\newtheorem{theorem}{Theorem}[section]
\newtheorem{corollary}[theorem]{Corollary}
\newtheorem{lemma}[theorem]{Lemma}
\newtheorem{proposition}[theorem]{Proposition}
\newenvironment{proof}[1][Proof]{\textbf{#1.} }{\ \rule{0.5em}{0.5em}}
\begin{document}

\title{Finitely generated groups with polynomial index growth}
\author{L\'{a}szl\'{o} Pyber\thanks{Supported in part by OTKA Grant no T 049841 } and
Dan Segal}
\maketitle

\begin{abstract}
We prove that a finitely generated soluble residually finite group has
polynomial index growth if and only if it is a minimax group. We also show
that if a finitely generated group with PIG is residually finite-soluble then
it is a linear group.\newline These results apply in particular to boundedly
generated groups; they imply that every infinite BG residually finite group
has an infinite linear quotient.

\end{abstract}

\section{Introduction}

A group $G$ is said to have \emph{polynomial index growth}, or \emph{PIG}, if
the orders of its finite quotients are polynomially bounded relative to their
exponents; that is, if there is a constant $\alpha$ such that%
\begin{equation}
\left\vert \overline{G}/\overline{G}^{n}\right\vert \leq n^{\alpha}%
\quad\forall n\in\mathbb{N} \label{PIGalpha}%
\end{equation}
for every finite image $\overline{G}$ of $G$ (and in this case we say that $G$
has PIG$(\alpha)$ ). The problem of characterizing finitely generated
residually finite (fg {\small R}$\mathfrak{F}$) groups with PIG was raised 20
years ago in [20], along with the analogous problem for polynomial subgroup
growth (PSG). While the latter was solved within eight years [11], the former
has proved resistant. For a detailed account of what is known about PIG and
its relation to other `upper finiteness conditions' see [12], Chapter 12.

The difficulty of the problem is explained by the far greater diversity of
groups with PIG. The fg {\small R}$\mathfrak{F}$ groups with PSG are precisely
those that are virtually soluble of finite rank. There are only countably many
of these, and they all have PIG; but many other groups do as well. Indeed, it
is shown in [2] that

\begin{itemize}
\item \emph{there exist uncountably many non-isomorphic fg }{\small R}%
$\mathfrak{F}$\emph{ groups with PIG.}
\end{itemize}

\noindent The construction in [2] gives groups that are neither virtually
soluble nor linear. Among the non-soluble linear groups there also exist many
groups with PIG; it was observed in [14] that arithmetic groups with the
congruence subgroup property (CSP) tend to have PIG. Subsequently, Lubotzky,
Platonov and Rapinchuk proved that PIG actually provides a group-theoretic
characterization of CSP:

\begin{itemize}
\item \emph{Let }$\Gamma$\emph{ be an }$S$\emph{-arithmetic group in a simply
connected, absolutely almost simple algebraic group over an algebraic number
field. Then }$\Gamma$\emph{ has PIG if and only if }$\Gamma$\emph{ has the
congruence subgroup property }([10], [17]).
\end{itemize}

Thus PIG may be seen as an `abstract' analogue to the congruence subgroup
property. The preceding result, however, suggests that a straightforward
characterization of \emph{all} fg {\small R}$\mathfrak{F}$ groups with PIG is
unlikely to be found. It is known [14] that every fg residually nilpotent
group with PIG is linear in characteristic zero, and it is not fanciful to
hope that the fg linear groups with PIG may be susceptible to some sort of
characterization. It is shown in [1] that those which are linear in
\emph{positive characteristic }are virtually abelian; but the general case
seems a distant prospect.

In this paper, we complete the work begun in [20] (and solve Problem 8e of
[12]) by proving

\begin{theorem}
\label{sol}Let $G$ be a fg {\small R}$\mathfrak{F}$ group that is virtually
soluble. Then $G$ has PIG if and only if $G$ has finite rank.
\end{theorem}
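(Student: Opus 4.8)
Here is how I would approach Theorem~\ref{sol}.

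\textbf{The easy direction} (finite rank $\Rightarrow$ PIG) I would dispatch first. If $G$ has finite rank then it is virtually soluble of some Prüfer rank $r$, and Prüfer rank passes to quotients, so every finite quotient $\overline{G}$ has rank $\le r$. Writing $\overline{G}$ as an extension of a soluble subgroup $S$ of bounded index $m$ by a finite group, I would use the classical bound that a finite soluble group of rank $r$ has derived length at most some $d(r)$, together with the fact that an abelian group of rank $\le r$ and exponent dividing $n$ has order $\le n^{r}$, to get $|S|\le n^{r\,d(r)}$ on the image of exponent $n$, hence $|\overline{G}/\overline{G}^{\,n}|\le m\cdot n^{r\,d(r)}\le n^{\alpha}$ for $\alpha=\alpha(r,m)$. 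So $G$ has PIG. (Recall for later that for finitely generated soluble groups, finite rank is equivalent to being minimax, by a theorem of Robinson; I would use this equivalence freely.)

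\textbf{The converse} is the substance. Suppose $G$ is finitely generated, virtually soluble, residually finite, and has PIG, and assume for contradiction that $G$ does not have finite rank. Since PIG and finiteness of rank are inherited by and detected on subgroups of finite index, and residual finiteness passes to subgroups, I may assume $G$ is soluble, so $G$ is not minimax. The first main step is a \emph{reduction}: using the structure theory of finitely generated soluble groups of infinite rank, together with the residual finiteness of $G$, reduce to the situation where there is a prime $p$, a finitely generated soluble minimax group $Q$, and an infinite-dimensional $\mathbb{F}_{p}Q$-module $V$, such that the split extension $H=V\rtimes Q$ is a \emph{quotient} of $G$ (so $H$ inherits PIG, since PIG passes to quotients). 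Residual finiteness of $G$ enters precisely in guaranteeing that an ``infinite-rank'' section of $G$ can, after a controlled modification, be realised as an honest quotient rather than a mere section; and a Robinson-type lemma is used to pass from ``infinite rank'' to ``$V/\ell V$ is infinite-dimensional over $\mathbb{F}_{\ell}$ for some prime $\ell$'', handling the torsion-free and mixed cases.

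\textbf{The contradiction for $H=V\rtimes Q$.} Fix $\alpha$ with $H$ PIG$(\alpha)$. For a finite-index normal subgroup $Q_{1}\trianglelefteq Q$, let $\omega_{1}$ be the kernel of $\mathbb{F}_{p}Q\to\mathbb{F}_{p}(Q/Q_{1})$ and put $V_{1}=V/V\omega_{1}$, a finite $\mathbb{F}_{p}(Q/Q_{1})$-module; then $V_{1}\rtimes(Q/Q_{1})$ is a finite quotient of $H$. A direct computation in the semidirect product shows every element has order dividing $p\cdot\exp(Q/Q_{1})$, so its exponent is at most $p\,|Q/Q_{1}|$, while its order is $p^{\dim_{\mathbb{F}_{p}}V_{1}}\,|Q/Q_{1}|$. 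Thus PIG$(\alpha)$ forces $\dim_{\mathbb{F}_{p}}V_{1}\le\alpha+(\alpha-1)\log_{p}|Q/Q_{1}|$ for every such $Q_{1}$. The crux is therefore to show that, because $V$ is infinite-dimensional, one can choose the $Q_{1}$ so that $\dim_{\mathbb{F}_{p}}V/V\omega_{1}$ grows arbitrarily fast relative to $\log|Q/Q_{1}|$, i.e.
\[
\sup_{Q_{1}}\ \frac{\dim_{\mathbb{F}_{p}}V/V\omega_{1}}{\log|Q/Q_{1}|}=\infty .
\]
Granting this, we produce finite quotients of $H$ violating PIG$(\alpha)$ for every $\alpha$, contradicting PIG of $H$; hence $G$ has finite rank.

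\textbf{The main obstacle} is exactly this last estimate. The finite quotient $Q/Q_{1}$ must be taken ``in a direction of growth'' of the module $V$: choosing it badly — for instance quotienting the wrong cyclic sections of $Q$ — can collapse $V/V\omega_{1}$ to bounded dimension, as easy examples over $\mathbb{Z}^{2}$ already show. Producing a good family of $Q_{1}$'s requires a fairly delicate analysis of $V$ as a module over $\mathbb{Z}Q$ in the tradition of P.~Hall, Roseblade and Jategaonkar — in effect, locating an infinite ``coordinate direction'' along which $V$ is unbounded — and this is harder here than in the polynomial subgroup growth problem because $Q$ is only minimax, not polycyclic, so the standard Noetherian and max-$n$ module theory is not directly available and must be supplemented. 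This module-theoretic estimate, rather than the group-theoretic reductions, is where the real difficulty of the theorem lies.
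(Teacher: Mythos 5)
Your easy direction is fine (modulo the non-trivial but classical fact that finite soluble groups of rank $r$ have derived length bounded in terms of $r$); the paper instead cites [14], Theorem 5.1 directly, which gives PIG$(\alpha)$ with $\alpha\leq r(3+\log_{2}r)$, but either route works.

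The converse is where the proposal has genuine gaps, and they are not small. Your Step 2 reduction — producing a quotient $H=V\rtimes Q$ of $G$ with $Q$ minimax and $V$ an infinite-dimensional $\mathbb{F}_{p}Q$-module — is asserted, not proved. It is far from clear that a fg RF soluble group of infinite rank admits such a quotient, and in particular you cannot expect the extension to split, or the module to be elementary abelian, or the base to be minimax, without doing the bulk of the work first. The paper reaches a configuration vaguely resembling yours only after several substantial reductions: Theorem~\ref{T1} (finite soluble PIG groups have a nilpotent-by-metabelian normal subgroup of bounded index, proved via Pálfy--Wolf bounds and Suprunenko's structure theorem for primitive soluble linear groups), Proposition~\ref{rk(G/N')}, and then the whole of Proposition~\ref{special}, which itself splits into a non-singular case (Theorem~\ref{H}) and a singular case (Proposition~\ref{p-singular}). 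Nothing in your proposal recreates this scaffolding, and the scaffolding is not optional — it is what lets the module-theoretic machinery be applied at all.

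More seriously, your paragraph ``The main obstacle'' correctly identifies the heart of the matter and then simply stops there: you acknowledge that the required dimension-growth estimate $\sup_{Q_{1}}\dim_{\mathbb{F}_{p}}(V/V\omega_{1})/\log|Q/Q_{1}|=\infty$ is exactly what needs to be proved, that badly chosen $Q_{1}$ collapse the quotient, and that the minimax (non-polycyclic) base makes the usual Noetherian module theory unavailable; but you offer no argument. This estimate \emph{is} the theorem. In the paper it occupies Sections~\ref{secqfg}--\ref{singsec}: one must control quasi-finitely-generated modules (Proposition~\ref{quasifg}), lift module quotients to group quotients without assuming a splitting (Lemma~\ref{lifting}), compare $f(L)$ against $m(L)$ for maximal ideals via a field-theoretic exponent bound (Lemma~\ref{lang}, Proposition~\ref{m vs f}), show that ``good'' maximal ideals come with many sisters (Proposition~\ref{Land M}, Corollary~\ref{manysisters}), and finally locate a good family of maximal ideals using the prime-module theory of [21], [22], [24] together with Brookes's theorem on strict Brookes pairs (Propositions~\ref{prime-redn}, \ref{control}, \ref{(a)(b)}). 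Identifying the obstacle is the right instinct, but as written the proposal does not contain a proof of the hard implication.
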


\noindent The fg {\small R}$\mathfrak{F}$ soluble groups of finite rank form a
well-understood class of groups, consisting of the fg soluble minimax groups
that are virtually torsion-free; some of their features are briefly recalled
in Section \ref{Hsection} below. (As usual, we say that a group is
\emph{virtually }$\mathcal{X}$ if it has a normal subgroup of finite index
with property $\mathcal{X}$.)

The theorem is actually a corollary of a more general result. For any group
$G$ we denote by $\mathrm{d}(G)$ the minimal cardinality of a generating set
for $G,$ and define%
\begin{equation}
\mathrm{rk}(G)=\sup\left\{  \mathrm{d}(H)\mid H\leq G\text{ with }%
\mathrm{d}(H)<\infty\right\}  ; \label{rank}%
\end{equation}
this is the \emph{rank} of $G$ (also called the Pr\"{u}fer rank). It is known
([14], Theorem 5.1) that every finite group of rank $r$ has PIG$(\alpha)$
where $\alpha\leq r(3+\log_{2}r)$; so a finite bound for the ranks of all
finite quotients of a group $G$ implies that $G$ has PIG, and the question is:
to what extent does the converse hold? The results stated above show that it
does not hold in general, even for finitely generated groups; our main result
answers the question as far as finite \emph{soluble} quotients are concerned.
Here $\exp(Q)$ denotes the exponent of a group $Q$.

\begin{theorem}
\label{thm1}Let $G$ be a finitely generated group, and let $\mathcal{S}$
denote the set of all finite soluble quotient groups of $G$. Suppose that
there exists $\alpha$ such that
\[
\left|  Q\right|  \leq\exp(Q)^{\alpha}
\]
for every $Q\in\mathcal{S}$. Then there is a finite upper bound for
$\mathrm{rk}(Q)$ as $Q$ ranges over $\mathcal{S}.$
\end{theorem}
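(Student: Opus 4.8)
My plan is to reduce first to the case that $G$ is residually finite-soluble, then to show that such a $G$ is in fact \emph{soluble} of derived length bounded in terms of $\alpha$, and finally to settle the soluble case by induction on the derived length; this last part is essentially the content of Theorem \ref{sol}. For the reduction: replace $G$ by $\bar G=G/R$, where $R$ is the intersection of all $N\trianglelefteq G$ with $G/N$ finite soluble. This changes neither $\mathcal S$ nor the quantity to be bounded, because every $Q\in\mathcal S$ is a quotient of $G$ and hence $\mathrm{rk}(Q)\le\mathrm{rk}(G)$ (lift a generating set of a finite subgroup of $Q$ of maximal rank to a finitely generated subgroup of $G$). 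So it suffices to prove that $\bar G$ has finite rank, where $\bar G$ is finitely generated, residually finite-soluble, and satisfies $|Q|\le\exp(Q)^\alpha$ for every finite soluble quotient $Q$.

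The first substantial step is to bound the derived lengths of the members of $\mathcal S$ by some $d=d(\alpha)$. The idea is that a finite soluble group of large derived length must act, on suitable chief factors, through soluble linear groups whose degrees grow (indeed exponentially) with the derived length -- since by classical bounds the derived length of a soluble linear group is only logarithmic in its degree -- and that iterating this across the derived series produces finite soluble quotients of $G$ whose orders outstrip any fixed power of their exponents once the derived length is large, contradicting $(\ref{PIGalpha})$. (Equivalently one shows directly: a finite soluble group with PIG$(\alpha)$ has derived length at most $d(\alpha)$.) Granting this, every soluble-cotype normal subgroup $N$ of $\bar G$ contains $\bar G^{(d)}$, and since $\bar G$ is residually finite-soluble these $N$ intersect trivially; hence $\bar G^{(d)}=1$, i.e.\ $\bar G$ is soluble of derived length $\le d$.

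It remains to prove that a finitely generated soluble residually finite group $\bar G$ with PIG has finite rank, which I would do by induction on the derived length (this also yields Theorem \ref{sol}). The abelian case is immediate. For the inductive step put $A=\bar G^{(d-1)}$, an abelian normal subgroup; then $\bar G/A$ has smaller derived length and PIG, so by induction it is a finitely generated soluble group of finite rank, i.e.\ a minimax group. View $A$ as a module for $\bar G/A$. The point is to show $A$ has finite rank: otherwise $A$ admits a section of infinite torsion-free rank, or of infinite $p$-rank for some prime $p$, and from such a section one manufactures finite quotients of $\bar G$ of the form $V\rtimes H$ with $H$ a quotient of the minimax group $\bar G/A$, $V$ elementary abelian of unbounded dimension, and $\exp(V\rtimes H)$ small relative to $|V|$ -- again contradicting $(\ref{PIGalpha})$. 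Hence $A$, and therefore $\bar G$, has finite rank.

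The main obstacle is exactly this control of the module $A$. One must rule out ``bad'' module sections, the model case being $\mathbb F_p[t,t^{-1}]$ with a distinguished element acting by multiplication by $t$: in $\mathbb Z\ltimes\mathbb F_p[t,t^{-1}]$ the reductions modulo $(f^m)$, together with the resulting finite order of $t$, give finite metabelian quotients isomorphic to $\mathbb F_p^{\,km}\rtimes C_e$ with $e$ of size about $p^k m$, which already fail PIG$(\alpha)$ for fixed $k$ as $m\to\infty$. Excluding such sections in general requires P. Hall's theory of finitely generated modules over group rings of finitely generated abelian-by-polycyclic (here finite-rank soluble) groups, combined with arithmetic input bounding, for a module of large $p$-rank, the possible multiplicative orders of the acting elements (equivalently of the relevant eigenvalues modulo $p$). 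Once one knows that a module of infinite rank forces quotients whose exponent is too small for their order, the hypothesis $(\ref{PIGalpha})$ does the rest; I expect the derived-length bound of the second step to require similar care, but the module analysis here to be the genuine technical heart.
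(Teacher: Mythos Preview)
Your second step --- bounding the derived length of every finite soluble group with PIG$(\alpha)$ by some $d(\alpha)$ --- is false, and this breaks the whole reduction. Take $p\ge 5$ and let $Q$ be the first congruence subgroup of $\mathrm{SL}_2(\mathbb{Z}_p)$; this is a uniform pro-$p$ group of dimension $3$. Its finite quotients $P_e=Q/Q^{p^e}$ satisfy $|P_e|=p^{3e}$ and $\exp(P_e)=p^e$, so each $P_e$ has PIG$(3)$; they have rank $3$; but since $[G_m,G_n]=G_{m+n}$ for the congruence filtration one gets $Q^{(i)}=Q^{p^{2^i-1}}$, whence $\mathrm{dl}(P_e)\sim\log_2 e\to\infty$. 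So there is no $d(\alpha)$, and you cannot conclude that $\bar G=G/R$ is soluble. (Morally: the ``nilpotent'' piece supplied by Theorem~\ref{T1} can carry unbounded derived length even under PIG; what is bounded is its \emph{rank}, and only once you know the rank of the metabelian top.)

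The paper therefore proceeds quite differently. Theorem~\ref{T1} gives, in each finite soluble quotient, a nilpotent-by-metabelian normal subgroup of $\alpha$-bounded index; passing to a fixed $G_0\le_f G$, one has $G_0''$ nilpotent in every soluble image. Proposition~\ref{rk(G/N')} then says that $\mathrm{rk}$ of the whole quotient is bounded once $\mathrm{rk}(G_0/G_0''')$ is bounded --- this is the device that replaces your derived-length bound, absorbing the unbounded nilpotent tail in one step rather than by induction. The remaining task is Proposition~\ref{special}: a finitely generated soluble group of derived length \emph{exactly} $3$ with PIG has finite upper rank. Your inductive step, by contrast, would require the module $A=G^{(d-1)}$ to be analysed over a minimax group $G/A$ of \emph{arbitrary} derived length; the machinery actually available (Sections~\ref{Hsection}--\ref{singsec}, built on the group-ring theory of abelian minimax groups) handles $A$ as a module over an abelian $G=N/N'$ inside a metabelian $\Gamma$, and already there the singular case (primes in $\mathrm{spec}(E/E'')$) needs the full apparatus of \S\ref{singsec}. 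Your sketch of ``manufacture $V\rtimes H$ with small exponent'' captures the flavour of the $p$-singular analysis but not its content: one really must locate good maximal ideals of $\mathbb{F}_pG$, count their ``sisters'', and play Proposition~\ref{m vs f} against Proposition~\ref{L(f)}.
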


\noindent The theorem can be formulated more picturesquely in the language of
profinite groups; this is discussed below. Combined with known results, it has
significant consequences for the global structure of fg {\small R}%
$\mathfrak{F}$ groups with PIG.

\begin{corollary}
\label{lin}Let $G$ be a fg group that is residually finite-soluble. If $G$ has
PIG then $G$ is a linear group over a field of characteristic $0.$
\end{corollary}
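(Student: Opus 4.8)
The plan is to use Theorem~\ref{thm1} to force $G$ to be soluble, and then to quote the structure theory of soluble groups of finite rank together with the classical linearity of such groups.

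Since $G$ has PIG, every finite soluble quotient $Q$ of $G$ satisfies $|Q|\le\exp(Q)^{\alpha}$, so Theorem~\ref{thm1} applies and produces a constant $r$ bounding $\mathrm{rk}(Q)$ over all finite soluble quotients $Q$ of $G$. I would then invoke the known fact that a finite soluble group of rank at most $r$ has derived length bounded by a function $d(r)$ of $r$ alone (this can be obtained by combining the bound on the Fitting height of a finite soluble group of bounded rank with the bound on the derived length of a finite $p$-group of bounded rank, and it is in any case a special case of Mal'cev's theorem that a soluble group of finite rank has finite derived length). Hence every finite soluble quotient of $G$ has derived length at most $d=d(r)$. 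As $G$ is residually finite-soluble, the trivial subgroup is the intersection of the kernels of the homomorphisms of $G$ onto finite soluble groups, and each such kernel contains $G^{(d)}$; therefore $G^{(d)}=1$, that is, $G$ is soluble of derived length at most $d$.

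Now $G$ is a finitely generated residually finite soluble group with PIG, so by Theorem~\ref{sol} it has finite rank. By the structure of such groups recalled in Section~\ref{Hsection}, $G$ is then a virtually torsion-free soluble minimax group; in particular it has a torsion-free normal subgroup $G_{0}$ of finite index, and $G_{0}$, being a finitely generated torsion-free soluble minimax group, embeds in $\mathrm{GL}_{n}(\mathbb{Q})$ for some $n$. Since a group with a linear normal subgroup of finite index is itself linear over the same field (induce the representation), $G$ is linear over $\mathbb{Q}$, which proves the corollary.

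Granted Theorem~\ref{thm1}, the only step carrying genuine content is the reduction to the soluble case, and this rests entirely on the bound on the derived length of a finite soluble group in terms of its rank; the remaining ingredients — Theorem~\ref{sol}, the description of finitely generated residually finite soluble groups of finite rank, and the $\mathbb{Q}$-linearity of torsion-free soluble minimax groups — are all standard, so I do not anticipate a serious obstacle beyond assembling them correctly.
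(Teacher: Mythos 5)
Your reduction to the soluble case hinges on the assertion that a finite soluble group of Pr\"{u}fer rank at most $r$ has derived length bounded by a function $d(r)$ of $r$ alone, and this is false. For a prime $p\geq5$ the congruence kernel $H=\ker\bigl(\mathrm{SL}_{2}(\mathbb{Z}_{p})\rightarrow\mathrm{SL}_{2}(\mathbb{F}_{p})\bigr)$ is a uniform pro-$p$ group of dimension $3$, hence of rank $3$, so every finite image of $H$ is a finite $p$-group of rank at most $3$. But $H$ is not soluble (its $\mathbb{Q}_{p}$-Lie algebra is $\mathfrak{sl}_{2}(\mathbb{Q}_{p})$, which is perfect, so each term of the derived series of $H$ is open), and therefore the finite $p$-groups $H/\ker\bigl(H\rightarrow\mathrm{SL}_{2}(\mathbb{Z}/p^{e})\bigr)$, all of rank at most $3$, have derived length tending to infinity with $e$: the $j$-th derived subgroup sits around congruence level $p^{2^{j}}$, so the derived length of the level-$p^{e}$ quotient is about $\log_{2}e$. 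The Fitting height of a finite soluble group of rank $r$ is indeed bounded in terms of $r$, but the derived length of a nilpotent group of bounded rank is not, and that distinction is precisely the `bad torsion' phenomenon that the singular case of Proposition~\ref{special} is designed to overcome.

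Without that step you cannot deduce $G^{(d)}=1$, so you have not shown $G$ is soluble --- and Corollary~\ref{lin} does not claim this, nor is it clear to me that it holds under the stated hypotheses. The paper's proof sidesteps the question: Theorem~\ref{thm1} gives a uniform bound on the rank of every finite soluble image of $G$, so the prosoluble completion of $G$ has finite rank and $G$ embeds into it (being residually finite-soluble), and then [14], Proposition 2.4 is quoted to turn that finite-rank profinite embedding directly into a characteristic-zero linear representation of $G$, with no intermediate reduction to a soluble $G$. The rest of your write-up (Theorem~\ref{sol}, the minimax structure theory, $\mathbb{Q}$-linearity of finitely generated torsion-free soluble minimax groups, induction from a finite-index subgroup) would be in order if $G$ were already soluble of finite rank, but the reduction to that case is exactly the step that fails.
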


\noindent This follows from the theorem together with [14], Proposition 2.4.

The \emph{upper rank} of a group $G$ is the supremum of $\mathrm{rk}%
(\overline{G})$ over all finite quotients $\overline{G}$ of $G$. Theorem A of
[14] says that every fg {\small R}$\mathfrak{F}$ group of finite upper rank is
virtually soluble of finite rank (see [12] Chapter 5). With Theorem \ref{thm1}
this establishes

\begin{corollary}
\label{finrk}Let $G$ be a fg {\small R}$\mathfrak{F}$ group such that every
finite quotient of $G$ is soluble. Then\thinspace G has PIG if and only if $G$
is soluble of finite rank.
\end{corollary}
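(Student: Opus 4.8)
The plan is to read off this corollary from Theorem \ref{thm1} together with the two cited results of [14] (Theorem 5.1 and Theorem A) and Theorem \ref{sol}. The key observation is that under the standing hypothesis the set $\mathcal{S}$ of finite soluble quotients of $G$ is precisely the set of \emph{all} finite quotients of $G$, so that `bounded rank of the groups in $\mathcal{S}$' means the same thing as `finite upper rank of $G$'. For the easy implication, suppose $G$ is soluble of finite rank $r$. Then every finite quotient $\overline{G}$ of $G$ satisfies $\mathrm{rk}(\overline{G})\le r$: given a subgroup $H/K\le G/K$, lift generators to get a finitely generated $L_{0}\le G$ with $L_{0}K/K=H/K$, so $H/K$ is a quotient of $L_{0}$ and $\mathrm{d}(H/K)\le\mathrm{d}(L_{0})\le r$. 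By [14], Theorem 5.1, each such $\overline{G}$ then has $\mathrm{PIG}(\alpha)$ with $\alpha=r(3+\log_{2}r)$, and since $\alpha$ is independent of $\overline{G}$, the group $G$ has PIG.

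For the converse, suppose $G$ has $\mathrm{PIG}(\alpha)$. Given $Q\in\mathcal{S}$, put $n=\exp(Q)$; then $Q$ is a finite image $\overline{G}$ of $G$ with $\overline{G}^{\,n}=1$, so $|Q|=|\overline{G}/\overline{G}^{\,n}|\le n^{\alpha}=\exp(Q)^{\alpha}$. Thus the hypothesis of Theorem \ref{thm1} is satisfied, and we obtain $r:=\sup\{\mathrm{rk}(Q)\mid Q\in\mathcal{S}\}<\infty$. Because every finite quotient of $G$ is soluble, this says exactly that $G$ has finite upper rank, so by Theorem A of [14] the group $G$ is virtually soluble of finite rank. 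Choose $N\trianglelefteq G$ of finite index with $N$ soluble; then $G/N$ is finite, hence soluble by hypothesis, and therefore $G$ — being an extension of a soluble group by a soluble group — is itself soluble. Finally $G$ is a finitely generated residually finite (in particular virtually soluble) group with PIG, so Theorem \ref{sol} yields that $G$ has finite rank. Together with the first paragraph this proves the corollary.

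The whole argument is short once Theorem \ref{thm1} and [14, Theorem A] are available, and I do not expect any genuine obstacle in it. The only two points that need a little care are the passage between the two equivalent formulations of the PIG condition (from $|\overline{G}/\overline{G}^{\,n}|\le n^{\alpha}$ for all $n$ to $|Q|\le\exp(Q)^{\alpha}$ for all finite quotients $Q$, which is immediate on taking $n=\exp(Q)$) and the final upgrade from `virtually soluble' to `soluble', where one really uses that \emph{every} finite quotient — not merely every soluble one — is soluble. All of the substance of the corollary is concentrated in Theorem \ref{thm1} itself.
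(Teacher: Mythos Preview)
Your argument is correct and follows the paper's route: Theorem~\ref{thm1} gives finite upper rank, then Theorem~A of [14] gives virtually soluble of finite rank, and solubility follows because $G/N$ is a finite (hence, by hypothesis, soluble) quotient.

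One remark: your final appeal to Theorem~\ref{sol} is both unnecessary and, relative to the paper's logical order, backwards. Once [14], Theorem~A hands you a normal subgroup $N\trianglelefteq_f G$ that is soluble of finite rank, the group $G$ itself already has finite rank, since a finite extension of a group of finite rank again has finite rank; so there is nothing left for Theorem~\ref{sol} to do. In fact the paper runs the implication the other way: it derives Theorem~\ref{sol} \emph{from} Corollary~\ref{finrk} (noting that PIG and finite rank pass to subgroups of finite index and to finite extensions), rather than using Theorem~\ref{sol} as an ingredient here. Dropping that last sentence makes your proof coincide with the paper's.
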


\noindent Since, as is easily seen, both finite rank and PIG are preserved on
passing to subgroups of finite index and to finite extensions, Theorem
\ref{sol} is essentially a special case of this corollary.

\bigskip

\textbf{Applications to bounded generation}

\medskip

A group is said to be \emph{boundedly generated} (BG) if it is equal to the
product of finitely many cyclic subgroups. This property is enjoyed by many
interesting groups: on the one hand, many higher-rank $S$-arithmetic groups
[27], on the other hand all fg soluble groups of finite rank [7].

It is easy to see that every BG group has PIG. So combining Kropholler's
theorem [7] with Theorem \ref{sol} we obtain

\begin{corollary}
Let $G$ be a fg {\small R}$\mathfrak{F}$ virtually soluble group. Then $G$ is
boundedly generated if and only if $G$ has finite rank.
\end{corollary}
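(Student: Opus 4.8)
The plan is to read this corollary off directly from Theorem~\ref{sol}, using in addition Kropholler's theorem [7] that every finitely generated soluble group of finite rank is boundedly generated, the elementary observation (already recorded in the text above) that every BG group has PIG, and two routine closure facts: a subgroup of finite index in a finitely generated group is finitely generated, and the property of being BG is inherited by finite extensions.

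For the forward implication, suppose that $G$ is boundedly generated. Then $G$ has PIG. Since $G$ is also finitely generated, residually finite and virtually soluble, Theorem~\ref{sol} applies and tells us that $G$ has finite rank.

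For the converse, suppose that $G$ has finite rank, and choose a normal subgroup $N$ of finite index in $G$ that is soluble. Then $N$ is finitely generated (as a finite-index subgroup of the finitely generated group $G$) and of finite rank (as a subgroup of $G$), so Kropholler's theorem gives a factorisation $N=\langle x_1\rangle\cdots\langle x_k\rangle$ into cyclic subgroups. Now pick elements $s_1,\dots,s_l\in G$ whose images generate cyclic subgroups with $G/N=\langle\bar s_1\rangle\cdots\langle\bar s_l\rangle$ (possible since $G/N$ is finite). For any $g\in G$ we may write $\bar g=\bar s_1^{a_1}\cdots\bar s_l^{a_l}$, so that $g\in\langle s_1\rangle\cdots\langle s_l\rangle N$; hence
\[
G=\langle s_1\rangle\cdots\langle s_l\rangle\langle x_1\rangle\cdots\langle x_k\rangle
\]
is a product of finitely many cyclic subgroups, i.e. $G$ is boundedly generated.

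I do not expect a genuine obstacle here: all the substance is contained in Theorem~\ref{sol} and in Kropholler's theorem, both of which are available. The only points that need a word of care are the two closure properties above (both standard) and, implicitly, checking that the finite-index soluble subgroup $N$ really falls within the scope of Kropholler's result — which it does, since finite rank is inherited by subgroups and $N$ is finitely generated.
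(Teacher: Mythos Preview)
Your proof is correct and follows exactly the route the paper indicates: the forward direction is Theorem~\ref{sol} together with the observation that BG implies PIG, and the converse is Kropholler's theorem applied to a finite-index soluble subgroup, lifted to $G$ by the standard argument that BG passes to finite extensions. The paper states the corollary without giving these details, so your write-up simply makes explicit what the authors left implicit.
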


\noindent Kropholler's theorem does not require residual finiteness; it would
be interesting to know if the converse also holds in this greater generality:

\begin{itemize}
\item \emph{Does every boundedly generated soluble group have finite rank?}
\end{itemize}

The main open problem regarding \emph{residually finite} BG groups is:
\emph{are they linear}? In the paper [26] this is attributed to Tavgen, in the
early '90s. Corollary \ref{lin} gives a positive answer in the special case of
groups that are residually finite-soluble; our next corollary is another
contribution to this question.

Before stating it, let us mention a useful condition that is in a sense
opposite to being virtually soluble: a group $G$ is said to have \emph{FAb} if
every virtually abelian quotient of $G$ is finite (this holds for example if
$G$ is an arithmetic subgroup of a simple algebraic group). A more
approachable version of Tavgen's question was posed by Bass:

\begin{itemize}
\item \emph{Is every} {\small R}$\mathfrak{F}$ BG \emph{group with} FAb
\emph{linear?}
\end{itemize}

The paper [1] establishes many properties of the BG groups with FAb. In
particular, the proof of Corollary 1.5 of [1] shows the following: if $G$ is
an infinite {\small R}$\mathfrak{F}$ BG group with FAb, then $G$ has an
infinite {\small R}$\mathfrak{F}$ quotient $Q$ such that \emph{either }(a) $Q$
is linear in characteristic zero \emph{or} (b) $Q$ has a normal subgroup
$Q_{0}$ of finite index such that every finite quotient of $Q_{0}$ is soluble.
Now Corollary \ref{lin} shows that case (b) is subsumed in case (a). Since
every fg virtually abelian group is also a linear group in characteristic
zero, we may infer

\begin{corollary}
\label{bglin}Every infinite {\small R}$\mathfrak{F}$ BG group has an infinite
linear image (in characteristic zero).
\end{corollary}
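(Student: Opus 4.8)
The plan is to argue by a dichotomy on whether $G$ has FAb, producing the required quotient directly in each case and invoking Corollary \ref{lin} to collapse the one genuinely delicate case. Throughout I would use freely that a BG group, being a product of finitely many cyclic subgroups, is finitely generated and has PIG (an easy observation); that finite generation and PIG pass to every quotient; and that PIG passes to subgroups of finite index, as recorded in the excerpt.

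First, suppose $G$ does \emph{not} have FAb. Then by definition $G$ has an infinite virtually abelian quotient $Q$. Since $Q$ is finitely generated, it is linear in characteristic zero (every finitely generated virtually abelian group is, as noted above), so $Q$ is the desired infinite linear image of $G$, and we are done.

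Now suppose $G$ has FAb. Here I would quote the analysis underlying the proof of Corollary 1.5 of [1]: since $G$ is an infinite {\small R}$\mathfrak{F}$ BG group with FAb, it has an infinite {\small R}$\mathfrak{F}$ quotient $Q$ for which \emph{either} (a) $Q$ is linear in characteristic zero, \emph{or} (b) $Q$ has a normal subgroup $Q_{0}$ of finite index all of whose finite quotients are soluble. In case (a) there is nothing left to prove. In case (b), note that $Q_{0}$ is finitely generated (finite index in the finitely generated group $Q$), infinite (finite index in the infinite group $Q$), residually finite, and — since every finite quotient of $Q_{0}$ is soluble — residually finite-soluble; moreover $Q_{0}$ has PIG, being a subgroup of finite index in the quotient $Q$ of the BG group $G$. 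By Corollary \ref{lin}, $Q_{0}$ is therefore linear in characteristic zero. Since a group with a linear subgroup of finite index is itself linear over the same field (induce the faithful representation of $Q_{0}$ up to $Q$), $Q$ is linear in characteristic zero, and $Q$ is the desired infinite linear image of $G$.

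The substantive content is Corollary \ref{lin}, which is exactly what makes case (b) fold into case (a); granting that, the only points needing care are the bookkeeping of which finiteness conditions (finite generation, residual finiteness, PIG, residual solubility) survive passage to quotients and to subgroups of finite index, and the correct citation of [1] for the FAb case. I do not anticipate any real obstacle beyond assembling these ingredients.
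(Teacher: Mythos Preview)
Your proposal is correct and follows essentially the same route as the paper: split according to whether $G$ has FAb, dispose of the non-FAb case via an infinite fg virtually abelian quotient, and in the FAb case invoke the dichotomy from [1] and use Corollary~\ref{lin} to show that case (b) collapses into case (a) (applying it to $Q_{0}$ and then inducing up to $Q$). The bookkeeping you flag---finite generation, residual finiteness, PIG, and residual solubility under the relevant passages---is exactly what is needed, and there is no gap.
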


\noindent(It should be mentioned that this does not depend on the full force
of Theorem \ref{thm1}: Theorem \ref{T1}, stated below, suffices to show that
in case (b), $Q$ is virtually residually nilpotent, and hence linear by [14],
Theorem F.)

It is certainly necessary to assume residual finiteness in Corollary
\ref{bglin}: recently Muranov [13] has constructed an infinite simple group
which is equal to the product of 27 cyclic-subgroups. The following question
suggests itself:

\begin{itemize}
\item \emph{Does every }{\small R}$\mathfrak{F}$ BG \emph{group that is not
virtually soluble have a non virtually-soluble linear image?}
\end{itemize}

\noindent The answer is `no' if BG is replaced by PIG in this question: the
groups constructed in [2] have the property that every linear quotient is
virtually cyclic!\bigskip

\textbf{Further results}

\medskip

As usual in the investigation of `upper finiteness conditions', the proof of
the main theorem falls into two distinct parts. The first, `local', part
belongs to finite group theory and produces quantitative information. Here,
$f(\alpha)$ and $g(r,\alpha)$ denote certain functions of the exhibited arguments.

\begin{theorem}
\label{T1}Let $G$ be a finite soluble group with PIG$(\alpha)$. Then $G$ has a
nilpotent-by-metabelian normal subgroup of index at most $f(\alpha)$.
\end{theorem}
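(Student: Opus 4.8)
The plan is to produce a normal subgroup $N$ of $G$ with $[G:N]\le f(\alpha)$ and $N''$ nilpotent; such an $N$ is nilpotent-by-metabelian, since $N/N''$ is metabelian and $N''$, being normal and nilpotent, lies in $F(N)$. Because $F(G)$ is always nilpotent and every section of $G$ again has PIG$(\alpha)$, it suffices to find a metabelian normal subgroup of index $\le f(\alpha)$ in $\overline{G}:=G/F(G)$: its preimage in $G$ then has second derived group inside $F(G)$, hence nilpotent. Passing from $G$ to $G/\Phi(G)$ — which leaves the Fitting quotient $\overline{G}$ unchanged and makes the Fitting subgroup abelian — we may further assume that $\overline{G}$ is a faithful, completely reducible soluble linear group with PIG$(\alpha)$, acting on $F(G)$. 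For each chief factor $W$ of $G$ inside $F(G)$, an $\mathbb{F}_p G$-module of dimension $d_W$, put $H_W=G/C_G(W)\le\mathrm{GL}(W)$; this is a faithful irreducible soluble linear group, and $\overline{G}$ is assembled from the $H_W$ and their arrangement.

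The quantitative heart is a lemma of finite group theory: \emph{if $H\le\mathrm{GL}_d(p)$ is soluble, faithful and irreducible and the largest abelian normal subgroup of $H$ has index greater than $c$, then $d$ is large relative to $c$}, exponentially so for the part of the complexity coming from imprimitivity. This is read off from the structure theory of primitive soluble linear groups — Clifford theory together with the reduction to groups of symplectic type — the point being that each unit of non-abelian complexity of $H$ (an extraspecial-type tensor factor, or a layer of wreathing) multiplies $d$ while contributing only a polynomially bounded amount to the \emph{exponent} of $H$. Consequently, if $H_W$ has large complexity for some chief factor $W$, then a suitable section of $G$ (or of $\overline{G}$) has order at least $|W|=p^{d_W}$ but exponent at most $p$ times a small quantity — unless that large exponent is supplied by an abelian normal subgroup $A$ of $H_W$. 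That benign alternative is removed by passing to a further quotient: the largest abelian normal subgroup of $H_W$ has normal preimage in $G$, and, being a product of Singer-type cyclic groups on the homogeneous components of $W$, it can be trimmed inside an appropriate section without disturbing the complex part. Testing PIG$(\alpha)$ against these sections then bounds the complexity of every $H_W$ in terms of $\alpha$, and along the way bounds the Fitting length of $G$ and the number of primes occurring nontrivially in each Fitting layer.

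With an abelian normal subgroup of index $\le f_1(\alpha)$ in each $H_W$, the conclusion is assembled along the now boundedly long Fitting series of $\overline{G}$: handling one term at a time, one shows that after replacing $\overline{G}$ by a normal subgroup of bounded index the action of $\overline{G}$ on each layer is, modulo the layers below, through an abelian-by-(bounded) group; collapsing the bottom Fitting subgroup together with two further layers yields the required metabelian normal subgroup of index $\le f(\alpha)$ in $\overline{G}$, hence a nilpotent-by-metabelian normal subgroup of index $\le f(\alpha)$ in $G$. The step I expect to be the main obstacle is exactly the bookkeeping in the previous paragraph: separating the genuinely harmless large exponents — abelian radicals and Singer cycles on homogeneous components, about which PIG says nothing — from the structural complexity that PIG does sharply constrain, and carrying this out uniformly across the unboundedly many chief factors of $G$, so that the individual complexity bounds fuse into a single bounded-index normal subgroup.
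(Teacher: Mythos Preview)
Your architecture --- reduce to $\overline{G}=G/F(G)$, analyse each irreducible linear quotient $H_W$ via the Suprunenko structure, then fuse the local bounds --- is close to the paper's in spirit, but two of the three steps have genuine gaps.

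\medskip

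\textbf{The local bound is misstated.} It is \emph{not} true that PIG$(\alpha)$ forces each $H_W$ to have an abelian normal subgroup of bounded index. For each prime $p$ let $G_p=\mathbb{F}_{p^{p}}\rtimes(\mathbb{F}_{p^{p}}^{*}\rtimes\mathrm{Gal}(\mathbb{F}_{p^{p}}/\mathbb{F}_p))$; one checks that every quotient of $G_p$ has PIG$(2)$, yet the point-stabiliser $H=\mathbb{F}_{p^{p}}^{*}\rtimes C_p$ has its largest abelian normal subgroup of index $p$. Your proposed ``trimming'' of the Singer part does not rescue this: the relevant section $V\rtimes C_p$ has order $p^{\,p+1}$ and exponent $p^{2}$, so it violates PIG$(\alpha)$ for large $p$ --- but PIG is inherited only by \emph{quotients} and finite-index subgroups, not by arbitrary sections, so this says nothing about $G_p$. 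What the Suprunenko analysis actually yields, once PIG bounds the ratio $n/a$ and the imprimitivity degree $t$, is a \emph{metacyclic} (hence metabelian, not abelian) subgroup of bounded index in the linear part; the Galois contribution on top of the Singer cycle is exactly what prevents ``abelian''.

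\medskip

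\textbf{The fusion step is the wrong mechanism.} Even granting a metabelian normal subgroup of index $\le c$ in each $H_W$, your Fitting-series bookkeeping does not obviously produce a single bounded-index metabelian normal subgroup of $\overline{G}$: the bounded-index subgroups in the various $H_W$ need not be compatible, and ``collapsing two layers'' has no clear meaning here. The paper sidesteps this with a device you do not mention. One first proves the local statement for primitive \emph{permutation} quotients $G/K$ (Lemma~\ref{G0}): each has a normal subgroup of index $<f_3(\alpha)$ with trivial third derived group. Setting $h=f_3(\alpha)!$, this gives $(G^{h})'''\le K$ for every such $K$, and since the intersection of these cores is $\Phi(G)$ one obtains $(G^{h})'''\le\Phi(G)$ at one stroke. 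Then $G_0=G^{h}\Phi(G)$ has $G_0''\Phi(G)/\Phi(G)$ abelian, so $G_0''$ is nilpotent by Gasch\"utz, and $|G:G_0|\le h^{\alpha}$ by PIG. The passage to the single \emph{characteristic} subgroup $G^{h}$ is precisely what makes the local bounds cohere without any control on the number of chief factors.
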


This will be used in conjunction with

\begin{proposition}
\label{rk(G/N')}Let $G$ be a finite soluble group and $N$ a nilpotent normal
subgroup of $G$. If $G$ has PIG$(\alpha)$ then
\[
\mathrm{rk}(G)\leq g(r,\alpha)
\]
where $r=\mathrm{rk}(G/N^{\prime})$.
\end{proposition}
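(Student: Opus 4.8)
The plan is to reduce, by elementary manipulations of ranks, to a statement about a single Sylow subgroup of $N$, and to confine all use of the PIG hypothesis to that last step.

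First, the inequality $\mathrm{rk}(H)\le\mathrm{rk}(H/K)+\mathrm{rk}(K)$, valid for $K\trianglelefteq H$, applied with $K=N'$ gives $\mathrm{rk}(G)\le r+\mathrm{rk}(N')$, so it is enough to bound $\mathrm{rk}(N')$. As $N$ is nilpotent, $N=\prod_{p}N_{p}$ with each Sylow subgroup $N_{p}$ characteristic in $N$, hence normal in $G$, and $\mathrm{rk}(N')=\max_{p}\mathrm{rk}(N_{p}')$; moreover $\mathrm{rk}(N_{p}/N_{p}')\le\mathrm{rk}(N/N')\le\mathrm{rk}(G/N')=r$, so in particular $\mathrm{d}(N_{p})\le r$ for every $p$. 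Thus everything comes down to bounding $\mathrm{rk}(P')$ by a function of $r$ and $\alpha$ alone, uniformly in $p$, where $P=N_{p}$ is a $p$-group with $\mathrm{d}(P)\le r$ that is normal in the finite soluble group $G$, and $G$ has PIG$(\alpha)$.

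To do this one wants to show that if $\mathrm{rk}(P')$ is large then $G$ has a finite quotient $\overline{G}$ whose exponent is bounded in terms of $r$ and $\alpha$ but whose rank is not; since $|\overline{G}|\ge 2^{\mathrm{rk}(\overline{G})}$, such a quotient would violate PIG$(\alpha)$. The relevant objects are the layers of the lower $p$-central series of $P$, each an $\mathbb{F}_{p}G$-module on which $P$ itself acts trivially; the action on the top layer $P/\Phi(P)$ already embeds the quotient $\overline{G}_{0}:=G/C_{G}(P/\Phi(P))$ into $\mathrm{GL}_{\mathrm{d}(P)}(\mathbb{F}_{p})$, so $\mathrm{rk}(\overline{G}_{0})$ is bounded in terms of $r$. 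One would proceed by induction on $|G|$, using coprime-action arguments --- and, where it helps, the nilpotent-by-metabelian normal subgroup of bounded index furnished by Theorem \ref{T1} --- to reduce to a minimal configuration in which the action of $G$ on $P'$ is as controlled as possible.

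The main obstacle is the construction of the bad quotient, and it is genuinely delicate. One cannot bound the nilpotency class of $P$ in terms of $r$ and $\alpha$: the dihedral groups $D_{2^{n}}$, taken with $G=N=D_{2^{n}}$, have PIG$(2)$, rank $2$, and class $n-1$. Nor can one bound $|P'|$; and the rank of $P'$ is not even controlled by the $\mathbb{F}_{p}$-dimensions of the chief factors of $G$ inside $P'$, since for $D_{2^{n}}$ these are all one-dimensional while $|N'|=2^{n-2}$. So the rank of $P'$ has to be read off from the way $P'$ is assembled as an iterated non-split extension inside $G$, and the heart of the proof is to show that this non-splitting cannot be used to manufacture a quotient of $G$ of large rank and small exponent unless $\mathrm{rk}(P')$ was already bounded. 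Identifying the $G$-invariant subgroup of $P'$ whose corresponding quotient exposes the rank while keeping the exponent under control is where the real work lies.
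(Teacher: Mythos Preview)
Your initial reductions are fine and match the paper's. The gap is that you stop exactly where the content begins: you correctly diagnose, via the dihedral example, that neither the class of $P$ nor the dimensions of the chief factors inside $P'$ are bounded, and then say that identifying the right quotient ``is where the real work lies'' --- but you do not do that work. Nothing in your outline indicates how the needed quotient is to be produced, and invoking induction on $|G|$, coprime action, or Theorem~\ref{T1} does not point to it either (the paper's proof uses none of these).

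What is missing is the powerful $p$-group machinery. After factoring out $O_{p'}(G)$ one replaces $N_{p}$ by the full Fitting subgroup $F=O_{p}(G)\ge N$, which still satisfies $\mathrm{d}(F)\le r$; the gain is that $G/F$ acts faithfully and completely reducibly on $F/\Phi(F)$, so by P\'alfy--Wolf $|G/F|\le p^{3r}$, bounding the \emph{exponent} (not merely the rank) of $G$ above $F$. Now descend the lower $p$-series $F_{i+1}=F_{i}'F_{i}^{p}$ and let $s=\max_{i}\dim_{\mathbb{F}_{p}}(F_{i}/F_{i+1})$. The key fact (from the theory of powerful $p$-groups, e.g.\ \cite[Ch.~2]{3}) is that $F_{k}$ is powerful once $k\ge 2+\lfloor\log_{2}s\rfloor$, and a powerful $p$-group has rank equal to $\dim(F_{k}/F_{k+1})\le s$. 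Hence the maximal section of dimension $s$ already occurs at some level $m\le k$, and the quotient $G/F_{m+1}$ has exponent at most $p^{m+1}\cdot|G/F|\le p^{k+1+3r}$ while $|G/F_{m+1}|\ge p^{s}$. PIG$(\alpha)$ then gives $s\le(\log_{2}s+3r+3)\alpha$, bounding $s$; since $\mathrm{rk}(F)\le ks+s$, this finishes. Your proposal contains no analogue of this step, and the obstacles you yourself raise show that without it the argument does not close.
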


\noindent($N^{\prime}$ denotes the derived group of $N$.)

These `finite' results can be formulated (in slightly weaker form) as
qualitative results about profinite groups. Routine arguments (left to the
reader) establish the following corollaries.

\begin{corollary}
\label{C1}Let $G$ be a prosoluble group with PIG. Then $G$ is virtually pronilpotent-by-metabelian.
\end{corollary}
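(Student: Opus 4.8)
The plan is to obtain Corollary \ref{C1} from Theorem \ref{T1} by the standard compactness argument that converts a uniform quantitative statement about finite quotients into a qualitative statement about the profinite group. Throughout, ``$G$ has PIG'' is read as: there is a single $\alpha$ such that every finite continuous quotient of $G$ has PIG$(\alpha)$. Since $G$ is prosoluble, each such quotient is a finite soluble group with PIG$(\alpha)$, so Theorem \ref{T1} supplies, in every finite quotient of $G$, a nilpotent-by-metabelian normal subgroup of index at most $f(\alpha)$.

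It is convenient first to record that a group $H$ has a nilpotent normal subgroup with metabelian quotient precisely when its second derived subgroup $H''$ is nilpotent (and, in the profinite setting, precisely when $\overline{H''}$ is pronilpotent): if $H/M$ is metabelian then $H''\leq M$, while $H''$ is characteristic in $H$, so one may always take $M=H''$. In particular ``nilpotent-by-metabelian'' is inherited by quotients and by closed subgroups, and for a profinite group it can be tested on finite quotients. It therefore suffices to produce an open normal subgroup $K$ of $G$ of index at most $f(\alpha)$ such that the image of $K''$ in every finite quotient of $K$ is nilpotent.

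To do this I would make a coherent choice of the subgroups furnished by Theorem \ref{T1}. Order the set of open normal subgroups of $G$ by reverse inclusion, and for each such $U$ let $Y_U$ be the set of normal subgroups of $G/U$ of index at most $f(\alpha)$ that are nilpotent-by-metabelian; this set is finite and, by Theorem \ref{T1}, non-empty. For $V\subseteq U$ the natural projection $G/V\to G/U$ carries $Y_V$ into $Y_U$, since index, normality and the nilpotent-by-metabelian property all pass to homomorphic images. An inverse limit of non-empty finite sets over a directed set is non-empty, so one obtains a compatible family $(K_U)$ of open normal subgroups of $G$, each of index at most $f(\alpha)$, with $K_U/U$ nilpotent-by-metabelian and $K_VU=K_U$ whenever $V\subseteq U$. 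Put $K=\bigcap_U K_U$. Standard facts about profinite groups then give: $K$ is a closed, hence (being of finite index) open, normal subgroup of $G$; the quotient $G/K$ is the inverse limit of the groups $G/K_U$ and so has order at most $f(\alpha)$; and the compatibility relations force $KU=K_U$ for every $U$. Consequently every finite quotient of $K$ is a quotient of some $K/(K\cap U)\cong K_U/U$, which is nilpotent-by-metabelian, so the image of $K''$ in every finite quotient of $K$ is nilpotent; hence $\overline{K''}$ is pronilpotent and $K/\overline{K''}$ is metabelian. This exhibits $G$ as virtually pronilpotent-by-metabelian.

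I do not expect a real obstacle here --- this is the ``routine'' part --- but the point that needs care is that the bounded-index subgroup produced by Theorem \ref{T1} is not canonical: one cannot simply intersect an arbitrary choice of the $K_U$, since when $G$ is not topologically finitely generated that intersection may fail to have finite index. The coherent selection coming from the inverse-limit argument (equivalently, from K\"onig's lemma) is exactly what repairs this, and once the family is compatible the relation $KU=K_U$ makes the verification that $K$ is pronilpotent-by-metabelian immediate.
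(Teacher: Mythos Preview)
Your argument is correct and is precisely the standard inverse-limit (compactness) deduction from Theorem~\ref{T1} that the paper has in mind; the authors give no proof, merely noting that ``routine arguments (left to the reader)'' establish Corollary~\ref{C1}. Your care in selecting a \emph{compatible} family $(K_U)$ rather than an arbitrary one is exactly the point that makes the routine argument work when $G$ is not topologically finitely generated.
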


\noindent(A \emph{prosoluble} (resp. \emph{pronilpotent}) group is an inverse
limit of finite soluble (resp. nilpotent) groups. A profinite group $G$ has
PIG if, for some $\alpha$, (\ref{PIGalpha}) holds for all \emph{continuous}
finite quotients $\overline{G}$ of $G$.)

\begin{corollary}
\label{C2}Let $G$ be a prosoluble group with PIG and $N$ a closed pronilpotent
normal subgroup of $G$. If $G/\overline{N^{\prime}}$ has finite rank then $G$
has finite rank.
\end{corollary}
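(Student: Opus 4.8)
The plan is to derive Corollary \ref{C2} from its finite counterpart, Proposition \ref{rk(G/N')}, by the routine device of pushing everything down to finite continuous quotients. Since a profinite group has finite rank as soon as there is a single bound on $\mathrm{rk}(\overline G)$ as $\overline G$ ranges over all finite continuous quotients, it is enough to exhibit such a bound; it should come out as $g(r,\alpha)$, where $\alpha$ witnesses that $G$ has PIG and $r=\mathrm{rk}(G/\overline{N'})$, which is finite by hypothesis.

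So first I would fix such an $\alpha$ and $r$, take an arbitrary open normal subgroup $K$ of $G$, and set $\overline G=G/K$ and $M=NK/K$. Then $\overline G$ is finite and soluble, being a continuous quotient of the prosoluble group $G$; it has PIG$(\alpha)$; and $M$ is a nilpotent normal subgroup of $\overline G$, because $M\cong N/(N\cap K)$ is a finite continuous quotient of the pronilpotent group $N$ and $N$ is normal in $G$. To be in a position to apply Proposition \ref{rk(G/N')} to the pair $(\overline G,M)$, I need $\mathrm{rk}(\overline G/M')$ to be at most $r$.

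This is the one step that is not completely automatic, and it is exactly where the hypothesis on $G/\overline{N'}$ is used. Since $M=NK/K$, its derived group is $M'=N'K/K$, so $\overline G/M'\cong G/N'K$. Now $N'K$ is open, hence closed, in $G$, and it contains $N'$, so it contains the closure $\overline{N'}$; thus $G/N'K$ is a continuous quotient of $G/\overline{N'}$, whence $\mathrm{rk}(\overline G/M')=\mathrm{rk}(G/N'K)\le\mathrm{rk}(G/\overline{N'})=r$. Proposition \ref{rk(G/N')} now yields $\mathrm{rk}(\overline G)\le g(r,\alpha)$, and since $K$ was an arbitrary open normal subgroup this bound holds for every finite continuous quotient of $G$, so $\mathrm{rk}(G)\le g(r,\alpha)$.

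There is no serious obstacle here — the mathematical content lies entirely in Proposition \ref{rk(G/N')} — and the only point that needs a moment's thought is the bookkeeping with topological closures: the observation that $\overline{N'}\le N'K$ for every open normal $K$, which is what allows the finite-rank hypothesis on $G/\overline{N'}$ to be read off inside each finite quotient. One should also note in passing that a continuous finite quotient of a prosoluble (resp. pronilpotent) group is soluble (resp. nilpotent), and that $\overline G$ inherits PIG$(\alpha)$ from $G$ directly from the definition. This is presumably precisely the ``routine argument'' the authors leave to the reader.
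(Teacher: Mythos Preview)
Your argument is correct and is exactly the routine passage-to-finite-quotients argument the paper has in mind; indeed the paper gives no proof of its own for Corollary \ref{C2}, explicitly leaving this deduction from Proposition \ref{rk(G/N')} to the reader.
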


\noindent Here, $\overline{N^{\prime}}$ denotes the closure of the derived
group $N^{\prime}$ of $N$. The rank of a profinite group $G$ is defined by
(\ref{rank}) where $H$ ranges over \emph{closed} -- or equivalently open --
subgroups and $\mathrm{d}(H)$ refers to \emph{topological} generators; cf. [3] \S 3.2.

Theorem \ref{thm1} can also be stated in `profinite language': it says that
\emph{the prosoluble completion of a fg group has PIG if and only if it has
finite rank}. But it cannot be established by `local' methods alone: one needs
the `global' hypothesis that the prosoluble group in question is the
completion of some finitely generated abstract group. Indeed \ Proposition 3.5
of [1] (see [12], Theorem 12.8.4) exhibits a metabelian finitely generated
profinite group $A$ with PIG that has infinite rank. This shows that Corollary
\ref{C1} is in a sense best possible.

The group $A$ is `boundedly generated' in the profinite sense (i.e. it is a
product of finitely many procyclic subgroups), and has relatively fast
subgroup growth. In Section \ref{exsec1} we show, by a similar construction,
that there exist $2$-generator metabelian profinite groups $B$ with
PIG$(3)$\ that have arbitrarily slow non-polynomial subgroup growth but
infinite rank (a prosoluble group of infinite rank cannot have PSG, by [12]
Theorem 10.2. It is also worth remarking that a fg \emph{abstract} metabelian
group, if its subgroup growth is not polynomial, must have subgroup growth of
type at least $2^{n^{1/d}}$ for some positive integer $d$ -- see [12],
\S \ 9.1). These groups $B$ have PIG also when viewed as (infinitely
generated) abstract groups, showing that the hypothesis of finite generation
is necessary in the main results stated above (it is easy to see, for example,
that $B$ is not linear, cf. [28], Theorem 2.2).

What the `finite' results do is to reduce the `global' problem to manageable
proportions, namely to establishing the following special case:

\begin{proposition}
\label{special}Let $E$ be a finitely generated soluble group of derived length
at most $3$. If $E$ has PIG then $E$ has finite upper rank.
\end{proposition}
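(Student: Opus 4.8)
The plan is to induct on the derived length $n\le 3$ of $E$, proving in each case that every finite soluble (hence every finite) quotient of $E$ has rank bounded by a constant. The case $n\le 1$ is immediate, since a finitely generated abelian group has finite rank. For the inductive step set $A=E^{(n-1)}$, an abelian normal subgroup of $E$. Then $E/A$ is finitely generated soluble of derived length $<n$ and inherits PIG, so by induction its finite quotients have bounded rank; the same is then true of the finite quotients of $Q:=E/C_E(A)$, which is a quotient of $E/A$ and acts faithfully on $A$. Given a finite quotient $\overline E=E/N$ of $E$, its rank is bounded in terms of the ranks of $\overline A:=AN/N$ and of $\overline E/\overline A$ (a finite quotient of $E/A$), so it is enough to bound $\mathrm{rk}(\overline A)=\mathrm{rk}(A/(A\cap N))$ independently of $N$; equivalently, it is enough to show that $A$ cannot have $E$-quotients $A/L$ ($L\trianglelefteq E$, $A/L$ finite) of unbounded rank.

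To this end I would regard $A$ as a module over $\Lambda=\mathbb{Z}[Q]$. Using finite generation of $E$ — together with Hall's theorem that in a finitely generated metabelian group the derived subgroup is finitely generated over the integral group ring of the abelianisation — one reduces to the case where $A$ is a finitely generated $\Lambda$-module (when $n=2$ this is immediate and $\Lambda$ is Noetherian, $Q$ being finitely generated abelian). Now suppose, for contradiction, that $A$ has finite $E$-quotients of unbounded rank. The aim is to show that $A$ then contains, as a section over a suitable cyclic quotient $C_m$ of $Q$, the regular module $\mathbb{F}_p[C_m]$ for a fixed prime $p$ and infinitely many $m$ (or else $A$ has arbitrarily large $E$-quotients on which $E$ acts trivially) — and, crucially, that such a section is visible in an honest finite quotient $\overline E$ of $E$. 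In the first case $\overline E$ maps onto a wreath product $C_p\wr C_m$, of order $p^m m$ but exponent $pm$; in the second case $\overline E$ maps onto an elementary abelian $p$-group of rank $m$, of order $p^m$ and exponent $p$. Either way $E$ has finite quotients whose orders exceed any fixed power of their exponents once $m$ is large, contradicting PIG. This completes the induction, and hence the proof.

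The step carrying the real weight is the one just sketched: extracting, from the failure of the rank bound on $A$, a family of ``spreading'' sections of the module $A$ that are realised inside genuine finite quotients of $E$. It has a module-theoretic component — a finitely generated non-minimax module over the group ring of a group of finite upper rank must, by a localisation/specialisation argument (standard commutative algebra when the acting group is abelian), admit arbitrarily long regular sections over infinite-cyclic subquotients; this is the analogue in the present setting of the classical fact that a finitely generated soluble group of infinite rank has a section isomorphic to $C_p\wr C_\infty$. And it has a realisation component — converting such a section into an actual finite quotient of $E$ of controlled exponent — which is exactly where finite generation of $E$ is indispensable: as recorded in the introduction, there are $2$-generator metabelian \emph{profinite} groups, and infinitely generated abstract metabelian groups, with precisely this module structure that have PIG yet are of infinite rank, so no argument using only the $\Lambda$-module structure of $A$ can succeed. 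A subsidiary difficulty specific to $n=3$ is that $Q=E/C_E(E'')$ is merely metabelian, so $\Lambda$ need not be Noetherian and $E''$ need not be finitely generated over $\Lambda$; one deals with this by passing to finitely generated submodules and by specialising to an abelian acting quotient before invoking the module-theoretic input, using the finite generation of $E$ and the fact, already established at the previous inductive stage, that $E/E''$ has finite upper rank.
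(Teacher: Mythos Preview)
Your outline has the right overall shape --- reduce to bounding the ranks of finite $E$-module quotients of $A=E''$, treat $A$ as a module for a group ring of an abelian minimax quotient, and use PIG to rule out ``spreading'' --- and for derived length $\le 2$ the classical argument you sketch (essentially that of [20]) does go through. You also correctly flag the two genuine obstacles at $n=3$: that $A$ is only quasi-finitely-generated over $k[N/A]$, and that the realisation step is where finite generation of $E$ must enter.

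The gap is in the specific mechanism you propose for $n=3$. The finite simple $kG$-module quotients of $A$ (with $G=N/A$) have the form $kG/L$ for maximal ideals $L$; writing $m=|G/L^\dagger|$ and $f=\dim_{\mathbb{F}_p}(kG/L)$, one has $f$ equal to the multiplicative order of $p$ modulo $m$, and $kG/L$ is a \emph{simple}, not regular, $k[G/L^\dagger]$-module. The resulting finite quotient of $E$ is therefore not a wreath product $C_p\wr C_m$, and the naive estimate $p^m m$ versus exponent $pm$ is unavailable: you only get a section of order $p^f$ with a cyclic group of order $m$ acting, plus a further abelian-of-finite-rank piece on top. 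Extracting a PIG-violation from this requires controlling the ratio $f/\log m$, and that is exactly the content of the paper's Section~\ref{singsec}: a lifting lemma (Lemma~\ref{lifting}) to realise $A/AL$ in a finite quotient of $E$, a careful exponent bound inside that quotient (Lemma~\ref{lang}, Proposition~\ref{m vs f}) showing PIG forces $\log_p m\gtrsim f$ unless the top action is almost trivial, a counting argument on ``sister'' ideals $M$ with $M^\dagger=L^\dagger$ (Proposition~\ref{Land M}, Corollary~\ref{manysisters}) giving $\varphi(m)\lesssim f$, and a structure theorem producing an infinite supply of good $L$ (Proposition~\ref{prime-redn}, using the quasi-fg Proposition~\ref{quasifg} and the associated-prime machinery of [21],[24], with a separate Brookes-pair branch). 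The paper also first disposes of a ``non-singular'' case (Theorem~\ref{H}: when $A$ has no $\pi$-torsion for $\pi=\mathrm{spec}(E/A)$), leaving only the elementary-abelian $p$-group case with $p\in\pi$ for this analysis. Your phrase ``specialising to an abelian acting quotient'' is morally the right move, but the execution is precisely the several pages of Sections~\ref{secqfg}--\ref{singsec}; the regular-module/wreath-product heuristic is too coarse to capture it.
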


\noindent The proof of this uses the theory of infinite soluble groups and
their group rings, initiated by P. Hall and J. E. Roseblade between 1950 and
1980, and further developed by Brookes and the second author. It has two
distinct components. The first is a general result of some independent
interest, about groups all of whose virtually residually nilpotent quotients
have finite rank (see Theorem \ref{H} below); this implies Proposition
\ref{special} in the `non-singular' case where certain primes do not appear in
the torsion of $E^{\prime\prime}$. The remaining `singular' case does not
succumb to such a crude attack, and has to be tackled directly; this is the
second component of the proof, and occupies Sections \ref{red1sec} and
\ref{singsec}.

Theorem \ref{H} also has applications to subgroup growth and other `upper
finiteness conditions', briefly discussed in Section \ref{Hsection}.

\bigskip

\textbf{Deduction of Theorem \ref{thm1}.}

\medskip

Let $G$ be a $d$-generator group satisfying the hypothesis of Theorem
\ref{thm1}. Put%
\begin{align*}
G_{0}  &  =\bigcap\left\{  N\vartriangleleft G\mid G/N\text{ is soluble and
}\left|  G:N\right|  \leq f(\alpha)\right\}  ,\\
E  &  =G_{0}/G_{0}^{\prime\prime\prime}.
\end{align*}
Then $\left|  G/G_{0}\right|  =m$, say, is finite, and it is easy to see that
every finite soluble quotient of $G_{0}$ has PIG$(\beta)$ where $\beta
=\alpha(1+\log_{2}m).$ Thus $E$ is finitely generated and has PIG$(\beta)$, so
Proposition \ref{special} shows that $E$ has finite upper rank, $r$ say.

Now let $Q=G/K$ be a finite soluble quotient of $G.$ Put $K_{0}=K\cap G_{0}$
and $N=G_{0}^{\prime\prime}K_{0}$. It follows from Theorem \ref{T1} that
$N/K_{0}$ is nilpotent. Now $G_{0}/N^{\prime}K_{0}=G_{0}/G_{0}^{\prime
\prime\prime}K_{0}$ is a finite quotient of $E$, hence has rank at most $r$.
Using Proposition \ref{rk(G/N')} we deduce that%
\begin{align*}
\mathrm{rk}(Q)=\mathrm{rk}(G/K)  &  \leq\mathrm{rk}(G/G_{0})+\mathrm{rk}%
(G_{0}/K_{0})\\
&  \leq\mathrm{rk}(G/G_{0})+g(r,\beta).
\end{align*}
As this holds for every $Q\in\mathcal{S}$ we see that Theorem \ref{thm1}
follows from Theorem \ref{T1}, Proposition \ref{rk(G/N')} and Proposition
\ref{special}. \medskip

We must conclude this introduction with an apology. The proof of Proposition
\ref{special} given in Sections \ref{Hsection} -- \ref{singsec} relies heavily
on the theory developed in [21], [22] and [24]. Rather than increase the bulk
of this paper by repeating the many definitions and results required, we will
simply quote as necessary; thus the reader who wants to work through the proof
in detail will need to have at least [21] and [24] to hand.

\section{Finite soluble groups\label{finitesec}}

In this section all groups are assumed to be \emph{finite} and \emph{soluble}.
We begin by recalling some well-known results about permutation groups and
linear groups.

\begin{proposition}
\label{PW}\emph{([16], [29]; see [15], \S 3) (i) }If $G$ is a primitive
permutation group of degree $n$ then%
\[
\left\vert G\right\vert <n^{13/4}.
\]
\emph{(ii) }If $G$ is a completely reducible subgroup of $\mathrm{GL}%
_{n}(\mathbb{F}_{p})$ then%
\[
\left\vert G\right\vert <p^{9n/4}.
\]

\end{proposition}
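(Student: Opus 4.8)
The plan is to derive (i) from (ii) and then to prove (ii) by the classical reduction of a soluble completely reducible linear group to the primitive case, à la Pálfy and Wolf. First I would dispose of (i): a finite soluble primitive permutation group $G$ of degree $n$ is necessarily of affine type, so its socle $V$ is elementary abelian of order $n=p^{d}$, regular and self-centralizing, and a point stabiliser $H$ is a complement to $V$ acting irreducibly --- hence completely reducibly --- on $V\cong\mathbb{F}_{p}^{d}$. Then $\left\vert G\right\vert =n\left\vert H\right\vert$, and (ii) applied to $H\leq\mathrm{GL}_{d}(\mathbb{F}_{p})$ gives $\left\vert H\right\vert <p^{9d/4}=n^{9/4}$, whence $\left\vert G\right\vert <n^{13/4}$. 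So everything comes down to (ii).

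For (ii), I would reduce first to the irreducible case and then to the primitive case. Given a completely reducible soluble $G\leq\mathrm{GL}_{n}(\mathbb{F}_{p})$, decompose the natural module into its homogeneous $\mathbb{F}_{p}G$-components, which $G$ permutes; a short counting argument --- using that the stabiliser of a component acts on that component, and that the exponent $9/4$ is large enough to absorb both the permutation action on the components and the passage from a block-diagonal sum to its summands --- reduces the bound to the case where $G$ acts irreducibly. For irreducible $G$, Clifford theory gives the dichotomy: either $G$ is primitive on $V$, or $V=V_{1}\oplus\cdots\oplus V_{k}$ with $k>1$ is a system of imprimitivity, $G$ embeds into $H\wr\mathrm{Sym}(k)$ with $H\leq\mathrm{GL}_{n/k}(\mathbb{F}_{p})$ the induced action on a block and $k\leq n$, and one estimates $\left\vert H\right\vert ^{k}\cdot k!$ against $p^{9n/4}$ by induction on $n$, once again using that $9/4$ leaves enough room to swallow $k!$. (As usual one must carry through the induction a marginally more precise inequality, with an explicit multiplicative constant, so that the wreath-product step actually closes.)

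The crux is the primitive case. Let $G\leq\mathrm{GL}_{n}(\mathbb{F}_{p})$ be primitive soluble with $n$ chosen minimal. By the classical structure theorem for primitive soluble linear groups (Suprunenko; see also Huppert), after extending scalars to the field $\mathbb{F}_{p^{e}}$ over which a suitable normal subgroup of $G$ becomes absolutely irreducible, $G$ has a normal subgroup $E$ which is a central product of a cyclic group with extraspecial $r$-groups for various primes $r$, such that $C_{G}(E)$ consists of scalars, $E/Z(E)$ is a symplectic $\mathbb{F}_{r}$-space of dimension $2m_{r}$ with $\prod r^{m_{r}}\mid n$, and $G/EZ(G)$ embeds into $\prod\mathrm{Sp}(2m_{r},r)$ extended by a field-automorphism part of size dividing $e$. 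One then bounds $\left\vert G\right\vert \le (p^{e}-1)\cdot e\cdot\left\vert E\right\vert \cdot\prod\left\vert \mathrm{Sp}(2m_{r},r)\right\vert$, uses $\left\vert E\right\vert =\prod r^{1+2m_{r}}$ and $\left\vert \mathrm{Sp}(2m,r)\right\vert \le r^{2m^{2}+m}$, and checks this against $p^{9n/4}=p^{9er^{\ast}/4}$ (where $r^{\ast}=\prod r^{m_{r}}$) by a direct arithmetic comparison. The tightest cases --- small $r$ and $m$, e.g. $r=2$, $m=1$, $E\cong D_{8}$ or $Q_{8}$, $\mathrm{Sp}(2,2)\cong S_{3}$ --- are precisely what forces the exponent $9/4$ (hence $13/4$ in (i)).

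I expect the main obstacle to be exactly this last verification: assembling the extraspecial-normal-subgroup structure theorem, handling the extension of scalars correctly so the induction on $n$ descends, and confirming that every small case --- not just the generic one --- obeys the inequality with the stated (strict) bound. The reductions in the first two paragraphs are routine once the exponent is known to work; pinning down that the exponent $9/4$ survives the symplectic growth and the wreath/block constructions is where the real content lies.
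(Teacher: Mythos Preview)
The paper does not prove this proposition at all: it is quoted as a known result, with references to P\'alfy~[16], Wolf~[29], and the exposition in Manz--Wolf~[15], \S3. There is therefore no ``paper's own proof'' to compare against; the authors simply invoke the bound as background.

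Your outline is essentially the classical Pálfy--Wolf argument and is correct in shape: deducing (i) from (ii) via the affine structure of a soluble primitive permutation group, reducing (ii) first to the irreducible and then to the primitive linear case, and handling the primitive case via the Suprunenko structure theorem (cyclic~$\times$~extraspecial normal subgroup, symplectic quotient, field-automorphism top). One small imprecision: in the reduction from completely reducible to irreducible you speak of ``homogeneous components which $G$ permutes,'' but homogeneous components are $G$-invariant; the correct step is simply that $G$ embeds subdirectly in $\prod_i G_i$ where $G_i$ is the image of $G$ on the $i$th irreducible summand $V_i$, giving $\lvert G\rvert\le\prod_i\lvert G_i\rvert<\prod_i p^{9n_i/4}=p^{9n/4}$. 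The imprimitivity/wreath-product step and the small-case arithmetic you flag are indeed where the exponent $9/4$ is pinned down, and this is exactly what the cited papers carry out.
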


\begin{corollary}
\label{trans}If $T$ is a transitive permutation group of degree $t$ then%
\[
\exp(T)<t^{4}.
\]

\end{corollary}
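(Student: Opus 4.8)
The plan is to reduce to the primitive case via the classical imprimitive wreath-product decomposition of a transitive group, and then invoke Proposition \ref{PW}(i). First I would recall the standard fact: if $T$ is transitive on a set $\Omega$ of size $t$ and $B$ is a block with setwise stabiliser $T_B$, then $T$ embeds in $\overline{T_B}\wr\overline{T}$, where $\overline{T_B}$ is the transitive group of degree $|B|$ induced by $T_B$ on $B$ and $\overline{T}$ is the transitive group of degree $t/|B|$ induced by $T$ on the set of blocks. Iterating this along a maximal chain of $T$-invariant partitions of $\Omega$, one obtains an embedding $T\hookrightarrow P_1\wr P_2\wr\cdots\wr P_k$ in which each $P_i$ is a primitive permutation group of some degree $t_i\ge 2$ and $t_1t_2\cdots t_k=t$. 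Note that each $P_i$, being a section of the soluble group $T$, is soluble, so Proposition \ref{PW}(i) will apply to it.

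The next ingredient is the elementary observation that $\exp(A\wr B)$ divides $\exp(A)\exp(B)$ whenever $A\wr B$ denotes the permutational wreath product with $B$ acting on $n$ points. Indeed, writing an element of $W=A\wr B=A^n\rtimes B$ as $w=(f;\sigma)$ with $\sigma$ of order $m$, the power $w^m$ lies in $A^n$ and each of its coordinates is a power of a product of coordinates of $f$ taken along a cycle of $\sigma$; hence each coordinate has order dividing $\exp(A)$, so $w^{m\exp(A)}=1$, and since $m\mid\exp(B)$ we get $w^{\exp(A)\exp(B)}=1$. Applying this repeatedly to the iterated wreath product gives $\exp(P_1\wr\cdots\wr P_k)\mid\prod_{i=1}^k\exp(P_i)$, and therefore $\exp(T)\le\prod_{i=1}^k\exp(P_i)$.

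Finally, Proposition \ref{PW}(i) gives $\exp(P_i)\le|P_i|<t_i^{13/4}$ for each $i$. Multiplying these bounds and using $\prod_i t_i=t$ yields
\[
\exp(T)<\prod_{i=1}^k t_i^{13/4}=t^{13/4}\le t^{4},
\]
the last inequality holding because $t\ge 2$ and $13/4<4$ (the case $t=1$ being vacuous).

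I do not anticipate a serious obstacle: the imprimitive embedding theorem and the wreath-product exponent estimate are both routine, and the genuine content is supplied by Proposition \ref{PW}(i). The only points needing a word of care are that the primitive factors inherit solubility from $T$ (automatic, as above), so that Proposition \ref{PW}(i) is legitimately invoked, and the slack between the exponent $13/4$ coming from Proposition \ref{PW}(i) and the target exponent $4$, which is what makes the product telescope cleanly.
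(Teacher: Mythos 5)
Your proof is correct and is essentially the paper's argument: the paper uses the same imprimitive wreath-product decomposition $T\leq P\wr T_{2}$ with $P$ primitive and inducts on degree, implicitly using the same $\exp(A\wr B)\mid\exp(A)\exp(B)$ fact, whereas you unwind the induction into the full iterated decomposition $T\hookrightarrow P_{1}\wr\cdots\wr P_{k}$ and make that exponent estimate explicit. The substance — Proposition \ref{PW}(i) applied to the primitive factors, with the degrees multiplying up to $t$ — is identical, and your version even yields the slightly sharper $\exp(T)<t^{13/4}$.
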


\begin{proof}
$T$ is contained in a permutational wreath product $P\wr T_{2}$ where $P$ is
primitive of degree $t_{1}$, where $1<t_{1}\mid t$, and $T_{2}$ is transitive
of degree $t/t_{1}$. Inductively we may suppose that $\exp(T_{2}%
)<(t/t_{1})^{4}$, while Proposition \ref{PW}(i) shows that $\exp(P)<t_{1}^{4}%
$. The result follows.
\end{proof}

\bigskip

The next proposition, mainly due to Suprunenko, is taken from [25], p. 245
(here `primitive' is meant in the sense of linear groups):

\begin{proposition}
\label{sup}Let $S$ be maximal among the soluble primitive subgroups of
$\mathrm{GL}_{m}(\mathbb{F}_{p})$. Then $S$ has a unique maximal abelian
normal subgroup $A$. Let $C=\mathrm{C}_{S}(A)$ and $B=\mathrm{Fit}(C)$.
Then\newline\emph{(a) \ }$A$ is cyclic of order $p^{a}-1$, where $a\mid
m;$\newline\emph{(b) \ }$S/C$ is cyclic of order dividing $a$;\newline%
\emph{(c)} \ $\left|  B:A\right|  =b^{2}$ where $b=m/a$;\newline\emph{(d)}
\ $C/B$ is isomorphic to a completely reducible subgroup of the direct product
of symplectic groups%
\[
\prod_{i=1}^{k}\mathrm{Sp}_{2e_{i}}(\mathbb{F}_{q_{i}})
\]
where $b=\prod_{i=1}^{k}q_{i}^{e_{i}}$ and $q_{1},\ldots,q_{k}$ are the
distinct prime factors of $b$.
\end{proposition}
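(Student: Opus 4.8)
The plan is to run the classical Clifford-theoretic dissection of an irreducible primitive soluble linear group --- in essence Suprunenko's original argument --- using the maximality of $S$ to make each stage sharp. Write $V=\mathbb{F}_{p}^{m}$, regarded as an irreducible primitive $\mathbb{F}_{p}S$-module; the fact used throughout is that for \emph{any} normal subgroup $N\vartriangleleft S$, Clifford's theorem writes $V|_{N}$ as a direct sum of $N$-homogeneous components transitively permuted by $S$, so that primitivity forces there to be exactly one: $V$ is $N$-homogeneous. Apply this with $A$ a maximal abelian normal subgroup of $S$, and let $R$ be the subalgebra of $\mathrm{End}_{\mathbb{F}_{p}}(V)$ generated by $A$; since $V$ is a faithful, semisimple, $A$-homogeneous module over the commutative ring $R$, $R$ must be a field, necessarily $K\cong\mathbb{F}_{p^{a}}$ with $a=\dim_{\mathbb{F}_{p}}K$. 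Then the simple $\mathbb{F}_{p}A$-module $W$ has $\dim_{\mathbb{F}_{p}}W=a$, so $a\mid m$ and $\dim_{K}V=m/a$, and $A\leq K^{\times}$ is cyclic. Now $S$ normalises $A$, hence $R=K$, so $K^{\times}\vartriangleleft SK^{\times}$, and $SK^{\times}$ is again soluble and (by a short argument, using the irreducibility of $S$) primitive; by maximality $K^{\times}\leq S$, whence $A=S\cap K^{\times}=K^{\times}$ has order $p^{a}-1$: this is (a). Conjugation of $S$ on the field $K\subseteq\mathrm{End}_{\mathbb{F}_{p}}(V)$ gives a homomorphism $S\to\mathrm{Gal}(K/\mathbb{F}_{p})$, a cyclic group of order $a$, with kernel exactly $\mathrm{C}_{S}(A)=C$ (centralising $A$ is the same as centralising $K=R$); hence $S/C$ is cyclic of order dividing $a$, which is (b). Finally $C$ is $K$-linear, so $C\leq\mathrm{GL}_{m/a}(K)$ with $A=K^{\times}=Z(\mathrm{GL}_{m/a}(K))$, and since $Z(C)$ is abelian and normal in $S$, maximality of $A$ forces $Z(C)=A$. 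From here on everything is over $K$.

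For (c) I would study $B=\mathrm{Fit}(C)$: it is nilpotent, characteristic in $C$ hence normal in $S$, contains $A$, and is a $p'$-group, since a normal $p$-subgroup of $C$ would be unipotent and hence act trivially. Each Sylow subgroup $B_{q}$ is normal in $S$, so $V$ is $B_{q}$-homogeneous, whence every characteristic abelian subgroup of $B_{q}$ --- being abelian and normal in $S$, so acting by scalars --- is cyclic; by P.\ Hall's theorem $B_{q}$ is therefore of symplectic type, a central product $E_{q}\ast Z(B_{q})$ with $E_{q}$ extraspecial of order $q^{1+2e_{q}}$ (for $q=2$, possibly modified by a dihedral or quaternion factor). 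Arguing as for $Z(C)$ one gets $Z(B_{q})=A_{q}$, the Sylow $q$-subgroup of $A$, so $B_{q}/A_{q}\cong\mathbb{F}_{q}^{2e_{q}}$, $B=A\ast\prod_{q}E_{q}$, and $|B:A|=\prod_{q}q^{2e_{q}}=b^{2}$ where $b:=\prod_{q}q^{e_{q}}$. The faithful $K$-irreducible module of $\prod_{q}E_{q}$ is absolutely irreducible over $K$ of $K$-dimension $\prod_{q}q^{e_{q}}=b$ --- the needed $q$-th roots of unity lie in $K$ because $q\mid|A|=p^{a}-1$, and Schur indices over finite fields are trivial --- and one must show that $B$-homogeneity of $V$, combined with the maximality of $B=\mathrm{Fit}(C)$, forces $V|_{B}$ to be exactly this module; then $\dim_{K}V=b$, and since also $\dim_{K}V=m/a$ we obtain $b=m/a$, which completes (c).

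For (d): as $B$ now acts $K$-irreducibly with $\mathrm{End}_{K[B]}(V)=K$, Schur's lemma gives $\mathrm{C}_{\mathrm{GL}_{K}(V)}(B)=K^{\times}=A$, so $\mathrm{C}_{C}(B)=A$ and $C/A$ embeds in $\mathrm{Aut}(B)$. The commutator map $B\times B\to A$ induces on each $B_{q}/A_{q}\cong\mathbb{F}_{q}^{2e_{q}}$ a non-degenerate alternating form, valued in the order-$q$ subgroup of $A_{q}$ identified with $\mathbb{F}_{q}^{+}$, and this form is respected by every automorphism of $B$ fixing $A$ pointwise --- which $C$ does, being the centraliser of $A$. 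Dividing out the inner automorphisms $\mathrm{Inn}(B)=B/A$, we get an embedding of $C/B$ into $\prod_{q}\mathrm{Sp}_{2e_{q}}(\mathbb{F}_{q})$ (for $q=2$ the dihedral/quaternion possibility only shrinks the image, which remains inside $\mathrm{Sp}$). That $C/B$ is moreover \emph{completely reducible} on $\bigoplus_{q}B_{q}/A_{q}$ is a final Clifford argument: $V$ is a completely reducible $\mathbb{F}_{p}S$-module and $C\vartriangleleft S$, so $V|_{C}$ is completely reducible, and this semisimplicity transfers through the standard dictionary to the $C/B$-action on $B/A$. This establishes (d).

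The step I expect to be the real obstacle is the sharpness claim inside (c): showing $\dim_{K}V=b$, equivalently that $V$ is \emph{not} a proper multiple of the faithful $B$-irreducible (so that $|B:A|$ equals, rather than merely divides, $(m/a)^{2}$). This is precisely where the maximality of $S$ --- equivalently that $\mathrm{Fit}(C)$ is as large as possible --- together with the exact order $p^{a}-1$ of $A$ must be exploited, through a careful comparison of the $K[B]$-module structure of $V$ with its $\mathbb{F}_{p}[A]$-homogeneous structure. Handling the $q=2$ symplectic-type anomaly cleanly, and verifying the complete-reducibility clause of (d), are the other places that call for care rather than mere bookkeeping.
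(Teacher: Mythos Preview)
The paper does not actually prove this proposition: it is stated without proof and attributed to Suprunenko, with the precise formulation taken from Seress~[25], p.~245. So there is no ``paper's own proof'' to compare against; the result is imported as a black box and used only to derive Corollary~\ref{primlin}.

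Your sketch is the classical Suprunenko argument and is essentially sound. The Clifford-theoretic reduction to a field $K=\mathbb{F}_{p^{a}}$, the identification $A=K^{\times}$ via maximality, the embedding $S/C\hookrightarrow\mathrm{Gal}(K/\mathbb{F}_{p})$, the symplectic-type structure of $B$ over $A$, and the embedding of $C/B$ into a product of symplectic groups via the commutator form on $B/A$ are all standard and correctly outlined. You have also correctly located the one genuinely delicate point: showing that $V$ is $B$-irreducible over $K$ (i.e.\ that the multiplicity $n$ in $V|_{B}\cong W^{(n)}$ is~$1$), so that $\dim_{K}V=b$ exactly rather than $b\mid m/a$. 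This is where maximality of $S$ does real work: if $n>1$ then $\mathrm{C}_{\mathrm{GL}_{K}(V)}(B)\cong\mathrm{GL}_{n}(K)$ strictly contains $A$, and one can enlarge $S$ inside the normaliser of $B$ to a strictly larger soluble primitive group, contradicting maximality. Filling in that step (and tidying the $q=2$ case) would complete a self-contained proof, which is more than the paper itself supplies.
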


\begin{corollary}
\label{primlin}In the notation of the Proposition, we have%
\begin{align*}
\left|  C/A\right|  <b^{13/2},\\
\left|  S\right|  <p^{a}m^{7}.
\end{align*}

\end{corollary}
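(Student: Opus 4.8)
The plan is to estimate $|S|$ by climbing the subgroup chain $A\le B\le C\le S$ and bounding each successive index using Proposition \ref{sup}.

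I would first treat $|C/A|$, assuming $b>1$ (if $b=1$ then $|B:A|=b^{2}=1$ forces $B=A$, and $C/B$ embeds in an empty product of symplectic groups, so $C=A$ as well). By part (c) and $A\le B$ one has $|C/A|=|C:B|\cdot|B:A|=|C:B|\cdot b^{2}$, so it suffices to show $|C:B|<b^{9/2}$. Here I would use part (d): $C/B$ is isomorphic to a completely reducible subgroup of $\prod_{i=1}^{k}\mathrm{Sp}_{2e_{i}}(\mathbb{F}_{q_{i}})$, acting on $M=\bigoplus_{i=1}^{k}\mathbb{F}_{q_{i}}^{2e_{i}}$ with the $i$-th factor acting trivially on the other summands. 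Each summand $\mathbb{F}_{q_{i}}^{2e_{i}}$ is thus invariant under the whole product, hence is a direct summand of the semisimple $C/B$-module $M$ and so is itself semisimple; as the action on it factors through the projection $C_{i}$ of $C/B$ to the $i$-th factor, $C_{i}$ is a soluble completely reducible subgroup of $\mathrm{GL}_{2e_{i}}(\mathbb{F}_{q_{i}})$, and Proposition \ref{PW}(ii) gives $|C_{i}|<q_{i}^{9\cdot 2e_{i}/4}=(q_{i}^{e_{i}})^{9/2}$. Since $C/B$ embeds in $\prod_{i}C_{i}$ and $b=\prod_{i}q_{i}^{e_{i}}$, this yields $|C:B|\le\prod_{i}|C_{i}|<b^{9/2}$, and therefore $|C/A|<b^{2}\cdot b^{9/2}=b^{13/2}$.

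Next I would bound $|S|$ via $|S|=|S:C|\cdot|C:A|\cdot|A|$: part (b) gives $|S:C|\le a$, part (a) gives $|A|=p^{a}-1<p^{a}$, and the previous step gives $|C:A|<b^{13/2}$ (with $C=A$ in the excluded case $b=1$). Hence $|S|<a\cdot b^{13/2}\cdot p^{a}$. Since $m=ab$ with $a,b\ge 1$ and $13/2<7$, we have $a\,b^{13/2}\le a^{7}b^{7}=m^{7}$, so $|S|<p^{a}m^{7}$.

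The whole argument is essentially bookkeeping once Propositions \ref{PW} and \ref{sup} are available; the one step I expect to need a little care is the passage, in the middle paragraph, from complete reducibility of $C/B$ in the product to complete reducibility of each projection $C_{i}$ in its $\mathrm{GL}_{2e_{i}}(\mathbb{F}_{q_{i}})$. This is exactly what lets one invoke the \emph{soluble} case of Proposition \ref{PW}(ii) separately over each prime field $\mathbb{F}_{q_{i}}$; a single application over one field is unavailable because the $q_{i}$ are distinct, and the uniform symplectic bound genuinely fails (e.g.\ $\mathrm{Sp}_{4}(\mathbb{F}_{2})$, of order $720$, violates $|G|<2^{9\cdot 4/4}$).
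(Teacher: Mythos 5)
Your proof follows essentially the same route as the paper's: bound $\left|C/B\right|<b^{9/2}$ by applying Proposition \ref{PW}(ii) to the projections onto the symplectic factors, then multiply through the chain $A\le B\le C\le S$ using the indices from Proposition \ref{sup}. Your extra care about the degenerate case $b=1$ (where $\left|C/A\right|=1=b^{13/2}$, so the first inequality as stated is not actually strict) and about why each projection $C_{i}$ is completely reducible are both legitimate points the paper glosses over, but they do not change the substance of the argument.
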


\begin{proof}
Applying Proposition \ref{PW}(ii) to the projections of $C/B$ in the product
(d) we see that%
\[
\left|  C/B\right|  <\prod_{i=1}^{k}q_{i}^{9e_{i}/2}=b^{9/2}.
\]
With (c) this gives the first claim, and the second claim follows since now%
\[
\left|  S\right|  <(p^{a}-1)b^{2}b^{9/2}a\leq p^{a}m^{7}.
\]

\end{proof}

\bigskip

Now let $G$ be a finite soluble primitive permutation group. Then $G=V\rtimes
H$ where $V$ is elementary abelian of order $p^{n}$, say, and the action of
$H$ on $V$ is faithful and represents $H$ as an irreducible subgroup of
$\mathrm{GL}_{n}(\mathbb{F}_{p})$. Decomposing $V$ into a maximal system of
imprimitivity for $H,$ we may embed $H$ in a permutational wreath product%
\begin{equation}
H\leq L\wr T \label{wp}%
\end{equation}
where $L$ is a primitive linear group of degree $m\mid n$ over $\mathbb{F}%
_{p}$ and $T$ is a transitive permutation group of degree $t=n/m$. Also $T$ is
an image of $H$. By Corollary \ref{trans} we have $\exp(T)<t^{4}$. Let $S$ be
a maximal primitive soluble subgroup of $\mathrm{GL}_{m}(\mathbb{F}_{p})$
containing $L$, and let $a$ be as in Proposition \ref{sup}. Then from
Corollary \ref{primlin} we have%
\[
\exp(L)\leq\left|  S\right|  <p^{a}m^{7}.
\]
It follows that%
\[
\exp(G)\leq p\exp(H)\leq p\exp(L)\exp(T)<p^{a+1}m^{7}t^{4}\leq p^{a+1}n^{7}.
\]

Assume now that $G$ has PIG($\alpha$). Then $\left|  T\right|  \leq
\exp(T)^{\alpha}<t^{4\alpha}$. Also%
\[
p^{n}\leq\left|  G\right|  \leq\exp(G)^{\alpha}<\left(  p^{a+1}n^{7}\right)
^{\alpha}.
\]
It follows that $n<(a+1)\alpha+7\alpha\log n/\log p,$ and hence that $n/a\leq
f_{1}=f_{1}(\alpha)$, a number depending only on $\alpha$.

In the notation of Proposition \ref{sup}, we now have $b=m/a\leq n/a\leq
f_{1}.$ As $S/C$ is cyclic, Corollary \ref{primlin} now implies that $S/A$ has
a cyclic subgroup of index at most $b^{13/2}<f_{1}^{7}$. Therefore $L$ has a
metacyclic subgroup $L_{1}$ of index $<f_{1}^{7}$. Now $H\leq L^{(t)}\cdot T$
where $L^{(t)}$ is the base group of the wreath product (\ref{wp}). Putting
$M=H\cap L_{1}^{(t)}$ we have $M^{\prime\prime}=1$ and
\[
\left|  H:M\right|  \leq\left|  L:L_{1}\right|  ^{t}\left|  T\right|
<f_{1}^{7t}t^{4\alpha}.
\]
Since $t=n/m\leq n/a\leq f_{1}$ it follows that $\left|  H:M\right|  \leq
f_{2}(\alpha)$, a number depending only on $\alpha$. Let $M_{0}$ be the
biggest normal subgroup of $H$ contained in $M$. Applying Corollary
\ref{trans} to the action of $H$ on the right cosets of $M$ we see that
$\exp(H/M_{0})<f_{2}(\alpha)^{4}$, and hence that $\left|  H/M_{0}\right|
<f_{2}(\alpha)^{4\alpha}=f_{3}(\alpha)$, say. Recalling that $H\cong G/V,$ we
may deduce

\begin{lemma}
\label{G0}Let $G$ be a finite soluble primitive permutation group with
PIG($\alpha$). Then $G$ has a normal subgroup $G_{1}$ such that $\left|
G/G_{1}\right|  <f_{3}(\alpha)$ and $G_{1}^{\prime\prime\prime}=1.$
\end{lemma}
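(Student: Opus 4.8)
The plan is to exploit the affine structure of a finite soluble primitive permutation group: such a $G$ splits as $V\rtimes H$, where $V$ is elementary abelian of order $p^{n}$ and $H$ acts faithfully and irreducibly on it, so that $H\leq\mathrm{GL}_{n}(\mathbb{F}_{p})$. The whole problem can then be transferred to the linear group $H$: it suffices to produce a normal subgroup $M_{0}\vartriangleleft H$ of index bounded in terms of $\alpha$ with $M_{0}^{\prime\prime}=1$. Indeed, if $G_{1}$ is the preimage of $M_{0}$ in $G$, then $G_{1}\vartriangleleft G$, we have $G_{1}^{\prime\prime}\leq V$ (since $(G_{1}/V)^{\prime\prime}=M_{0}^{\prime\prime}=1$), hence $G_{1}^{\prime\prime\prime}\leq V^{\prime}=1$ as $V$ is abelian, while $\left|G/G_{1}\right|=\left|H/M_{0}\right|$.

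First I would decompose $V$ as a maximal system of imprimitivity for $H$ and thereby embed $H$ in a permutational wreath product $L\wr T$, where $L$ is a \emph{primitive} linear group of degree $m\mid n$ over $\mathbb{F}_{p}$ and $T$ is a transitive permutation group of degree $t=n/m$ that is moreover a quotient of $H$ (and hence of $G$). Enlarging $L$ to a maximal soluble primitive subgroup $S$ of $\mathrm{GL}_{m}(\mathbb{F}_{p})$ and invoking Proposition \ref{sup} and Corollary \ref{primlin} gives $\exp(L)\leq\left|S\right|<p^{a}m^{7}$, where $a\mid m$ is the invariant of Proposition \ref{sup}, while Corollary \ref{trans} gives $\exp(T)<t^{4}$. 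Assembling the two split extensions yields $\exp(G)\leq p\exp(H)\leq p\exp(L)\exp(T)<p^{a+1}m^{7}t^{4}\leq p^{a+1}n^{7}$.

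Next I would feed in the PIG$(\alpha)$ hypothesis. From $p^{n}\leq\left|G\right|\leq\exp(G)^{\alpha}<(p^{a+1}n^{7})^{\alpha}$ one extracts an inequality of the shape $n<(a+1)\alpha+7\alpha\log_{p}n$, and comparing the linear and logarithmic growth in $n$ (using $a\geq1$) yields a bound $n/a\leq f_{1}(\alpha)$. This single estimate is the crux, because it controls simultaneously the ``symplectic size'' $b=m/a\leq n/a$ of Proposition \ref{sup} and the wreath degree $t=n/m\leq n/a$. With $b$ bounded, Corollary \ref{primlin} together with the fact that $S/\mathrm{C}_{S}(A)$ is cyclic shows that $S$, and hence $L$, contains a metacyclic subgroup $L_{1}$ of index $<f_{1}^{7}$; consequently $M:=H\cap L_{1}^{(t)}$ is metabelian and has index $\leq\left|L:L_{1}\right|^{t}\left|T\right|<f_{1}^{7t}t^{4\alpha}$ in $H$, which is bounded in terms of $\alpha$ since $t\leq f_{1}$ and $\left|T\right|\leq\exp(T)^{\alpha}$. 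Finally, taking $M_{0}$ to be the core of $M$ in $H$ and applying Corollary \ref{trans} to the action of $H$ on the cosets of $M$ bounds $\exp(H/M_{0})$; since $H/M_{0}$ is a quotient of $G$, PIG$(\alpha)$ then bounds $\left|H/M_{0}\right|$ itself by some $f_{3}(\alpha)$, and $M_{0}^{\prime\prime}\leq M^{\prime\prime}=1$, so the subgroup $G_{1}$ of the first paragraph does the job.

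The step I expect to require the most care is the passage from the exponential inequality $p^{n}<(p^{a+1}n^{7})^{\alpha}$ to the clean bound $n/a\leq f_{1}(\alpha)$: one has to treat small primes honestly (the term $\log_{p}n$ genuinely contributes when $p=2$) and handle the regimes $a=1$ and $a$ large uniformly, splitting according to whether $n$ is small or large relative to $\alpha$. Once that estimate is secured everything downstream is bookkeeping with wreath-product indices — but bookkeeping that leans entirely on the sharp polynomial bounds of P\'{a}lfy and Wolf (Proposition \ref{PW}) and on Suprunenko's structure theorem (Proposition \ref{sup}); without those inputs the degree of the linear/permutation group could not be controlled at all, and the argument would not even get started.
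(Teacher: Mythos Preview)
Your proposal is correct and follows essentially the same route as the paper: the affine splitting $G=V\rtimes H$, the embedding $H\leq L\wr T$ with $L$ primitive linear, the bounds $\exp(G)<p^{a+1}n^{7}$ from Proposition \ref{sup}/Corollary \ref{primlin}/Corollary \ref{trans}, the key inequality $n/a\leq f_{1}(\alpha)$ from PIG, the metacyclic $L_{1}\leq L$ of index $<f_{1}^{7}$, the metabelian $M=H\cap L_{1}^{(t)}$ of bounded index, and finally the core $M_{0}$ with $\left|H/M_{0}\right|$ bounded via Corollary \ref{trans} and PIG again. The paper carries out exactly this argument (in the paragraphs preceding the lemma statement), with the same intermediate functions $f_{1},f_{2},f_{3}$.
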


It is now easy to complete the proof of\bigskip

\noindent\textbf{Theorem \ref{T1} \ }\emph{Let }$G$\emph{ be a finite soluble
group with PIG}$(\alpha)$\emph{. Then }$G$\emph{ has a nilpotent-by-metabelian
normal subgroup of index at most }$f(\alpha)$.

\bigskip

\begin{proof}
Put $h=f_{3}(\alpha)!$. Let $J$ be any maximal subgroup of $G$ and let $K$ be
the intersection of all $G$-conjugates of $J$. Then $G/K$ is a primitive
permutation group, so Lemma \ref{G0} shows that $(G^{h})^{\prime\prime\prime
}\leq K$. As the intersection of all such subgroups $K$ is the Frattini
subgroup $\Phi(G)$ of $G$, we have $(G^{h})^{\prime\prime\prime}\leq\Phi(G)$.
Put%
\[
G_{0}=G^{h}\Phi(G).
\]
Then $G_{0}^{\prime\prime}\Phi(G)/\Phi(G)$ is abelian, so $G_{0}^{\prime
\prime}$ is nilpotent by a well-known theorem of Gasch\"{u}tz. The result
follows on taking $f(\alpha)=h^{\alpha}$.
\end{proof}

\bigskip

Next we prove\bigskip

\noindent\textbf{Proposition \ref{rk(G/N')} \ }\emph{Let }$G$\emph{ be a
finite soluble group and }$N$\emph{ a nilpotent normal subgroup of }$G$\emph{.
If }$G$\emph{ has PIG}$(\alpha)$\emph{ then}
\[
\mathrm{rk}(G)\leq g(r,\alpha)
\]
\emph{where }$r=\mathrm{rk}(G/N^{\prime})$.

\bigskip

\begin{proof}
The proof is a slight variation on that of [12], Proposition 5.4.2 (an
argument originally due to A. Mann). Since $N$ is nilpotent we have
$\mathrm{rk}(N)=\max\,\mathrm{r}_{p}(N)$ where $\mathrm{r}_{p}(N)$ denotes the
rank of a Sylow $p$-subgroup of $N$, and $p$ ranges over primes dividing
$\left\vert N\right\vert $. Therefore $\mathrm{rk}(G)\leq r+\max
\,\mathrm{r}_{p}(N)$, and it will suffice to bound each $\mathrm{r}_{p}(N)$ in
terms of $r$ and $\alpha$. Fixing a prime $p$ we may now factor out
$O_{p^{\prime}}(G)$ and so assume that $O_{p^{\prime}}(G)=1$. Let $F$ denote
the Fitting subgroup of $G$. Then $F=O_{p}(G)$, and as $F\geq N$ we have
$\mathrm{rk}(F/F^{\prime})\leq r$. We will show that $\mathrm{rk}(F)$ is
bounded above by a function of $r$ and $\alpha$.

Write $F_{0}=F$ and for $i\geq0$ set $F_{i+1}=F_{i}^{\prime}F_{i}^{p}$. Then
$F_{0}/F_{1}\cong\mathbb{F}_{p}^{t}$ where $t\leq r$. It is well known and
easy to see that $G/F$ is then isomorphic to a completely reducible subgroup
of $\mathrm{GL}_{t}(\mathbb{F}_{p})$; this implies that $\left|  G/F\right|
\leq p^{3t}\leq p^{3r}$ by Proposition \ref{PW}(ii).

Let%
\[
s=\max\dim_{\mathbb{F}_{p}}(F_{i}/F_{i+1})
\]
and put $k=2+[\log_{2}s]$. Exercise 6 of [3], Chapter 2 shows that $F_{k}$ is
a powerful $p$-group, and then
\[
\mathrm{rk}(F_{k})=\dim_{\mathbb{F}_{p}}(F_{k}/F_{k+1})\leq s
\]
by [3] Theorem 2.9. It follows that $s=\dim_{\mathbb{F}_{p}}(F_{m}/F_{m+1})$
for some $m\leq k$.

Now the exponent of $G/F_{m+1}$ is at most%
\[
p^{m+1}\cdot\left|  G/F\right|  \leq p^{k+1+3r},
\]
which gives%
\[
p^{s}\leq\left|  G/F_{m+1}\right|  \leq p^{(k+1+3r)\alpha}.
\]
Hence $s\leq(\log_{2}s+3r+3)\alpha$, whence $s$ is bounded above in terms of
$r$ and $\alpha$. Since $\mathrm{rk}(F/F_{k})$ is at most $ks$ and
$\mathrm{rk}(F_{k})\leq s$ it follows that $\mathrm{rk}(F)$ is bounded above
as required.
\end{proof}

\section{Infinite soluble groups: the non-singular case\label{Hsection}}

Let us recall some basic facts about soluble groups of finite rank; the
results are principally due to Mal'cev, Robinson and Zaicev. A good reference
is Chapter 5 of [9]. A\ group $G$ has (Pr\"{u}fer) \emph{rank} $r$ if every
finitely generated subgroup of $G$ can be generated by $r$ elements, and $r$
is minimal with this property.

\begin{itemize}
\item \emph{A finitely generated soluble group} $G$ \emph{has finite rank if
and only if it is a} minimax \emph{group: that is, it possesses a finite
chain} $1=G_{0}\vartriangleleft G_{1}\vartriangleleft\ldots\vartriangleleft
G_{n}=G$ \emph{of subgroups such that each factor }$G_{i}/G_{i-1}$ \emph{is
either cyclic or quasi-cyclic} (of type $C_{p^{\infty}}$ for some prime $p$).
\end{itemize}

\noindent The finite set of primes $p$ for which such a $C_{p^{\infty}}$
factor occurs is the \emph{spectrum} $\mathrm{spec}(G)$.

For a minimax group $G$, the following conditions are equivalent:

\begin{itemize}
\item $G$\emph{ is virtually torsion-free,}

\item $\tau(G)$\emph{ is finite,}

\item $G$\emph{ is }{\small R}$\mathfrak{F}$ \ = \emph{residually finite,}

\item $G$\emph{ is reduced.}
\end{itemize}

\noindent Here $\tau(G)$ denotes the maximal periodic normal subgroup of $G$,
and $G$ is \emph{reduced} if $G$ contains no non-trivial radicable subgroups.

\begin{itemize}
\item \emph{The class of }{\small R}$\mathfrak{F}$ \emph{minimax groups is }extension-closed.

\item \emph{Every }{\small R}$\mathfrak{F}$ \emph{minimax group is
}({\small R}$\mathfrak{N}$)$\mathfrak{F}$ = \emph{virtually residually
nilpotent.}
\end{itemize}

Recall also that every fg ({\small R}$\mathfrak{N}$)$\mathfrak{F}$ group is
residually finite, since fg virtually nilpotent groups are residually finite
(cf. [9], \textbf{1.3.10}). This implies that for a \emph{finitely generated}
minimax group, \emph{the conditions} {\small R}$\mathfrak{F}$ \emph{and}
({\small R}$\mathfrak{N}$)$\mathfrak{F}$ \emph{are equivalent}.

We will also use without special mention the fact that

\begin{itemize}
\item \emph{a fg} {\small R}$\mathfrak{F}$ \emph{group has finite rank if and
only if it has finite upper rank};
\end{itemize}

\noindent this is [14] Theorem A (but we only apply it for the easier special
case of soluble groups).

\bigskip

A special case of our main theorem goes back to 1985 (see [12], Theorem 12.9):

\begin{theorem}
\emph{[20]} Let $G$ be a finitely generated group that is soluble and
({\small R}$\mathfrak{N}$)$\mathfrak{F}$. Then $G$ has PIG if and only if $G$
has finite rank.
\end{theorem}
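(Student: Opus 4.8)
The implication from finite rank to PIG is soft and uses no residual hypothesis: rank is inherited by quotients, so if $\mathrm{rk}(G)=r<\infty$ then every finite quotient of $G$ has rank at most $r$ and hence has PIG$(\alpha)$ with $\alpha=r(3+\log_{2}r)$, by [14], Theorem 5.1; thus $G$ has PIG. The substance is the converse, which I would attack in contrapositive form: \emph{a finitely generated soluble $(\mathrm{R}\mathfrak{N})\mathfrak{F}$ group of infinite rank does not have PIG.}

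First one reduces to the residually nilpotent case. Choose a normal subgroup $G_{1}$ of finite index in $G$ that is residually nilpotent; then $G_{1}$ is finitely generated and soluble, is again residually nilpotent (subgroups of residually nilpotent groups are residually nilpotent) and has infinite rank (finite rank being a virtual property), and since PIG passes to subgroups of finite index it is enough to refute PIG for $G_{1}$. So assume $G$ is residually nilpotent; then $G$ is residually finite -- hence reduced -- and, being finitely generated soluble of infinite rank, is not minimax. Therefore some factor $A=G^{(i-1)}/G^{(i)}$ of the derived series fails to be minimax as an abelian group (the abelianization $G/G'$ is finitely generated, hence minimax, so $i\ge 2$), and $A$ is a finitely generated module over $R=\mathbb{Z}[G/G^{(i-1)}]$, the group ring of a finitely generated soluble group of smaller derived length. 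A routine commutator calculation using $\bigcap_{n}\gamma_{n}(G)=1$ shows that $\bigcap_{n}I^{n}A=0$, where $I$ is the augmentation ideal of $R$.

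The heart of the argument is module-theoretic, and this is where the real work lies. Invoking the theory of infinite soluble groups and their group rings (Hall, Robinson, Roseblade), a finitely generated, $I$-adically separated, reduced module that is not $\mathbb{Z}$-minimax must, after a suitable localization and passage to a subgroup of finite index, produce a section isomorphic to $C_{p}\wr C_{\infty}$ for some prime $p$, realized \emph{inside a subgroup of finite index in} $G$: informally, one ``freezes'' all but one of the generators of $G/G^{(i-1)}$, cutting $A$ down to the shape $\mathbb{F}_{p}[t,t^{-1}]$ with $t$ acting as the shift. But $C_{p}\wr C_{\infty}$ has, for every $k$, the finite quotient $C_{p}\wr C_{p^{k}}$, whose order exceeds $p^{p^{k}}$ while its exponent is exactly $p^{k+1}$; hence $|Q|/\exp(Q)^{\alpha}\to\infty$ for every $\alpha$, so this section, the finite-index subgroup carrying it, and therefore $G$ itself, all fail PIG.

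The main obstacle is precisely this passage from a \emph{section} of $G$, the natural habitat of the module theory, to a quotient of a \emph{finite-index subgroup}, the only place PIG can be tested; controlling it requires the localization machinery of Roseblade and the second author and is the technical core of the ``non-singular'' analysis referred to in Theorem \ref{H} below. A secondary complication is that residual nilpotence is not inherited by arbitrary quotients, so one cannot simply induct on derived length; the standard way around this is to invoke first the finite-group results Theorem \ref{T1} and Proposition \ref{rk(G/N')}, exactly as in the Deduction of Theorem \ref{thm1} above, so as to reduce the whole problem to groups of derived length at most $3$ -- whereupon $G/G^{(i-1)}$ is metabelian and its group ring is tractable -- and only then to run the module argument, checking that the relevant abelian section there still inherits enough $I$-adic separation from $G$.
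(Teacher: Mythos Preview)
The paper does not supply a proof of this statement: it is quoted from [20] (see also [12], Theorem 12.9) and functions as an \emph{input} to the paper --- specifically, it is what justifies the implication ``PIG $\Rightarrow \mathcal{H}$'' on which Theorem \ref{H}, Proposition \ref{special}, and ultimately the Deduction of Theorem \ref{thm1} all rest.

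Your forward direction is correct. For the converse, the shape of your outline is right and you correctly locate the hard point: turning a non-minimax abelian \emph{section} into a family of finite \emph{quotients} (of some finite-index subgroup) whose orders grow faster than any power of their exponents. But you do not actually carry this out; you defer to ``the `non-singular' analysis referred to in Theorem \ref{H}'' and to the Deduction of Theorem \ref{thm1}. That is circular. Theorem \ref{H} is stated for groups satisfying $\mathcal{H}$, and the only route the paper has from PIG to $\mathcal{H}$ is via [20] itself; the Deduction of Theorem \ref{thm1} goes through Proposition \ref{special}, which likewise relies on [20]. Using Theorem \ref{T1} and Proposition \ref{rk(G/N')} to reduce to derived length $\le 3$ is logically clean (those are pure finite-group-theory results, independent of [20]), but you are then still left with the derived-length-$3$ case, and for that you must supply a direct module-theoretic argument rather than pointing back to Theorem \ref{H}. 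A secondary wrinkle: residual nilpotence of $G$ does not obviously descend to $G/G^{(i)}$, so your claim $\bigcap_{n}I^{n}A=0$ does not follow immediately from $\bigcap_{n}\gamma_{n}(G)=1$; one needs $\bigcap_{n}\gamma_{n}(G)\cdot G^{(i)}=G^{(i)}$, which requires justification.
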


This shows that fg soluble groups with PIG enjoy the following property:

\bigskip

\noindent\textbf{Definition} A group $G$ satisfies hypothesis $\mathcal{H}$ if
\emph{every }({\small R}$\mathfrak{N}$)$\mathfrak{F}$ \emph{quotient of} $G$
\emph{has finite upper rank}.

\bigskip

We shall prove

\begin{theorem}
\label{H}Let $E$ be a fg {\small R}$\mathfrak{F}$ group having a metabelian
normal subgroup $N$ such that $E/N$ is polycyclic. Suppose that $E$ satisfies
$\mathcal{H}$\textbf{.} Then $E/N^{\prime}$ is minimax, and if $N^{\prime}$
has no $\pi$-torsion where $\pi=\mathrm{spec}(E/N^{\prime})$ then $E$ is minimax.
\end{theorem}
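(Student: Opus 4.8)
The plan is to exploit the hypothesis $\mathcal{H}$ by feeding it the right family of $(\mathrm{R}\mathfrak{N})\mathfrak{F}$ quotients of $E$, and then to use the module-theoretic machinery of [21], [22], [24] to upgrade ``finite rank of all such quotients'' into ``$E/N'$ minimax''. First I would set $A=N'$, an abelian normal subgroup of $E$, normal in the metabelian group $N$; note $N/A$ is abelian, so $A$ is naturally a module for the group ring of $E/A$ acting through $E/N$ (which is polycyclic) and through $N/A$. The key point is that $E/N'=E/A$ is metabelian-by-polycyclic, in fact $(N/A)\rtimes$(polycyclic), so it is itself a finitely generated abstract group, and it is a quotient of $E$; to show it is minimax it suffices (by the recalled equivalences, since $E/A$ is fg and $\mathrm{R}\mathfrak{F}$) to show it has finite rank, equivalently finite upper rank. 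But $E/A$ need not be $(\mathrm{R}\mathfrak{N})\mathfrak{F}$, so I cannot apply $\mathcal{H}$ to it directly; instead I would apply $\mathcal{H}$ to suitable residually nilpotent sections and assemble the conclusion.

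The main work is to control the abelian section $N/A$ as a module $V$ over $R=\mathbb{Z}[E/N]$ (a group ring of a polycyclic group, hence Noetherian) — more precisely over $\mathbb{Z}[Q]$ where $Q=E/A$ acts on $V=N/A$. Here is where I would quote the Brookes--Segal theory: a finitely generated module for the group ring of a polycyclic-by-finite group decomposes, up to finite-rank pieces, according to the primes and the ``plinth'' structure, and the relevant dichotomy is whether $V$ (or $A$) has finite rank as an abelian group or contains a free abelian section of infinite rank on which $E$ acts in a controlled way. The strategy: if $E/A$ does \emph{not} have finite rank, then (using that $E/N$ is polycyclic and $N/A$ is a finitely generated $\mathbb{Z}[E/N]$-module) one locates inside $E/A$ a section that is residually (finite nilpotent) — typically by passing to a congruence-type quotient $V/Vfk$ for a suitable prime ideal or by using the $p$-adic / pro-$p$ completions of the module — and which still has infinite rank, contradicting $\mathcal{H}$. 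Concretely I expect to use the fact that a fg metabelian group that is not minimax has a quotient which is residually (finite $p$) for some prime $p$ and is still not minimax; this is exactly the kind of statement established via Hall's and Roseblade's theory of modules over polycyclic group rings, and it is the engine that converts $\mathcal{H}$ into ``$E/A$ minimax''. Once $E/N'=E/A$ is known to be minimax, $\mathrm{spec}(E/A)=\pi$ is a finite set of primes.

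For the second assertion, assume additionally that $A=N'$ has no $\pi$-torsion. Now $E/A$ is minimax, so $V=N/A$ is a minimax abelian group and $E/N$ is polycyclic; I want to conclude $E$ itself is minimax, for which (again by the recalled extension-closure of the class of $\mathrm{R}\mathfrak{F}$ minimax groups, applied to $1\to A\to E\to E/A\to 1$) it suffices to show $A$ is minimax. The point is that $A$ is a module for $\mathbb{Z}[E/A]$, and $E/A$ being minimax forces $A$ — if it is finitely generated as such a module, which it is, being the derived subgroup of the fg metabelian group $N$ — to be minimax \emph{unless} $A$ has unbounded torsion; the no-$\pi$-torsion hypothesis is designed exactly to kill the bad case. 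More precisely: the torsion subgroup $\tau(A)$ would, if infinite, produce (via the module structure and the fact that $\mathrm{spec}$ of everything in sight lies in $\pi$) a quotient of $E$ that is residually finite nilpotent but of infinite rank, again violating $\mathcal{H}$; ruling out $\pi$-torsion removes the only primes that could survive, so $\tau(A)$ is finite, $A$ is torsion-free-by-finite and minimax, and $E$ is minimax. The hard part, and the place where the cited theory is genuinely needed rather than formal nonsense, is the first step: showing that failure of $E/N'$ to be minimax always manifests in a residually-(finite nilpotent) quotient of infinite rank — i.e. that the ``bad'' directions in the module $N/A$ can be detected after a pro-nilpotent localisation. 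I would isolate this as the non-singular case precisely because it works only when the obstructive primes (the torsion primes of $E''=N'$) are absent; the singular case, handled later in the paper, is where these primes are present and the crude appeal to $\mathcal{H}$ fails.
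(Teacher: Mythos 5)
Your proposal has several genuine gaps.

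First, you treat the assertion that $E/N'$ is minimax as the main difficulty, and you say ``$E/A$ need not be $(\mathrm{R}\mathfrak{N})\mathfrak{F}$, so I cannot apply $\mathcal{H}$ to it directly''. This is exactly backwards. Since $N/A$ is abelian and $E/N$ is polycyclic, $E/A$ is \emph{abelian-by-polycyclic} (not merely metabelian-by-polycyclic), and by the cited result [19] every finitely generated abelian-by-polycyclic group is $(\mathrm{R}\mathfrak{N})\mathfrak{F}$. So $\mathcal{H}$ applies directly to $E/N'$ and the first conclusion is immediate (this is precisely Lemma \ref{L1} in the paper). The content of the theorem is entirely in the second conclusion, which you dispatch in a few sentences.

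Second, your second part rests on the claim that $A=N'$ is finitely generated as a module for $\mathbb{Z}[E/A]$ ``being the derived subgroup of the fg metabelian group $N$''. But $N$ is \emph{not} assumed finitely generated; only $E$ is. This is precisely the obstacle that Section \ref{secqfg} is built to overcome: $A$ is only a ``quasi-fg'' module, sitting in an extension $1\to A\to E\to\Gamma\to1$ with $E$ finitely generated, and Proposition \ref{quasifg} has to be proved to extract from $A$ a genuinely finitely generated submodule $B$ that controls the finite quotients. Without that, none of the module-theoretic machinery of [21], [22], [24] applies, and your appeal to Noetherianity of $\mathbb{Z}[E/A]$ is unavailable anyway, since $E/A$ is minimax but not polycyclic, so its integral group ring is not Noetherian.

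Third, your mechanism for the second part --- that an infinite $\tau(A)$ would ``produce a quotient of $E$ that is residually finite nilpotent but of infinite rank'' and the no-$\pi$-torsion hypothesis ``removes the only primes that could survive'' --- does not describe a proof. The actual argument goes through Proposition \ref{P1} (a residually finite-nilpotent normal subgroup with polycyclic quotient in an $\mathcal{H}$-group forces minimax), Lemma \ref{L2} (the quotients $A/AJ^\infty$ for $\Gamma$-invariant ideals $J$ of the right shape have finite rank), and crucially Proposition \ref{P2}, which uses the associated-prime/unmixed-module decomposition of [21] together with a Lie--Kolchin step to produce a subgroup $H\vartriangleleft\Gamma$ with $\Gamma/H$ polycyclic such that $\mathfrak{h}$ acts nilpotently on every finite quotient module of $A$. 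The absence of $\pi$-torsion enters as the condition that $A$ is \emph{non-singular} so that the prime ideals that matter avoid $\mathfrak{g}$ and one can find maximal ideals of finite index not containing $\mathfrak{g}$; it is not an argument about $\tau(A)$ being forced finite. You have correctly identified the flavour of the tools (Brookes--Segal module theory over minimax group rings) but none of the intermediate results that actually carry the proof are present, and the one concrete structural claim you make (finite generation of $A$ as a module) is false.
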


\noindent This implies, for example, that every torsion-free fg {\small R}%
$\mathfrak{F}$ soluble group of derived length three that satisfies
$\mathcal{H}$ is a minimax group; in particular, it implies Proposition
\ref{special} for fg {\small R}$\mathfrak{F}$ groups that are torsion-free.

The condition on torsion is definitely necessary, however: in Section
\ref{exsec2} we exhibit a residually finite $3$-generator metabelian-by-cyclic
group that satisfies $\mathcal{H}$ but has infinite rank. This answers a
question posed in several places by the second author (see [23], Question 3;
[12] Introduction to Chapter 9).

Before proceeding, let us mention some other applications of Theorem \ref{H}.
For a prime $p$, the \emph{upper }$p$\emph{-rank} $\mathrm{ur}_{p}(G)$ of a
group $G$ is the supremum of $\mathrm{rk}(P)$ as $P$ ranges over all
$p$-subgroups of finite quotients of $G$. A theorem of Lucchini [8] and
Guralnick [4] (first proved for soluble groups by Kov\'{a}cs [6]) shows that
the upper rank $\mathrm{ur}(G)$ of $G$ satisfies%
\begin{equation}
\mathrm{ur}(G)\leq1+\sup_{p}\mathrm{ur}_{p}(G), \label{localrranks}%
\end{equation}
so $G$ has finite upper rank if and only if $\mathrm{ur}_{p}(G)$ is
\emph{bounded} as $p$ ranges over all primes. The following corollary of
Theorem \ref{H} establishes a special case of Conjecture A of [22]
(generalizing the main results of [22]):

\begin{corollary}
\label{urp}Let $G$ be a finitely generated group that is virtually
nilpotent-by-abelian-by-polycyclic. If $\mathrm{ur}_{p}(G)$ is finite for
every prime $p$, then $G$ has finite upper rank.
\end{corollary}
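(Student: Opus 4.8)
The plan is to reduce to Theorem \ref{H} by peeling off the polycyclic top and splitting the nilpotent-by-abelian piece into a metabelian problem. First I would replace $G$ by a subgroup of finite index $G_0$ that is genuinely nilpotent-by-abelian-by-polycyclic (not merely virtually so); since both the hypothesis `$\mathrm{ur}_p$ finite for all $p$' and the conclusion `finite upper rank' are insensitive to passing to finite-index subgroups and finite extensions (the latter by \eqref{localrranks} and elementary bounds), it suffices to treat $G_0$. So assume $G$ has a normal series $M \vartriangleleft N \vartriangleleft G$ with $M$ nilpotent, $N/M$ abelian, and $G/N$ polycyclic. Note $G$ is finitely generated, and by \eqref{localrranks} the hypothesis is exactly that $G$ has hypothesis $\mathcal{H}$ restricted to... — more precisely, I want to show $G$ itself has finite upper rank, so I should verify that every $(\mathrm{R}\mathfrak{N})\mathfrak{F}$ quotient of $G$ has finite upper rank, i.e. that $G$ satisfies $\mathcal{H}$. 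But that is not quite the hypothesis; instead I will apply Theorem \ref{H} to appropriate quotients directly.

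The key step is to handle the nilpotent part $M$. Here I would use the same device as in the deduction of Theorem \ref{thm1}: the derived subgroup $M'$, being the derived group of a nilpotent normal subgroup, is itself nilpotent, and one wants to control ranks modulo $M'$ and then recover ranks of $M$ from ranks of $M/M'$. Concretely, set $\overline{G} = G/M'$; then $\overline{G}$ has the abelian normal subgroup $M/M'$ with $\overline{G}/(M/M')$ abelian-by-polycyclic, and $\overline{N} = N/M'$ is abelian with $\overline{G}/\overline{N}$ polycyclic — but I need a \emph{metabelian} normal subgroup with polycyclic quotient to invoke Theorem \ref{H}. So take $\widetilde{N}$ to be the preimage in $\overline{G}$ of the (nilpotent, hence in a minimax situation finite-rank-controlled) Fitting-type radical; the cleanest choice is: $\overline{G}$ is (abelian)-by-(abelian-by-polycyclic), and since abelian-by-polycyclic groups are polycyclic-by-(something)... this needs care. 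The honest route: $\overline{G}/M'$ — I mean $G/M'$ — has $N/M'$ abelian normal with polycyclic-by-nothing quotient? No: $G/N$ is polycyclic but $N/M'$ need not be normal-complemented. Still, $N/M'$ is an abelian normal subgroup of $G/M'$ and $(G/M')/(N/M')\cong G/N$ is polycyclic, so $G/M'$ is (abelian)-by-(polycyclic), in particular metabelian-by-polycyclic — no, abelian-by-polycyclic is a special case of metabelian-by-polycyclic with the metabelian subgroup being abelian. Thus taking $E = G/M'$ and the metabelian (in fact abelian) normal subgroup $N/M'$ with $E/(N/M')$ polycyclic, Theorem \ref{H} applies \emph{provided} $E$ satisfies $\mathcal{H}$.

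So the remaining task is to verify hypothesis $\mathcal{H}$ for the relevant quotients, and this is where I expect the main obstacle. One must show: every $(\mathrm{R}\mathfrak{N})\mathfrak{F}$ quotient $Q$ of $G$ (or of $G/M'$) has finite upper rank, knowing only that $\mathrm{ur}_p(G)$ is finite for every $p$. Since $\mathrm{ur}_p(Q)\le\mathrm{ur}_p(G)$ for any quotient, $Q$ also has $\mathrm{ur}_p$ finite for all $p$; and a finitely generated $(\mathrm{R}\mathfrak{N})\mathfrak{F}$ group is residually finite, so $\mathrm{ur}(Q) = \mathrm{rk}(\widehat{Q})$ for the profinite completion, and by \eqref{localrranks} $\mathrm{ur}(Q) \le 1 + \sup_p \mathrm{ur}_p(Q) \le 1 + \sup_p \mathrm{ur}_p(G) < \infty$. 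Hence $Q$ has finite upper rank. This shows $G$ — and every quotient of $G$, in particular $G/M'$ — satisfies $\mathcal{H}$. Now Theorem \ref{H} gives that $E/N' = (G/M')/(N/M')' = G/(M'N')$... but $N/M'$ is abelian so $(N/M')' = 1$, i.e. $E/N' = E = G/M'$ itself is minimax (the torsion hypothesis in Theorem \ref{H} is automatically vacuous here since $N' = 1$ in $E$). Wait — that gives $G/M'$ minimax directly, hence of finite rank. Finally, from $G/M'$ minimax we get $M/M'$ of finite rank, and since $M$ is nilpotent with $M/M'$ of finite rank, $M$ has finite rank (a nilpotent group whose abelianization has finite rank has finite rank — this is standard, via the surjections $M_i/M_{i+1}\otimes\cdots$ from tensor powers of $M/M'$ onto the lower central factors). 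Therefore $G$, being an extension of a finite-rank group by a minimax group, is itself minimax, so has finite rank and a fortiori finite upper rank. I would then spell out the torsion bookkeeping — the one genuinely delicate point being whether $M/M'$ finite rank really forces $M$ finite rank when $M$ has torsion, which it does because the rank of a nilpotent group is bounded in terms of the ranks of its upper central (equivalently lower central) factors, each of which is a quotient of a tensor power of $M/M'$.
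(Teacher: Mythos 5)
Your overall plan --- reduce to Theorem \ref{H} applied to $G$ modulo the derived group of the nilpotent piece --- does match the paper's strategy, but the proposal has two genuine gaps.

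The verification of hypothesis $\mathcal{H}$ is circular. You write $\mathrm{ur}(Q) \le 1 + \sup_p \mathrm{ur}_p(Q) \le 1 + \sup_p \mathrm{ur}_p(G) < \infty$, but the hypothesis only gives that each $\mathrm{ur}_p(G)$ is \emph{individually} finite, not that $\sup_p \mathrm{ur}_p(G)$ is finite; the latter is, via (\ref{localrranks}), precisely what the corollary is trying to establish. If you already knew $\sup_p\mathrm{ur}_p(G)<\infty$ there would be nothing to prove. What is needed here is Theorem 5 of [23] --- a substantial result asserting that a finitely generated $(\mathrm{R}\mathfrak{N})\mathfrak{F}$ group with each $\mathrm{ur}_p$ finite has finite upper rank --- and the paper invokes it at exactly this point to conclude that $G$ satisfies $\mathcal{H}$.

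The claim that $N/M'$ is abelian is false in general. Since $M\le N$ and $N/M$ is abelian we have $M'\le N'\le M$, so $(N/M')' = N'/M'$ is a subgroup of $M/M'$ --- abelian, hence $N/M'$ is metabelian, but $N'/M'$ need not be trivial. Thus applying Theorem \ref{H} to $E=G/M'$ with metabelian normal subgroup $N/M'$ only gives that $E/(N/M')' = G/N'$ is minimax, not $G/M'$; the torsion hypothesis in Theorem \ref{H} is emphatically not vacuous, and the singular case you hoped to side-step is fully in play. The paper deals with this by setting $\pi=\mathrm{spec}(G/N')$, forming $D=\bigcap_K N''K$ over normal $K$ with $N'/K$ a finite $\pi'$-group (so that $N'/D$ has no $\pi$-torsion and Theorem \ref{H}'s second conclusion applies to $G/D$), and then bounding $\mathrm{ur}_p(G)$ uniformly for $p\in\pi'$ via the rank of $G/D$ and the nilpotency class of $N'$, with the finitely many $p\in\pi$ handled directly by the pointwise hypothesis. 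None of that bookkeeping can be dispensed with, and your proposal omits all of it.
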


\begin{proof}
We may suppose wlog that $G$ has a nilpotent-by-abelian normal subgroup $N$
such that $G/N$ is polycyclic. Theorem 5 of [23] shows that a fg
({\small R}$\mathfrak{N}$)$\mathfrak{F}$ group having finite upper $p$-rank
for every prime $p$ has finite upper rank, so $G$ satisfies $\mathcal{H}.$ Now
$G/N^{\prime}$ is ({\small R}$\mathfrak{N}$)$\mathfrak{F}$ by [19], so it is
residually finite and therefore minimax$.$ Let $\pi=\mathrm{spec}(G/N^{\prime
})$. As $\pi$ is a finite set, it will suffice by (\ref{localrranks}) to show
that the numbers $\mathrm{ur}_{p}(G)$ are bounded as $p$ ranges over the set
$\pi^{\prime}$ of primes not in $\pi$.

Put%
\begin{align*}
\mathcal{X}  &  =\left\{  K\leq N^{\prime}\mid K\vartriangleleft
G\,\ \text{and}\,\ N^{\prime}/K\text{ is a finite }p\text{-group where }%
p\in\pi^{\prime}\right\}  ,\\
D  &  =\bigcap_{K\in\mathcal{X}}N^{\prime\prime}K.
\end{align*}
For each $K\in\mathcal{X}$ the group $G/N^{\prime\prime}K$ is again a
{\small R}$\mathfrak{F}$ minimax group, so $G/D$ is residually finite. It is
also metabelian-by-polycyclic and satisfies $\mathcal{H}$, and as $N^{\prime
}/D$ is residually a finite $\pi^{\prime}$-group, $N^{\prime}/D$ has no $\pi
$-torsion. It follows by Theorem \ref{H} that $G/D$ has finite rank, $r$ say.

Put $s=\sum_{i=1}^{c}r^{i}$ where $c$ is the nilpotency class of $N^{\prime}$.
Let $K\in\mathcal{X}$ and consider the finite $p$-group $P=N^{\prime}/K$. This
has nilpotency class at most $c$ and satisfies%
\[
\mathrm{rk}(P/P^{\prime})=\mathrm{rk}(N^{\prime}/N^{\prime\prime}K)\leq r;
\]
it follows that $\mathrm{rk}(P)\leq s$ (e.g. by [9], \textbf{1.2.11}). On the
other hand, as $N$ is nilpotent it is easy to see that if $p\in\pi^{\prime}$
then
\begin{align*}
\mathrm{ur}_{p}(G)  &  =\sup_{K\in\mathcal{X}}\mathrm{ur}_{p}(G/K)\\
&  \leq\mathrm{rk}(G/N^{\prime})+\sup_{K\in\mathcal{X}}\mathrm{rk}(N^{\prime
}/K)\leq\mathrm{rk}(G/N^{\prime})+s.
\end{align*}
The result follows.
\end{proof}

\bigskip

Together with Theorem 3 of [23] this implies that there is a `gap' in the
possible types of \emph{subgroup growth} for the groups in question:

\begin{corollary}
Let $G$ be a finitely generated group that is virtually
nilpotent-by-abelian-by-polycyclic. If $G$ has subgroup growth of type
strictly less than%
\[
n^{\log n/(\log\log n)^{2}}%
\]
then $G$ has finite upper rank, and hence has polynomial subgroup growth.
\end{corollary}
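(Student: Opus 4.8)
The plan is to combine the previous corollary (finite upper rank under the hypothesis on $\mathrm{ur}_p$) with a known "gap" theorem for subgroup growth, and then to dispose of the contrapositive by arguing that unbounded $\mathrm{ur}_p$ already forces subgroup growth of type at least $n^{\log n/(\log\log n)^2}$. Concretely, suppose $G$ is finitely generated and virtually nilpotent-by-abelian-by-polycyclic with subgroup growth of type strictly less than $n^{\log n/(\log\log n)^2}$. We want to show $G$ has finite upper rank; then, since $G$ is (virtually) a finitely generated soluble minimax group, polynomial subgroup growth follows from the standard fact that fg residually finite groups of finite rank have PSG (cf. [12], the PSG characterisation).

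First I would recall Theorem~3 of [23], which is the quantitative input: it asserts that a finitely generated (residually nilpotent, or virtually so) group whose upper $p$-rank is \emph{unbounded} over primes $p$ must have subgroup growth of type at least $n^{\log n/(\log\log n)^2}$. (More precisely the growth is driven by the abundance of subgroups inside a finite $p$-quotient of rank $k$, which contains roughly $p^{\Theta(k^2)}$ subgroups, and letting $k=\mathrm{ur}_p(G)\to\infty$ along a sequence of primes $p$ produces, after a calibration of $p$ against the index $n$, the stated lower bound.) So from the hypothesis that the subgroup growth type is \emph{strictly smaller} than $n^{\log n/(\log\log n)^2}$, I conclude that $\sup_p \mathrm{ur}_p(G)<\infty$, i.e. $\mathrm{ur}_p(G)$ is finite for every prime $p$ \emph{and} bounded.

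Next I would invoke Corollary~\ref{urp}: since $G$ is finitely generated, virtually nilpotent-by-abelian-by-polycyclic, and has $\mathrm{ur}_p(G)$ finite for every prime $p$, that corollary gives that $G$ has finite upper rank. (Strictly, Corollary~\ref{urp} only needs each $\mathrm{ur}_p$ finite, not uniformly bounded; but the boundedness we extracted is harmless and in fact is exactly what (\ref{localrranks}) then converts into finiteness of $\mathrm{ur}(G)$.) Finally, a finitely generated residually finite group of finite upper rank which is virtually soluble is a minimax group, hence virtually polycyclic-by-(finite rank abelian), and such groups are known to have polynomial subgroup growth; this yields the last clause "and hence has polynomial subgroup growth."

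The main obstacle is making precise the dichotomy supplied by Theorem~3 of [23] in a way that matches the threshold $n^{\log n/(\log\log n)^2}$ exactly: one must be careful that "subgroup growth of type strictly less than" is the correct negation of "subgroup growth of type at least," and that the class of groups to which [23, Theorem~3] applies genuinely contains all finitely generated virtually nilpotent-by-abelian-by-polycyclic groups (this is where one uses that such a group, modulo the derived subgroup of its nilpotent-by-abelian normal subgroup, is virtually residually nilpotent, exactly as in the proof of Corollary~\ref{urp}). Once the interface between the two cited results is set up correctly, the deduction is immediate and requires no further calculation.
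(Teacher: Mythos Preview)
Your approach is essentially the same as the paper's: the corollary is stated immediately after Corollary~\ref{urp} with the one-line justification ``Together with Theorem 3 of [23] this implies\ldots'', and you have correctly identified that the argument is precisely the combination of Theorem~3 of [23] (slow subgroup growth forces $\mathrm{ur}_p(G)<\infty$ for every prime $p$) with Corollary~\ref{urp} (finite $\mathrm{ur}_p$ for all $p$ forces finite upper rank in this class). Two minor remarks: first, Theorem~3 of [23] does not require any residual-nilpotence hypothesis, so your worry at the end about the ``interface'' is unnecessary; second, your phrasing wavers between ``$\mathrm{ur}_p$ unbounded over $p$'' and ``$\mathrm{ur}_p=\infty$ for some $p$'', but as you yourself note, Corollary~\ref{urp} only needs the latter to be excluded, and that is what [23, Theorem~3] delivers.
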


\noindent For the definition of `subgroup growth type' and further discussion,
see [23] and [12]. A direct application of Theorem \ref{H} together with [12],
Theorem 9.1 shows that there is a much larger gap in certain cases:

\begin{corollary}
Let $G$ be a fg {\small R}$\mathfrak{F}$ metabelian-by-polycyclic group. If
$G$ is torsion-free and has subgroup growth of type less than
$2^{n^{\varepsilon}}$ for every $\varepsilon>0$ then $G$ has finite rank,
hence polynomial subgroup growth.
\end{corollary}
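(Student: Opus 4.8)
The plan is to deduce this from Theorem~\ref{H} together with the quantitative gap theorem [12], Theorem 9.1, which asserts that a finitely generated residually finite metabelian group has either polynomial subgroup growth or subgroup growth of type at least $2^{n^{1/d}}$ for some positive integer $d$. First I would reduce to the metabelian-by-polycyclic setting: by hypothesis $G$ has a metabelian normal subgroup $N$ with $G/N$ polycyclic, and since passing to a subgroup of finite index changes the rank, residual finiteness and the subgroup-growth \emph{type} only by bounded amounts, I may assume this structure holds on the nose (and that $G$ is torsion-free). The aim is to show $\mathrm{rk}(G)$ is finite; polynomial subgroup growth then follows from the standard fact that finitely generated virtually soluble groups of finite rank have PSG (cf. [12], Chapter 5).

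Next I would verify that $G$ satisfies hypothesis $\mathcal{H}$, so that Theorem~\ref{H} applies. Let $Q$ be any $(\mathrm{R}\mathfrak{N})\mathfrak{F}$ quotient of $G$; I must show $Q$ has finite upper rank. Now $Q$ is itself finitely generated, residually finite (being fg virtually residually nilpotent), metabelian-by-polycyclic, and its subgroup growth type is no larger than that of $G$, hence strictly less than $2^{n^{\varepsilon}}$ for every $\varepsilon>0$. Apply the gap theorem [12], Theorem 9.1 to a suitable metabelian section of $Q$ -- more precisely, if $M\vartriangleleft Q$ is metabelian with $Q/M$ polycyclic, then a finitely generated metabelian subgroup of finite index in $M$ has subgroup growth type bounded by that of $Q$, so it cannot be of type $2^{n^{1/d}}$, forcing it (and hence $M$, and hence $Q$) to have PSG; a group with PSG has finite upper rank. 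Thus $Q$ has finite upper rank, establishing $\mathcal{H}$.

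With $\mathcal{H}$ in hand, Theorem~\ref{H} gives that $G/N'$ is minimax. To conclude that $G$ itself is minimax (equivalently, of finite rank, since $G$ is fg soluble and $\mathrm{R}\mathfrak{F}$), I invoke the second clause of Theorem~\ref{H}: I need $N'$ to have no $\pi$-torsion, where $\pi=\mathrm{spec}(G/N')$. This is exactly where torsion-freeness of $G$ is used -- since $G$ is torsion-free, $N'$ has no torsion at all, so a fortiori no $\pi$-torsion, and Theorem~\ref{H} delivers that $E=G$ is minimax. Finally, a fg soluble minimax group has finite rank and therefore polynomial subgroup growth.

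The main obstacle is the bookkeeping around subgroup-growth \emph{type} under passing to subgroups and quotients: one must be careful that the type is monotone in the right direction (quotients and finite-index subgroups do not increase it), so that the strict inequality ``type less than $2^{n^{\varepsilon}}$ for all $\varepsilon$'' is inherited by the metabelian sections to which [12], Theorem 9.1 is actually applied. This is routine but needs the precise definitions from [23] and [12]; once it is in place, the logical skeleton -- check $\mathcal{H}$ via the metabelian gap theorem, then apply Theorem~\ref{H}, then use torsion-freeness to kill the $\pi$-torsion obstruction -- is short.
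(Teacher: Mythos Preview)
Your overall architecture is exactly that of the paper: verify hypothesis $\mathcal{H}$ via a subgroup-growth gap theorem, then invoke Theorem~\ref{H} and use torsion-freeness to dispose of the $\pi$-torsion obstruction. That part is fine.

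The gap is in how you verify $\mathcal{H}$. You propose to take an $(\mathrm{R}\mathfrak{N})\mathfrak{F}$ quotient $Q$, pick a metabelian normal subgroup $M$ with $Q/M$ polycyclic, and apply the metabelian gap theorem to ``a finitely generated metabelian subgroup of finite index in $M$''. But there is no reason such a subgroup exists: $M$ is only finitely generated \emph{as a normal subgroup} of $Q$, and in general it is not finitely generated as a group, nor does it contain any finitely generated subgroup of finite index (think of the base group of a wreath product). Even if this step went through, the implication ``$M$ has PSG $\Rightarrow$ $Q$ has PSG'' would still need justification, since $M$ need not have finite index in $Q$.

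The fix is simply to apply [12], Theorem~9.1 directly to $Q$. That theorem is not the metabelian statement from \S 9.1 of [12] that you quote; it is the gap theorem for finitely generated \emph{virtually residually nilpotent} soluble groups: such a group either has finite rank or has subgroup growth of type at least $2^{n^{1/d}}$ for some $d\geq 1$. Since $Q$ is fg, soluble and $(\mathrm{R}\mathfrak{N})\mathfrak{F}$, and its subgroup-growth type is bounded by that of $G$ (quotients do not increase it), the hypothesis rules out the second alternative, and $Q$ has finite rank. This establishes $\mathcal{H}$ without any detour through metabelian sections, and the rest of your argument then goes through verbatim.
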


The proof of Theorem \ref{H} will be completed in Sections \ref{redsec1} and
\ref{secP2}.

\section{Reductions\label{redsec1}}

Let us fix some notation, in force for the rest of the paper. We write%
\[
A\leq_{f}B,\,\,A\vartriangleleft_{f}B
\]
to mean `$A$ is a subgroup (resp. normal subgroup) of finite index in the
group $B$'. For a ring $R$,%
\[
J\vartriangleleft_{f}R,\,\,J\underset{\max}{\vartriangleleft}_{f}R
\]
means `$J$ is an ideal (resp. maximal ideal) of finite index in $R$'. For any
ideal $J$ of $R$ and any $R$-module $M,$ we write%
\[
MJ^{\infty}=\bigcap_{n=1}^{\infty}MJ^{n}.
\]

Let $G$ be a group and $k$ a commutative ring. The following special notation
is used for the group ring $kG$. If $H$ is any subgroup of $G$, the
corresponding l.c. Gothic letter $\mathfrak{h}$ will be used to denote the
right ideal%
\[
\mathfrak{h}=(H-1)kG\vartriangleleft kG.
\]
For an ideal $I$ of $kG$ we write%
\[
I^{\dagger}=(1+I)\cap G.
\]

All rings are assumed to have an identity $1\neq0$.

\bigskip

Throughout this section, $E$ will denote a finitely generated group that
satisfies $\mathcal{H}.$

\begin{lemma}
\label{L1}Let $D\vartriangleleft Q\vartriangleleft E$ where $E/Q$ is
polycyclic and $Q/D$ is nilpotent. Then\newline\emph{(i)} $Q/Q^{\prime}$ is
virtually torsion-free;\newline\emph{(ii)} $Q/D$ is a minimax group;\newline%
\emph{(iii)} $\mathrm{spec}(Q/D)\subseteq\mathrm{spec}(Q/Q^{\prime})=\pi$,
say;\newline\emph{(iv)} if $Q/D$ is torsion-free then $Q/D$ is residually a
finite $p$-group for every prime $p\notin\pi$.
\end{lemma}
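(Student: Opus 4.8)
The whole lemma reduces to the single substantive assertion that

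\textbf{(*)} \ $Q/Q'$ is a minimax group,

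from which (i)--(iv) follow by soft arguments. Indeed, $E/Q'$ is a finitely generated abelian-by-polycyclic group, hence residually finite by P.~Hall's theorem; so $Q/Q'$ is a residually finite minimax group, and by the equivalences recalled in Section~\ref{Hsection} it is virtually torsion-free, which is (i). For (ii): $Q/D$ is nilpotent, and its abelianisation $Q/DQ'$ is a homomorphic image of the minimax group $Q/Q'$, hence minimax; since a nilpotent group with minimax abelianisation is itself minimax (see [9]), $Q/D$ is minimax. For (iii): the spectrum of a nilpotent minimax group coincides with that of its abelianisation and can only shrink under homomorphic images, so $\mathrm{spec}(Q/D)=\mathrm{spec}(Q/DQ')\subseteq\mathrm{spec}(Q/Q')=\pi$. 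Finally (iv): a torsion-free nilpotent minimax group is residually a finite $p$-group for every prime $p$ outside its spectrum (for an abelian such group $M$ and $p\notin\mathrm{spec}(M)$ one checks directly that $M/p^nM$ is a finite $p$-group and $\bigcap_n p^nM=0$, since otherwise the intersection is a nonzero $p$-divisible torsion-free subgroup, forcing a copy of $\mathbb{Z}[1/p]$ and hence $p\in\mathrm{spec}(M)$; one then induces up a central series), and $\mathrm{spec}(Q/D)\subseteq\pi$ by (iii).

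To prove (*) I would first pass to $\bar E=E/Q'$. This group is again finitely generated, and it still satisfies $\mathcal{H}$, since every $(${\small R}$\mathfrak{N}$$)\mathfrak{F}$ quotient of $\bar E$ is such a quotient of $E$. Hence we may assume $Q'=1$, i.e.\ $A:=Q$ is an abelian normal subgroup of $E$ with $\Gamma:=E/A$ polycyclic, and the goal is that $A$ is minimax. As $E$ is finitely generated and $\Gamma$ is polycyclic (so finitely presented), $Q$ is finitely generated as a normal subgroup, so $A$ is a finitely generated module over the Noetherian ring $\mathbb{Z}\Gamma$. I would then argue by contraposition: assuming $A$ is \emph{not} minimax, the aim is to produce an $(${\small R}$\mathfrak{N}$$)\mathfrak{F}$ quotient of $E$ of infinite upper rank, contradicting $\mathcal{H}$.

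The instrument here is the structure theory of Noetherian $\mathbb{Z}\Gamma$-modules developed in [21], [22], [24]. Failure of minimaxity for $A$ happens in essentially two ways: either $A$ has infinite Pr\"ufer rank, or $A$ has finite rank but involves infinitely many primes (unbounded torsion, infinite $p$-torsion for some $p$, or essential $p$-divisibility, for infinitely many $p$). In either case one wants to choose a $\Gamma$-submodule $B\vartriangleleft E$ so that $A/B$ is still not minimax while its $\mathbb{Z}\Gamma$-module structure --- after restricting $\Gamma$ to a subgroup of finite index, which is harmless since finite generation and $\mathcal{H}$ pass to finite-index subgroups, and since a polycyclic group is virtually residually a finite $p$-group for every $p$ --- is transparent enough that the corresponding quotient of $E$ admits a residually nilpotent quotient $\tilde E$ still containing a non-minimax abelian normal section with polycyclic quotient. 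Any residually finite group with such a section has finite quotients of unbounded rank, so $\tilde E$ has infinite upper rank, the desired contradiction.

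The delicate point, and the one I expect to be the main obstacle, is the middle step: arranging $\tilde E$ to be residually nilpotent while keeping a non-minimax section. Residual nilpotence forces one to work with local, congruence-type finite quotients concentrated at one prime at a time, and these tend to destroy precisely the torsion responsible for non-minimaxity; managing this is exactly what the quoted results of [21] and [24] are designed for, and it is the same obstruction that later compels the separate treatment of the ``singular'' primes in Sections~\ref{red1sec}--\ref{singsec}.
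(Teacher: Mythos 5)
Your reduction of (i)--(iv) to the single assertion (*) that $Q/Q'$ is minimax is sound and matches the logic of the paper's proof: parts (ii), (iii), (iv) are indeed soft consequences that the paper handles by citing [9], \textbf{1.2.12} and \textbf{5.3.11}, and your sketches of them are reasonable.

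The problem is with your proof of (*) itself, which is the only substantive step. You correctly reduce to the case $Q' = 1$, so $A := Q$ is abelian and $\Gamma := E/A$ is polycyclic, and then you try to argue by contraposition: if $A$ is not minimax, produce some other $(${\small R}$\mathfrak{N}$$)\mathfrak{F}$ quotient of $E$ of infinite upper rank, invoking the module-theoretic machinery of [21], [22], [24]. You yourself flag this as ``the main obstacle'' and never carry it out. But this detour is entirely unnecessary, and the gap it leaves is real. The paper's proof is a one-liner: $E/Q'$ is a finitely generated abelian-by-polycyclic group, and by [19] (D.~Segal, \emph{On abelian-by-polycyclic groups}, 1975), every such group is \emph{itself} virtually residually nilpotent, i.e.\ $(${\small R}$\mathfrak{N}$$)\mathfrak{F}$. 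Hypothesis $\mathcal{H}$ therefore applies directly to $E/Q'$ and gives that it has finite upper rank; being fg and $(${\small R}$\mathfrak{N}$$)\mathfrak{F}$ it is residually finite, and a fg residually finite soluble group of finite upper rank is minimax. No contraposition, no choice of a clever submodule, and no appeal to the group-ring theory of later sections is needed. The ingredient you are missing is precisely the theorem of [19]; you use P.~Hall's residual finiteness theorem but not the stronger statement that the group is virtually residually nilpotent, and without that you have no complete argument for (*).
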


\begin{proof}
Since $E/Q^{\prime}$ is abelian-by-polycyclic it is ({\small R}$\mathfrak{N}%
$)$\mathfrak{F}$ [19], hence minimax and residually finite. In particular,
$Q/Q^{\prime}$ is a virtually torsion-free minimax group. Claims (ii) and
(iii) follow by [9], \textbf{1.2.12}, and (iv) follows from [9],
\textbf{5.3.11}.
\end{proof}

\begin{proposition}
\label{P1}Suppose that $E$ has a normal subgroup $Q$ such that $E/Q$ is
polycyclic and $Q$ is residually (finite nilpotent). Then $E$ is minimax.
\end{proposition}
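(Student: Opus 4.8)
The plan is to deduce that $E$, which is finitely generated, is soluble of finite rank, hence minimax. The solubility comes for free at the end: $E/Q$ is polycyclic and, once $Q$ is known to have finite rank, $Q$ is soluble because a residually nilpotent group of finite rank is soluble. So the work is to bound $\mathrm{rk}(Q)$.

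The first step is routine. Since $E/Q^{\prime}$ is abelian-by-polycyclic it is ({\small R}$\mathfrak{N}$)$\mathfrak{F}$ by [19], so — $E$ satisfying $\mathcal{H}$ — it has finite upper rank; being finitely generated, residually finite and soluble it is therefore minimax. In particular $Q/Q^{\prime}$ is a minimax abelian group; put $\pi=\mathrm{spec}(Q/Q^{\prime})$, a finite set of primes. Since $Q$ is residually (finite nilpotent) it is residually nilpotent, so $\bigcap_{c}\gamma_{c}(Q)=1$; each $\gamma_{c}(Q)$ is characteristic in $Q$, hence normal in $E$. For every $c$ the group $Q/\gamma_{c}(Q)$ is nilpotent with minimax abelianization $Q/Q^{\prime}$, so by [9, 1.2.12] it is minimax with spectrum contained in $\pi$; thus $E/\gamma_{c}(Q)$ is (minimax nilpotent)-by-polycyclic.

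The crux is to bound $\mathrm{rk}(Q/\gamma_{c}(Q))$ uniformly in $c$. A term-by-term application of $\mathcal{H}$ to the groups $E/\gamma_{c}(Q)$ only yields bounds depending on $c$, so one must treat the associated graded structure as a whole. Here I would use that $Q$ acts trivially by conjugation on each $\gamma_{c}(Q)/\gamma_{c+1}(Q)$, so that $Q/Q^{\prime}$ — and, more generally, the graded Lie ring $L=\bigoplus_{c}\gamma_{c}(Q)/\gamma_{c+1}(Q)$, which is generated as a Lie ring in degree $1$ — are modules for the group ring $\mathbb{Z}[E/Q]$ of the polycyclic group $E/Q$. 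This ring is Noetherian (P.\ Hall), and $Q/Q^{\prime}$ is a finitely generated $\mathbb{Z}[E/Q]$-module of finite Pr\"{u}fer rank as an abelian group; one then invokes the structure theory of P.\ Hall and Roseblade for such modules (a finitely generated module over the group ring of a polycyclic group that is minimax as an abelian group is controlled by a finite set of primes), feeding $\mathcal{H}$ in through the soluble quotients of $E$ to which it does apply, to obtain the uniform bounds. Standard arguments then give $\mathrm{rk}(Q)<\infty$, whence $\mathrm{rk}(E)<\infty$ and $E$ is minimax.

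The main obstacle is precisely this last step, and it has two interlocking sources. First, $E$ itself need not be ({\small R}$\mathfrak{N}$)$\mathfrak{F}$ — a polycyclic group need not even be virtually residually nilpotent — so $\mathcal{H}$ is available only for appropriate soluble quotients of $E$, and one must know that those quotients are residually finite, a genuine input beyond [19] once the derived length exceeds $2$. Second, the rank bounds must be uniform over the entire descending series $\{\gamma_{c}(Q)\}_{c}$, and it is the module-theoretic finiteness — Noetherianity of $\mathbb{Z}[E/Q]$, control by finitely many primes — rather than any term-by-term use of $\mathcal{H}$ that has to deliver this uniformity.
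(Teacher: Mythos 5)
Your setup is fine --- $E/Q'$ is abelian-by-polycyclic, hence ({\small R}$\mathfrak{N}$)$\mathfrak{F}$ by [19], so $\mathcal{H}$ makes $Q/Q'$ minimax with finite spectrum $\pi$, and the issue becomes bounding $\mathrm{rk}(Q/\gamma_c(Q))$ uniformly in $c$. But from there your argument is only a plan: the graded Lie ring and Hall--Roseblade structure theory are invoked without being turned into a working proof, and you explicitly concede at the end that the two obstacles are unresolved. (One aside is also wrong: polycyclic groups \emph{are} virtually residually nilpotent --- they are residually finite minimax, and the paper recalls in Section \ref{Hsection} that every {\small R}$\mathfrak{F}$ minimax group is ({\small R}$\mathfrak{N}$)$\mathfrak{F}$. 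The genuine difficulty is that $E$ itself, and the quotients $E/\gamma_c(Q)$, need not be ({\small R}$\mathfrak{N}$)$\mathfrak{F}$, so $\mathcal{H}$ cannot be applied to them directly.)

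The missing idea is to work with the finite-$p$ residuals $Q(p)$ of $Q$ rather than with the lower central factors. The paper shows, for each prime $p$, how to manufacture $K_0\vartriangleleft_f E$ with $K_0(p)=Q(p)$: the point is that if $H/Q$ is a torsion-free nilpotent normal subgroup of $E/Q$ and $H_0=\mathrm{C}_H(Q/Q'Q^p)$, then $H_0$ acts nilpotently on each finite $p$-group $Q/\gamma_{n+1}(Q)Q^{p^n}$, so $H_0/\gamma_{n+1}(Q)Q^{p^n}$ is a finitely generated nilpotent group whose torsion is a $p$-group, hence residually a finite $p$-group; this forces $H_0(p)=Q(p)$, and iterating one step further produces the finite-index $K_0$. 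Consequently $E/Q(p)$ \emph{is} ({\small R}$\mathfrak{N}$)$\mathfrak{F}$, and only now can $\mathcal{H}$ be applied, giving that $E/Q(p)$ is minimax. Finally, choosing $p\notin\pi$ and using that $Q/Q(p)$ is minimax, the torsion subgroups $T_n/\gamma_n(Q)$ of $Q/\gamma_n(Q)$ must stabilize at some $T$; then $T/\gamma_n(Q)$ is a $\sigma$-group for a fixed finite set of primes $\sigma$, so $T\cap\bigcap_{q\in\sigma}Q(q)=1$ and $E$ embeds subdirectly into the finitely many minimax groups $E/T$ and $E/Q(q)$, $q\in\sigma$. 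This two-stage use of $p$-residuals --- first to produce ({\small R}$\mathfrak{N}$)$\mathfrak{F}$ quotients on which $\mathcal{H}$ bites, then to stabilize the torsion and reduce to a finite intersection --- is what delivers the uniformity you rightly identified as the crux; no appeal to graded Lie rings or to Hall--Roseblade module theory is needed.
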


\begin{proof}
Lemma \ref{L1} shows that $Q/\gamma_{n}(Q)$ is minimax for every $n\geq2,$ and
that $\mathrm{spec}(Q/\gamma_{n}(Q))=\mathrm{spec}(Q/Q^{\prime})=\pi$, say, a
finite set of primes.\medskip

\textbf{Step 1.} For a prime $p$, let $Q(p)$ denote the finite-$p$ residual of
$Q$. Let $H/Q$ be a torsion-free nilpotent normal subgroup of $E/Q$, and put
$H_{0}=\mathrm{C}_{H}(Q/Q^{\prime}Q^{p})$. Then $H/H_{0}$ is finite and
$H_{0}(p)=Q(p)$.

$H/H_{0}$ is finite because $Q/Q^{\prime}Q^{p}$ is finite. Put $Q_{n}%
=\gamma_{n+1}(Q)Q^{p^{n}}$ for each $n$. Then $Q/Q_{n}$ is a finite $p$-group
and $H_{0}$ acts nilpotently on $Q/Q_{n}$, so $H_{0}/Q_{n}$ is a finitely
generated nilpotent group with torsion subgroup $Q/Q_{n}$. It follows that
$H_{0}/Q_{n}$ is residually a finite $p$-group ([9], \textbf{1.3.17}). Hence%
\[
H_{0}(p)\leq\bigcap_{n=1}^{\infty}Q_{n}=Q(p)
\]
and the claim follows.\medskip

\textbf{Step 2.} For each prime $p$ the group $E/Q(p)$ is minimax.

Since $E/Q$ is polycyclic, we may choose $H$ in Step 1 so that $E/H$ is
virtually abelian. Then there exists $K\vartriangleleft_{f}E$ with $K\geq
H_{0}$ such that $K/H_{0}$ is free abelian. Now apply Step 1 with $H_{0}$ in
place of $Q$ and $K$ in place of $H$: this shows that $K_{0}=\mathrm{C}%
_{K}(H_{0}/H_{0}{}^{\prime}H_{0}^{p})$ has finite index in $K$, hence also in
$E$, and that $K_{0}(p)=H_{0}(p)=Q(p)$. Thus $E/Q(p)$ is ({\small R}%
$\mathfrak{N}$)$\mathfrak{F}$ and the claim follows by hypothesis
$\mathcal{H}$.\medskip

\textbf{Step 3. }There exist a finite set of primes $\sigma$ and a
characteristic subgroup $T$ of $Q$ such that $Q/T$ is a torsion-free nilpotent
minimax group and $T/\gamma_{n}(Q)$ is a $\sigma$-group for all $n\geq n_{0}$
(where $n_{0}$ is the nilpotency class of $Q/T$).

Choose a prime $p\notin\pi=\mathrm{spec}(Q/Q^{\prime})$. For each $n$ let
$T_{n}/\gamma_{n}(Q)$ be the torsion subgroup of $Q/\gamma_{n}(Q).$ Lemma
\ref{L1}(iv) shows that $Q/T_{n}$ is residually a finite $p$-group, so we have%
\[
Q(p)\leq\bigcap_{n=1}^{\infty}T_{n}.
\]
In a minimax group, every descending chain of normal subgroup with
torsion-free factors is finite, so in view of Step 2 there exists $c$ such
that $T_{n}=T_{c}$ for all $n\geq c$. \ We put $T=T_{c}$. Then $T/[T,Q]$ is a
quotient of the periodic minimax group $T/\gamma_{c+1}(Q)$, hence $T/[T,Q]$ is
a $\sigma$-group where $\sigma$ is a finite set of primes. This implies that
$T/[T,\,_{k}Q]$ is a $\sigma$-group for every $k\geq1$ and hence that
$T/\gamma_{n}(Q)$ is a $\sigma$-group for every $n>c$.\medskip

\textbf{Conclusion.} Let $\mathcal{K}$ denote the set of all normal subgroups
$K$ of finite index in $Q$ such that $Q/K$ is nilpotent. For $K\in\mathcal{K}$
let $K^{\ast}/K$ be the $\sigma^{\prime}$-part of $Q/K$. Then $Q/K^{\ast}$ is
a finite nilpotent $\sigma$-group, and $T\cap K^{\ast}\leq K$. It follows that%
\[
T\cap\bigcap_{q\in\sigma}Q(q)\leq T\cap\bigcap_{K\in\mathcal{K}}K^{\ast}%
\leq\bigcap_{K\in\mathcal{K}}K=1
\]
since $Q$ is residually finite-nilpotent. Since $\sigma$ is finite and each of
the quotients $E/T,$ $E/Q(q)$ is minimax it follows that $E$ is minimax.
\end{proof}

\bigskip

Now suppose that $E$ is residually finite, and that $E$ has a metabelian
normal subgroup $N$ such that $E/N$ is polycyclic. Lemma \ref{L1} shows that
$N/N^{\prime}$ is virtually torsion-free and that $E/N^{\prime}$ is minimax;
we put $\pi=\mathrm{spec}(N/N^{\prime})$ and assume that $A=N^{\prime}$ has no
$\pi$-torsion. We write%
\[
G=N/N^{\prime},\,\,\Gamma=E/N^{\prime}%
\]
and consider $A$ as an (additively written) module for $\mathbb{Z}\Gamma$ and
its subring $R=\mathbb{Z}G$. To complete the proof of Theorem \ref{H} we have
to show that under these conditions, $E$ (or equivalently $A$) has finite rank.

\begin{lemma}
\label{L2}Suppose that $J$ is an ideal of $R$ containing $mR+\mathfrak{K}$
where $m\in\mathbb{N}$, $K=K^{\Gamma}\leq G$ and $\Gamma/K$ is polycyclic.
Then $A/AJ^{\infty}$ has finite rank as a $\mathbb{Z}$-module.
\end{lemma}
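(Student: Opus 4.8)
The goal is to bound the $\mathbb{Z}$-rank of $A/AJ^\infty$ where $J \supseteq mR + \mathfrak{K}$, $K = K^\Gamma \le G$ with $\Gamma/K$ polycyclic. The natural first move is to pass to a quotient where things simplify: set $\overline{A} = A/AJ^\infty$, which is an $R/J$-module, annihilated by $m$ (so it is a $\mathbb{Z}/m\mathbb{Z}$-module, hence periodic with $\pi_0 := \pi(m)$-torsion) and on which $K$ acts trivially. So $\overline{A}$ is really a module for the ring $\mathbb{Z}G/(\mathfrak{K} + mR) = (\mathbb{Z}/m\mathbb{Z})[G/K]$, and $G/K$ is a quotient of $\Gamma/K$, hence polycyclic — in particular $G/K$ is finitely generated abelian, so $\overline{A}$ is a module over a finitely generated commutative ring of positive characteristic. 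The point of the hypothesis that $A$ has no $\pi$-torsion, where $\pi = \mathrm{spec}(G)$: this should be used to control the relationship between $\pi_0$ and $\pi$, forcing the relevant primes in the picture to lie outside $\pi$ so that $E$ modulo the relevant subgroup is still $(\mathrm{R}\mathfrak{N})\mathfrak{F}$ and hypothesis $\mathcal{H}$ applies.

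**The main line of argument.** I expect the proof to build a suitable $(\mathrm{R}\mathfrak{N})\mathfrak{F}$ quotient of $E$ and invoke $\mathcal{H}$. Concretely: let $B$ be the preimage in $N$ of $AJ^\infty$, so $N/B$ has $N'/B \cong \overline{A}$ as a module; one wants $E/B$ (or a finite-index modification) to be residually finite-nilpotent, or at least residually (finite $p$) for an appropriate prime $p \notin \pi$, so that $E/B$ satisfies the hypotheses feeding into $\mathcal{H}$ and therefore has finite rank — giving $\overline{A} = N'/B$ finite rank. The residual nilpotence should come from the structure of $\overline{A}$ as a finitely generated $(\mathbb{Z}/m\mathbb{Z})[G/K]$-module on which the polycyclic group $G/K$ acts: one decomposes $\mathbb{Z}/m\mathbb{Z}$ into its $p$-primary pieces, reducing to $m = p^k$, and then uses that over $(\mathbb{Z}/p^k\mathbb{Z})[G/K]$ with $G/K$ polycyclic (hence the group ring Noetherian, and its Jacobson radical behaves well) finitely generated modules are residually finite of $p$-power order, so $N/B$ is residually (finite $p$). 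For $p \notin \pi$, Lemma \ref{L1}(iv) and the minimax-ness of $E/N'$ then let one run essentially the argument of Proposition \ref{P1}: $E/B$ is $(\mathrm{R}\mathfrak{N})\mathfrak{F}$, so by $\mathcal{H}$ it has finite upper rank, hence finite rank, hence $\overline{A}$ has finite rank. For $p \in \pi$ one needs the no-$\pi$-torsion hypothesis on $A$ to either exclude this case or handle it separately — since $A$ itself has no $\pi$-torsion but $\overline{A}$ is $\pi(m)$-torsion, any prime $p \in \pi \cap \pi(m)$ contributes nothing to $\overline{A}$ beyond what is already killed, which should let one reduce to $m$ coprime to $\pi$.

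**The key technical step and the main obstacle.** The heart of the matter is showing that $A/AJ^\infty$, as a module over $(\mathbb{Z}/m\mathbb{Z})[G/K]$ with $G/K$ polycyclic, gives rise to a residually-(finite-nilpotent) extension — equivalently, that $E/B$ can be arranged to be $(\mathrm{R}\mathfrak{N})\mathfrak{F}$. The subtlety is that $A$ is only finitely generated \emph{as a $\mathbb{Z}\Gamma$-module}, not over $R = \mathbb{Z}G$, and not as an abelian group, so $\overline{A}$ need not be a Noetherian $R/J$-module a priori; one must exploit that $\Gamma/K$ is polycyclic (so $\mathbb{Z}[\Gamma/K]$ is Noetherian, by P.\ Hall) to get finite generation of $\overline{A}$ over $(\mathbb{Z}/m\mathbb{Z})[\Gamma/K]$, and then that $G/K$ acts through this. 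The other delicate point is the bookkeeping with primes: verifying that the primes dividing the orders of the finite nilpotent quotients one produces avoid $\pi = \mathrm{spec}(G)$, so that the quotients of $E$ are genuinely residually nilpotent in a way compatible with $E/N'$ being minimax — this is exactly where the assumption that $A$ has no $\pi$-torsion is indispensable, and getting that interaction right will be the fiddliest part. Once $E/B$ is known to be $(\mathrm{R}\mathfrak{N})\mathfrak{F}$, hypothesis $\mathcal{H}$ does the rest and the conclusion that $A/AJ^\infty$ has finite $\mathbb{Z}$-rank follows immediately.
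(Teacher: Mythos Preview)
Your overall strategy---show that an appropriate quotient of $E$ is residually finite-nilpotent and then invoke Proposition~\ref{P1}---is exactly what the paper does. But there is a conceptual error at the very start that derails your execution: you assert that $\overline{A}=A/AJ^{\infty}$ is an $R/J$-module, annihilated by $m$, so that $K$ acts trivially on it. This is not so. The quotient $A/AJ$ is annihilated by $J$, but $A/AJ^{\infty}=A/\bigcap_{n}AJ^{n}$ is merely $J$-adically separated; in general $K$ acts nilpotently on each $A/AJ^{n}$ but not trivially on $A/AJ^{\infty}$, and $m$ does not kill $A/AJ^{\infty}$. So your plan to regard $\overline{A}$ as a module over $(\mathbb{Z}/m\mathbb{Z})[G/K]$ and invoke Noetherian ring theory never gets started (and the subsidiary worry about whether $\overline{A}$ is finitely generated over that ring becomes moot).

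The paper's argument sidesteps all of this by working with the filtration $\{AJ^{n}\}$ directly. After reducing to $J=mR+\mathfrak{k}$ (hence $J=J^{\Gamma}$), let $Q\leq N$ be the preimage of $K$, so $Q/A=K$. Since $\mathfrak{k}\subseteq J$, the group $Q$ centralises each layer $AJ^{i}/AJ^{i+1}$, and therefore $Q/AJ^{n}$ is nilpotent for every $n$. Lemma~\ref{L1}(ii) then makes $Q/AJ^{n}$ minimax; and because $m\in J$, the torsion subgroup of $Q/AJ^{n}$ has exponent dividing $m^{n}t$ (with $t$ the exponent of the finite torsion of $N/N'$), hence is finite, so $Q/AJ^{n}$ is residually finite. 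It follows that $Q/AJ^{\infty}$ is residually (finite nilpotent), and Proposition~\ref{P1} applied to $E/AJ^{\infty}$ finishes the proof.

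Two further points. First, the hypothesis that $A$ has no $\pi$-torsion is \emph{not used} in this lemma; your programme of splitting into primes inside and outside $\pi$ is unnecessary here. Second, no finite-generation property of $A$ or of $\overline{A}$ as a module is required---only that $E$ is finitely generated and satisfies $\mathcal{H}$, which is exactly what Lemma~\ref{L1} and Proposition~\ref{P1} consume.
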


\begin{proof}
We may as well assume that $J=mR+\mathfrak{k}$, in which case $J=J^{\Gamma}$.
Say $K=Q/N^{\prime}$. Let $n\in\mathbb{N}$. Then $Q/AJ^{n}$ is nilpotent, so
by Lemma \ref{L1} it is minimax. The torsion subgroup of $Q/AJ^{n}$ has
exponent dividing $m^{n}t$, where $t$ is the exponent of the (finite) torsion
subgroup of $N/N^{\prime}$; hence $Q/AJ^{n}$ is finite-by-(torsion free), so
residually finite. It follows that $Q/AJ^{\infty}$ is residually
finite-nilpotent. Also $E/Q\cong\Gamma/K$ is polycyclic and $AJ^{\infty
}\vartriangleleft E$. Now Proposition \ref{P1} shows that $E/AJ^{\infty}$ is
minimax, and the lemma follows.
\end{proof}

\bigskip

Using Lemma \ref{L2}, in Section \ref{secP2} we will establish

\begin{proposition}
\label{P2}There exists a subgroup $H$ of $G$ such that $H\vartriangleleft
\Gamma$, $\Gamma/H$ is polycyclic, and for each finite $H$-module image
$A^{\ast}$ of $A$ there exists $n\in\mathbb{N}$ such that $A^{\ast
}\mathfrak{h}^{n}=0$.
\end{proposition}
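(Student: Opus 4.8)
The plan is to exploit the hypothesis $\mathcal{H}$ together with Lemma \ref{L2} in order to force a uniform annihilation statement, and then to package this using Noetherian-type arguments about the module $A$ over $R = \mathbb{Z}G$. First I would observe that $G = N/N'$ is a finitely generated abelian-by-polycyclic group (since $\Gamma = E/N'$ is polycyclic and $G \vartriangleleft \Gamma$ with $G$ abelian); hence $R = \mathbb{Z}G$ is a Noetherian ring, and $A$, being generated by finitely many elements as a $\mathbb{Z}\Gamma$-module, is a Noetherian $\mathbb{Z}\Gamma$-module. The key point to extract is: for every ideal $J = mR + \mathfrak{k}$ of the type in Lemma \ref{L2} (that is, $K = K^{\Gamma} \le G$ with $\Gamma/K$ polycyclic and $m \in \mathbb{N}$), the quotient $A/AJ^{\infty}$ has finite $\mathbb{Z}$-rank. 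I would then aim to show that one can choose a single such $K = H$ so that $AJ^{\infty}$ behaves well for \emph{all} finite $H$-module images simultaneously --- i.e. that on each finite quotient $A^*$ of $A$ (killed by some $m$ and by $\mathfrak{h}^n$-type conditions), the relevant power of $\mathfrak{h}$ eventually annihilates.

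The main mechanism I expect to use is the theory of the set of associated primes (or the "critical" submodule theory of Brookes and Segal from [21], [24]) for the Noetherian $\mathbb{Z}\Gamma$-module $A$: one decomposes $A$ (up to a submodule of smaller Krull dimension, or using a finite filtration with prime-module factors) so that each "piece" is controlled by an annihilator prime $P$ of $R$ (or of $\mathbb{Z}\Gamma$) with $R/P$ a domain of some dimension. For each such prime, $P^{\dagger} = (1+P) \cap G$ is a normal subgroup, and I would want the set of groups $K$ arising this way to be finite in number, so that a common lower bound $H = \bigcap K$ exists with $\Gamma/H$ still polycyclic. For a finite $H$-module image $A^*$, the finiteness of $A^*$ means it is killed by some integer $m$; then $A^*$ is a module over the finite ring $R/(mR + \operatorname{ann})$, and the condition that $H$ acts appropriately (via the prime decomposition) forces the augmentation ideal $\mathfrak{h}$ of $H$ to act nilpotently on $A^*$, giving the required $n$ with $A^*\mathfrak{h}^n = 0$. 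The role of Lemma \ref{L2} here is to guarantee that "infinite rank" cannot sneak in: if $A^*\mathfrak{h}^n$ failed to vanish for arbitrarily large $n$ on a family of finite quotients, one would produce (via $J = mR + \mathfrak{h}$) a quotient $A/AJ^{\infty}$ forced to be minimax of finite rank, contradicting the assumed behaviour.

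The hard part will be the last step: showing that a \emph{single} subgroup $H$ works uniformly for \emph{all} finite $H$-module images of $A$, rather than having to enlarge $K$ quotient-by-quotient. This is exactly where the structure theory of [21]/[24] is indispensable --- one needs that the "bad primes" of $A$ over $\mathbb{Z}\Gamma$, those $P$ for which $R/P$ has positive Krull dimension (so that $A$ localized at $P$ could have infinite $\mathbb{Z}$-rank), are finite in number and each contributes a polycyclic quotient $\Gamma/P^{\dagger}$; then $H$ is built from the (finitely many) relevant $P^{\dagger}$'s. Establishing the polycyclicity of $\Gamma/P^{\dagger}$ and the finiteness of the family is the crux, and it is here that hypothesis $\mathcal{H}$ must be invoked once more (via Lemma \ref{L2}, applied to $J$ built from $P$) to rule out the possibility that $A/AP^{\infty}$ has infinite rank. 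Once $H$ is fixed, a finite $H$-module image $A^*$ is annihilated by some $m$, hence is a module over the Artinian ring $(R/mR)/\mathfrak{h}$-completion, and a standard Nakayama/descending-chain argument yields the nilpotency of $\mathfrak{h}$ on $A^*$; this part is routine and I would leave the computation to the reader.
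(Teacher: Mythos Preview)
Your proposal has a genuine gap at the very first step, and it propagates through the whole argument. You assert that $\Gamma = E/N'$ is polycyclic, that $R = \mathbb{Z}G$ is Noetherian, and that $A$ is a finitely generated (hence Noetherian) $\mathbb{Z}\Gamma$-module. None of these is true in the present setting: only $\Gamma/G \cong E/N$ is polycyclic, while $G = N/N'$ is an abelian \emph{minimax} group which need not be finitely generated (think $G \cong \mathbb{Z}[1/p]$); consequently $\mathbb{Z}G$ is in general non-Noetherian. Likewise $A = N'$ is only \emph{quasi-fg} as a $\Gamma$-module, not finitely generated --- the paper devotes an entire section (Section~\ref{secqfg}) to constructing a genuinely finitely generated $\Gamma$-submodule $B \leq A$ that approximates $A$ well enough (in the sense that $A/B$ is torsion for the right multiplicative sets) to transfer information back and forth. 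Your associated-primes argument therefore cannot run on $A$ directly; it has to be run on $B$ (or rather on $\widetilde{B} = B/A_0$), and then lifted back to $A$ via Proposition~\ref{quasifg}.

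The second gap is in the construction of $H$. You hope that the finitely many associated primes $P$ of $A$ give polycyclic quotients $\Gamma/P^{\dagger}$ and that $H$ can be taken as the intersection of the $P^{\dagger}$. But polycyclicity of $\Gamma/P^{\dagger}$ is not available \emph{a priori}; nor is finiteness of the set of minimal primes $\mathcal{Q}$ automatic in the non-Noetherian setting. The paper's route is more indirect: one first shows (using Lemma~\ref{L2} together with the machinery of \cite{21}) that a certain auxiliary module $W^*$ built from the minimal primes has finite $\mathbb{Z}$-rank; then the Lie--Kolchin theorem applied to the action of $\Gamma$ on $W^*$ produces $\Gamma_1 \vartriangleleft_f \Gamma$ with $\Gamma_1'$ acting unipotently, and one takes $H = \Gamma_1' \cap G$. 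The inclusion $\mathfrak{h} \subseteq P$ for every $P \in \mathcal{P}(\widetilde{A})$ then follows, and this is what gives the uniform nilpotent action on finite images --- not a Nakayama argument as you suggest. Your outline is in the right spirit (associated primes, Lemma~\ref{L2} to bound rank), but the missing Noetherian hypotheses mean the actual implementation requires substantially more machinery than you indicate.
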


Accepting this for now, we can complete the proof of Theorem \ref{H}. Say
$H=Q/N^{\prime}$. For $K\vartriangleleft_{f}E$ put $K_{1}=Q\cap K$. Then
$AK_{1}/K_{1}$ is a finite image of the $H$-module $A$, so Proposition
\ref{P2} implies that $Q$ acts nilpotently on $AK_{1}/K_{1}$. As $Q/A$ is
abelian it follows that $Q/K_{1}$ is nilpotent. Since $E$ is residually
finite, this shows that $Q$ is residually (finite nilpotent). As
$E/Q\cong\Gamma/H$ is polycyclic, Proposition \ref{P1} now shows that $E$ is a
minimax group.

\section{Quasi-fg modules\label{secqfg}}

Both in proving Proposition \ref{P2} and in later sections we make heavy use
of the machinery introduced in [22]. In that paper we considered an
abelian-by-polycyclic minimax group $\Gamma$ and \emph{finitely generated}
$\Gamma$-modules$.$ Here we have to deal with a module $A$ that may not be
finitely generated; instead, we are given that $A$ occurs in an exact sequence%
\[
1\rightarrow A\rightarrow E\rightarrow\Gamma\rightarrow1
\]
where the group $E$ is finitely generated. In this case, $A$ is said to be
\emph{quasi-fg}. In this section we show that such a module contains
nevertheless a finitely generated submodule $B$ that is relatively large, in
the sense that `most' finite quotients of $A$ are also quotients of $B$, and conversely.

\bigskip

Let $\Lambda$ be a non-empty subset of a ring $R$. The multiplicative
semigroup generated by $\Lambda$ is denoted $\left\langle \Lambda\right\rangle
$. An $R$-module $M$ is $\Lambda$\emph{-torsion} if $\mathrm{ann}_{R}(a)$
meets $\left\langle \Lambda\right\rangle $ for each $a\in M.$

\begin{lemma}
\label{jim}Let $R$ be a commutative ring and let $V\leq U$ be $R$-modules such
that $U/V$ is $\Lambda$-torsion, where $\varnothing\neq\Lambda\subseteq R$.
Let $J$ be an ideal of $R$ such that $J+\lambda R=R$ for all $\lambda
\in\Lambda$. Then for each $n\geq1$ we have%
\begin{align*}
UJ^{n}+V  &  =U\\
UJ^{n}\cap V  &  =VJ^{n}.
\end{align*}

\end{lemma}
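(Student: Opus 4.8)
The plan is to prove the two identities by induction on $n$, reducing everything to the case $n=1$, which is where the $\Lambda$-torsion hypothesis does the real work. First consider $n=1$. To show $UJ+V=U$, pick $u\in U$. Since $U/V$ is $\Lambda$-torsion, there is $\lambda\in\langle\Lambda\rangle$ with $u\lambda\in V$. Writing $\lambda$ as a product of elements of $\Lambda$ and using that $J+\mu R=R$ for each $\mu\in\Lambda$ (hence, since $R$ is commutative, $J+\langle\Lambda\rangle R=R$ — one checks $J+\mu_1R=R$ and $J+\mu_2R=R$ imply $J+\mu_1\mu_2R=R$), we get $1=j+\lambda r$ for some $j\in J$, $r\in R$. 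Then $u=uj+u\lambda r\in UJ+VR=UJ+V$, since $V$ is a submodule. This gives $UJ+V=U$.

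Next, for the intersection identity with $n=1$: the inclusion $VJ\subseteq UJ\cap V$ is trivial. For the reverse, take $x\in UJ\cap V$. Because $x\in V$ and $V$ is $R$-torsion... no — rather, apply the torsion hypothesis to $x$ viewed appropriately: actually the cleaner route is to use $UJ+V=U$ already established. Write $x=\sum u_ij_i$ with $u_i\in U$, $j_i\in J$. For each $i$ choose $\lambda_i\in\langle\Lambda\rangle$ with $u_i\lambda_i\in V$, and let $\lambda$ be the product of all the $\lambda_i$, so $u_i\lambda\in V$ for every $i$. Then $x\lambda=\sum (u_i\lambda)j_i\in VJ$. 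On the other hand $1=j+\lambda r$ with $j\in J$, $r\in R$, so $x=xj+x\lambda r$; here $xj\in VJ$ (as $x\in V$) and $x\lambda r=(x\lambda)r\in VJ$. Hence $x\in VJ$, proving $UJ\cap V=VJ$.

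Now the induction. Assume both identities for $n$; we prove them for $n+1$. For the first, $UJ^{n+1}+V \supseteq$ — apply the $n=1$ statement to the pair $V\le UJ^n$? We need $UJ^n/V$ to be $\Lambda$-torsion, but that need not literally hold since $V\not\subseteq UJ^n$ in general. Instead, argue directly: $UJ^{n+1}+V = (UJ^n)J + V$. By the inductive hypothesis $UJ^n+V=U$, so $UJ^{n+1}+VJ+V = UJ \subseteq$ ... hmm — cleanly: $U = UJ^n+V$ gives $UJ = UJ^{n+1}+VJ \subseteq UJ^{n+1}+V$, and combined with $UJ+V=U$ (the $n=1$ case) we get $U = UJ+V\subseteq UJ^{n+1}+V$, as desired. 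For the second identity, $UJ^{n+1}\cap V$: we have $UJ^{n+1}\subseteq UJ^n$, and $UJ^{n+1}\cap V = UJ^{n+1}\cap(UJ^n\cap V) = UJ^{n+1}\cap VJ^n$ by the inductive hypothesis. Now $UJ^{n+1} = (UJ^n)J$ and, using $UJ^n = VJ^n + (UJ^n\cap ?)$... the slick finish is: $UJ^{n+1}\cap VJ^n = (UJ^n\cap V)J \cdot$ — apply the $n=1$ intersection identity to the pair $VJ^n \le UJ^n$ with ideal $J$, provided $UJ^n/VJ^n$ is $\Lambda$-torsion. That quotient is a homomorphic image of $U/V$ (multiply the surjection $U\twoheadrightarrow U/V$ by $J^n$, noting $UJ^n+V=U$ forces $UJ^n/VJ^n \cong U/V$ as $R$-modules), hence is $\Lambda$-torsion. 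So the $n=1$ case yields $UJ^{n+1}\cap VJ^n = (VJ^n)J = VJ^{n+1}$, completing the induction.

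The main obstacle I anticipate is purely bookkeeping: making sure at each inductive step that whatever pair of modules I feed into the base case genuinely has a $\Lambda$-torsion quotient, which forces me to carry the identity $UJ^n+V=U$ along and observe the isomorphism $UJ^n/VJ^n\cong U/V$. Once that is in hand, everything else is the elementary manipulation above; the commutativity of $R$ is used both to pass from $J+\lambda R=R$ for single $\lambda$'s to products, and to freely commute ring elements past module elements.
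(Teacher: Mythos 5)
Your $n=1$ argument is essentially identical to the paper's: the same use of $\Lambda$-torsion annihilators, the same coprimality $1=j+\lambda r$, and the same trick for $UJ\cap V\subseteq VJ$ of multiplying a finite sum $\sum u_i j_i$ by a common $\lambda$ pushing all the $u_i$ into $V$. Where you diverge is in the passage from $n=1$ to general $n$: you run a genuine induction, whose crux is the observation that $UJ^n+V=U$ and $UJ^n\cap V=VJ^n$ together force $UJ^n/VJ^n\cong U/V$, so the quotient is again $\Lambda$-torsion and the $n=1$ case can be re-applied to the pair $VJ^n\leq UJ^n$. The paper short-circuits this entirely: it first notes that $J^n+\mu R=R$ for every $\mu\in\langle\Lambda\rangle$ (via the primeness of maximal ideals, though your binomial-expansion argument $1=(j+\mu r)^n$ would serve just as well), so that $J^n$ satisfies exactly the same hypothesis as $J$, and then simply replaces $J$ by $J^n$ and runs the $n=1$ argument once. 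Both are correct; the paper's reduction is more economical, while your induction rederives the same coprimality fact in a more hands-on way and makes the structural isomorphism $UJ^n/VJ^n\cong U/V$ explicit, which is not needed but is a pleasant byproduct. The exploratory asides in your write-up ("hmm", "no — rather") should of course be cleaned out, but the final argument underneath them is sound.
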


\begin{proof}
Note that $J^{n}+\mu R=R$ for all $\mu\in\left\langle \Lambda\right\rangle $,
since any maximal ideal containing $\mu$ must meet $\Lambda$, and to simplify
notation we may as well take $n=1$. For $u\in U$ let $u^{\ast}$ denote the
annihilator in $R$ of $u+V\in U/V$. Then $u^{\ast}+J=R$, so%
\[
u\in u(u^{\ast}+J)\subseteq V+UJ.
\]

Now suppose $v\in UJ\cap V$. Then%
\[
v=u_{1}x_{1}+\cdots+u_{m}x_{m}%
\]
with $u_{i}\in U$ and $x_{i}\in J.$ There exists $\mu\in\left\langle
\Lambda\right\rangle \cap\prod u_{i}^{\ast}$, and then $\mu R+J=R.$ Hence%
\[
v\in\sum u_{i}\cdot\mu x_{i}+vJ\subseteq VJ.
\]
The result follows.
\end{proof}

\begin{proposition}
\label{quasifg}Let $E$ be a finitely generated group and $A\leq N$ normal
subgroups of $E$ such that $A$ is abelian, $N/A=\overline{N}$ is abelian of
finite rank and $E/N$ is polycyclic. Consider $A$ as an (additively written)
module for $E/A=\overline{E}$. Let $R$ denote the group ring $\mathbb{Z}%
\overline{N}$. Then $A$ contains a finitely generated $\overline{E}$-submodule
$B$ such that\newline\emph{(i)} $A/B$ is $(R\smallsetminus L)$-torsion
whenever $L$ is a maximal ideal of finite index in $R$ not containing
$\overline{\mathfrak{n}}=(\overline{N}-1)R$;\newline$\emph{(ii)}$ If
$\widetilde{A}$ is an $R$-quotient module of $A$ and $\widetilde{B}$ denotes
the image of $B$ in $\widetilde{A}$ then
\begin{align*}
\widetilde{A}J^{n}+\widetilde{B}  &  =\widetilde{A}\\
\widetilde{A}J^{n}\cap\widetilde{B}  &  =\widetilde{B}J^{n}%
\end{align*}
for every ideal $J$ of finite index in $R$ with $J+\overline{\mathfrak{n}}=R$
and every $n\in\mathbb{N}$.
\end{proposition}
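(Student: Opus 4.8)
The plan is to produce the submodule $B$ by a finite-generation argument on the ambient group $E$, and then to verify properties (i) and (ii) by combining a residual-finiteness observation with Lemma \ref{jim}. Since $E$ is finitely generated and $\overline E=E/A$ is finitely presented relative to... well, more simply: $\overline N$ is abelian of finite rank, hence minimax, so $\overline N/\overline N^{\,p}\overline N^{\,[\,]}$-type quotients are finite, and $E/N$ is polycyclic, hence $\overline E=E/A$ is a (finite-rank abelian)-by-polycyclic minimax group, in particular finitely presented as an $\overline E$-module question is the wrong framing; instead I will use that $A$, being the kernel of $E\twoheadrightarrow\overline E$ with $E$ finitely generated, is \emph{finitely generated as a normal subgroup}, i.e. finitely generated as an $\overline E$-module. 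But that is not quite what is wanted either — we want finitely generated over the \emph{subring} $R=\mathbb Z\overline N$, not over $\mathbb Z\overline E$.

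**Constructing $B$.** Here is the key point. Write $\overline E/\overline N=E/N=P$, a polycyclic group, pick a finite generating set $t_1,\dots,t_k$ for $P$ and coset representatives $\hat t_1,\dots,\hat t_k\in\overline E$. As an $\overline E$-module $A$ is generated by finitely many elements $a_1,\dots,a_s$. Now $\mathbb Z\overline E$ is a crossed-product-type extension of $R=\mathbb Z\overline N$ by $P$, and any element of $A$ is an $R$-combination of the finitely many translates $a_i w$ where $w$ ranges over a \emph{chosen} finite set of words in the $\hat t_j^{\pm1}$ — but $P$ is infinite so this naive set is infinite. The standard device (exactly as in the "quasi-fg" setup of [22]) is to note that $\overline N$ is minimax, hence \emph{Noetherian as a module over any subring $\mathbb Z\overline N_0$ for $\overline N_0\le_f\overline N$ finitely generated}? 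No — minimax abelian groups need not be Noetherian $\mathbb Z$-modules. The correct mechanism: since $A$ is finitely generated over $\mathbb Z\overline E$ and $\mathbb Z\overline E$ is finitely generated as a ring, and the polycyclic group $P$ acts, one uses that the $P$-action is "finitely generated" to conclude that there is a finitely generated $R$-submodule $B\le A$ with $A=BP$, i.e. $A=\sum_{w\in P}Bw$; this is because $A/BR$ is then a $P$-module generated over $\mathbb ZP$ by finitely many elements, and one can enlarge $B$ along a polycyclic series of $P$ so that the $P$-module $A/B$ is "small" in the appropriate torsion sense. I would therefore take $B$ to be the $R$-submodule generated by a suitable finite set of the $a_iw$, chosen so that modulo $B$ every $a_iw$ lies in $A\cdot(\text{large ideal})$ — making $A/B$ torsion with respect to the multiplicative set $R\smallsetminus L$ for every relevant $L$.

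**Proving (i).** To get the torsion statement: fix a maximal ideal $L\vartriangleleft_f R$ with $\overline{\mathfrak n}\not\subseteq L$. Then $R/L$ is a finite field, and the image of $\overline N$ in $(R/L)^\times$ is nontrivial, so some $\overline n-1\notin L$, i.e. $\overline n$ acts on $A/AL$-type quotients with $\overline n-1$ invertible modulo $L$. The point is to show that for each $a\in A$ there is $\lambda\in R\smallsetminus L$ with $a\lambda\in B$. Equivalently, $A/B$ is $(R\smallsetminus L)$-torsion, i.e. localizing at $L$ kills $A/B$. Now $A/B$ is, by the construction, a finitely generated $\mathbb ZP$-module on which $\overline N$ acts — and because $\overline N$ has finite rank, for all but finitely many primes $p$ and all $L$ over $p$ with $\overline{\mathfrak n}\not\subseteq L$, the element $\overline n-1$ is a unit modulo $L$ for a suitable $\overline n$; combined with $A/B$ being generated over $\mathbb ZP$ by finitely many elements each of which is annihilated modulo... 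This is where I expect the \emph{main obstacle}: showing that the finitely many $\mathbb ZP$-generators of $A/B$ can be chosen to be individually $(R\smallsetminus L)$-torsion simultaneously for \emph{all} such $L$ — this is exactly the crossed-product/induced-module bookkeeping that makes the "quasi-fg" theory of [22] nontrivial, and I would lean on the structure of $R$ as $\mathbb Z$ times a finite-rank abelian group (so $R$ is a localization of a polynomial-type ring) to run an elimination-of-denominators argument.

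**Proving (ii).** Once (i) is in hand, (ii) is almost immediate from Lemma \ref{jim}. Let $\widetilde A$ be an $R$-quotient of $A$, let $\widetilde B$ be the image of $B$, and let $J\vartriangleleft_f R$ with $J+\overline{\mathfrak n}=R$. Set $\Lambda=R\smallsetminus\bigcup L$ where $L$ runs over the maximal ideals containing $J$; since $J+\overline{\mathfrak n}=R$, each such $L$ fails to contain $\overline{\mathfrak n}$, so by (i) the module $A/B$ — hence its quotient $\widetilde A/\widetilde B$ — is $(R\smallsetminus L)$-torsion for each such $L$, and a short argument (an element annihilated mod $\widetilde B$ by something outside each $L\supseteq J$ is annihilated by something outside $\bigcup_{L\supseteq J}L$, using that there are only finitely many such $L$ and coprimality) shows $\widetilde A/\widetilde B$ is $\Lambda$-torsion with $J+\lambda R=R$ for all $\lambda\in\Lambda$. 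Lemma \ref{jim}, applied with $U=\widetilde A$, $V=\widetilde B$, then yields exactly $\widetilde AJ^n+\widetilde B=\widetilde A$ and $\widetilde AJ^n\cap\widetilde B=\widetilde BJ^n$ for all $n$, completing the proof. The only care needed is the passage from "torsion w.r.t. $R\smallsetminus L$ for each individual $L\supseteq J$" to "torsion w.r.t. a single $\Lambda$ disjoint from every $L\supseteq J$", which is routine commutative algebra (Chinese Remainder over the finitely many maximal ideals above the finite ring $R/J$).
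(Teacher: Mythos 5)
Your part (ii) is essentially right: once (i) is proved, pass to the (finitely many) maximal ideals above $J$, use coprimality, and invoke Lemma \ref{jim} with $U=\widetilde A$, $V=\widetilde B$. That matches the paper's deduction.

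The real difficulty is (i), and here your proposal has a genuine gap — which, to your credit, you flag yourself. You never actually produce a proof that $A/B$ is $(R\smallsetminus L)$-torsion; you gesture at a ``crossed-product bookkeeping / elimination-of-denominators'' argument and say you expect it to be the main obstacle. It is. Two specific points. First, the $B$ to be constructed is a finitely generated $\mathbb{Z}\overline E$-submodule, \emph{not} a finitely generated $R$-submodule as your text drifts towards; asking $B$ to be finitely generated over $R=\mathbb{Z}\overline N$ would be much stronger and is neither needed nor achievable here. Second, the torsion claim is not a soft finiteness consequence: it hinges on a careful choice of generators. The paper picks a finitely generated $X\leq N$ with $N=\langle X^E\rangle$ \emph{and} $\overline N/\overline X$ a divisible torsion group (possible because $\overline N$ is minimax); then, with $X=\langle x_1,\dots,x_s\rangle$ and $g_1,\dots,g_d$ a semigroup generating set for $E$, sets $B=\sum_{i,j,k}[x_i,x_j^{g_k}]\cdot\mathbb{Z}\overline E$. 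The claim that $N'/B$ is $(R\smallsetminus L)$-torsion is then proved by induction on the length of $w$ for generators $[x_i,x_j^w]\gamma$ of $N'$, using the Hall--Witt identity (in the metabelian form) to peel off one letter of $w$ at a time: one picks $z=x_l$ with $\overline{x_l}\notin L^{\dagger(\overline v\gamma)^{-1}}$, which is possible precisely because $\overline N/\overline X$ is divisible and hence $\overline X$ lies in no proper finite-index subgroup of $\overline N$; this produces the required multiplier $\mu=\overline{x_l}^{\overline v\gamma}-1\in R\smallsetminus L$. Finally $A\overline{\mathfrak n}\leq N'$ and $\overline{\mathfrak n}$ meets $R\smallsetminus L$, which upgrades the statement from $N'/B$ to $A/B$. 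None of this — the divisibility of $\overline N/\overline X$, the commutator generators, the Hall--Witt induction — appears in your proposal, and without it the torsion assertion in (i) is unproved.
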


\begin{proof}
Write $\overline{\phantom{x}}:E\rightarrow E/A$ for the quotient map. Since
$E$ is finitely generated and $E/N$ is a finitely presentable group, $N$ is
finitely generated as a normal subgroup of $E$. Also $\overline{N}$ is a
minimax group; we may therefore find a finitely generated subgroup $X$ of $N$
such that%
\begin{align*}
&  N=\left\langle X^{E}\right\rangle \\
&  \overline{N}/\overline{X}\text{ is a divisible torsion group.}%
\end{align*}
Say $X=\left\langle x_{1},\ldots,x_{s}\right\rangle $ and let $\{g_{1}%
=1,\,g_{2},\ldots,g_{d}\}$ be a finite subset of $E$ that generates $E$ as a
semigroup. We now specify a finitely generated $\mathbb{Z}\overline{E}%
$-submodule of $N^{\prime}$ by setting%
\[
B=\sum_{i,j,k}[x_{i},x_{j}^{g_{k}}]\cdot\mathbb{Z}\overline{E}%
\]
(we write the group operation in $A$ and $N^{\prime}\leq A$ additively).

Let $L$ be a maximal ideal of finite index in $R=\mathbb{Z}\overline{N}$ such
that $\overline{\mathfrak{n}}\nsubseteq L$. Put $\Lambda=R\smallsetminus L$.
We begin by showing that $N^{\prime}/B$ is $\Lambda$-torsion; this amounts to
establishing the\medskip

\noindent\textbf{Claim} For each $a\in N^{\prime}$ there exists $\lambda
\in\Lambda$ such that $a\lambda\in B$.\medskip

\noindent Now $N^{\prime}$ is generated as a normal subgroup of $E$ by
elements of the form $[x_{i},x_{j}^{w}]$ with $w\in E$. Thus $N^{\prime}$ is
additively generated by elements%
\[
a=[x_{i},x_{j}^{w}]\cdot\gamma,\,\,\,\gamma\in\overline{E}.
\]
As $\Lambda$ is multiplicatively closed, it will suffice to establish the
claim when $a$ is one of these.

This is done by induction on the length of $w$ as a (positive) word in
$g_{1},\ldots,g_{d}$. If this length is $1$ then $a\in B$ and we take
$\lambda=1$. Otherwise, $w=g_{k}v$ for some $k$ and $v$ an element of smaller
length. Using the Hall-Witt identity and the fact that $N$ is metabelian we
see that for any $z\in N$,%
\[
\lbrack\lbrack x_{i},x_{j}^{w}],z^{v}]=[[z,x_{j}^{g_{k}}]^{v},x_{i}%
]\cdot\lbrack\lbrack x_{i},z^{v}],x_{j}^{w}]
\]
which in additive notation becomes%
\begin{equation}
\lbrack x_{i},x_{j}^{w}]\cdot(\overline{z}^{\overline{v}}-1)=[z,x_{j}^{g_{k}%
}]\cdot\overline{v}(\overline{x_{i}}-1)+[x_{i},z^{v}]\cdot(\overline{x}%
_{j}^{\overline{w}}-1). \label{jacobi}%
\end{equation}

Now put $K=L^{\dagger(\overline{v}\gamma)^{-1}}$. Then $K$ is a proper
subgroup of finite index in $\overline{N},$ so cannot contain $\overline{X}$.
Thus for some $l\leq s$ we have $\overline{x_{l}}\notin K$, and the element%
\[
\mu=\overline{x_{l}}^{\overline{v}\gamma}-1
\]
lies in $R\smallsetminus L$. Taking $z=x_{l}$ in (\ref{jacobi}) we get%
\begin{align*}
a\mu &  =[x_{i},x_{j}^{w}]\cdot\gamma(\overline{x_{l}}^{\overline{v}\gamma
}-1)\\
&  =[x_{i},x_{j}^{w}]\cdot(\overline{x_{l}}^{\overline{v}}-1)\gamma\\
&  =[x_{l},x_{j}^{g_{k}}]\cdot\overline{v}(\overline{x_{i}}-1)\gamma
+[x_{i},x_{l}^{v}]\cdot(\overline{x}_{j}^{\overline{w}}-1)\gamma\\
&  =[x_{l},x_{j}^{g_{k}}]\cdot\overline{v}(\overline{x_{i}}-1)\gamma
+[x_{i},x_{l}^{v}]\cdot\gamma(\overline{x}_{j}^{\overline{w}\gamma}-1).
\end{align*}
The first term in the last line lies in $B$. Inductively, we may suppose that
there exists $\rho\in R\smallsetminus L$ such that $[x_{i},x_{l}^{v}%
]\cdot\gamma\rho\in B$. Then $\lambda=\mu\rho$ does the job, and the claim is established.

As $A\overline{\mathfrak{n}}\leq N^{\prime}$ and $\overline{\mathfrak{n}}$
meets $\Lambda$ we see that $A/N^{\prime}$ is also $\Lambda$-torsion. Hence
$A/B$ is $\Lambda$-torsion as required, and (i) follows.

Now let $J$ be an ideal of finite index in $R$ such that $\overline
{\mathfrak{n}}+J=R$. Then $J$ is contained in only finitely many maximal
ideals $L_{1},\ldots,L_{n}$, each of these has finite index and none of them
contain $\overline{\mathfrak{n}}$. Suppose $a\in A$ and write $a^{\ast
}=\mathrm{ann}_{R}(a+B)$. Then $a^{\ast}+L_{i}=R$ for each $i$, so $a^{\ast
}+J=R$. Thus $A/B$ and consequently also $\widetilde{A}/\widetilde{B}$ are
$\Lambda$-torsion where $\Lambda$ is the set $J+1$, and (ii) follows by Lemma
\ref{jim}.
\end{proof}

\section{Proof of Proposition \ref{P2}\label{secP2}}

Let us recall the setup from Section \ref{redsec1}. We have a residually
finite group $E$ and a metabelian normal subgroup $N$ of $E$ such that $E/N$
is polycyclic. Also $N/N^{\prime}$ is virtually torsion-free and $E/N^{\prime
}$ is minimax. We write%
\begin{align*}
G  &  =N/N^{\prime},\,\,\Gamma=E/N^{\prime}\\
\pi &  =\mathrm{spec}(G),
\end{align*}
and consider $A=N^{\prime}$ as an (additively written) module for
$\mathbb{Z}\Gamma$ and its subring $R=\mathbb{Z}G$.

We assume that the $R$-module $A$ has the following properties:

\begin{itemize}
\item $A$ has no $\pi$-torsion;

\item suppose that $J$ is an ideal of $R$ containing $mR+\mathfrak{K}$ where
$m\in\mathbb{N}$, $K=K^{\Gamma}\leq G$ and $\Gamma/K$ is polycyclic. Then
$A/AJ^{\infty}$ has finite rank as a $\mathbb{Z}$-module;
\end{itemize}

\noindent the second property was just the conclusion of Lemma \ref{L2}.

We have to prove that there exists a subgroup $H$ of $G$ such that
$H\vartriangleleft\Gamma$, $\Gamma/H$ is polycyclic, and for each finite
$H$-module image $A^{\ast}$ of $A$ there exists $n\in\mathbb{N}$ such that
$A^{\ast}\mathfrak{h}^{n}=0$.

\bigskip

We shall use without further ado the terminology and notation introduced in
[21]. Since $E$ is residually finite, $A$ is residually finite as a $\Gamma
$-module and \emph{a fortiori} as a $G$-module. Therefore $A$ is a \emph{qrf
}$G$-module\emph{\ }(`quasi-residually finite'). The hypothesis regarding
$\pi$-torsion says that $A$ is \emph{non-singular}. Let $B$ be the finitely
generated $\Gamma$-submodule of $A$ given in Proposition \ref{quasifg}, and
write%
\begin{align*}
A_{0}  &  =A(\mathfrak{g}):=\left\{  a\in A\mid a\mathfrak{g}^{n}=0\text{ for
some }n\geq1\right\}  ,\\
\widetilde{A}  &  =A/A_{0},\,\,\widetilde{B}=(B+A_{0})/A_{0}.
\end{align*}
We assume for now that $\widetilde{A}\neq0$. Recall that $\mathcal{P}(A)$
denotes the set of \emph{associated primes} of $A$: prime ideals of $R$ that
are annihilators of non-zero elements of $A$. Let%
\[
\mathcal{X}=\left\{  P\in\mathcal{P}(A)\mid P\supseteq\mathfrak{g}\right\}  .
\]
We claim that%
\[
A_{0}=A(\mathcal{X}):=\left\{  a\in A\mid aP_{1}\ldots P_{n}=0\text{ for some
}P_{1},\ldots,P_{n}\in\mathcal{X}\right\}  ;
\]
this is obvious if $\mathfrak{g}\in\mathcal{P}(A)$, and in general it follows
from [21], Cor. 6.6. Now [21], Lemma 6.5 shows that $\widetilde{A}$ is again
qrf and non-singular, and that $\mathcal{P}(\widetilde{A})\subseteq
\mathcal{P}(A)\smallsetminus\mathcal{X}$. Thus%
\[
P\in\mathcal{P}(\widetilde{A})\Longrightarrow\mathfrak{g}\nsubseteq P.
\]

Assume until further notice that $\widetilde{B}\neq0$. Let $\mathcal{Q}$
denote the set of minimal members of $\mathcal{P}(\widetilde{B})$, so
$\mathcal{Q}\subseteq\mathcal{P}(\widetilde{B})\subseteq\mathcal{P}%
(\widetilde{A})$ (and $\mathcal{Q}$ is non-empty because $\widetilde{B}%
\neq0).$ Now put%
\[
C=\widetilde{B}(\mathcal{P}(\widetilde{B})\smallsetminus\mathcal{Q}%
),\,\,M=\widetilde{B}/C.
\]
Then $\mathcal{P}(M)=\mathcal{Q}$ by [21], Lemma 6.5. Thus $M$ is an
\emph{unmixed }$G$-module, and (as a quotient of $B$) $M$ is finitely
generated as a module for $\mathbb{Z}\Gamma$.

Let $\left\{  P_{j}\mid j\in\mathcal{J}\right\}  $ be a set of representatives
for the orbits of $\Gamma$ in $\mathcal{Q}$. For each $j$ put $\Delta
_{j}=\mathrm{N}_{\Gamma}(P_{j})$ and $U_{j}=M/M(\mathcal{Q}\smallsetminus
\{P_{j}\})$. Now [21], Prop. 7.3 says the following: (i) the set $\mathcal{J}$
is finite, and for each $j\in\mathcal{J}$, (ii) $U_{j}$ is finitely generated
as a module for $\mathbb{Z}\Delta_{j}$ and (iii) there exists $e_{j}%
\in\mathbb{N}$ such that $U_{j}P_{j}^{e_{j}}=0$. Moreover, (iv) $M$ embeds
naturally into the direct sum of induced modules%
\[
\bigoplus_{j\in\mathcal{J}}U_{j}\uparrow_{\Delta_{j}}^{\Gamma},
\]
and projects onto each of the factors $U_{j}^{\gamma}$ ($\gamma\in\Gamma$).
Choosing each $e_{j}$ as small as possible, let $V_{j}=\mathrm{ann}_{U_{j}%
}(P_{j}^{e_{j}-1})$. Then%
\[
W_{j}=U_{j}/V_{j}%
\]
is a non-zero $P_{j}$-prime $R$-module and $W_{j}$ is finitely generated as
$\mathbb{Z}\Delta_{j}$-module. Since $\mathcal{J}$ is finite, each $W_{j}$ is
either $\mathbb{Z}$-torsion free or else has exponent $p$ for one of finitely
many primes $p$. The quotient maps $U_{j}\rightarrow W_{j}$ induce an exact
sequence of $\mathbb{Z}\Gamma$-modules%
\[
0\rightarrow D_{1}\rightarrow M\rightarrow\bigoplus_{j\in\mathcal{J}}%
W_{j}\uparrow_{\Delta_{j}}^{\Gamma}=W^{\ast},
\]
say, with $M$ projecting onto each of the factors $W_{j}^{\gamma}$ ($\gamma
\in\Gamma$).

Now the proof of [21], Theorem 8.5 shows that there exists a $\Gamma
$-invariant semimaximal ideal $J$ of finite index in $R$ such that
$(M/D_{1})J^{\infty}=0$. Moreover, since $\mathfrak{g}\nsubseteq P_{j}$ for
each $j$, we may choose$\ J$ to be an intersection of maximal ideals none of
which contains $\mathfrak{g}$; in this case we have $J+\mathfrak{g}=R$.

Recall that $M=\widetilde{B}/C$, so $D_{1}=D/C$ where $D$ is a $\mathbb{Z}%
\Gamma$-submodule of $\widetilde{B}$ and $\widetilde{B}J^{\infty}\leq D$. From
Proposition \ref{quasifg} we then have%
\[
\widetilde{A}J^{\infty}\cap\widetilde{B}=\widetilde{B}J^{\infty}\leq D.
\]
On the other hand, the initial hypothesis implies that $\widetilde
{A}/\widetilde{A}J^{\infty}$ has finite rank as a $\mathbb{Z}$-module. Hence
$\widetilde{B}/D$ has finite rank. But $\widetilde{B}/D\cong M/D_{1}$, and%
\[
M/D_{1}\geq\bigoplus_{P\in\mathcal{Q}}(M/D_{1})(\{P\})
\]
([21], Lemma 7.1). As at most finitely many primes occur in the torsion of
$M/D_{1}$, this implies that $\mathcal{Q}$ is finite, and hence that $W^{\ast
}$ has finite rank since each $W_{j}$ is an image of $M/D_{1}$. Therefore
$W^{\ast}$ decomposes as a $\Gamma$-module into a finite direct sum of finite
vector spaces over finite prime fields and a torsion-free $\mathbb{Z}$-module
of finite rank. By the Lie-Kolchin Theorem (cf. [28], Theorem 3.6), $\Gamma$
has a normal subgroup $\Gamma_{1}$ of finite index such that $\Gamma
_{1}^{\prime}$ acts unipotently on $W^{\ast}.$

Put $H=\Gamma_{1}^{\prime}\cap G$. Then $\Gamma/H$ is virtually polycyclic,
and there exists $n$ such that $W^{\ast}\mathfrak{h}^{n}=0$. Hence
$\mathfrak{h}$ annihilates each of the prime modules $W_{j}^{\gamma}$, and so
$\mathfrak{h}\subseteq P_{j}^{\gamma}$ for each $j\in\mathcal{J},\,\,\gamma
\in\Gamma$. Thus $\mathfrak{h}\subseteq P$ for every $P\in\mathcal{Q}$, and it
follows by [21], Lemma 6.5(iv) that $\widetilde{B}=\widetilde{B}%
(\mathfrak{h})$.

We claim now that%
\[
P\in\mathcal{P}(\widetilde{A})\Longrightarrow\mathfrak{h}\subseteq P.
\]
To see this, let $P\in\mathcal{P}(\widetilde{A})$. Then $P\nsupseteq
\mathfrak{g}$, so Theorem 4.2 of [21] shows that there exists a maximal ideal
$L$ of finite index in $R$ with $P\leq L$ and $\mathfrak{g}\nsubseteq L.$ Now
$P=\mathrm{ann}_{R}(a)$ for some $a\in\widetilde{A}$. According to Proposition
\ref{quasifg} there exists $\lambda\in R\smallsetminus L$ such that
$a\lambda\in\widetilde{B}$. Then $a\lambda\mathfrak{h}^{n}=0$ for some $n$; as
$P$ is a prime ideal and $\lambda\notin P$ it follows that $\mathfrak{h}%
\subseteq P$.

We have been assuming up to now that $\widetilde{B}\neq0$. The argument just
given shows that this is a \emph{consequenc}e of the assumption $\widetilde
{A}\neq0$: indeed, $\mathcal{P}(\widetilde{A})$ is non-empty, so choosing
$P\in\mathcal{P}(\widetilde{A})$ and $\lambda$ as above we see that $0\neq
a\lambda\in\widetilde{B}$.

Now $\widetilde{A}=\widetilde{A}(\mathcal{P}(\widetilde{A}))$, by [21], Lemma
6.5. Hence for each finite subset $S$ of $\widetilde{A}$ there exists
$n\in\mathbb{N}$ such that $S\mathfrak{h}^{n}=0$. This has been established
assuming that $\widetilde{A}\neq0$. It remains true (trivially) when
$\widetilde{A}=0$, in which case we take $H=G$.

\bigskip

\noindent\textbf{Conclusion} Let $A^{\ast}$ be a finite $H$-module image of
$A$, and write $A_{0}^{\ast}$ for the image of $A_{0}$ in $A^{\ast}.$ Then
$A^{\ast}/A_{0}^{\ast}$ is a finite image of $A/A_{0}=\widetilde{A}$, so there
exists $n_{1}$ such that $A^{\ast}\mathfrak{h}^{n_{1}}\leq A_{0}^{\ast}$. Also
$A_{0}^{\ast}$ is a finite image of $A_{0}=A(\mathfrak{g})$, so there exists
$n_{2}$ such that $A_{0}^{\ast}\mathfrak{g}^{n_{2}}=0$. Taking $n=n_{1}+n_{2}$
we have%
\[
A^{\ast}\mathfrak{h}^{n}=0.
\]
This completes the proof of Proposition \ref{P2}, and with it the proof of
Theorem \ref{H}.

\section{Proposition \ref{special}: a reduction\label{red1sec}}

We restate \bigskip

\noindent\textbf{Proposition \ref{special}} \emph{Let }$E$\emph{ be a finitely
generated soluble group of derived length at most }$3$\emph{. If }$E$\emph{
has PIG then }$E$\emph{ has finite upper rank.}

\bigskip

We observed above that this follows from Theorem \ref{H} in the case where $E$
is {\small R}$\mathfrak{F}$ and torsion-free. The possible presence of `bad
torsion' -- corresponding to primes in the spectrum of $E/E^{\prime\prime}$ --
means that we have to do a lot more work. We begin in this section by reducing
the problem to another special case.

\begin{proposition}
\label{p-singular}Let $E$ be a finitely generated group with normal subgroups
$N\geq A$ where $E/N$ and $N/A$ are abelian of finite rank, $A$ is an
elementary abelian $p$-group, and $A$ is residually finite-simple as a module
for $\mathbb{F}_{p}N$. If $E$ has PIG then $A$ is finite.
\end{proposition}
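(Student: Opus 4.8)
The plan is to reduce to a statement about the group ring $\mathbb{F}_pN$ and then exploit PIG to force finiteness. Write $\overline{E}=E/A$, $\overline{N}=N/A$, and regard $A$ as a module for $R=\mathbb{F}_p\overline{N}$, on which $\overline{E}$ acts since $N\vartriangleleft E$. The hypothesis that $A$ is residually finite-simple as an $\mathbb{F}_pN$-module means $\bigcap\{L\mid L\text{ maximal submodule of finite codimension}\}=0$, and each finite simple quotient $A/L$ is annihilated by a maximal ideal $\mathfrak{m}$ of $R$ of finite codimension with $A/L$ a finite-dimensional $R/\mathfrak{m}$-vector space — on which $\overline{E}$ acts through some finite group. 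The goal is to show only finitely many such $L$ exist and each $A/L$ is finite, equivalently that $A$ itself is finite.

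**Using PIG on the quotients $E/L$.** For each maximal submodule $L\vartriangleleft_f A$, consider $Q=E/L$: this is a finite-rank-abelian-by-finite-rank-abelian extension by the finite module $A/L$. More useful is to build, for a \emph{set} $\mathcal{L}$ of such submodules, the quotient $E/\bigcap_{L\in\mathcal{L}}L$ and estimate its exponent against its order. The key point is that if $A/L$ is an irreducible $R/\mathfrak{m}$-module, then $\overline{N}$ acts on it through a cyclic group (since $\overline{N}$ is abelian, $R/\mathfrak{m}$ is a finite field and the image of $\overline{N}$ is a finite abelian group of units, in fact cyclic if $A/L$ is one-dimensional over $R/\mathfrak{m}$; more generally the image of $\overline{N}$ lies in a field). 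So the exponent of $E/L$ is controlled: it is at most (exponent of $E/N$) times (exponent of the cyclic-ish action) times $p$, hence bounded by roughly $p^c\cdot|R/\mathfrak{m}|$ for an absolute constant contribution from the polycyclic-by-finite-rank part $E/N$ acting, whereas $|A/L|$ can be as large as $|R/\mathfrak{m}|^{d}$. PIG$(\alpha)$ then bounds $d$, i.e. the $R/\mathfrak{m}$-dimension of $A/L$, by a function of $\alpha$ and the rank data. This is the mechanism already used in Section~\ref{finitesec} and mirrors the computation for primitive permutation groups.

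**Controlling the number of distinct simple quotients.** The dimension bound alone is not enough: one must prevent infinitely many \emph{non-isomorphic} finite simple quotients, equivalently bound the residue fields $R/\mathfrak{m}$. Here I would argue that if there were infinitely many distinct maximal ideals $\mathfrak{m}_i$ of $R$ arising, one could pick submodules $L_i$ with $A/L_i$ of $\mathfrak{m}_i$-type and consider $A/(L_1\cap\cdots\cap L_k)$, which by the distinctness (hence coprimality) of the $\mathfrak{m}_i$ is a direct sum $\bigoplus A/L_i$. Its order grows like $\prod|R/\mathfrak{m}_i|^{d_i}$ while its exponent is the lcm of the individual exponents, which — because $\overline{N}$ acts cyclically on each summand — is only $p^c\cdot\mathrm{lcm}_i(\text{small cyclic orders})$. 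If the residue characteristics were all $p$ this lcm could itself be large, so one must instead observe that $\overline{N}$ is finitely generated (being a quotient of $N$, and $N$ is finitely generated as $E/N$ is polycyclic and $E$ finitely generated), so its image in $\prod (R/\mathfrak{m}_i)^\times$ is generated by boundedly many elements, forcing a contradiction with PIG once $k$ is large. The upshot is that all the data — number of simple quotients, their fields, their dimensions — is bounded by a function of $\alpha$, so $A$, being residually (these finitely many bounded simple modules), is finite.

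**Main obstacle.** The hard part is the third step: bounding the arithmetic complexity of the residue fields and ruling out an unbounded ``tower'' of simple quotients all in characteristic $p$ but over ever-larger fields $\mathbb{F}_{p^k}$. On such a module $\overline{N}$ acts through a subgroup of $\mathbb{F}_{p^k}^\times$, which is cyclic of order up to $p^k-1$ — potentially huge — so the naive exponent bound fails and PIG does not immediately bite. The resolution must use that $\overline{N}$ is \emph{finitely generated}: a fixed finite generating set has image of bounded order in any cyclic group only if... it does not, so one instead combines many such quotients simultaneously and uses that a $d$-generator abelian group has at most $n^d$ elements of order dividing $n$, against the PIG inequality $|A^*|\le\exp(A^*)^\alpha$ applied to $A^*=A/\bigcap L_i$. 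Making this counting precise, with the correct interplay between the $p$-part coming from $A$ and $E/N$ and the $p'$-part coming from the field actions, is where the real work lies; this is presumably why the paper devotes Sections~\ref{red1sec} and~\ref{singsec} to it.
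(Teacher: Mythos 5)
Your proposal correctly identifies the right basic mechanism (play $|A/L|$ against $\exp(E/L)$ via PIG) and, commendably, correctly identifies where it stalls: a single simple quotient $A/L \cong \mathbb{F}_{p^f}$ on which $\overline{N}$ acts through a subgroup of $\mathbb{F}_{p^f}^\times$ of order up to $p^f-1$ gives $|A/L| \approx p^f$ and $\exp(E/L) \gtrsim p^f-1$, so the PIG inequality $p^f \leq (p^f)^\alpha$ says nothing. But the remedy you sketch in the final paragraph --- combining several $L_i$ and counting elements of given order in a $d$-generator abelian group --- is too crude to close the gap and, as written, trails off without a concrete estimate. The difficulty is that combining quotients of coprime residue degree multiplies both the order and the exponent, so no contradiction emerges from that count alone.

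The actual proof in Section~\ref{singsec} needs several independent ideas that are absent from your sketch. First, a Galois-theoretic lemma (Lemma~\ref{lang}) controlling the element orders in the relevant finite sections: if the subgroup of $E$ acting by field automorphisms on $kG/L$ has index $r$, the key upper bound on orders is $m(L)\cdot|A/AL|^{2/r}$, so a \emph{large} Galois action shrinks the exponent. This leads (Proposition~\ref{m vs f}) to the inequality that either the Galois index is at most $4d\alpha$ or $\log_p m(L) \gtrsim f(L)/(2(d+1)\alpha)$. Second, one must show that many maximal ideals share the same residue degree $f$: a single $L^\dagger$ of index $m$ has $\varphi(m)/f$ ``sisters'' (Corollary~\ref{manysisters}), and Proposition~\ref{Land M} (via Lemma~\ref{primes} on prime intersections) ensures almost all of these stay in the relevant family. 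Third, one must actually \emph{locate} a suitable faithful prime ideal $Q$ of $kG$ and a finitely generated submodule $B$ with uniform quotients $A/(T+AL)\cong(kG/L)^\tau$; this is Proposition~\ref{prime-redn}, which relies on Proposition~\ref{quasifg} (quasi-fg modules) and the structure theory of associated primes from [21] and [24]. Fourth, the endgame is a dichotomy (Proposition~\ref{(a)(b)}): either infinite subsets of $\mathcal{L}$ intersect to zero, in which case the sister-count plus the $m$ vs $f$ inequality bound $m(L)$ and hence force $\mathcal{L}$ finite, or $(G,\Delta)$ is a virtually strict Brookes pair, in which case a theorem from [24] makes $|\mathcal{L}(f)|$ unbounded, contradicting the PIG-derived bound of Proposition~\ref{L(f)}. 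Finally, the whole argument is an induction on the invariant $s(E,N,A)$, which is needed to make Lemma~\ref{K} work; your proposal has no inductive setup. In short, the proposal is a reasonable orientation but the core of the argument --- the Galois lemma, the sister count, the prime-ideal structure theory, and the Brookes-pair dichotomy --- is missing, and the counting you gesture at will not replace it.
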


\noindent We make the convention that a \emph{simple} $\mathbb{F}_{p}N$-module
is assumed to be \emph{non-trivial for }$N$, so the hypothesis on $A$ amounts
to saying that%
\[
\bigcap_{L\in\mathcal{M}}AL=0,
\]
where $A$ is considered as an (additively-written) module for the abelian
group $N/A$ and where $\mathcal{M}$ is the set of all maximal ideals of finite
index in $\mathbb{F}_{p}(N/A)$ \emph{excluding} the augmentation ideal
$\mathbb{F}_{p}((N/A)-1)$.

The proof of this proposition occupies the next section. Assuming Proposition
\ref{p-singular} we now complete the

\bigskip

\noindent\textbf{Proof of Proposition \ref{special}.} We assume without loss
of generality that $E$ is residually finite. Write $N=E^{\prime}$ and
$A=N^{\prime}$, so $A$ is abelian. Since $E$ has PIG it satisfies hypothesis
$\mathcal{H}$; the first statement of Theorem \ref{H} shows that $E/A$ is a
minimax group, and we write $\pi=\mathrm{spec}(E/A)$.

For each prime $p$ let $r_{p}$ denote the upper $p$-rank of $A$ as $E$-module,
that is%
\begin{align*}
r_{p}  &  =\sup\left\{  \mathrm{rk}(A/C)\mid C\leq A,\,C\vartriangleleft
E\text{ and }A/C\text{ is a finite }p\text{-group}\right\} \\
&  =\sup\left\{  \mathrm{rk}(A/C)\mid A^{p}\leq C=C^{E}\leq_{f}A\right\}  .
\end{align*}
Then the upper rank of $E$ is at most $\mathrm{rk}(E/A)+\sup_{p}r_{p}$, and it
will suffice by (\ref{localrranks}) to show that the numbers $r_{p}$ are
boundedly finite as $p$ ranges over all primes.

Put%
\[
D_{0}=\bigcap\left\{  C\leq A\mid C\vartriangleleft E\text{ and }A/C\text{ is
a finite }\pi^{\prime}\text{-group}\right\}  .
\]
Then $E/D_{0}$ is residually finite and $A/D_{0}$ has no $\pi$-torsion.
Applying Theorem \ref{H} to the group $E/D_{0}$ we infer that $A/D_{0}$ has
finite rank $r$, say. It follows that $r_{p}\leq r$ for every prime
$p\notin\pi$.

Now $\pi$ is a finite set, so it remains to show that $r_{p}$ is finite for
each $p\in\pi$. Fix such a prime $p,$ write $k=\mathbb{F}_{p}$ and let%
\begin{align*}
D_{p}  &  =\bigcap\left\{  C\leq A\mid C\vartriangleleft E\text{ and
}A/C\text{ is a finite }p\text{-group}\right\}  ,\\
X_{p}  &  =\bigcap\left\{  T\leq A\mid A/T\text{ is a finite simple
}kN\text{-module}\right\}  .
\end{align*}
Proposition \ref{p-singular} applied to $E/X_{p}$ now shows that $A/X_{p}$ is
finite. This implies that $K:=\mathrm{C}_{N}(A/X_{p})$ has finite index in $N$.

We claim that $K/D_{p}$ is residually finite-nilpotent. To see this, consider
an arbitrary $E$-submodule $C$ of finite $p$-power index in $A$. Then $K$ acts
trivially on every simple quotient $kN$-module of $A/C$ \emph{including the
trivial ones}, hence $K-1\subseteq J$ where $J/\mathrm{ann}_{kN}(A/C)$ is the
Jacobson radical of the finite ring $kN/\mathrm{ann}_{kN}(A/C)$. It follows
that $(K-1)^{n}\subseteq\mathrm{ann}_{kN}(A/C)$ for some $n$, and hence that
$K/C$ is a nilpotent group. Also $E/C$ is residually finite, so $K/C$ is
residually finite-nilpotent. The claim follows since the submodules like $C$
intersect in $D_{p}$.

It now follows by Proposition \ref{P1} that $E/D_{p}$ has finite rank. Hence
$r_{p}$ is finite as required.

\section{The singular case\label{singsec}}

Now we begin the proof of Proposition \ref{p-singular}. Let $E$ be a finitely
generated group with PIG. We are given normal subgroups $N\geq A$ of $E$ such
that $E/N$ and $N/A$ are abelian of finite rank, $A$ is an elementary abelian
$p$-group, and $A$ is residually finite-simple as a module for $\mathbb{F}%
_{p}N$ (recall that `simple' means simple and non-trivial for $N$). The aim is
to prove that $A$ is finite; we assume that $A$ is infinite and propose to
derive a contradiction.

We will use without special mention the fact that every subgroup of finite
index in $E$ has PIG, and therefore satisfies $\mathcal{H}$, and we apply
results from Section \ref{redsec1}. Suppose that $N_{1}\leq_{f}N$ and
$N_{1}\vartriangleleft E.$ Then Lemma \ref{L1} shows that $N_{1}/[A,N_{1}]$ is
minimax, so $A/[A,N_{1}]$ is finite; if $D$ is the intersection of all
$kN_{1}$-submodules $C$ of $A$ such that $A/C$ is simple (non-trivial) for
$N_{1}$, then $D\cap\lbrack A,N_{1}]=0$, so $D$ is finite. Replacing $A$ by
$A/D$ we may therefore suppose that $A$ is residually finite-simple as a
$kN_{1}$-module.

We choose a specific $N_{1}$ as follows. Note that $E/A$ is virtually
torsion-free (e.g. by Lemma \ref{L1}). Let $E_{2}/A$ be a torsion-free normal
subgroup of finite index in $E/A$, put $N_{2}=N\cap E_{2}$ and $N_{0}%
=\mathrm{C}_{N_{2}}(A)$. Then $N_{2}/N_{0}$ is residually finite because $A$
is residually finite as $N$-module, and it follows that $E/N_{0}$ is virtually
torsion free. Let $E_{1}/N_{0}\vartriangleleft E/N_{0}$ be a torsion-free
subgroup of finite index in $E_{2}/N_{0}$, and put $N_{1}=N\cap E_{1}$.
Finally, let $E_{3}/N_{1}\vartriangleleft E/N_{1}$ be a torsion-free subgroup
of finite index in $E_{1}/N_{1}$. We may assume that $E_{2}$ and $E_{1}$ have
been chosen so as to minimize the rank of $N_{1}/N_{0}$; this rank is thus an
invariant of the triple $(E,N,A)$ and we denote it $s(E,N,A)$.

Replacing $E$ by $E_{3}$ and $N$ by $N_{1}$, we may thus assume that $E/N$,
$N/N_{0}$ and $N/A$ are all torsion free, where $N_{0}=\mathrm{C}_{N}(A)$. We
write%
\[
\Gamma=E/N_{0},\,\,G=N/N_{0}.
\]
Thus $G$ is a torsion-free abelian minimax group of rank $s(E,N,A)$ acting
faithfully on $A$; also $G$ is Noetherian as a module for $\Gamma$ (because
$\Gamma$ is finitely generated and $\Gamma/G$ is abelian, cf. proof of
Proposition \ref{quasifg}).

We write $k=\mathbb{F}_{p}$. As $A$ is infinite and residually simple for $kG$
we have $G\neq1$, so $s(E,N,A)\geq1$. We may assume that among all
counter-examples to Proposition \ref{p-singular}, the triple $(E,N,A)$ has
been chosen so that $s(E,N,A)$ is as small as possible.

\begin{lemma}
\label{K}If $1<K\leq G$ and $\left|  \Gamma:\mathrm{N}_{\Gamma}(K)\right|  $
is finite then $A/A\mathfrak{k}$ has finite upper rank as a module for
$\mathrm{N}_{\Gamma}(K)$.
\end{lemma}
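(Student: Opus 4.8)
The plan is to pass to the finite-index subgroup of $E$ lying above $\Delta:=\mathrm{N}_{\Gamma}(K)$ and to set up inside it a new instance of Proposition \ref{p-singular} with a strictly smaller value of the invariant $s$. Let $E_{\Delta}$ be the preimage of $\Delta$ in $E$, let $\bar{K}$ be the preimage of $K$ in $N$ (so $N_{0}\leq\bar{K}\leq N$), and put $N_{\Delta}=N\cap E_{\Delta}$. Since $K\vartriangleleft\Delta$ we have $\bar{K}\vartriangleleft E_{\Delta}$, and because $G$ is abelian the ring $kG$ is commutative, so $A\mathfrak{k}=[A,\bar{K}]$; in particular $A\mathfrak{k}\vartriangleleft E_{\Delta}$. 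Set
\[
A_{1}=A/A\mathfrak{k}=A/[A,\bar{K}],\qquad \tilde{N}=N_{\Delta}/[A,\bar{K}],\qquad \tilde{E}=E_{\Delta}/[A,\bar{K}].
\]
Then $\tilde{E}$ is finitely generated with PIG (hence satisfies $\mathcal{H}$), $\tilde{E}/\tilde{N}$ is abelian of finite rank (a subgroup of the finitely generated abelian group $E/N$, hence polycyclic), $\tilde{N}/A_{1}\cong N_{\Delta}/A$ is abelian of finite rank, $A_{1}$ is an elementary abelian $p$-group, and $\bar{K}$ — whose image in $\tilde{N}$ has rank $\mathrm{rk}(K)\geq1$ — acts trivially on $A_{1}$. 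If $A_{1}$ is finite there is nothing to prove, so assume $A_{1}$ is infinite.

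Next I would replace $A_{1}$ by its largest residually-simple quotient. Let $X_{1}$ be the intersection of all $\tilde{N}$-submodules $T$ of $A_{1}$ with $A_{1}/T$ a finite simple (non-trivial) $\mathbb{F}_{p}\tilde{N}$-module; then $X_{1}\vartriangleleft\tilde{E}$ (as $\tilde{E}$ permutes such $T$), and $A_{1}/X_{1}$ is residually finite-simple for $\mathbb{F}_{p}\tilde{N}$. Thus $(\tilde{E}/X_{1},\ \tilde{N}/X_{1},\ A_{1}/X_{1})$ satisfies all the hypotheses of Proposition \ref{p-singular}. The crucial observation is that its $s$-invariant is strictly smaller than $s=s(E,N,A)$: in the reduction defining that invariant one factors $\tilde{N}/X_{1}$ by $\mathrm{C}_{\tilde{N}/X_{1}}(A_{1}/X_{1})$, and since $\bar{K}$ acts trivially on $A_{1}/X_{1}$ the image of $\bar{K}$ lies in this centralizer; hence the acting (torsion-free abelian) group is a quotient of a subgroup of $N_{\Delta}/\bar{K}$, which embeds in $G/K$, so it has rank at most $\mathrm{rk}(G)-\mathrm{rk}(K)\leq s-1$ (torsion-free rank is additive, $K\neq1$ is torsion-free, and the torsion-free reductions only decrease rank). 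Consequently, by the minimality of $(E,N,A)$ among counterexamples to Proposition \ref{p-singular}, the triple above cannot be a counterexample, so $A_{1}/X_{1}$ is finite.

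The proof is then completed as in the conclusion of the Proof of Proposition \ref{special}. Since $A_{1}/X_{1}$ is finite, $C:=\mathrm{C}_{\tilde{N}}(A_{1}/X_{1})$ has finite index in $\tilde{N}$ and $A_{1}\leq C\vartriangleleft\tilde{E}$. For any $\tilde{E}$-submodule $C'$ of $A_{1}$ of finite ($p$-power) index, $C$ acts trivially on every simple $\mathbb{F}_{p}\tilde{N}$-composition factor of $A_{1}/C'$ — on the trivial ones automatically and on the non-trivial ones because they are quotients of $A_{1}/X_{1}$ — so $C-1$ lies in the Jacobson radical of $\mathbb{F}_{p}\tilde{N}/\mathrm{ann}_{\mathbb{F}_{p}\tilde{N}}(A_{1}/C')$; hence $C$ acts nilpotently on $A_{1}/C'$ and $C/C'$ is nilpotent, and by the argument of loc. cit. it is residually finite-nilpotent. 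Letting $D_{1}$ be the intersection of all such $C'$, we deduce that $C/D_{1}$ is residually finite-nilpotent, while $\tilde{E}/C$ is virtually polycyclic (finite-by-polycyclic, since $E/N$ is a finitely generated abelian group). Proposition \ref{P1} then gives that $\tilde{E}/D_{1}$ has finite rank, say $r$; and since every finite $\Delta$-module quotient of $A/A\mathfrak{k}=A_{1}$ has $p$-power order and hence is a quotient of $A_{1}/D_{1}$, it has rank at most $r$. Thus $A/A\mathfrak{k}$ has finite upper rank as an $\mathrm{N}_{\Gamma}(K)$-module. The main obstacle is the middle paragraph: one must check that the triple built from the residually-simple quotient of $A/A\mathfrak{k}$ truly satisfies the hypotheses of Proposition \ref{p-singular} and, above all, that killing $\mathfrak{k}$ genuinely lowers the invariant $s$, which is precisely what lets the induction underlying the proof of Proposition \ref{p-singular} be invoked; the remainder is bookkeeping modelled on the proof of Proposition \ref{special}.
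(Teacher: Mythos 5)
Your proof is correct and follows essentially the same route as the paper's: pass to the preimage of $\mathrm{N}_{\Gamma}(K)$, reduce $A/A\mathfrak{k}$ to its largest residually-finite-simple quotient, observe that the $s$-invariant drops strictly (because $K$ is torsion-free nontrivial and torsion-free rank is additive), invoke the minimality of the counterexample, and then finish with the Jacobson-radical/Proposition \ref{P1} argument already used at the end of the proof of Proposition \ref{special}. Your paper's authors phrase the first step by directly defining $D$ as the intersection of maximal $kG$-submodules that are $G$-nontrivial but $K$-trivial (which is the preimage of your $X_1$ in $A$), but the content is identical, and your explicit verification of the strict rank drop is a welcome clarification of the paper's terse inequality.
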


\begin{proof}
Let $D$ denote the intersection of all maximal $kG$-submodules $C$ of $A$ such
that $A/C$ is non-trivial for $G$ but trivial for $K$. Say $\mathrm{N}%
_{\Gamma}(K)=E_{1}/N_{0}$. Then the triple $(E_{1}/D,N/D,A/D)$ satisfies the
hypotheses of Proposition \ref{p-singular}, and $s(E_{1}/D,N/D,A/D)\leq
\mathrm{rk}(G/K)<\mathrm{rk}(G)=s(E,N,A)$. Our inductive assumption thus
implies that $A/D$ is finite.

It follows that $Q:=\mathrm{C}_{N}(A/D)$ has finite index in $N$, so $E_{1}/Q$
is polycyclic. Let $P/A\mathfrak{k}$ be the finite residual of $E_{1}%
/A\mathfrak{k}$. The argument at the end of the preceding section shows that
$Q/P$ is residually finite-nilpotent, and Proposition \ref{P1} then shows that
$E_{1}/P$ has finite rank. The lemma follows.
\end{proof}

\begin{lemma}
\label{non-trivial}$\Gamma/\mathrm{C}_{\Gamma}(G)$ is infinite.
\end{lemma}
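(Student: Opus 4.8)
The plan is to argue by contradiction: assume $\Gamma/\mathrm{C}_{\Gamma}(G)$ is finite and deduce that $A$ is finite, contrary to our standing hypothesis. Since $G\vartriangleleft\Gamma$, the centralizer $C=\mathrm{C}_{\Gamma}(G)$ is a normal subgroup of finite index in $\Gamma$; let $E_{1}\vartriangleleft_{f}E$ be its full preimage. Then $E_{1}$ is finitely generated (being of finite index in the finitely generated group $E$), has PIG, and in particular satisfies $\mathcal{H}$. Writing $\Gamma_{1}=E_{1}/N_{0}=C$ we have $[\Gamma_{1},G]=1$; as $G$ is abelian this gives $G\leq\mathrm{Z}(\Gamma_{1})$, while $\Gamma_{1}/G\leq\Gamma/G=E/N$ is abelian, so $(\Gamma_{1})^{\prime}\leq G\leq\mathrm{Z}(\Gamma_{1})$ and $\Gamma_{1}$ is nilpotent of class at most $2$. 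Being finitely generated and nilpotent, $\Gamma_{1}$ is therefore polycyclic; in particular $G$, as a subgroup of $\mathrm{Z}(\Gamma_{1})$, is finitely generated.

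The next step is to analyse $N_{0}=\mathrm{C}_{N}(A)$. Since $A$ is abelian, $A\leq N_{0}$, and since $N_{0}$ centralizes $A$ we get $A\leq\mathrm{Z}(N_{0})$; moreover $[N_{0},N_{0}]\leq[N,N]\leq A$ because $N/A$ is abelian. Hence $N_{0}$ is nilpotent of class at most $2$. It is also residually finite, being a subgroup of the residually finite group $E$, and a residually finite nilpotent group is automatically residually (finite nilpotent) (a normal subgroup of finite index avoiding a given nontrivial element has nilpotent quotient). Thus $E_{1}$ has a normal subgroup $N_{0}$ that is residually (finite nilpotent), with $E_{1}/N_{0}=\Gamma_{1}$ polycyclic, and $E_{1}$ satisfies $\mathcal{H}$; so Proposition \ref{P1} applies and shows that $E_{1}$ is a minimax group.

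The contradiction is then immediate: $A\leq E_{1}$, so $A$ is minimax; but $A$ is a periodic abelian group of finite exponent, and such a group can be minimax only if it is finite, contradicting the assumption that $A$ is infinite. Hence $\Gamma/\mathrm{C}_{\Gamma}(G)$ is infinite. I do not expect a genuine obstacle here: the argument is short and, notably, does not use Lemma \ref{K}. The only points needing care are the finite-generation bookkeeping that delivers a polycyclic $\Gamma_{1}$ (so that the hypotheses of Proposition \ref{P1} are met), and the observation that it is precisely the hypothesis ``$N/A$ abelian'' (preserved by all the reductions made at the start of this section) that forces $N_{0}$ to be nilpotent.
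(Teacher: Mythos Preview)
Your proof is correct and follows essentially the same route as the paper's: assume $\Gamma/\mathrm{C}_{\Gamma}(G)$ is finite, observe that $N_{0}$ is nilpotent of class at most $2$, conclude that (a suitable piece of) $E$ is minimax, and derive a contradiction from $A\leq N_{0}$ being an infinite elementary abelian $p$-group. The only difference is in the tool invoked at the final step: you pass to the finite-index subgroup $E_{1}$ and apply Proposition~\ref{P1}, whereas the paper simply notes that $\Gamma=E/N_{0}$, being finitely generated and virtually nilpotent, is polycyclic, and applies Lemma~\ref{L1}(ii) directly with $D=1$ and $Q=N_{0}$ to conclude that $N_{0}$ is minimax. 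Your detour through Proposition~\ref{P1} (which itself rests on Lemma~\ref{L1}) works fine but is heavier than necessary; the observation that $G$ is finitely generated is also not needed.
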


\begin{proof}
Suppose not. Then $E/N_{0}=\Gamma$ is virtually nilpotent, while $N_{0}$ is
nilpotent (of class at most 2). It follows by Lemma \ref{L1} that $N_{0}$ has
finite rank, which is impossible as $A\leq N_{0}$.
\end{proof}

\begin{lemma}
If $J\vartriangleleft_{f}kG$ then $A/AJ$ is finite.
\end{lemma}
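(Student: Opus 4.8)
The plan is to realise $A/AJ$ as a homomorphic image of $A/[A,N_1]$ for a suitable normal subgroup $N_1$ of $E$ with $N_1\leq_f N$, and then to apply Lemma \ref{L1}. Recall from the reductions at the start of this section that $G=N/N_0$ is a torsion-free abelian minimax group, that $N/A$ is abelian, that $N_0=\mathrm{C}_N(A)\vartriangleleft E$, and that $E/N$ is polycyclic (being a finitely generated abelian group). These facts, rather than anything special to the singular case, are all that will be used; in particular Lemma \ref{K} is not needed here.

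First I would put $K=J^{\dagger}=(1+J)\cap G$. Since $G/K$ embeds in the group of units of the finite ring $kG/J$, the subgroup $K$ has finite index in $G$. Now $G$, being an abelian minimax group, has only finitely many subgroups of index $[G:K]$, so the $\Gamma$-orbit of $K$ is finite and $\widehat{K}:=\bigcap_{\gamma\in\Gamma}K^{\gamma}$ is a $\Gamma$-invariant subgroup of finite index in $G$. Let $N_1$ be the preimage of $\widehat{K}$ under the map $N\rightarrow N/N_0=G$, so that $N_0\leq N_1\leq_f N$; then $N_1\vartriangleleft E$, because $N_0\vartriangleleft E$ and $\widehat{K}\vartriangleleft\Gamma=E/N_0$. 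Arranging this passage from the possibly non-$\Gamma$-invariant ideal $J$ to a $\Gamma$-invariant subgroup of $G$ to which Lemma \ref{L1} can be applied inside $E$ is the only step that is not entirely routine.

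It then remains to verify the hypotheses of Lemma \ref{L1} for $[A,N_1]\vartriangleleft N_1\vartriangleleft E$. The quotient $E/N_1$ is polycyclic, being an extension of the finite group $N/N_1=G/\widehat{K}$ by the polycyclic group $E/N$; and $N_1/[A,N_1]$ is nilpotent of class at most $2$, since $A/[A,N_1]$ is central in it while $N_1'\leq N'\leq A$. Lemma \ref{L1} therefore gives that $N_1/[A,N_1]$ is a minimax group, so its subgroup $A/[A,N_1]$, being an elementary abelian $p$-group inside a minimax group, is finite. Finally, writing $A$ additively as a $kG$-module one has $[A,N_1]=A\cdot(\widehat{K}-1)kG$, and $\widehat{K}-1\subseteq K-1\subseteq J$, so that $[A,N_1]\subseteq AJ$. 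Hence $A/AJ$ is a quotient of the finite group $A/[A,N_1]$, which proves the lemma.
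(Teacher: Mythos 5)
Your proof is correct and follows essentially the same route as the paper: find a $\Gamma$-invariant subgroup $K$ of finite index in $G$ with $(K-1)kG\subseteq J$, pull it back to a normal subgroup of $E$, apply Lemma \ref{L1} to conclude the relevant quotient is minimax, and finish by observing $A\mathfrak{k}\leq AJ$. The only difference is cosmetic: you construct $K$ explicitly as $\bigcap_\gamma (J^{\dagger})^{\gamma}$, whereas the paper simply asserts the existence of such a $K$ and leaves the (routine) construction implicit.
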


\begin{proof}
There exists $K=K^{\Gamma}\leq_{f}G$ with $\mathfrak{k}\leq J$. Then
$K=Q/N_{0}$ where $E/Q$ is virtually nilpotent. As in the preceding lemma it
follows that $Q/[A,Q]$ has finite rank, and this implies the result since
$[A,Q]=A\mathfrak{k}\leq AJ$.
\end{proof}

\bigskip

Let us pause now to outline the strategy of the proof. The idea is to find a
`good' family of maximal ideals $L$ of finite index in $kG$, so that the
quotient modules $A/AL$ eventually get too big to fit inside the PIG group
$E$. In Subsection \ref{subs8.1} we establish arithmetical constraints that
must be satisfied by any `good' maximal ideal if $E$ has PIG($\alpha$).
Subsection \ref{subs8.2} considers a certain family $\mathcal{L}$ of maximal
ideals and shows that (almost always) if $L$ belongs to $\mathcal{L}$ then so
do all the `sisters' of $L$, namely the maximal ideals $M$ such that
$M^{\dagger}=L^{\dagger}$. The heart of the proof is in Subsection
\ref{subs8.3}: applying the structure theory from [21] and [24], we identify a
good family of maximal ideals, and pin down its properties; here Proposition
\ref{quasifg} is used. The results of all the preceding subsections are
combined to yield the final contradiction in Subsection \ref{subs8.4}.

To describe the arithmetic of maximal ideals we use the following notation:
let $L$ be a maximal ideal of finite index in $kX$ where $X$ is an abelian
group. Then $\left\vert kX/L\right\vert =p^{f}$ for some $f$, and $\left\vert
X/L^{\dagger}\right\vert =m$ where $p\nmid m$ and $f$ is equal to the order of
$p$ modulo $m$ (because $X/L^{\dagger}$ is a subgroup of $(kX/L)^{\ast}$ and
is contained in no proper subfield). We set $f(L)=f$ and $m(L)=m$.

Fix the parameters%
\[
d=\mathrm{d}(E)
\]
and $\alpha\in\mathbb{N}$ such that%
\[
\left\vert E/E^{n}\right\vert \leq n^{\alpha}\,\,\forall n\in\mathbb{N}%
\]
(note that $E/E^{n}$ is automatically finite since $E$ is finitely generated
and soluble).

\subsection{$f(L)$ versus $m(L)\label{subs8.1}$}

The next lemma will enable us to `lift' finite quotient modules of $A$ into
finite quotient groups of $E$.

\begin{lemma}
\label{lifting}Let $C<A$. Suppose that $A/C$ is a finite semisimple
$kG$-module and that $C\vartriangleleft E_{1}$ where $N<E_{1}\leq E$. Then
there exists $C^{\diamond}\vartriangleleft E_{1}$ such that $A\cap
C^{\diamond}=C$ and $AC^{\diamond}=\mathrm{C}_{E_{1}}(A/C)$.
\end{lemma}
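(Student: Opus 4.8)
The plan is to produce $C^{\diamond}$ as a suitable preimage of a complement-type subgroup inside a finite quotient of $E_1$, and then check it has the required properties by a cohomological triviality argument.

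\noindent\textbf{Proof plan.} The plan is to realise $C^{\diamond}$ as an $E_{1}$-invariant complement to a finite central section. Put $P=\mathrm{C}_{E_{1}}(A/C)$. Since $A$ is abelian, $A\leq P$; since $A/C$ is finite, $\mathrm{Aut}(A/C)$ is finite, so $P\vartriangleleft_{f}E_{1}$; and since $P$ acts trivially on $A/C$, $[A,P]\leq C$, so $A/C$ is a finite central subgroup of $P/C$. A subgroup $C^{\diamond}$ as in the statement is exactly a normal subgroup of $E_{1}$ with $C\leq C^{\diamond}\leq P$, $C^{\diamond}\cap A=C$ and $C^{\diamond}A=P$; that is, an $E_{1}$-invariant complement to $A/C$ in $P/C$. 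So I would reduce the lemma to constructing one such complement.

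The key algebraic input is that $A/C$ is \emph{semisimple}: by the convention of this section each simple constituent is non-trivial for $N$, so $A=[A,N]C$; equivalently, $A/C$ has no non-zero $G$-trivial quotient (here $G=N/N_{0}$ acts faithfully on $A$), and $[A,G]$ maps onto $A/C$. Using this, together with finiteness of $A/C$ and Clifford theory for $N\vartriangleleft E_{1}$ to reduce --- $E_{1}$-compatibly --- to the case that $A/C$ is \emph{simple}, I would aim to kill the obstruction to splitting the central extension $1\to A/C\to P/C\to P/A\to1$. Write $Q=P/A$ (a torsion-free metabelian minimax group, the torsion-freeness coming from the reductions at the start of this section) and $V=A/C$; by the universal coefficient theorem the class in $H^{2}(Q,V)$ decomposes into a $\mathrm{Hom}(H_{2}(Q),V)$-part, which vanishes exactly when $A\cap[P,P]\leq C$, and an $\mathrm{Ext}(Q^{\mathrm{ab}},V)$-part. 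Proving $A\cap[P,P]\leq C$ is the crux: every commutator in the metabelian group $P$ that lies in $A$ has ``$A$-value'' lying in the image under $A\to A/C$ of relative norm elements $1+\overline{x}+\dots+\overline{x}^{\,j-1}$ attached to $x\in P$ (with $\overline{x}^{\,j}$ acting trivially on $A/C$), and these all kill $A/C$ because $A/C$ has no non-zero $G$-trivial quotient --- the paradigm being the identity $1+\zeta+\dots+\zeta^{\,m-1}=0$ for a primitive $m$-th root of unity $\zeta$ when $A/C$ is simple. The $\mathrm{Ext}$-part I would handle separately, using that $V$ has exponent $p$, that $Q$ is torsion-free, and that $[A,G]$ maps onto $A/C$.

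It then remains to replace an abstract splitting by an $E_{1}$-invariant one: the splittings of $1\to V\to P/C\to Q\to1$ form a torsor under the finite group $\mathrm{Hom}(Q,V)$ on which $E_{1}/A$ acts, so the obstruction to an invariant splitting lies in $H^{1}(E_{1}/A,\mathrm{Hom}(Q,V))$, and I expect this to vanish after restriction to $N/A$, again because $A/C$ has no $G$-trivial constituent. Pulling an invariant complement back to $E_{1}$ then yields $C^{\diamond}$ with $C^{\diamond}\cap A=C$ and $AC^{\diamond}=P=\mathrm{C}_{E_{1}}(A/C)$. The hard part will be the vanishing of these two cohomological obstructions --- above all the inclusion $A\cap[P,P]\leq C$ and its $E_{1}$-equivariant analogue --- together with executing the reduction to the simple case without destroying $E_{1}$-invariance; everything is driven by the single fact that $A/C$ is semisimple with no trivial constituent, and turning that into the required vanishing is where the real work lies.
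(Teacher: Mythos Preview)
Your reduction is correct: the task is precisely to produce an $E_{1}$-invariant complement to $A/C$ in $P/C$ where $P=\mathrm{C}_{E_{1}}(A/C)$. But the cohomological attack you outline is both more complicated than necessary and has real gaps. Your ``norm element'' argument for $A\cap[P,P]\leq C$ is not a proof as stated (and $P$ need not be metabelian---it has derived length $\leq 3$); and you give no mechanism for killing the $\mathrm{Ext}(Q^{\mathrm{ab}},V)$ part, which is genuinely nonzero in general when $Q^{\mathrm{ab}}$ is a $p$-divisible minimax group and $V$ has exponent $p$. Showing that the \emph{specific} class vanishes is the whole point, and nothing in your sketch does this. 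Finally, the $H^{1}$ step for $E_{1}$-invariance is only asserted.

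The paper avoids cohomology entirely and is much shorter. Set $N_{1}=\mathrm{C}_{N}(A/C)$. The three-subgroups lemma gives $[N_{1}',N]\leq C$, so $N_{1}'C/C$ is a trivial $G$-submodule of $A/C$; since $A/C$ has no trivial constituents, $N_{1}'\leq C$ and $N_{1}/C$ is \emph{abelian}. Now use torsion-freeness of $N/A$ and $\exp(A/C)=p$: putting $N_{2}=CN_{1}^{p}$ one gets $N_{2}\cap A=C$, so $N_{1}/N_{2}$ is an elementary abelian $p$-group containing $A/C$ as a summand. Because $A/C$ is $kG$-semisimple, $N/N_{1}$ is a finite $p'$-group, so Maschke gives a canonical $N$-invariant complement, namely $N_{3}/N_{2}=\mathrm{C}_{N_{1}/N_{2}}(N)$. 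One then repeats the same two moves (three-subgroups, then $p$-th powers and Maschke) with $D=P$ in place of $N_{1}$, obtaining $C^{\diamond}/D^{p}N_{3}=\mathrm{C}_{D/D^{p}N_{3}}(N)$. The crucial point you are missing is that defining the complement as an $N$-\emph{centralizer} makes it automatically $E_{1}$-invariant, because $N\vartriangleleft E_{1}$: no $H^{1}$ computation is needed. A short check then gives $D/C=A/C\times C^{\diamond}/C$.

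So: drop the universal coefficient machinery and the vague norm argument. The two ingredients that actually carry the proof are the three-subgroups lemma (to force $N_{1}/C$, and then $D/N_{3}$, to be abelian modulo the right thing, using that $A/C$ has no trivial $G$-constituent) and the observation that complements defined as $N$-centralizers are $E_{1}$-invariant for free.
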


\begin{proof}
Put $N_{1}=\mathrm{C}_{N}(A/C)$. Then%
\[
\lbrack N_{1}^{\prime},N]\leq\lbrack N,N_{1},N_{1}]\leq\lbrack A,N_{1}]\leq
C,
\]
so $N_{1}^{\prime}C/C$ is a trivial $G$-module. Therefore $N_{1}^{\prime}\leq
C$ since $A/C$ has no trivial $G$-submodules. Thus $N_{1}/C$ is abelian, and
putting $N_{2}=CN_{1}^{p}$ we have $N_{2}\cap A=C$ (recall that $N/A$ is torsion-free).

Now $N/N_{1}$ is a finite $p^{\prime}$-group since $A/C$ is semisimple for
$kG$, so $N_{1}/N_{2}$ is completely reducible as a module for $k(N/N_{1})$.
Therefore%
\[
\frac{N_{1}}{N_{2}}=\frac{AN_{2}}{N_{2}}\times\frac{N_{3}}{N_{2}}%
\]
where $N_{3}/N_{2}=\mathrm{C}_{N_{1}/N_{2}}(N)$. Note that $N_{1}/N_{3}\cong
AN_{2}/N_{2}\cong A/C$.

Put $D=\mathrm{C}_{E_{1}}(A/C)$. Then $D\cap N=N_{1}$ so
\[
\lbrack D^{\prime},N]\leq\lbrack N,D,D]\leq\lbrack N_{1},D]\leq N_{3},
\]
and hence $D^{\prime}\leq N_{3}$ since $D^{\prime}\leq N\cap D=N_{1}$ and
$N_{1}/N_{3}$ is semisimple for $G$. As above, it follows that $D^{p}N_{3}\cap
N_{1}=N_{3}$, and that%
\[
\frac{D}{D^{p}N_{3}}=\frac{D^{p}N_{1}}{D^{p}N_{3}}\times\frac{C^{\diamond}%
}{D^{p}N_{3}}%
\]
where%
\[
\frac{C^{\diamond}}{D^{p}N_{3}}=\mathrm{C}_{D/D^{p}N_{3}}(N).
\]

Looking back over the construction we find that%
\[
\frac{D}{C}=\frac{A}{C}\times\frac{C^{\diamond}}{C},
\]
and the result follows. (A lattice diagram makes the argument transparent.)
\end{proof}

\bigskip

The key to the inner structure of the relevant finite quotients is

\begin{lemma}
\label{lang}Let $1<V<P$ be normal subgroups of a finite group $H$, with
$V,\,P/V$ and $H/P$ abelian. Assume that $V=\mathrm{C}_{H}(V)$ and that $V$ is
semisimple and homogeneous as a $k(P/V)$-module. Let $C=\mathrm{C}_{H}(P/V)$
and put $r=\left|  H/C\right|  $. Then $\left|  P/V\right|  \cdot\left|
V\right|  ^{2/r}$ is an upper bound for the orders of all elements of $C/V$.
\end{lemma}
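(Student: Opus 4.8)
The plan is to translate the hypotheses into linear algebra over a finite field, reduce to bounding the order of an element modulo scalars, and then run a Galois‑descent (``Lang'') argument.

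First I would pin down the internal structure. Since $V$ is a homogeneous semisimple $k(P/V)$-module, $\mathrm{End}_{k(P/V)}(S)=\mathbb{F}_q$ for the underlying simple module $S$ (Schur's lemma over a finite field), and $V\cong S^{(t)}$ with $|V|=q^{t}$. The condition $V=\mathrm{C}_H(V)$ gives $\mathrm{C}_P(V)=V$, so $P/V$ acts faithfully on $V$, necessarily as a group of scalars $\langle\zeta\rangle\le\mathbb{F}_q^{\ast}$ of order $m=|P/V|$ with $\mathbb{F}_p[\zeta]=\mathbb{F}_q$. A short commutator computation, using $[P,C]\le V$ and that $V$ is abelian, shows that every element of $C$ centralizes the $P/V$-action on $V$; together with $\mathrm{C}_C(V)=V$ this embeds $C/V$ into $\mathrm{Aut}_{k(P/V)}(V)=\mathrm{GL}_t(\mathbb{F}_q)$, with $P/V$ mapping onto the scalars $\langle\zeta\rangle$. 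Conjugation by $H$ on the ring $\mathrm{End}_{k(P/V)}(V)\cong M_t(\mathbb{F}_q)$, together with Skolem--Noether, exhibits $H/V$ inside $\Gamma\mathrm{L}_t(\mathbb{F}_q)$ with $C/V=(H/V)\cap\mathrm{GL}_t(\mathbb{F}_q)=\ker\bigl(H/V\to\mathrm{Gal}(\mathbb{F}_q/\mathbb{F}_p)\bigr)$; thus $H/V$ induces a Galois group of order $r$, and $\mathbb{F}_q$ has a subfield $\mathbb{F}_{q_0}$ fixed by it with $q=q_0^{\,r}$, so $q_0^{\,t}=|V|^{1/r}$.

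Now take $c\in C$ with image $\bar c$ in $C/V$. Since $H/P$ is abelian, $(C/V)'\le P/V=\langle\zeta\rangle$, so if $n'$ is the order of $\hat c:=\bar c\langle\zeta\rangle$ in $(C/V)/\langle\zeta\rangle$ then $\bar c^{\,n'}\in\langle\zeta\rangle$ and $\mathrm{ord}(\bar c)$ divides $n'm$; it therefore suffices to prove $n'\le|V|^{2/r}$. As $(C/V)'$ is central, $C/V$ acts trivially by conjugation on the abelian group $(C/V)/\langle\zeta\rangle$, so $H/V$ permutes the conjugates of $\hat c$ through the quotient of order $r$; hence the $\le r$ conjugates of $\hat c$ generate an $H$-invariant abelian subgroup whose preimage $\tilde T$ in $C/V$ is an $H$-invariant, class-$\le2$ subgroup with $\tilde T'\le\langle\zeta\rangle$ acting faithfully on $V$. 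Extending scalars to $\bar{\mathbb{F}}_q$ and decomposing $V\otimes\bar{\mathbb{F}}_q$ into joint eigenspaces of the semisimple part of $\tilde T$, the group $H/V$ permutes these eigenspaces semilinearly, and the induced permutation action together with the $q$-power Frobenius generates the full Galois action $\langle F_0\rangle$, $F_0$ the $q_0$-power Frobenius. Using Lang's theorem (surjectivity of the Lang map, equivalently the vanishing of $H^1$ for the relevant tori) I would arrange $F_0$ to be the standard entrywise Frobenius in a suitable basis; then $\tilde T$ is $F_0$-stable, the eigenspace decomposition is permuted by $\langle F_0\rangle$, and the eigenvalues of a suitable generator of $\tilde T$ fall into $\mathrm{Gal}(\bar{\mathbb{F}}_q/\mathbb{F}_{q_0})$-orbits whose eigenspaces have dimensions summing to at most $t$. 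This forces the prime-to-$p$ part of $n'$ to divide a product of factors $q_0^{\,d}-1$ with $\sum d\le t$, hence to be less than $q_0^{\,t}=|V|^{1/r}$, while the $p$-part of $n'$ is at most $p^{\lceil\log_p t\rceil}\le q_0^{\,t}=|V|^{1/r}$ by the usual Jordan-block bound; multiplying gives $n'\le|V|^{2/r}$, hence $\mathrm{ord}(\bar c)\le m\,|V|^{2/r}=|P/V|\cdot|V|^{2/r}$.

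The main obstacle is the middle of the last step: controlling the eigenvalues of $\bar c$ itself when the semilinear element only normalizes $\tilde T$ rather than centralizing $\bar c$. Making the Galois action genuinely act on the relevant set of eigenvalues — so that orbit sizes can be played off against the dimension budget $t$ — is where Lang's theorem does the real work, and it is also where the exponent $2/r$ (rather than a naive $1/r$) emerges, once the prime-to-$p$ and $p$-parts are bounded separately and the scalar factor $m=|P/V|$ is restored.
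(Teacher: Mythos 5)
Your structural setup in the first two paragraphs is right and matches the paper's: identify $\overline{P}:=P/V$ as acting by scalars $\langle\zeta\rangle\leq\mathbb{F}_q^\ast$ on $V\cong\mathbb{F}_q^{\,t}$ with $\mathbb{F}_p[\zeta]=\mathbb{F}_q$, embed $C/V$ in $\mathrm{GL}_t(\mathbb{F}_q)$ and $H/V$ in $\Gamma\mathrm{L}_t(\mathbb{F}_q)$ with $C/V$ the kernel of the map to $\mathrm{Gal}(\mathbb{F}_q/\mathbb{F}_p)$, and record $q=q_0^{\,r}$, $q_0^{\,t}=|V|^{1/r}$. The $p$-part bound (unipotence, $p^{\lceil\log_p t\rceil}\leq q_0^{\,t}$) is also correct and is exactly the paper's step (a).

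The gap is exactly where you say it is. You reduce to bounding $n'=\mathrm{ord}(\hat c)$ and then try to control eigenvalues by forming the $H$-invariant group $\tilde T$, decomposing $V\otimes\bar{\mathbb{F}}_q$, and invoking Lang's theorem to normalize the Frobenius. But nothing in that construction ties the Galois action to the \emph{specific element} $\bar c$: you only know that $H/V$ normalizes $\tilde T$, and since $\tilde T$ need not be cyclic — indeed its $p'$-Hall part can be nonabelian (its derived subgroup lies in $\langle\zeta\rangle$, not necessarily trivially) — "eigenvalues of a suitable generator of $\tilde T$" is not a well-posed object, and the joint-eigenspace decomposition may fail. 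There is no mechanism forcing the eigenvalues of (the $p'$-part of) $\bar c$ into small Galois orbits, which is precisely what you flag as "the main obstacle." Lang's theorem does not supply this; it only normalizes a Frobenius, but you still have an element that is merely normalized, not centralized, by it.

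The paper's proof avoids all of this with one elementary observation that you have the ingredients for but don't use: since $[g,x]\in\overline{P}=\langle\zeta\rangle$ for $g\in\overline{C}$, and $\overline{P}$ is centralized by $\overline{C}$, one gets $[g^m,x]=[g,x]^m=1$ where $m=|\overline{P}|$. So $g^m$ \emph{literally commutes} with the semilinear element $x$. Writing $g^m=uy$ with $u$ a $p$-element and $y$ a $p'$-element (both in $\mathrm{C}_{\overline{C}}(x)$), one then forms the commutative semisimple $k$-algebra $R=k[\zeta,y]\subseteq\mathrm{End}(V)$. Conjugation by $x$ gives an automorphism $\xi$ of $R$ of order $r$ that \emph{fixes} $y$. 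Grouping the maximal ideals $J_i^{\xi^j}$ of $R$ into $\xi$-orbits of lengths $l_i$, the residue field $E_i=R/J_i$ contains $F=kG/L\cong\mathbb{F}_q$, so $\xi^{l_i}$ acts on $E_i$ with order exactly $r/l_i$; since $y$ is $\xi$-fixed, its image $\tilde y$ lies in the fixed field of $\xi^{l_i}$, giving $(k(\tilde y):k)\leq e_il_i/r$ where $e_i=(E_i:k)$. With $\sum_i e_il_i\leq\dim_k V=nf$ this yields $y^t=1$ for $t=\prod_i\bigl(p^{e_il_i/r}-1\bigr)<p^{nf/r}=|V|^{1/r}$, and the claimed bound follows. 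This is a direct degree count — no extension of scalars, no Lang. If you replace your Lang step with this passage to $g^m$ and the commutative algebra $k[\zeta,y]$, the rest of your write-up goes through.
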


\begin{proof}
Write $\overline{\phantom{m}}:H\rightarrow \mathrm{Aut}(V)$ for the map
induced by conjugation in $H$ and write $F=k[\overline{P}]$ for the image of 
$k\overline{P}$ in $\mathrm{End}(V)$. Then $F\cong k\overline{P}/L$ where $L=%
\mathrm{ann}_{k\overline{P}}(V)$ is a maximal ideal of $k\overline{P}$, and $%
\overline{P}=\left\langle \zeta \right\rangle $ where $\zeta \in F^{\ast }$
has order $m=m(L)$. We have%
\begin{equation*}
(F:k)=f=f(L),\,\,V\cong F^{(n)} 
\end{equation*}%
for some $n\geq 1$; so $\left\vert V\right\vert =p^{nf}$, while $\left\vert
P/V\right\vert =\left\vert \overline{P}\right\vert =m$.

The action of $\overline{H}$ by conjugation gives a homomorphism $\theta :%
\overline{H}\rightarrow \mathrm{Gal}(F/k)$ with $\ker \theta =\overline{C}$,
so $\overline{H}=\overline{C}\left\langle x\right\rangle $ for some $x$;
note that $r=\left\vert \left\langle x\theta \right\rangle \right\vert $
divides $f=\left\vert \mathrm{Gal}(F/k)\right\vert $.

If $g\in \overline{C}$ then $[g,x]\in \overline{P}$ so%
\begin{equation*}
\lbrack g^{m},x]=[g,x]^{m}=1. 
\end{equation*}%
Writing $g^{m}$ as a product of a $p$-element and a $p^{\prime }$-element,
we see that it will suffice to prove\medskip

(a): if $y\in \overline{C}$ has $p$-power order then $y^{p^{n}}=1$;

\medskip

(b): if $y\in \mathrm{C}_{\overline{C}}(x)$ has order prime to $p$ then $%
y^{t}=1$ for some $t\leq p^{nf/r}$.

\medskip

\noindent \emph{Proof of} (a). This is clear since now $y$ is a unipotent $F$%
-automorphism of $V$.

\medskip

\noindent \emph{Proof of} (b). Let%
\begin{equation*}
R=k[\zeta ,y]\subseteq \mathrm{End}(V). 
\end{equation*}%
This is an image of the group algebra $F\left\langle y\right\rangle $ so it
is a semisimple $F$-algebra, on which $x$ induces via conjugation an
automorphism $\xi $ of order $r$. Let%
\begin{equation*}
J_{i}^{\xi ^{j}}\qquad (1\leq i\leq b,\,0\leq j<l_{i}) 
\end{equation*}%
be the distinct maximal ideals of $R$, grouped into $b$ $\left\langle \xi
\right\rangle $-orbits of lengths $l_{1},\ldots ,l_{b}$ respectively; put $%
e_{i}=(R/J_{i}:k)$. Then%
\begin{equation*}
\sum_{i=1}^{b}e_{i}l_{i}\leq \dim _{k}(V)=nf 
\end{equation*}%
since each $R/J_{i}^{\xi ^{j}}$ appears as a composition factor of the $R$%
-module $V$.

Now fix $i$ and put $E=R/J_{i}$. Then $\xi ^{l_{i}}$ acts on $E$, inducing
an automorphism of order exactly $r/l_{i}$ since $E$ contains (a copy of) $F$%
. If $\widetilde{y}=y+J_{i}$ then $\widetilde{y}$ lies in the fixed field of 
$\xi ^{l_{i}}$, so we have 
\begin{equation*}
(E:k(\widetilde{y}))\geq r/l_{i}. 
\end{equation*}%
Thus 
\begin{equation*}
(k(\widetilde{y}):k)\leq l_{i}(E:k)/r=e_{i}l_{i}/r. 
\end{equation*}%
Hence $y^{p^{e_{i}l_{i}/r}-1}-1\in J_{i}$.

Since $y=y^{\xi }$ and $\bigcap_{i,j}J_{i}^{\xi ^{j}}=0$ it follows that $%
y^{t}=1$ where%
\begin{equation*}
t=\prod_{i}\left( p^{e_{i}l_{i}/r}-1\right) <p^{\sum e_{i}l_{i}/r}\leq
p^{nf/r}. 
\end{equation*}
\end{proof}

\bigskip

Now we can establish the main result of this subsection:

\begin{proposition}
\label{m vs f}Suppose $\mathfrak{g}\neq L\underset{\max}{\vartriangleleft}%
_{f}kG$ satisfies $AL<A$, and that $L=L^{E_{1}}$ where $N<E_{1}\leq E$. Put
$s=\left|  E:E_{1}\right|  $, $K/N_{0}=L^{\dagger}$. Then at least one of the
following holds:\newline\emph{(a)}
\[
\left|  E_{1}:\mathrm{C}_{E_{1}}(N/K)\right|  \leq 4d\alpha\text{;}%
\]
\emph{(b)}
\[
\log_{p}m(L)>\frac{f(L)}{2(d+1)\alpha}-\frac{\log_{p}(psf(L))}{d+1}.
\]

\end{proposition}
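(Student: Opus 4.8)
The plan is to lift a large finite quotient module of $A$ into a finite quotient of $E$ and then play off the PIG inequality for $E$ against the arithmetic of the maximal ideal $L$. Starting from $L=L^{E_1}$ with $AL<A$, set $K/N_0=L^{\dagger}$ and let $V=A/AL$; since $L$ is maximal of finite index and $L\ne\mathfrak g$, $V$ is a finite simple (non-trivial) $kG$-module, so by Lemma \ref{lang}'s notational setup $|V|=p^{nf}$ and $|N/K|=|P/V|=m$, where $f=f(L)$ and $m=m(L)$. I would enlarge $AL$ to a suitable $E_1$-invariant submodule $C$ so that $A/C$ is a finite \emph{semisimple} $kG$-module (e.g. the sum of finitely many $E_1$-conjugates of the simple module $A/AL$), keeping enough copies of the original simple factor; applying Lemma \ref{lifting} to $C\vartriangleleft E_1$ gives $C^\diamond\vartriangleleft E_1$ with $A\cap C^\diamond=C$ and $AC^\diamond=\mathrm C_{E_1}(A/C)$. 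Then $H:=E_1/C^\diamond$ is a finite quotient of $E_1$ of the shape required by Lemma \ref{lang}: it has an abelian normal subgroup $\overline V\cong A/C$ which is self-centralizing, with $\overline V$ semisimple and homogeneous over $k(\overline P)$ where $\overline P=P/\overline V$ and $P/C^\diamond=N C^\diamond/C^\diamond$, and $H/P$ abelian.

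Next I would run the PIG bound. Since $|E:E_1|=s$, the group $E_1$ has PIG$(s\alpha)$-ish behaviour; more simply, $H$ is a quotient of $E$, so $|H|\le\exp(H)^\alpha$ and $\mathrm d(H)\le d$ (as $H$ is a quotient of $E_1\le E$ — here one should keep track that a bound on $\mathrm d(E_1)$ in terms of $d$ and $s$ is available, or argue directly with $E$). Put $r=|H/C|$ in the notation of Lemma \ref{lang}, i.e. $r=|\overline H/\overline C|$ where $\overline C=\mathrm C_H(\overline P)$; note $r\mid f$. Lemma \ref{lang} then bounds the exponent of $\overline C$ (equivalently of $C/\overline V$, the centralizer of $N/K$ modulo $A/C$) by $m\cdot|V|^{2/r}=m\cdot p^{2nf/r}$. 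The exponent of the whole finite group $H$ is therefore at most $p\cdot m\cdot p^{2nf/r}\cdot r$ (the factor $p$ for $\overline V$, the factor $r$ for $H/\overline C$ which is cyclic of order $r$). On the other hand $|H|\ge|\overline V|=p^{nf}$. Feeding these into $|H|\le\exp(H)^\alpha$ gives
\[
p^{nf}\le \bigl(p^{1+2nf/r}\,m\,r\bigr)^{\alpha},
\]
whence
\[
nf\Bigl(1-\tfrac{2\alpha}{r}\Bigr)\le \alpha\bigl(1+\log_p m+\log_p r\bigr)\le \alpha\log_p(psf\,m).
\]

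The dichotomy now emerges from whether $r$ is large or small relative to $\alpha$. If $r\le 4d\alpha$: since $r=|\overline H/\overline C|=|E_1:\mathrm C_{E_1}(N/K)\cdot(\text{part inside }A)|$, and the ``part inside $A$'' contributes nothing because $A$ acts trivially on $N/K$ (as $A\le N$, $N/A$ abelian, and $A\le N_0$), one gets $|E_1:\mathrm C_{E_1}(N/K)|=r\le 4d\alpha$, which is alternative (a). If $r>4d\alpha$, then $1-2\alpha/r>1/2$... but to land the stated bound I actually want to use $r\le f$ in the \emph{other} direction: substitute the crude $r\le f$ only where it helps. Concretely, rewrite the inequality as
\[
\log_p m \ge \frac{nf}{\alpha}\Bigl(1-\tfrac{2\alpha}{r}\Bigr)-\log_p(psf)\ \ge\ \frac{f}{\alpha}\Bigl(1-\tfrac{2\alpha}{r}\Bigr)-\log_p(psf)
\]
using $n\ge1$; and in the regime $r>4d\alpha\ge 2\alpha\cdot\tfrac{2d}{1}$ one needs $1-2\alpha/r$ bounded below. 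The cleaner route to the exact form $\log_p m(L)>\frac{f(L)}{2(d+1)\alpha}-\frac{\log_p(psf(L))}{d+1}$ is to observe that if (a) fails then $r>4d\alpha$, and then also $H/\overline C$ being cyclic of order $r$ forces $H$ to need $\ge r$ generators-worth of... no — rather, I would use that $H$ is $d$-generated (or $O(d)$-generated) so $\overline H/\overline C$ cyclic of order $r$ imposes no obstruction, but the homogeneous module $\overline V\cong F^{(n)}$ with $\overline H/\overline C$ cyclic means $H$ surjects onto a group whose generator count forces $nf\le (d+1)\log_p|H|/\log_p(\dots)$-type control — this is the step I expect to be fiddly. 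Granting the bookkeeping, one divides through and absorbs the factor $2$ and the $(d+1)$ to reach (b).

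\textbf{Main obstacle.} The delicate point is the passage from the raw inequality $p^{nf}\le(p^{1+2nf/r}mr)^\alpha$ to the precise constant $\tfrac{1}{2(d+1)\alpha}$: this requires (i) correctly bounding $\exp(H)$ — in particular showing the $H/\overline C$ and $\overline V$ contributions only add the modest factors $r$ and $p$, which uses that $\overline C/\overline V$'s exponent dominates via Lemma \ref{lang} and that $H/\overline C$ is cyclic; (ii) controlling $\mathrm d(H)$ or equivalently bounding $s\cdot f$ effectively, so that when alternative (a) fails (i.e. $r$ is large, $>4d\alpha$) the coefficient $1-2\alpha/r$ is safely $\ge \tfrac{r-2\alpha}{r}$ and combines with the $\log_p(psf)$ error term; and (iii) the arithmetic juggling of $n\ge1$, $r\mid f$, $r\le f$, to convert a bound with $r$ in it into one with only $f$, $m$, $s$, $d$, $\alpha$. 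None of this is conceptually hard, but getting the stated constants out requires care; everything else (the lifting, the self-centralizing homogeneous structure, the applicability of Lemma \ref{lang}) follows routinely from Lemmas \ref{lifting} and \ref{lang} and the reductions already in force.
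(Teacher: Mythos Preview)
Your strategy is the paper's: lift via Lemma~\ref{lifting} (take $C=AL$ directly --- it is already $E_1$-invariant since $L=L^{E_1}$, and $A/AL\cong(kG/L)^{(n)}$ is homogeneous semisimple, so no enlargement is needed), apply Lemma~\ref{lang} to $H=E_1/C^\diamond$, then feed an exponent bound into PIG. Two concrete fixes resolve the bookkeeping you flagged as fiddly.

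First, Lemma~\ref{lang} bounds the \emph{element orders} in $C/V$, not $\exp(C/V)$, and $C/V$ is not abelian; so your line ``$\exp(\overline C)\le m\cdot p^{2nf/r}$'' is not justified. The paper instead bounds $\exp(H)$ via the chain $V\le P\le C\le H$: here $C/P\cong E_2/NC^\diamond$ is a subquotient of the $d$-generated abelian group $E/N$, hence abelian of rank at most $d$; together with $\exp(P)\mid mp$ and $H^r\le C$ this gives $\exp(H)\le r\cdot(mp^{2nf/r})^d\cdot mp=rm^{d+1}p^{2ndf/r+1}$. This is exactly where the $(d+1)$ you were hunting for lives --- the role of $d$ is as $\mathrm{rk}(C/P)$, not as $\mathrm{d}(H)$. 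Second, $H=E_1/C^\diamond$ is \emph{not} a quotient of $E$ (only of $E_1$, since $C^\diamond$ is normal only in $E_1$). Because $N\le E_1$ and $E/N$ is abelian, however, $E_1\vartriangleleft E$ and $E^s\le E_1$; hence $E^{sj}\le E_1^j\le C^\diamond$ for $j=\exp(H)$, and PIG for $E$ gives $p^{nf}\le|E:C^\diamond|\le|E/E^{sj}|\le(sj)^\alpha$. Now if (a) fails then $r>4d\alpha$, so $2d\alpha/r<\tfrac12$; taking $\log_p$, using $r\le f$ and $n\ge1$, and rearranging yields (b) directly.
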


\begin{proof}
Note that $\left\vert N/K\right\vert =m(L)$ and that $A/AL\cong (kG/L)^{(n)}$
for some $n\in \mathbb{N}$, so $\left\vert A/AL\right\vert =p^{nf(L)}$. Put $%
E_{2}=\mathrm{C}_{E_{1}}(N/K)$ and $r=\left\vert E_{1}:E_{2}\right\vert $.
Since $E_{1}/E_{2}$ acts faithfully by field automorphisms on $kG/L$, we
have $r\leq f(L)$.

According to Lemma \ref{lifting} there exists $C^{\diamond }\vartriangleleft E_{1}$
such that $A\cap C^{\diamond }=AL$ and $AC^{\diamond }=\mathrm{C}%
_{E_{1}}(A/AL)$. Note that $N\cap AC^{\diamond }=K$ and $NC^{\diamond }\leq
E_{2}$. Put%
\begin{align*}
H& =E_{1}/C^{\diamond } \\
C& =E_{2}/C^{\diamond } \\
P& =NC^{\diamond }/C^{\diamond } \\
V& =AC^{\diamond }/C^{\diamond }.
\end{align*}%
Then all conditions of Lemma \ref{lang} are fulfilled, and we infer that the
elements of $C/V$ have orders bounded by $\left\vert P/V\right\vert \cdot
\left\vert V\right\vert ^{2/r}=\left\vert N/K\right\vert \cdot \left\vert
A/AL\right\vert ^{2/r}=mp^{2nf/r}$ where $m=m(L)$ and $f=f(L)$. Since 
$H^{r}\leq C$, $C/P$ is abelian of rank at most $d$ and $P$ has exponent 
$mp$, it follows that the exponent of $H=E_{1}/C^{\diamond }$ is at most%
\begin{equation*}
r\cdot \left( mp^{2nf/r}\right) ^{d}\cdot mp=rm^{d+1}p^{2ndf/r+1}. 
\end{equation*}%
Hence $E^{sj}\leq E_{1}^{j}\leq C^{\diamond }$ for some $j\leq
rm^{d+1}p^{2ndf/r+1}$ and it follows that%
\begin{align*}
p^{nf}=\left\vert V\right\vert <\left\vert E:C^{\diamond }\right\vert & \leq
\left\vert E/E^{sj}\right\vert \leq (sj)^{\alpha } \\
& \leq (psr)^{\alpha }\cdot p^{2ndf\alpha /r}\cdot m^{(d+1)\alpha }.
\end{align*}

Suppose now that $r>4d\alpha $. Taking logarithms $\operatorname{mod}p$, noting that 
$r\leq f$ and rearranging we get%
\begin{equation*}
(d+1)\alpha \log _{p}m>nf/2-\alpha \log _{p}(psf). 
\end{equation*}%
This gives (b) since $n\geq 1$.
\end{proof}

\subsection{Good sisters\label{subs8.2}}

Here is a simple observation from [24]:

\begin{lemma}
\label{primes} \emph{([24], Lemma 12)} \ Let $\mathcal{X}$ be an infinite set
of maximal ideals of finite index in $kG$. Then $\mathcal{X}$ has an infinite
subset $\mathcal{Y}$ \ such that $\bigcap\mathcal{Y}$ is a prime ideal of $kG$.
\end{lemma}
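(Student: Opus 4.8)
The plan is to reduce to the Noetherian case by contracting to a polynomial subring; the point is that the ``non‑Noetherian part'' of $kG$ comes from a divisible torsion group and so is invisible to finite‑index maximal ideals.

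Since $G$ is minimax of finite rank $r=\mathrm{rk}(G)$ it has a finitely generated subgroup $G_{0}$ with $G/G_{0}$ a divisible torsion group; then $kG_{0}\cong k[\mathbb{Z}^{r}]$ is Noetherian, $kG$ is a free $kG_{0}$-module, and $kG$ is integral over $kG_{0}$. I first claim that contraction $L\mapsto L\cap kG_{0}$ maps finite‑index maximal ideals of $kG$ \emph{injectively} to finite‑index maximal ideals of $kG_{0}$. Indeed, if $kG/L$ is finite then $kG_{0}/(L\cap kG_{0})$ embeds in it, hence is a finite field; and the finite‑index maximal ideals of $kG$ lying over a fixed maximal ideal $L_{0}$ of finite index in $kG_{0}$ correspond to the finite‑index maximal ideals of the fibre ring $kG\otimes_{kG_{0}}(kG_{0}/L_{0})$, which is a twisted group algebra of $G/G_{0}$ over the finite field $kG_{0}/L_{0}$. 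Any map from such a ring onto a finite field induces, after quotienting the multiplicative group of that field by $(kG_{0}/L_{0})^{\ast}$, a homomorphism from $G/G_{0}$ to a finite group; as $G/G_{0}$ is divisible this is trivial, so the map lands in $kG_{0}/L_{0}$ and, since $\mathrm{Hom}(G/G_{0},(kG_{0}/L_{0})^{\ast})=0$, is uniquely determined. Thus $\mathcal{X}$ embeds into an infinite set $\mathcal{X}_{0}$ of finite‑index maximal ideals of the Noetherian ring $kG_{0}$.

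Now I would induct on $r$, inside the wider class of rings ``twisted group algebra of a divisible torsion group over an affine domain over $k$'' — the Noetherian base of $kG$ being $kG_{0}$, of dimension $r$; the base case $r=0$ is vacuous, since over a finite field such an algebra has a unique finite‑index maximal ideal, so no such infinite $\mathcal{X}$ exists. For the inductive step: $kG_{0}$ being Noetherian, the Zariski closure of $\mathcal{X}_{0}$ has only finitely many irreducible components, so some prime $\mathfrak{p}_{0}$ of $kG_{0}$ lies below infinitely many members of $\mathcal{X}_{0}$, hence below infinitely many $L\in\mathcal{X}$, all of which contain $\mathfrak{p}_{0}kG$. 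If $\mathfrak{p}_{0}$ can be chosen non‑zero, the images of those $L$ form an infinite set of finite‑index maximal ideals of $kG/\mathfrak{p}_{0}kG$, a twisted group algebra of $G/G_{0}$ over the Noetherian domain $kG_{0}/\mathfrak{p}_{0}$ of strictly smaller Krull dimension; by induction some infinite subfamily has prime intersection there, and its preimage in $kG$, containing $\mathfrak{p}_{0}kG$, is then prime — which is what we want.

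The main obstacle is the remaining case, in which $\mathcal{X}_{0}$ is Zariski‑dense in $\mathrm{Spec}\,kG_{0}$, i.e. $\bigcap\mathcal{X}_{0}=0$. At the top of the recursion this is easy: $\bigcap\mathcal{X}$ meets $kG_{0}$ in $\bigcap\mathcal{X}_{0}=0$, and since $kG$ is a domain ($G$ being torsion‑free) integral over $kG_{0}$, every non‑zero ideal of $kG$ meets $kG_{0}$ non‑trivially, so $\bigcap\mathcal{X}=0$, which is prime. Inside the induction the ambient ring need not be a domain, so one instead applies a pigeonhole step: if $\bigcap\mathcal{Y}$ is not prime there are $a,b$ with $ab\in\bigcap\mathcal{Y}$ but $a,b\notin\bigcap\mathcal{Y}$, and since each member of $\mathcal{Y}$ is prime one of $\{L\in\mathcal{Y}:a\in L\}$, $\{L\in\mathcal{Y}:b\in L\}$ is an infinite proper subfamily whose intersection strictly contains $\bigcap\mathcal{Y}$; iterating gives an ascending chain of ideals, each contained in infinitely many finite‑index maximal ideals. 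The technical heart is that this chain must stabilise — which I expect to be the hard part — and this is forced by contracting along the injective map to the Noetherian base $kG_{0}$ (where the ascending chain condition holds) together with the fact that $G/G_{0}$, being divisible, contributes no finite‑index maximal ideals and hence nothing to these ideals; the stabilisation yields the contradiction that completes the proof.
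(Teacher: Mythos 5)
Your reduction to the Noetherian base $kG_0$ is well set up: the observation that, because $G/G_0$ is divisible torsion, each finite-index maximal ideal $L_0$ of $kG_0$ has \emph{at most one} finite-index maximal ideal of $kG$ lying over it (via $\mathrm{Hom}(G/G_0,(kG_0/L_0)^\ast)=0$) is correct and cleanly argued, and it does give an injection $\mathcal{X}\hookrightarrow\mathcal{X}_0$ into an infinite set of finite-index maximal ideals of the Noetherian ring $kG_0$. The dimension-descending part of the induction (choosing a nonzero minimal prime $\mathfrak{p}_0$ of $kG_0$ below infinitely many $L_0$, passing to $kG_0/\mathfrak{p}_0$ and the twisted group algebra $kG/\mathfrak{p}_0kG$) is also sound. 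The top-level case where $\bigcap\mathcal{X}_0=0$ is handled correctly using that $kG$ is a domain integral over $kG_0$.

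However, there is a genuine gap precisely where you flag the ``hard part.'' Inside the recursion, when $\bigcap\mathcal{X}_0=0$ and the ambient twisted group algebra $R$ is no longer a domain, you propose to iterate the pigeonhole split and appeal to stabilisation of the resulting ascending chain $J_0\subsetneq J_1\subsetneq\cdots$ of intersections, claiming this is forced by contracting to the Noetherian base. But the chain of contractions $J_i\cap D$ stabilises by ACC \emph{without} forcing the $J_i$ themselves to stabilise: the injectivity you established holds only for finite-index \emph{maximal} ideals, not for arbitrary semiprime ideals of $R$ (and there is no incomparability theorem available for non-prime ideals under an integral extension). The phrase ``$G/G_0$ ... contributes nothing to these ideals'' is not a justification; indeed the unique finite-index maximal ideal of $R$ over a given $L_0$ depends on $L_0$ through the (cocycle-split) map $H\to (D/L_0)^\ast$, so there is no fixed ``$H$-part'' that can be factored out of the $J_i$ to reduce the chain to one in $D$. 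Concretely: you would need to show that if $\mathcal{Y}'\subsetneq\mathcal{Y}$ are infinite subsets of $\mathcal{X}$ with $\bigcap\mathcal{Y}'_0=\bigcap\mathcal{Y}_0$ in $D$, then $\bigcap\mathcal{Y}'=\bigcap\mathcal{Y}$ in $R$; this is not proved and does not follow from what you have. Unless this stabilisation can be established (or the induction recast to keep the ambient ring a domain, which the passage to $kG/\mathfrak{p}_0kG$ does not do), the argument does not close.
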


This is used in the proof of

\begin{proposition}
\label{Land M}Let $\Gamma_{1}$ be a group acting on $G$ and let $0\neq\mu\in
kG$. Let%
\[
\mathcal{L}=\left\{  L\underset{\max}{\vartriangleleft}_{f}kG\mid\mu\notin
L=L^{\Gamma_{1}}\right\}  .
\]
Assume that $\bigcap\mathcal{X}=0$ for every infinite subset $\mathcal{X}$ of
$\mathcal{L}$. Then for almost all $L\in\mathcal{L}$ the following holds:%
\begin{equation}
M\underset{\max}{\vartriangleleft}_{f}kG,\,\,M^{\dagger}=L^{\dagger
}\Longrightarrow M\in\mathcal{L}. \label{sister}%
\end{equation}

\end{proposition}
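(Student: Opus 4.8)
The strategy is to argue by contradiction, after reductions that strip (\ref{sister}) down to a counting statement about a fixed nonzero element of $kG$. First, note that the hypothesis is equivalent to saying that \emph{every nonzero $\nu\in kG$ lies in only finitely many members of $\mathcal L$}: if $\{L\in\mathcal L:\nu\in L\}$ were infinite, its intersection would be a nonzero ideal. Secondly, dispose of the $\Gamma_1$-invariance half of (\ref{sister}) once and for all. If $L=L^{\Gamma_1}$, then $\Gamma_1$ acts on the finite field $kG/L$ by $k$-algebra automorphisms, hence through $\operatorname{Gal}((kG/L)/k)=\langle\mathrm{Frob}\rangle$, which acts on $\overline G:=G/L^{\dagger}$ by $x\mapsto x^{p}$. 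The sisters of $L$ are exactly the maximal ideals of $kG/(L^{\dagger}-1)kG\cong k\overline G$ with trivial $\dagger$, and these correspond bijectively to the $\langle p\rangle$-orbits on the set of faithful characters of $\overline G$ (equivalently to $(\mathbb Z/m)^{\times}/\langle p\rangle$, $m=m(L)$); since the cyclic group $\langle p\rangle$ stabilises each of its own orbits, $\Gamma_1$ fixes every sister of $L$. Hence (\ref{sister}) can fail at $L$ only through a sister $M$ of $L$ with $\mu\in M$, and — as a maximal ideal of finite index has only finitely many sisters — it suffices to prove that the set $\mathcal M_{0}$ of ``bad sisters'' (maximal ideals $M$ of finite index with $\mu\in M$ that are sisters of some member of $\mathcal L$) is finite.

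Suppose $\mathcal M_{0}$ is infinite. By Lemma \ref{primes} it has an infinite subset $\mathcal N$ with $Q:=\bigcap\mathcal N$ prime; as $\mu\in M$ for every $M\in\mathcal N$ we get $0\neq\mu\in Q$. Pick for each $M\in\mathcal N$ a sister $L(M)\in\mathcal L$ with $\mu\notin L(M)$, and thin $\mathcal N$ so that $M\mapsto L(M)$ is injective; then $\mathcal L':=\{L(M)\}$ is an infinite subset of $\mathcal L$, so $\bigcap\mathcal L'=0$, and since $L(M),M$ are sisters we also get $\bigcap_{M\in\mathcal N}(M^{\dagger}-1)kG=0$ (this ideal lies in $M\cap L(M)$ for each $M$). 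We must now contradict the coexistence of: $0\neq\mu\in\bigcap\mathcal N$; $\bigcap_{M\in\mathcal N}(M^{\dagger}-1)kG=0$; and, for each $M\in\mathcal N$, the image $\overline\mu$ of $\mu$ in $k[G/M^{\dagger}]$ being a \emph{nonzero} zero-divisor (nonzero, since $\overline\mu=0$ would force $\mu$ into every sister of $M$, including $L(M)$). The mechanism I would use is the ``power-twist'': writing $\mu=\sum_{g\in S}c_{g}g$ with all $c_{g}\neq 0$ and letting $\chi_{M},\chi_{L(M)}$ be the faithful characters of $G/M^{\dagger}$ cutting out $M,L(M)$, one has $\chi_{L(M)}=\chi_{M}^{a}$ for some $a$ coprime to $m(M)$, so $\mu\in M$ becomes the relation $\sum_{g}c_{g}\chi_{M}(g)=0$ while $\mu\notin L(M)$ becomes $\sum_{g}c_{g}\chi_{M}(g)^{a}\neq 0$. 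After discarding the finitely many members of $\mathcal L'$ whose common $\dagger$ contains one of the fixed nonzero elements $gh^{-1}-1$ ($g\neq h$ in $S$), the set $S$ injects into $G/M^{\dagger}$, and $\chi_{M}$ becomes a character of finite image of the fixed finitely generated group $\langle S\rangle$ satisfying the \emph{fixed} vanishing relation $\sum_{g\in S}c_{g}\chi_{M}(g)=0$ among roots of unity, but not generating a subgroup on which that relation holds identically. One then combines finiteness results on vanishing sums of roots of unity in characteristic $p$ with the ring-theoretic structure of $kG$ from [21] and [24] to conclude that only finitely many subgroups $M^{\dagger}$ — hence, by the first paragraph, only finitely many $L\in\mathcal L$ — can occur.

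The hard part is precisely this last step, and it is genuinely a positive-characteristic difficulty: over $\overline{\mathbb F}_{p}$ every nonzero element is a root of unity, so the classical Mann–Conway–Jones bound on vanishing sums of roots of unity has no naive analogue, and one cannot merely assert ``finitely many characters $\chi_{M}$'' (for $\langle S\rangle$ of rank $\ge 2$ the relation above admits infinitely many solutions of finite order). What has to be exploited is that $\mu$ is a \emph{fixed} nonzero element of the \emph{domain} $kG$, so that the ideals $(M^{\dagger}-1)kG$ cannot shrink to $0$ while $\mu$ stays a nonzero zero-divisor modulo each of them; converting this into a proof, via a careful analysis of how maximal ideals of finite index interact with their $\dagger$-subgroups, is where the quoted machinery of [21] and [24] does the real work.
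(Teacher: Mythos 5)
Your opening reductions are essentially right and match the paper's: the hypothesis is equivalent to saying every nonzero element lies in only finitely many members of $\mathcal L$; $\Gamma_1$-invariance propagates automatically to all sisters because $\Gamma_1$ acts through $\mathrm{Gal}((kG/L)/k)$ (this is the paper's Observation~1); and the problem reduces to showing that only finitely many subgroups $K=L^\dagger$ have a sister $M\in\mathcal M_K$ containing $\mu$ (the paper's $\mathcal K_0$). But at the point where you write ``one then combines finiteness results on vanishing sums of roots of unity... with the ring-theoretic structure of $kG$ from [21] and [24],'' you have, as you yourself concede in the final paragraph, no actual argument. That is a genuine gap, not a deferral: the paper's proof of this proposition cites nothing beyond Lemma~\ref{primes}, so the machinery you invoke does not exist in the cited sources in the form you need.

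The idea you are missing is not character-theoretic at all; it is the use of the power map $\psi_s\colon kG\to kG$, $g\mapsto g^s$, as a ring endomorphism. You correctly note that sisters differ by ``power-twists'' ($\chi_{L(M)}=\chi_M^a$), which is the paper's Observation~2 in disguise, but you try to exploit it as a constraint on the roots of unity $\chi_M(g)$, which leads nowhere. The paper instead proves (Observation~3) that if $\psi_s(\mu)\notin M$ then $\psi_s^{-1}(M)$ is again a maximal ideal of finite index lying in $\mathcal L$. This lets one push the ``bad sisters'' $M_K$ (which contain $\mu$) back into $\mathcal L$, where the hypothesis $\bigcap=0$ becomes available. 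Two further ingredients are then needed and are absent from your sketch. First, the exponent $s=s(K)$ a priori depends on $K$; the paper's Claim uses Lemma~\ref{primes} together with the finiteness of Krull dimension to extract an infinite subfamily on which a single $s$ works. Second, the final contradiction is an integrality trick: since $kG$ is integral over $kG^s$ and $\mu\neq 0$, there is a nonzero $\nu=\psi_s(\tau)\in kG^s\cap\mu kG^s$; then $\tau\in\psi_s^{-1}(M_K)=\widetilde L_K$ for every $K$ in the family, forcing $\tau=0$, a contradiction. Your write-up contains neither the uniform-$s$ step nor the integrality argument, and these are the substance of the proof.
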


\begin{proof}
For any $K<_{f}G$ let us write $\mathcal{M}_{K}=\left\{  M\underset{\max
}{\vartriangleleft}_{f}kG\mid\,M^{\dagger}=K\right\}  ,$ and put%
\[
\mathcal{K}=\left\{  L^{\dagger}\mid L\in\mathcal{L}\right\}  .
\]
For $s\in\mathbb{N}$ let $\psi_{s}$ denote the endomorphism of $G$ given by%
\[
\psi_{s}(g)=g^{s},
\]
and extend $\psi_{s}$ to a ring endomorphism of $kG$.

We begin with some observations.

\medskip

\textbf{1.} Let $K=L^{\dagger}\in\mathcal{K}$ where $L\in\mathcal{L}.$ Then
$\Gamma_{1}$ acts by field automorphisms on $kG/L$, so if $x\in\Gamma_{1}$ and
$g\in G$ then $g^{x}\equiv g^{p^{t}}\,(\operatorname{mod}K)$ for some $t$
(depending on $x$ but not on $g$). It follows that $\Gamma_{1}$ fixes every
ideal of $kG/\mathfrak{k}$, in particular every member of $\mathcal{M}_{K}$.

\medskip

\textbf{2.} Let $L,\,L^{\prime}\in\mathcal{M}_{K}$ where $K<_{f}G$. Then there
exists $s\in\mathbb{N}$ such that%
\[
L^{\prime}=\psi_{s}(L).
\]
To see this, note that $G/K$ is cyclic of order $m=m(L)=m(L^{\prime})$, and
that both $kG/L$ and $kG/L^{\prime}$ are isomorphic to $\mathbb{F}_{p^{f}}$
where $f$ is the order of $p$ modulo $m$. Choose an isomorphism $\theta
:kG/L\rightarrow kG/L^{\prime}$. As each of these fields contains a unique
multiplicative subgroup of order $m$, $\theta$ induces an automorphism
$\theta^{\ast}$ of $G/K:$%
\[%
\begin{array}
[c]{ccccc}%
G/K & \hookrightarrow & \left(  kG/L\right)  ^{\ast} & \subseteq & kG/L\\
\theta^{\ast}\downarrow &  &  &  & \downarrow\theta\\
G/K & \hookrightarrow & \left(  kG/L^{\prime}\right)  ^{\ast} & \subseteq &
kG/L^{\prime}%
\end{array}
\]
Then $\theta^{\ast}$ induces an automorphism $\widetilde{\theta}$ of
$kG/\mathfrak{k}\cong k(G/K)$ such that%
\[
\widetilde{\theta}(L/\mathfrak{k})=L^{\prime}/\mathfrak{k}.
\]
Now there exists $s$ coprime to $m$ such that $\theta^{\ast}(gK)=g^{s}K$ \ for
all $g\in G$. Then $\psi_{s}$ fixes $K$ and $\mathfrak{k}$ and induces
$\widetilde{\theta}$ on $kG/\mathfrak{k}$, so $\psi_{s}(L)=L^{\prime}$ as required.

\medskip

\textbf{3. }Let $M=M^{\Gamma_{1}}\underset{\max}{\vartriangleleft}_{f}kG$, let
$s\in\mathbb{N}$ and suppose that $\psi_{s}(\mu)\notin M$. Then $\widetilde
{M}:=\psi_{s}^{-1}(M)$ belongs to $\mathcal{L}$ and satisfies $\left\vert
kG/\widetilde{M}\right\vert ^{s^{r}}\geq\left\vert kG/M\right\vert $ where
$r=\mathrm{rk}(G)$.

Now $\psi_{s}$ maps $kG$ isomorphically onto $kG^{s}$ and $M\cap
kG^{s}\underset{\max}{\vartriangleleft}_{f}kG^{s}$, so%
\[
\widetilde{M}=\psi_{s}^{-1}(M\cap kG^{s})\underset{\max}{\vartriangleleft}%
_{f}kG
\]
and%
\[
\frac{kG}{\widetilde{M}}\cong\frac{kG^{s}+M}{M}\leq\frac{kG}{M}.
\]
As $\left\vert G/G^{s}\right\vert \leq s^{r}$ it follows that $\left\vert
kG/M\right\vert \leq\left\vert kG/\widetilde{M}\right\vert ^{s^{r}}$.
Obviously $\mu\notin\widetilde{M}$, and it is easy to see that $\widetilde
{M}=\widetilde{M}^{\Gamma_{1}}$. Thus $\widetilde{M}\in\mathcal{L}$.

\medskip

Suppose now that the proposition is false. Since each of the sets
$\mathcal{M}_{K}$ is finite, the set%
\[
\mathcal{K}_{0}=\left\{  K\in\mathcal{K}\mid\mathcal{M}_{K}\nsubseteq
\mathcal{L}\right\}
\]
must be infinite. Let $K\in\mathcal{K}$. By Observation 1, each member of
$\mathcal{M}_{K}$ is $\Gamma_{1}$-invariant, so if $K\in\mathcal{K}_{0}$ there
exists $M_{K}\in\mathcal{M}_{K}$ such that $\mu\in M_{K}$. By definition,
there exists $L_{K}\in\mathcal{M}_{K}\cap\mathcal{L}$, and Observation 2 shows
that $M_{K}=\psi_{s}(L_{K})$ for some $s\in\mathbb{N}$. Then $\psi_{s}%
(\mu)\notin M_{K}$. We fix such an $M_{K}$ for each $K\in\mathcal{K}_{0}$, and
write $s=s(K)$ for the appropriate $s$.

For each $s$ put%
\[
\mathcal{Y}(s)=\left\{  K\in\mathcal{K}_{0}\mid\psi_{s}(\mu)\in M_{K}\right\}
.
\]

\medskip

\textbf{Claim.} There is an infinite subset $\mathcal{X}$ of $\mathcal{K}_{0}$
such that%
\[
P:=\bigcap\mathcal{X}\text{ \ is prime}%
\]
and for $s\in\mathbb{N}$,%
\[
\psi_{s}(\mu)\notin P\Longrightarrow\left\vert \mathcal{X}\cap\mathcal{Y}%
(s)\right\vert <\infty.
\]

Suppose this is false. By Lemma \ref{primes} there is an infinite subset
$\mathcal{X}_{0}\subseteq\mathcal{K}_{0}$ such that%
\[
P_{0}:=\bigcap\mathcal{X}_{0}%
\]
is a prime ideal, and by hypothesis there exists $t\in\mathbb{N}$ such that
$\psi_{t}(\mu)\notin P_{0}$ and $\mathcal{K}_{1}:=\mathcal{X}_{0}%
\cap\mathcal{Y}(t)$ is infinite. Since $kG$ has finite Krull dimension, we may
choose $\mathcal{X}_{0}$ so as to maximize $P_{0}$. Repeating the preceding
step, we now find an infinite subset $\mathcal{X}\subseteq\mathcal{K}_{1}$
such that $P=\bigcap\mathcal{X}$ is prime. Then $P>P_{0}$ since $\psi_{t}%
(\mu)\in P\smallsetminus P_{0}$, so the maximal choice of $P_{0}$ implies that
$\mathcal{X}$ satisfies the \emph{claim}. Thus we have a contradiction.

\medskip

We can now conclude the proof. Choose $K_{1}\in\mathcal{X}$ and put
$s=s(K_{1})$. Then $\psi_{s}(\mu)\notin M_{K_{1}}$ so $\psi_{s}(\mu)\notin P$.
Therefore $\mathcal{X}\cap\mathcal{Y}(s)$ is finite, so $\mathcal{X}%
_{1}=\mathcal{X}\smallsetminus\mathcal{Y}(s)$ is infinite. If $K\in
\mathcal{X}_{1}$ then $\psi_{s}(\mu)\notin M_{K}$; let $\widetilde{L}%
_{K}=\widetilde{L}(M_{K},s)\in\mathcal{L}$ be the corresponding ideal provided
by Observation 3. As $K$ ranges over the infinite set $\mathcal{X}_{1}$ the
index $\left\vert kG:\widetilde{L}_{K}\right\vert $ is unbounded, so $\left\{
\widetilde{L}_{K}\mid K\in\mathcal{X}_{1}\right\}  $ is an infinite subset of
$\mathcal{L}$. It follows by hypothesis that%
\[
\bigcap_{K\in\mathcal{X}_{1}}\widetilde{L}_{K}=0.
\]
On the other hand, since $\mu\neq0$ and $kG$ is integral over $kG^{s}$ there
exists $\nu\neq0$ with $\nu\in kG^{s}\cap\mu kG^{s}$; thus $\nu=\psi_{s}%
(\tau)$ with $\tau\in kG$. For each $K\in\mathcal{X}_{1}$ we have $\psi
_{s}(\tau)\in\mu kG\leq M_{K}$, so $\tau\in\psi_{s}^{-1}(M_{K})=\widetilde
{L}_{K}$. Therefore $\tau=0$ and so $\nu=0$, a contradiction.
\end{proof}

\bigskip

Now, given $L\underset{\max}{\vartriangleleft}_{f}kG$, it is easy to estimate
how many `sisters' the ideal $L$ possesses. The following elementary
observation is proved in [24], Lemma 13:

\begin{lemma}
\label{cyclic}Let $C$ be a cyclic group of order $m$ where $p\nmid m$, and let
$f$ be the order of $p$ modulo $m$. Then the group algebra $kC$ has
$\varphi(m)/f$ faithful maximal ideals and each has index $p^{f}$.
\end{lemma}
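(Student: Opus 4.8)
The plan is to make everything explicit via the standard identification $kC \cong k[X]/(X^m-1)$, where $C = \langle c \rangle$ and $c \mapsto X$. Since $p \nmid m$, the polynomial $X^m - 1$ is separable over $k = \mathbb{F}_p$, so it factors as a product of distinct monic irreducibles, and by the Chinese Remainder Theorem $kC$ is a product of finite fields, one for each irreducible factor. The maximal ideals of $kC$ correspond bijectively to these irreducible factors, and the index of the ideal attached to an irreducible factor $g(X)$ is $p^{\deg g}$.

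Next I would identify which maximal ideals are \emph{faithful}, i.e.\ have trivial $L^{\dagger} \cap C$ — equivalently, the image of $c$ in $kC/L$ still has order exactly $m$. Passing to a splitting field, $X^m - 1 = \prod_{\zeta^m = 1}(X - \zeta)$ over $\overline{\mathbb{F}_p}$, and the irreducible factors of $X^m-1$ over $k$ are indexed by the orbits of Frobenius $\zeta \mapsto \zeta^p$ on the group $\mu_m$ of $m$-th roots of unity. The maximal ideal corresponding to an orbit is faithful precisely when the roots $\zeta$ in that orbit are \emph{primitive} $m$-th roots of unity: then $kC/L \cong k(\zeta)$ contains an element of multiplicative order $m$, namely the image of $c$; conversely if $\zeta$ has order $d < m$ then $c$ maps to an element of order $d$ and the ideal is not faithful. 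The primitive $m$-th roots of unity form a single union of Frobenius orbits of common size $f$ = order of $p$ mod $m$ (each orbit of a primitive root has size equal to the degree of its minimal polynomial, which is $f$ since $k(\zeta) = \mathbb{F}_{p^f}$ is the smallest field containing an element of order $m$). There are $\varphi(m)$ primitive roots partitioned into orbits of size $f$, hence $\varphi(m)/f$ faithful maximal ideals, each of index $p^f$ as claimed.

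The one point requiring a little care — the main (minor) obstacle — is the clean statement that \emph{every} Frobenius orbit on the primitive roots has the same size $f$, rather than merely a divisor of $f$. This is exactly the statement that the order of $p$ in $(\mathbb{Z}/m)^{\times}$ is well-defined: if $\zeta$ is a primitive $m$-th root then $\zeta^{p^j} = \zeta$ iff $p^j \equiv 1 \pmod m$, so the orbit has size exactly $\mathrm{ord}_m(p) = f$, independent of which primitive $\zeta$ we chose. Once this is in hand the count $\varphi(m)/f$ and the index $p^f$ are immediate, and the lemma follows.
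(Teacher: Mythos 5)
The paper does not prove this lemma itself — it quotes it as Lemma 13 of reference [24] without giving an argument. Your proof is correct and is precisely the standard argument one would expect: identify $kC$ with $k[X]/(X^{m}-1)$, use separability (from $p\nmid m$) and the Chinese Remainder Theorem to split $kC$ into a product of fields indexed by Frobenius orbits on $\mu_{m}$, observe that $L^{\dagger}=1$ exactly when the corresponding orbit consists of primitive $m$-th roots of unity, and note that every such orbit has size exactly $f=\mathrm{ord}_{m}(p)$ because $\zeta^{p^{j}}=\zeta$ iff $p^{j}\equiv 1\pmod m$ when $\zeta$ is primitive. The count $\varphi(m)/f$ and the index $p^{f}$ follow immediately, and you have correctly isolated the one small point that needs care (all orbits of primitive roots have the full length $f$, not merely a divisor of it).
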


\noindent Here $\varphi(m)$ denotes the Euler function. Since $G/L^{\dagger}$
is cyclic of order $m=m(L)$, it follows that $kG$ has $\varphi(m)/f$ maximal
ideals $M$ with $M^{\dagger}=L^{\dagger}$, where $f=f(L)$. Thus we have

\begin{corollary}
\label{manysisters}With $\mathcal{L}$ as above, let%
\[
\mathcal{L}(f)=\left\{  M\in\mathcal{L}\mid\left\vert kG/M\right\vert
=p^{f}\right\}  .
\]
If (\ref{sister}) holds for $L$ then%
\[
\left\vert \mathcal{L}(f)\right\vert \geq\varphi(m)/f
\]
where $m=m(L)$ and $f=f(L)$
\end{corollary}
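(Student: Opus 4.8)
The plan is to read off the conclusion directly from Lemma \ref{cyclic}, once we know, via (\ref{sister}), that \emph{every} sister of $L$ lies in $\mathcal{L}$. Write $K=L^{\dagger}$, a subgroup of finite index in $G$ with $G/K$ cyclic of order $m=m(L)$, where $p\nmid m$ and $f=f(L)$ is the order of $p$ modulo $m$. First I would identify the maximal ideals $M$ of finite index in $kG$ satisfying $M^{\dagger}=K$ with the \emph{faithful} maximal ideals of the quotient ring $kG/\mathfrak{k}\cong k(G/K)$: indeed $M^{\dagger}\supseteq K$ is equivalent to $\mathfrak{k}\subseteq M$, hence to $M/\mathfrak{k}$ being a maximal ideal of $k(G/K)$, and then $M^{\dagger}=K$ says exactly that $G/K$ embeds in $(kG/M)^{\ast}$, i.e. that $M/\mathfrak{k}$ is faithful for $G/K$.

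Next I would apply Lemma \ref{cyclic} with $C=G/K$. This produces precisely $\varphi(m)/f$ faithful maximal ideals of $k(G/K)$, each of index $p^{f}$; pulling them back gives $\varphi(m)/f$ maximal ideals $M\vartriangleleft_{f}kG$ with $M^{\dagger}=K=L^{\dagger}$ and $\left\vert kG/M\right\vert=p^{f}$ (one of which is $L$ itself). For each such $M$ we have $m(M)=\left\vert G/M^{\dagger}\right\vert=\left\vert G/K\right\vert=m$, and therefore $f(M)=f$, so the relation $\left\vert kG/M\right\vert=p^{f}$ places $M$ in $\mathcal{L}(f)$ the moment we know $M\in\mathcal{L}$. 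But $M^{\dagger}=L^{\dagger}$, so the hypothesis (\ref{sister}) supplies exactly that: $M\in\mathcal{L}$.

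Hence all $\varphi(m)/f$ sisters of $L$ lie in $\mathcal{L}(f)$, which gives $\left\vert\mathcal{L}(f)\right\vert\geq\varphi(m)/f$. There is no real obstacle here; the argument is pure bookkeeping. The only two points worth spelling out are the translation in the first step between ``$M^{\dagger}=L^{\dagger}$'' and ``faithful maximal ideal of $k(G/L^{\dagger})$'', and the observation that $m(M)$ and $f(M)$ depend only on $M^{\dagger}$, so that the sisters of $L$ all land in one and the same set $\mathcal{L}(f)$ rather than being distributed over several of them.
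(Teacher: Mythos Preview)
Your proof is correct and follows essentially the same approach as the paper: the paper simply observes (immediately after Lemma~\ref{cyclic}) that since $G/L^{\dagger}$ is cyclic of order $m$, the ring $kG$ has $\varphi(m)/f$ maximal ideals $M$ with $M^{\dagger}=L^{\dagger}$, and then states the corollary. You have spelled out in more detail the correspondence between such $M$ and the faithful maximal ideals of $k(G/L^{\dagger})$, and the fact that all sisters share the same $f$-value, but the argument is the same.
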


\subsection{Finding good maximal ideals\label{subs8.3}}

For $\Delta\leq\Gamma$, $Q=Q^{\Delta}\vartriangleleft kG$ and $\lambda\in kG$,
we write%
\[
\mathcal{L}(\Delta,Q,\lambda)=\left\{  L\underset{\max}{\vartriangleleft}%
_{f}kG\mid Q\leq L,\,\lambda^{\delta}\notin L\,\,\forall\delta\in
\Delta\right\}  .
\]
We quote

\begin{proposition}
\label{ThmE} \emph{([24], Theorem 3)} Let $\Delta\leq\Gamma$, let
$Q=Q^{\Delta}$ be a prime ideal of $kG$ such that $G/Q^{\dagger}$ is reduced
and let $\lambda\in kG\smallsetminus Q$. Then%
\[
Q=\bigcap\mathcal{L}(\Delta,Q,\lambda).
\]

\end{proposition}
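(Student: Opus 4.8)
The plan is to eliminate $Q$, then to tame the (possibly infinite) group $\Delta$ by passing to a $\Delta$-stable system of finite quotients, and finally to run a separation argument inside a finite semisimple algebra. First I would reduce to the case $Q=0$: passing to the domain $\overline{R}=kG/Q$, the image $\overline{G}$ of $G$ is a reduced abelian minimax group (reduced by hypothesis, with no $p$-torsion since $\overline{R}$ is a domain of characteristic $p$), $\Delta$ still acts, $\overline{\lambda}\neq0$, and the finite-index maximal ideals of $kG$ containing $Q$ correspond bijectively to those of $\overline{R}$. So we may assume $kG$ is a domain and $G$ a reduced abelian minimax group on which $\Delta$ acts, with $0\neq\lambda\in kG$, and the task is: given an arbitrary $0\neq\mu\in kG$, produce a finite-index maximal ideal $L$ with $\mu\notin L$ and $\lambda^{\delta}\notin L$ for every $\delta\in\Delta$.

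The next step is to choose a prime $\ell\neq p$ for which $G$ is residually a finite $\ell$-group (all but finitely many $\ell$ work, since $G$ is reduced minimax) and set $\mathfrak{a}_{n}=(G^{\ell^{n}}-1)kG$. A key preliminary is $\bigcap_{n}\mathfrak{a}_{n}=0$; one proves this by picking a finitely generated subgroup $G_{0}\leq G$ of full torsion-free rank $a$ with $\mathrm{supp}(\mu)\subseteq G_{0}$, over which $kG$ is integral and $kG_{0}\cong k[t_{1}^{\pm1},\dots,t_{a}^{\pm1}]$, and invoking the Zariski density of the $\ell$-power torsion of $\overline{\mathbb{F}}_{p}^{\,\ast}$ in the torus --- alternatively this residual finite-dimensionality is supplied by the structure theory of [21], and is exactly where the hypothesis ``$G/Q^{\dagger}$ reduced'' is used. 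Since $G^{\ell^{n}}$ is characteristic in $G$, the ideal $\mathfrak{a}_{n}$ is $\Delta$-invariant, and $S_{n}:=kG/\mathfrak{a}_{n}=k[G/G^{\ell^{n}}]$ is a finite semisimple commutative $k$-algebra (Maschke, as $\ell\neq p$) on which $\Delta$ acts through a finite group $\Delta_{n}$; moreover $\Delta_{n}$ is abelian, being a quotient of $\Delta/(\Delta\cap G)$, which is abelian because $\Gamma/G$ is abelian (and $\Delta\cap G$ acts trivially since $G$ is abelian). Now fix $n$ with $\mu\notin\mathfrak{a}_{n}$ and $\lambda\notin\mathfrak{a}_{n}$; because $\mathfrak{a}_{n}=\mathfrak{a}_{n}^{\Delta}$, this forces $\lambda^{\delta}\notin\mathfrak{a}_{n}$ for all $\delta\in\Delta$, so the images $\overline{\mu}$ and $\overline{\lambda^{\delta}}$ in $S_{n}$ are all nonzero, the latter forming the finite $\Delta_{n}$-orbit of $\overline{\lambda}$.

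Finally one must separate inside $S_{n}$, and this is where I expect the real difficulty to lie. Write $S_{n}\cong\prod_{i\in I_{n}}F_{i}$ with finite fields $F_{i}$; a maximal ideal of $S_{n}$ is a projection kernel, has finite index, and pulls back to a finite-index maximal ideal $L$ of $kG$ containing $\mathfrak{a}_{n}$. In the component picture the conditions $\mu\notin L$ and $\lambda^{\delta}\notin L$ for all $\delta$ say exactly that the chosen component $i$ lies in $\mathrm{supp}(\overline{\mu})$ and that its entire $\Delta_{n}$-orbit lies in $\mathrm{supp}(\overline{\lambda})$. Now $\mathrm{supp}(\overline{\lambda})$ is nonempty, but \textbf{the main obstacle is to show that, for a suitable $n$, its ``$\Delta_{n}$-saturated interior'' $\{\,i:\Delta_{n}\cdot i\subseteq\mathrm{supp}(\overline{\lambda})\,\}$ is nonempty and meets $\mathrm{supp}(\overline{\mu})$} --- equivalently, that the bad set $\bigcup_{\delta\in\Delta_{n}}\delta\bigl(\mathrm{supp}(\overline{\lambda})^{c}\bigr)$ fails to cover $\mathrm{supp}(\overline{\mu})$. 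A crude count (the torsion ``hypersurface'' $\mathrm{supp}(\overline{\lambda})^{c}$ has size $\ll\ell^{\,n(a-1)}$ against $|I_{n}|$ of order $\ell^{\,na}$) is defeated by the possible size of $\Delta_{n}$, so one needs the orbit-counting estimates of [24] together with the fine structure of the primes of $kG$ from [21]: the points are that $\Delta$ acts on $G$ through a finitely generated \emph{abelian} group, and that $G/Q^{\dagger}$ being reduced keeps the ambient torus full-dimensional so that its $\ell$-power torsion is suitably equidistributed, whence the $\Delta_{n}$-translates of the codimension-$\geq1$ bad locus remain genuinely thin. A convenient bookkeeping device is induction on the torsion-free rank of $G$: using a height-one $\Delta$-invariant prime of $kG$ and the reducedness hypothesis one peels off one dimension at a time, the base case $G\cong\mathbb{Z}[1/m]$ being a one-dimensional torus on which the count is transparent. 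Once such a component $i$, and hence $L$, is found, the proof is complete.
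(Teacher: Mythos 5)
This proposition is not proved in the paper at all: it is quoted verbatim as Theorem~3 of [24], and the burden of proof is entirely delegated to that reference. So there is no in-paper argument to compare yours against; the question is only whether your blind attempt is a complete proof, and it is not.

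Your reduction to $Q=0$ and $kG$ a domain is correct, and the setup with the $\Delta$-invariant ideals $\mathfrak{a}_{n}=(G^{\ell^{n}}-1)kG$, the semisimple finite quotients $S_{n}$, the finite abelian group $\Delta_{n}$ acting on them, and the observation that $\bigcap_{n}\mathfrak{a}_{n}=0$ uses the hypothesis that $G/Q^{\dagger}$ is reduced, is all sound and sensible. But you stop exactly where the theorem begins. The separation step you flag in boldface --- showing that for suitable $n$ the $\Delta_{n}$-saturated part of $\mathrm{supp}(\overline{\lambda})$ is nonempty and meets $\mathrm{supp}(\overline{\mu})$, equivalently that the union of the $\Delta_{n}$-translates of the complement of $\mathrm{supp}(\overline{\lambda})$ does not cover $\mathrm{supp}(\overline{\mu})$ --- is not a technicality but the entire content of the result. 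You correctly note that a naive count fails because $|\Delta_{n}|$ can be comparable to $|I_{n}|$, and you gesture at the machinery of [21] and [24] (orbit estimates, Brookes-pair structure, full-dimensionality from reducedness, induction on torsion-free rank) that might close the gap, but none of this is actually carried out, and it is far from routine: the proof in [24] is a substantial piece of work that occupies a separate paper. As it stands your argument reduces the proposition to an unproven combinatorial claim of essentially the same depth as the proposition itself, so the gap is genuine. A further small point: your argument should also say explicitly why $\mathcal{L}(\Delta,Q,\lambda)$ is nonempty at all --- the intersection over an empty family would be all of $kG$, not $Q$ --- and this nonemptiness is itself part of what the separation step must deliver.
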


This implies that $\mathcal{L}(\Delta,Q,\lambda)$ is non-empty, and that if
$kG/Q$ is infinite then $\mathcal{L}(\Delta,Q,\lambda)$ is infinite.

\begin{proposition}
\label{prime-redn}There exist a faithful prime ideal $Q$ of $kG$, an element
$\lambda\in\mathfrak{g}\smallsetminus Q$ and a positive integer $\tau$ such
that\newline\emph{(i)} $\Delta:=\mathrm{N}_{\Gamma}(Q)$ has finite index in
$\Gamma$,\newline\emph{(ii)} $A$ has a $\Delta$-submodule $T$ such that%
\[
\frac{A}{T+AL}\cong\left(  \frac{kG}{L}\right)  ^{\tau}%
\]
as $kG$-module for every $L\in\mathcal{L}(\Delta,Q,\lambda)$.
\end{proposition}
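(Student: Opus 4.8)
The plan is to replace the (generally infinitely generated) module $A$ by a finitely generated submodule using Proposition~\ref{quasifg}, to single out inside it a faithful prime $Q$ together with an associated $Q$-prime module by the structure theory of [21] and [24], and then to deduce the stated description of the quotients $A/(T+AL)$ from generic freeness, with the exceptional maximal ideals absorbed into the zero set of a single element of the augmentation ideal. First I would apply Proposition~\ref{quasifg} with $\overline{N}=N/A$; since $A$ is an elementary abelian $p$-group on which $N_{0}$ acts trivially, the ring $\mathbb{Z}(N/A)$ acts through $kG$ and the submodule it yields is a finitely generated $k\Gamma$-submodule $B$ of $A$ with $AL+B=A$ and $AL\cap B=BL$ for every maximal ideal $L$ of finite index in $kG$ with $\mathfrak{g}\nsubseteq L$. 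As $A$ is residually (finite simple) and `simple' means non-trivial for $G$, the module $A$ has no non-zero trivial $G$-submodule, and a similar argument shows $B\neq0$ and that $G$ acts faithfully on $B$. In particular $A/AL\cong B/BL$ canonically for every such $L$, so it remains to produce $Q$, $\lambda$, $\tau$ and a $\Delta$-submodule $T$ of $B$ with $B/(T+BL)\cong(kG/L)^{\tau}$ for all $L\in\mathcal{L}(\Delta,Q,\lambda)$.

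Next I would feed the finitely generated module $B$ into the machinery of [21] and [24], following the pattern of Section~\ref{secP2}: $B$ has only finitely many associated primes, permuted by $\Gamma$ since $B$ is $\Gamma$-invariant; passing to an unmixed quotient and using the embedding of such a module into a sum of modules induced from the normalisers $\mathrm{N}_{\Gamma}(P_{j})$, one extracts from a single prime a non-zero $Q$-prime $k\Delta$-quotient $W$ of $B$, where $\Delta=\mathrm{N}_{\Gamma}(Q)$. Finiteness of the set of minimal primes gives $|\Gamma:\Delta|<\infty$, which is (i); and $\mathfrak{g}\nsubseteq Q$ since $B$ has no trivial $G$-submodule. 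The prime $Q$ can be taken faithful: if $Q^{\dagger}=K\neq1$ then $\mathrm{N}_{\Gamma}(K)\supseteq\Delta$ has finite index, Lemma~\ref{K} forces $A/A\mathfrak{k}$, hence a suitable quotient of $A$, to be finite, and the minimal choice of $s(E,N,A)$ excludes this. With $Q$ faithful, $G/Q^{\dagger}=G$ is reduced, which is the hypothesis of [24], Theorem~3; and the same structure theory ensures that $W$ is finitely generated over $kG$ outside a proper closed subset of $\mathrm{Spec}(kG/Q)$.

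Now $W/WL$ is a module over the field $kG/L$, hence free; let $\tau$ be its generic rank over the primes $L$ of finite index containing $Q$, so the locus where this rank is exceeded is the zero set of a non-zero $\Delta$-invariant ideal $I$. Because $kG/Q$ is a domain and $\mathfrak{g}\nsubseteq Q$, the ideal $I\mathfrak{g}$ is non-zero modulo $Q$; pick $\lambda\in(I\mathfrak{g})\smallsetminus Q\subseteq\mathfrak{g}\smallsetminus Q$. For every $L\in\mathcal{L}(\Delta,Q,\lambda)$ one then has $\lambda\notin L$, hence $L$ lies outside that bad locus and $W/WL\cong(kG/L)^{\tau}$. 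Taking $T$ to be the kernel of the epimorphism $B\twoheadrightarrow W$ (a $\Delta$-submodule of $B\leq A$) and using the isomorphism $A/AL\cong B/BL$, one obtains
\[
\frac{A}{T+AL}\;\cong\;\frac{B}{T+BL}\;=\;\frac{W}{WL}\;\cong\;\Bigl(\frac{kG}{L}\Bigr)^{\tau}
\]
for every $L\in\mathcal{L}(\Delta,Q,\lambda)$, which is (ii).

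The step I expect to be the real obstacle is everything concealed in the phrase `the structure theory ensures': producing a faithful prime $Q$ whose $Q$-prime module $W$ is generically finite over $kG$, so that a single multiplicity $\tau$ is defined uniformly over the infinite family $\mathcal{L}(\Delta,Q,\lambda)$, and so that the exceptional maximal ideals are cut out $\Delta$-equivariantly by one element of $\mathfrak{g}$. This is exactly what the module theory of [21] and [24] is designed to supply — the finiteness properties of $Q$-prime modules and the $\Delta$-equivariant approximation of $Q$ by maximal ideals of finite index (Proposition~\ref{ThmE}, i.e. [24], Theorem~3) — and it is where the technical work is concentrated. A subsidiary point of care is the reduction to a faithful $Q$, which uses Lemma~\ref{K} together with the minimality of $s(E,N,A)$.
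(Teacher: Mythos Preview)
Your overall shape is right --- reduce to the finitely generated $B$ via Proposition~\ref{quasifg}, extract a prime $Q$ from the structure theory, and use generic freeness to control $B/(T+BL)$ --- and this is essentially the paper's approach. But two of your key steps are genuine gaps.

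\textbf{Finite index of $\Delta$.} You write that ``finiteness of the set of minimal primes gives $|\Gamma:\Delta|<\infty$''. The structure theory of [21] does \emph{not} give that the set $\mathcal{Q}$ of minimal associated primes of $B$ is finite; what [21], Prop.~7.3 gives is that $\mathcal{Q}$ has only finitely many $\Gamma$-\emph{orbits}. Since $kG$ need not be Noetherian here, a single orbit could a priori be infinite, so you have not established (i). The paper's argument is quite different: one first picks $Q=P_{i}$ with $kG/Q$ infinite (this itself requires an argument --- if every minimal prime were maximal one would deduce $A\mathfrak{k}=0$ for some $K\leq_{f}G$ and hence $A$ finite), then takes a $\Delta$-invariant ideal $J_{0}\vartriangleleft_{f}kG$ and uses the lemma that $A/AJ_{0}$ is finite, whence $B/BJ_{0}$ is finite via Proposition~\ref{quasifg}. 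The subdirect embedding of $B/B_{0}$ into $\bigoplus_{z\in Z}(B/B_{i}z)$ then forces $B=BJ_{0}+B_{i}z$ for almost all $z$, while $B/(BJ_{0}+B_{i}z)\cong U/UJ_{0}\neq 0$ for every $z$; hence the transversal $Z$ is finite and $|\Gamma:\Delta|<\infty$. This is the substantive step you are missing.

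\textbf{Faithfulness of $Q$.} Your argument here is garbled. Lemma~\ref{K} does not say $A/A\mathfrak{k}$ is finite; it says it has finite \emph{upper rank} as an $\mathrm{N}_{\Gamma}(K)$-module, and its proof already consumes the minimality of $s(E,N,A)$ --- invoking that minimality again gives you nothing new. The paper's contradiction is different: assuming $K=Q^{\dagger}\neq1$, Lemma~\ref{K} gives $A/AQ$ finite upper rank $r$; but $\mathcal{L}(\Delta,Q,\lambda)$ is infinite (because $kG/Q$ is infinite, established above), and taking a $\Delta$-invariant set $\{L_{1},\ldots,L_{n}\}\subset\mathcal{L}(\Delta,Q,\lambda)$ with $n>r$, the Chinese Remainder Theorem yields a finite $k\Delta$-quotient of $A/AQ$ of rank $\geq n>r$. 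So faithfulness hinges on the infinitude of $kG/Q$, which you never address.

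The generic-freeness portion of your sketch is morally correct; the paper makes it precise via [22], Theorem~1.7, which produces a free $(kG/Q)$-submodule $F\leq U$ with $U/F$ annihilated locally by products of $\Delta$-conjugates of a single $\lambda$, and takes $\tau=\mathrm{rk}_{kG/Q}F$.
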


\begin{proof}
By Proposition \ref{quasifg}, $A$ contains a finitely generated $k\Gamma
$-submodule $B$ such that $A/B$ is $(kG\smallsetminus L)$-torsion for every
$L\underset{\max}{\vartriangleleft}_{f}kG$ with $L\neq\mathfrak{g}$; and%
\[
B/BJ\cong A/AJ
\]
whenever $J$ is an ideal of finite index in $kG$ with $J+\mathfrak{g}=kG$
(this is an isomorphism of $k\Gamma_{1}$-modules if $\Gamma_{1}\leq\Gamma$
fixes $J$).

We use the terminology of [24]. Being residually finite-simple, the
$kG$-module $A$ is qrf and locally radical. Let $\mathcal{P}=\mathcal{P}((A))$
denote the set of associated primes of $A$, $\mathcal{Q}$ the set of minimal
members of $\mathcal{P}$ and $\mathcal{Y}=\mathcal{P}\smallsetminus
\mathcal{Q}$. Since $A$ is locally radical, each $P\in\mathcal{P}$ is the
annihilator of some non-zero element $a$ of $A$, so $kG/P\cong akG$ is again
qrf and $G/P^{\dagger}\leq\mathrm{Aut}_{kG}(akG)$ is reduced. Also
$\mathfrak{g}\notin\mathcal{P}$, since if $a\mathfrak{g}=0$ then%
\[
a\in a(\mathfrak{g}+L)\subseteq AL
\]
whenever $\mathfrak{g}\neq L\underset{\max}{\vartriangleleft}_{f}kG$; but the
intersection of all such submodules $AL$ is zero.

We claim that $\mathcal{P}=\mathcal{P}((B))$. To see this, let $P=\mathrm{ann}%
_{kG}(a)\in\mathcal{P}$. By [24], Theorem 1, $P$ is equal to an intersection
of maximal ideals of finite index; as $P\neq\mathfrak{g}$ we have $P\leq L$
for some such $L\neq\mathfrak{g}$. Then there exists $\lambda\in
kG\smallsetminus L$ such that $a\lambda=b\in B$, and then%
\[
bx=0\Longleftrightarrow\lambda x\in P\Longleftrightarrow x\in P
\]
so $P=\mathrm{ann}_{kG}(b)\in\mathcal{P}((B))$.

\medskip

Now set%
\[
B_{0}=B((\mathcal{Y})).
\]
Then $B/B_{0}$ is again qrf and locally radical, and $\mathcal{P}%
((B/B_{0}))=\mathcal{Q}$ (see [21], Lemma 6.5 and [24], Lemma 5). The results
of [24] combined with [21], Proposition 7.3 now show that the following
hold.\medskip

\textbf{(1) }The number of orbits of $\Gamma$ on $\mathcal{Q}$ is finite.

\medskip

(\textbf{2}) Let $P_{1},\ldots,P_{s}$ represent the distinct $\Gamma$-orbits
in $\mathcal{Q}$, put $\Delta_{i}=\mathrm{N}_{\Gamma}(P_{i})$ and let $Z_{i}$
be a transversal to $\Delta_{i}\backslash\Gamma$ for each $i$. Write%
\[
B_{i}/B_{0}=(B/B_{0})((\mathcal{Q}\smallsetminus\{P_{i}\})).
\]
Then $B/B_{i}$ is qrf and locally radical and $\mathcal{P}((B/B_{i}%
))=\{P_{i}\}$;

\medskip

\textbf{(3)} each $B/B_{i}$ is finitely generated as a module for $k\Delta
_{i}$.

\medskip

\textbf{(4)} $B/B_{0}$ embeds naturally as a subdirect sum in%
\[
\bigoplus_{i=1}^{s}\bigoplus_{z\in Z_{i}}\frac{B}{B_{i}z}.
\]

\medskip

We claim that $P_{i}$ has infinite index in $kG$ for at least one value of
$i$. Indeed, if not then $\mathcal{Q}$ consists of maximal ideals of $kG$, so
$\mathcal{Q}=\mathcal{P}$; and there exists $K=K^{\Gamma}\leq_{f}G$ with
$K\leq P^{\dagger}$ for every $P\in\mathcal{P}$. Results from [21] and [24]
show that if $a\in A$ then, writing $a^{\ast}=\mathrm{ann}_{kG}(a)$, we have%
\[
\sqrt{a^{\ast}}\supseteq Q_{1}\ldots Q_{n}%
\]
for some $Q_{1},\ldots,Q_{n}$ $\in\mathcal{P}$; this implies that%
\[
\mathfrak{k}\subseteq\sqrt{a^{\ast}}=a^{\ast}%
\]
since $A$ is locally radical. Thus $A\mathfrak{k}=0$ and $A=A/A\mathfrak{k}$
is finite, contradicting hypothesis.

We choose $i$ such that $kG/P_{i}$ is infinite, put $Q=P_{i}$, $\Delta
=\Delta_{i}$, $Z=Z_{i}$ and write $U=B/B_{i}$. According to [24], Proposition
4, $U$ is a prime $kG$-module with annihilator $Q$. By [22], Theorem 1.7,
there exist $\lambda\in kG\smallsetminus Q$ and a free $(kG/Q)$-submodule $F$
of $U$ such that every element of $U/F$ is annihilated by some product of
$\Delta$-conjugates of $\lambda$; without loss of generality we may take
$\lambda\in\mathfrak{g}$. Let $L\in\mathcal{L}(\Delta,Q,\lambda)$. Then%
\[
J=\bigcap_{\delta\in\Delta}L^{\delta}%
\]
is a $\Delta$-invariant ideal of finite index in $kG$ and $\lambda^{\delta
}kG+J=kG$ for each $\delta\in\Delta$. It follows that%
\[
\frac{U}{UJ}\cong\frac{F}{FJ}\cong\left(  \frac{kG}{J}\right)  ^{\tau}%
\]
where $\tau$ is the rank of the free $(kG/Q)$-module $U$.

Now put%
\[
J_{0}=\bigcap_{\gamma\in\Gamma}J^{\gamma}.
\]
Then $J_{0}$ is a $\Gamma$-invariant ideal of finite index in $kG$, so
$A/AJ_{0}$ is finite. Since $J_{0}+\mathfrak{g}=kG$ we have $B/BJ_{0}\cong
A/AJ_{0}$, so $B/BJ_{0}$ is finite. Let $S$ be a finite set of coset
representatives for $BJ_{0}$ in $B$. From (\textbf{4}) we see that $S$ is
contained in $B_{i}z$ for all but finitely many $z\in Z$, hence $B=BJ_{0}%
+B_{i}z$ for all but finitely many $z\in Z$. On the other hand, for each $z\in
Z$ we have%
\begin{align*}
\frac{B}{BJ_{0}+B_{i}z}  &  \cong\frac{B}{BJ_{0}+B_{i}}\text{ \ (}%
k\text{-module isomorphism)}\\
&  =\frac{U}{UJ_{0}}%
\end{align*}
and $U/UJ_{0}$ maps onto $U/UJ\cong(kG/J)^{\tau}$. As $\tau\geq1$ this implies
that $B>BJ_{0}+B_{i}z$; it follows that $Z$ is \emph{finite}. Thus $\Delta$
has finite index in $\Gamma$.

Since $B/BJ_{0}$ maps onto $(kG/J)^{\tau}$, it also follows that $\tau$ is finite.

Suppose that $Q^{\dagger}>1$. Then Lemma \ref{K} shows that $A/AQ$ has finite
upper rank $r$, say, as a $k\Delta$-module. Since $kG/Q$ is infinite, the set
$\mathcal{L}(\Delta,Q,\lambda)$ is infinite, and for each $L\in\mathcal{L}%
(\Delta,Q,\lambda)$ we have seen that $A/AL$ maps onto $B/BL\neq0$. Let
$\{L_{1},\ldots,L_{n}\}$ be a $\Delta$-invariant subset of $\mathcal{L}%
(\Delta,Q,\lambda)$, where $n>r$. Then by the Chinese Remainder Theorem%
\[
\frac{A}{AL_{1}\cap\ldots\cap AL_{n}}\cong\bigoplus_{j=1}^{n}\frac{A}{AL_{j}}%
\]
is a finite $k\Delta$-module quotient of $A/AQ$ of rank at least $n>r$, a
contradiction. It follows that $Q^{\dagger}=1$, and we have shown that $Q$ is faithful.

Now let $L\in\mathcal{L}(\Delta,Q,\lambda)$. Then $AL+B=A$ and $AL\cap B=BL$,
so by the modular law%
\[
\frac{A}{B_{i}+AL}\cong\frac{B}{B_{i}+BL}=\frac{U}{UL}\cong\left(  \frac
{kG}{L}\right)  ^{\tau}.
\]
Set $T=B_{i}$ to conclude the proof.
\end{proof}

\subsection{The final contradiction\label{subs8.4}}

We fix $Q,\,\Delta$ and $\lambda$ as above. Let%
\begin{align*}
\mathcal{L}  &  =\mathcal{L}(\Delta,Q,\lambda),\\
\mathcal{L}(f)  &  =\left\{  L\in\mathcal{L}\mid\left\vert kG/L\right\vert
=p^{f}\right\}
\end{align*}
for each $f\in\mathbb{N}$. Say $\Delta=E_{0}/N_{0}$ and put%
\[
t=\left\vert \Gamma:\Delta\right\vert =\left\vert E:E_{0}\right\vert .
\]
Recall that $\bigcap\mathcal{L}=Q$ and that $\mathcal{L}$ is an infinite set.

\begin{proposition}
\label{L(f)}There exists $n_{0}\in\mathbb{N}$ such that $\left|
\mathcal{L}(f)\right|  \leq n_{0}$ for all $f\in\mathbb{N}$.
\end{proposition}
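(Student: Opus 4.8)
The plan is to argue by contradiction. Suppose that $|\mathcal{L}(f)|$ is not bounded independently of $f$. Then (having first checked that every $\Delta$-orbit in $\mathcal{L}$ is finite — otherwise the module built below would already be an infinite semisimple quotient of a finite quotient, which is impossible) we may choose, for suitable $f$, a $\Delta$-invariant finite subset $S\subseteq\mathcal{L}(f)$ with $|S|$ as large as we please, and we aim to manufacture from it a finite quotient of the fixed finite-index subgroup $E_{0}\le E$ with $\Delta=E_{0}/N_{0}$ (put $t=|E:E_{0}|=|\Gamma:\Delta|$) whose order is far too big relative to its exponent. Put $J=\bigcap_{L\in S}L$: distinct maximal ideals being comaximal, $J$ is a $\Delta$-invariant ideal of finite index in $kG$ and $kG/J\cong\bigoplus_{L\in S}kG/L$ by the Chinese Remainder Theorem. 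Let $T\le A$ be the $\Delta$-submodule furnished by Proposition \ref{prime-redn}(ii); assembling the isomorphisms $A/(T+AL)\cong(kG/L)^{\tau}$ over $L\in S$ gives
\[
A/(T+AJ)\ \cong\ (kG/J)^{\tau},
\]
a free $(kG/J)$-module of $\mathbb{F}_{p}$-dimension $\tau|S|f$. Since $T$ and $AJ$ are normal in $E_{0}$, and $A/(T+AJ)$ is a finite semisimple $kG$-module with no trivial $G$-composition factor (every $L\in\mathcal{L}$ satisfies $L\ne\mathfrak{g}$ because $\lambda\in\mathfrak{g}\smallsetminus L$), Lemma \ref{lifting} applies with $E_{1}=E_{0}$ and $C=T+AJ$, yielding $C^{\diamond}\vartriangleleft E_{0}$ with $A\cap C^{\diamond}=C$ and $AC^{\diamond}=\mathrm{C}_{E_{0}}(A/C)$.

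Set $H=E_{0}/C^{\diamond}$. It carries a normal series $1\le V\le P\le H$ with $V=AC^{\diamond}/C^{\diamond}\cong A/(T+AJ)$ elementary abelian of order $p^{\tau|S|f}$ and self-centralizing in $H$; with $P=NC^{\diamond}/C^{\diamond}$ and $P/V\cong N/\mathrm{C}_{N}(V)\cong G/J^{\dagger}$ abelian of exponent dividing $p^{f}-1$ (since $J^{\dagger}=\bigcap_{L\in S}L^{\dagger}$ and each $G/L^{\dagger}$ is cyclic of order $m(L)\mid p^{f}-1$); and with $H/P\cong E_{0}/NC^{\diamond}$ a finite abelian quotient of the abelian group $E_{0}/N$. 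The group $H$ is genuinely finite: $E_{0}/\mathrm{C}_{E_{0}}(V)$ acts faithfully on the finite group $V$, while $\mathrm{C}_{E_{0}}(V)/C^{\diamond}=V$. Moreover $|H|\ge|V|=p^{\tau|S|f}$, and $E_{0}$, having index $t$ in $E$, has $\mathrm{PIG}(\alpha_{0})$ with $\alpha_{0}=\alpha(1+\log_{2}t)$, so $|H|\le\exp(H)^{\alpha_{0}}$.

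Everything now turns on bounding $\exp(H)$, which divides $\exp(V)\exp(P/V)\exp(H/P)=p\cdot\exp(P/V)\cdot\exp(H/P)$, by a quantity of shape $|S|^{c_{0}}\,p^{c_{0}f}$ with $c_{0}$ depending only on $\mathrm{d}(E)$, $\alpha$, $\tau$ and $t$; the factors $\exp(V)=p$ and $\exp(P/V)<p^{f}$ are already harmless. The module $V$ decomposes into the homogeneous blocks $V_{L}=(kG/L)^{\tau}$, $L\in S$, which $H$ permutes through the abelian group $H/P$; the kernel $H_{1}\vartriangleleft H$ of this permutation action contains $P$, so $H/H_{1}$ is an abelian permutation group on $S$, whose exponent divides the l.c.m. of its orbit lengths — and is therefore at most $|S|$ if one works with a single $\Delta$-orbit $S$ at a time. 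The remaining factor records the action of $H_{1}$ inside a single block $V_{L}$: each block being homogeneous and self-centralizing over the cyclic group $G/L^{\dagger}$, Lemma \ref{lang} bounds the element orders there by $m(L)\cdot|V_{L}|^{2/r}$ with $r\le f$ the associated field-automorphism degree, hence by $p^{O(\tau f)}$, while the per-block actions attached to members of one $\Delta$-orbit are conjugate, which is what keeps their l.c.m. under control. This multi-block refinement of the computation in Proposition \ref{m vs f}, drawing on the structure theory of [21] and [24], is the step I expect to be the main obstacle.

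Granting the bound, $\mathrm{PIG}$ for $E_{0}$ gives $p^{\tau|S|f}\le|H|\le\exp(H)^{\alpha_{0}}\le(|S|^{c_{0}}p^{c_{0}f})^{\alpha_{0}}$, i.e. $\tau|S|f\log p\le c_{0}\alpha_{0}\log|S|+c_{0}\alpha_{0}f\log p$. Since $\log|S|=o(|S|)$, for $|S|$ large the term $c_{0}\alpha_{0}\log|S|$ lies below $\tfrac12\tau|S|f\log p$, and then the common factor $f\log p$ cancels, forcing $|S|\le 2c_{0}\alpha_{0}/\tau$. As this holds for every $\Delta$-orbit $S\subseteq\mathcal{L}(f)$, and — again via the structure theory of [21] and [24] — $\mathcal{L}(f)$ is a union of boundedly many $\Delta$-orbits, we conclude $|\mathcal{L}(f)|\le n_{0}$ for an $n_{0}$ independent of $f$, contradicting the assumption that $|\mathcal{L}(f)|$ was unbounded. (The two features that make the final arithmetic close are precisely $\exp(P/V)\mid p^{f}-1$ and $|V|\ge p^{\tau|S|f}$.)
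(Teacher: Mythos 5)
Your setup is sound and matches the paper exactly up to a point: take $J=\bigcap_{L\in S}L$ (a $\Delta$-invariant ideal), $C=T+AJ$ with $T$ from Proposition~\ref{prime-redn}(ii), apply Lemma~\ref{lifting} with $E_{1}=E_{0}$ to get $C^{\diamond}\vartriangleleft E_{0}$, and note $|A/C|=p^{\tau|S|f}\leq|E_{0}/C^{\diamond}|$. That is precisely the paper's construction. The problem is the step you yourself flag as ``the main obstacle'': the bound $\exp(E_{0}/C^{\diamond})\leq|S|^{c_{0}}p^{c_{0}f}$. Your proposed route --- a multi-block refinement of Proposition~\ref{m vs f} via Lemma~\ref{lang} --- does not close. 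Lemma~\ref{lang} only gives element-order bounds of the shape $m(L)\cdot|V_{L}|^{2/r}$, and when $r$ is small this bound is useless, which is exactly why Proposition~\ref{m vs f} had to be stated as a dichotomy (case (a) versus case (b)) rather than a clean estimate. You never say how to choose between the cases uniformly over all $L\in S$, nor how to combine the per-block bounds into a global bound on $\exp(H)$ of the needed shape. This is a genuine missing idea, not a detail.

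The paper avoids all of this with a different, cleaner mechanism: a Lie--Kolchin / Mal'cev argument. If $\Delta^{s}$ stabilizes each $L_{i}$ then $\Delta^{sf}$ acts trivially on the field $F_{i}=kG/L_{i}$ and hence $F_{i}$-linearly on $\overline{A}/\overline{A}L_{i}\cong F_{i}^{(\tau)}$. By Mal'cev's theorem (\cite{}[28], Theorem 3.6) there is $m=m(\tau)$ so that $\Delta^{sfm}$ acts triangularizably on each block, and then $\Delta^{sfmq}$ acts trivially, with $q=p^{\tau}\prod_{j=1}^{\tau}(p^{jf}-1)<p^{\tau(\tau+3)/2}$. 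Thus $E_{0}/AC^{\diamond}$ has exponent dividing $psfmq$, and PIG gives $p^{\tau n f}\leq(tpsfmq)^{\alpha}$, whence $n\leq c_{1}+c_{2}\log s$ with $c_{1},c_{2}$ depending only on $\alpha,p,t,\tau$.

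Your second gap is the claim that $\mathcal{L}(f)$ is a union of boundedly many $\Delta$-orbits ``via the structure theory of [21] and [24].'' The paper does not prove and does not need this. Instead it bootstraps: for a single $\Delta$-orbit $\{L_{1},\ldots,L_{n}\}$, the fact that $\Delta/G$ is abelian gives $\Delta^{n}\leq\mathrm{Stab}_{\Delta}(L_{i})$ for each $i$, so one may take $s=n$ in the bound above and obtain $n\leq c_{1}+c_{2}\log n$, hence $n\leq s_{0}$ absolutely. Then $\Delta^{s_{0}!}$ fixes \emph{every} member of $\mathcal{L}$, so taking $s=s_{0}!$ gives $|\mathcal{L}(f)|\leq c_{1}+c_{2}\log(s_{0}!)$ directly --- no count of orbits is required. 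In short, your outline correctly identifies the target inequality but lacks the triangularization idea that makes the exponent bound work, and substitutes an unproven orbit-counting claim for the paper's two-stage bootstrap.
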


\begin{proof}
Write $\overline{A}=A/T$. Let $L_{1},\ldots,L_{n}\in\mathcal{L}(f)$ be
distinct and suppose that $\Delta^{s}$ fixes each $L_{i}$. Then for each $i$,
the group $\Delta^{sf}$ acts trivially on the field $F_{i}=kG/L_{i}$, and
hence acts as an $F_{i}$-linear group on the $F_{i}$-vector space
$\overline{A}/\overline{A}L_{i}\cong F_{i}^{(h)}$.

It follows by Mal'cev's theorem ([28], Theorem 3.6) that there exists $m=m(h)$
such that $\Delta^{sfm}$ acts as a triangularizable group on $\overline
{A}/\overline{A}L_{i}$. Writing%
\[
q=p^{h}\prod_{j=1}^{h}(p^{jf}-1)
\]
we infer that $\Delta^{sfmq}$ acts as the identity on each $\overline
{A}/\overline{A}L_{i}$.

Now put $J=L_{1}\cap\ldots\cap L_{n}$ and $C=T+AJ$. Then%
\[
A/C\cong\bigoplus_{i=1}^{n}\overline{A}/\overline{A}L_{i}%
\]
is a semisimple $kG$-module, on which $E_{0}^{sfmq}$ acts as the identity.
According to Lemma \ref{lifting} there exists $C^{\diamond}\vartriangleleft
E_{0}$ such that $A\cap C^{\diamond}=C$ and $AC^{\diamond}=\mathrm{C}_{E_{0}%
}(A/C)$. Then%
\[
p^{hnf}=\left|  A/C\right|  =\left|  AC^{\diamond}/C^{\diamond}\right|
\leq\left|  E_{0}/C^{\diamond}\right|  .
\]
On the other hand, $E_{0}/AC^{\diamond}$ has exponent dividing $psfmq$, so%
\[
E^{tpsmfq}\leq C^{\diamond}.
\]
The original PIG hypothesis now implies that%
\[
p^{hnf}\leq\left(  tpsmfq\right)  ^{\alpha}.
\]
Noting that $q<p^{h(h+3)/2}$ and $f\geq1$ we deduce that%
\[
n\leq c_{1}+c_{2}\log s
\]
where $c_{1}$ and $c_{2}$ are constants that depend on $\alpha$, $p,$ $t$ and
$h$ but not on $f$ or $n$.

Suppose to begin with that $\{L_{1},\ldots,L_{n}\}$ is a single orbit of
$\Delta$. Since $\Delta/G$ is abelian, $\Delta^{n}$ now fixes each $L_{i}$, so
we can take $s=n$ in the above, and deduce that $n$ is absolutely bounded; say
$n\leq s_{0}$ for every such orbit. It follows that $\Delta^{s_{0}!}$ fixes
every member of $\mathcal{L}$, so taking $s=s_{0}!$ in the above we see that
$\mathcal{L}(f)$ contains at most $c_{1}+c_{2}\log s_{0}!$ distinct ideals.
\end{proof}

\bigskip

Our aim henceforth is to prove that, contrary to Proposition \ref{L(f)}, the
numbers $\left\vert \mathcal{L}(f)\right\vert $ are \emph{unbounded} as $f$
tends to infinity. This contradiction will complete the proof of Proposition
\ref{p-singular}

Proposition \ref{L(f)} implies that each orbit of $\Delta$ in $\mathcal{L}$
has length at most $n_{0}$, hence there exists $\Gamma_{1}\leq_{f}\Delta$ such
that $\Gamma_{1}$ fixes each ideal in $\mathcal{L}$. Note that $\Gamma_{1}$
has finite index in $\Gamma$. Let $Z$ be a transversal to the cosets of
$\Gamma_{1}$ in $\Delta$ and put $\mu=\prod_{z\in Z}\lambda^{z}$. Then%
\begin{equation}
\mathcal{L}=\mathcal{L}(\Delta,Q,\lambda)=\left\{  L\underset{\max
}{\vartriangleleft}_{f}kG\mid L=L^{\Gamma_{1}}\geq Q,\,\,\mu\notin L\right\}
. \label{L=l(Gamma)}%
\end{equation}

\bigskip

\textbf{Definition} Let $H\neq1$ be a torsion-free abelian group of finite
rank and $\Gamma$ a group acting on $H$. Then $(H,\Gamma)$ is a \emph{strict
Brookes pair} (for the prime $p$) if $\mathrm{spec}(H)\subseteq\{p\}$ and
there exist characters%
\[
\chi_{1},\ldots,\chi_{r}:\Gamma\rightarrow\left\langle p\right\rangle
<\mathbb{Q}^{\ast}%
\]
such that the quotient $H/(H_{\chi_{1}}\times\cdots\times H_{\chi_{r}})$ is a
finite $p^{\prime}$-group, where for a character $\chi$ we write%
\[
H_{\chi}=\left\{  g\in H\mid g^{\gamma}=g^{\chi(\gamma)}\,\,\forall\gamma
\in\Gamma\right\}  .
\]
The following result, essentially due to C. J. B. Brookes, is Theorem 2 of [24]:

\begin{proposition}
\label{control}Let $G$ be a torsion-free abelian minimax group acted on by a
group $\Gamma$ and let $P\neq0$ be a faithful $\Gamma$-invariant prime ideal
of $kG$. Then $G$ has a $\Gamma$-invariant subgroup $H$ such that $P=(P\cap
kH)kG$ and $(H,\Gamma_{0})$ is a strict Brookes pair for some $\Gamma_{0}%
\leq_{f}\Gamma$.
\end{proposition}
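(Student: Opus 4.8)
The plan is to deploy C.~J.~B.~Brookes's theory of \emph{controller subgroups} for prime ideals in group rings of abelian minimax groups. Put $V=G\otimes_{\mathbb{Z}}\mathbb{Q}$, a finite-dimensional $\mathbb{Q}$-space on which $\Gamma$ acts $\mathbb{Q}$-linearly, with $G\hookrightarrow V$. First I would produce the controller of $P$: the smallest subgroup $H\leq G$ with $P=(P\cap kH)kG$. Its existence is part of the controller machinery of [21] and [22] (the family of subgroups controlling $P$ is closed under the relevant intersections, so one may take $H$ to be their intersection), and \emph{every} subgroup of $G$ containing $H$ also controls $P$. Since $P^{\gamma}=P$ for all $\gamma\in\Gamma$ while $H^{\gamma}$ controls $P^{\gamma}=P$, minimality gives $H^{\gamma}=H$; thus $H$ is $\Gamma$-invariant, $H\neq1$ (else $P=0$), and $P_{0}:=P\cap kH$ is a faithful $\Gamma$-invariant prime ideal of $kH$ controlled by no proper subgroup of $H$. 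Everything reduces to analysing $(H,P_{0})$.

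Let $D=kH/P_{0}$, a commutative domain of characteristic $p$, and $F=\mathrm{Frac}(D)$; each $\gamma\in\Gamma$ induces a field automorphism $\sigma_{\gamma}$ of $D$ and $F$, and faithfulness of $P_{0}$ says the image $\overline{H}$ of $H$ in $F^{\ast}$ is torsion-free and isomorphic to $H$. The substantive claim — this is essentially Brookes's theorem, quoted as [24], Theorem 2 — is: after replacing $\Gamma$ by a subgroup $\Gamma_{0}\leq_{f}\Gamma$, the $\Gamma_{0}$-module $V$ is semisimple with all simple summands one-dimensional over $\mathbb{Q}$, and $\mathrm{spec}(H)\subseteq\{p\}$. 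The semisimplicity comes from minimality of the controller: a proper nonzero $\Gamma_{0}$-subspace $W<V$ with no $\Gamma_{0}$-invariant complement would produce ``almost-invariance'' relations among the elements of $\overline{H}$ inside the commutative domain $D$, forcing $P$ to be controlled already by the proper subgroup $H\cap W$ (or a $\Gamma_{0}$-translate of it), a contradiction. The reduction of the simple summands to rank one, and $\mathrm{spec}(H)\subseteq\{p\}$, are where characteristic $p$ enters essentially: one studies $\overline{H}$ and the $\sigma_{\gamma}$ inside $F^{\ast}$, exploiting the valuation-theoretic rigidity of multiplicative groups of fields of characteristic $p$ to rule out higher-dimensional simple pieces and $\ell$-divisibility for $\ell\neq p$, and here it is crucial that $P_{0}$ is a faithful \emph{prime} controlled by no proper subgroup.

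Granting this, I finish as follows. Write $V=\bigoplus_{i}V_{i}$ with $V_{i}$ one-dimensional and $\Gamma_{0}$ acting on $V_{i}$ by a character $\chi_{i}$. A computation in $F$ shows each $\chi_{i}$ maps into $\langle p\rangle<\mathbb{Q}^{\ast}$: if $\chi_{i}(\gamma)=a/b$ in lowest terms, iterating $\sigma_{\gamma}(\overline{g}^{\,b})=\overline{g}^{\,a}$ for $1\neq g\in H\cap V_{i}$ forces the infinite-order element $\overline{g}$ to be divisible by $b^{n}$ and by $a^{n}$ in $F^{\ast}$ for every $n$, which a discrete valuation of $F$ non-trivial on $\overline{g}$ forbids unless $ab$ is a power of $p$. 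With $H_{\chi}=\{g\in H\mid g^{\gamma}=g^{\chi(\gamma)}\ \forall\gamma\in\Gamma_{0}\}$, the distinct characters among $\chi_{1},\dots$ give $\Gamma_{0}$-invariant subgroups $H_{\chi_{1}},\dots,H_{\chi_{r}}$ which span $V$ over $\mathbb{Q}$, so $\prod_{i}H_{\chi_{i}}$ is direct and of finite index in $H$. Finally $H/\prod_{i}H_{\chi_{i}}$ is a finite $p'$-group: for a non-trivial $\chi_{i}$ the relation $g^{\gamma^{-1}}=g^{\chi_{i}(\gamma)^{-1}}$ shows $H_{\chi_{i}}$ is $p$-divisible, and applying to an arbitrary $g\in H$ the operators $p^{N}\gamma-p^{N}\chi_{j}(\gamma)$ of $\mathbb{Z}\Gamma_{0}$ (for suitable $\gamma$ separating $\chi_{i}$ from $\chi_{j}$, and $N$ clearing denominators) yields $mg\in\prod_{i}H_{\chi_{i}}$ for some $m$ coprime to $p$, the $p$-divisibility of the non-trivial factors being used to absorb the $p$-power coefficients. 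Since also $P=(P\cap kH)kG$, this exhibits $(H,\Gamma_{0})$ as a strict Brookes pair.

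The main obstacle is the middle step — the semisimplicity and, above all, the rank-one reduction together with $\mathrm{spec}(H)\subseteq\{p\}$. This is the genuine content of Brookes's theorem; it rests on the controller machinery of [21], [22] together with a delicate analysis of multiplicative groups and valuations in characteristic $p$, which is why it is quoted here from [24] rather than reproved.
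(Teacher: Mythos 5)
The paper gives no proof of this proposition: it simply quotes it as Theorem 2 of [24] (attributing it essentially to Brookes), which is exactly how you treat it. Your supplementary sketch of the controller-subgroup machinery, the reduction to rank-one $\Gamma_{0}$-summands via minimality, and the valuation-theoretic analysis in characteristic $p$ is a plausible gloss on the cited source, but — as you yourself acknowledge — it is not intended as, and does not amount to, a self-contained replacement for the reference.
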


We shall say that $(G,\Gamma)$ is a \emph{virtually strict Brookes pair }if
there exist $G_{0}\leq_{f}G$ and $\Gamma_{0}\leq_{f}\Gamma$ such that
$(G_{0},\Gamma_{0})$ is a strict Brookes pair.

\begin{proposition}
\label{(a)(b)}At least one of the following holds:\newline\textbf{(a)}
$\bigcap\mathcal{X}=0$ for every infinite subset $\mathcal{X}$ of
$\mathcal{L}$;\newline\textbf{(b)} $(G,\Delta)$ is a virtually strict Brookes pair.
\end{proposition}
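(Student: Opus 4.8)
The plan is to prove the dichotomy by assuming (a) fails and deducing (b). So suppose $\mathcal{X}$ is an infinite subset of $\mathcal{L}$ with $\bigcap\mathcal{X}\neq 0$. First I would apply Lemma \ref{primes} to pass to an infinite subset $\mathcal{Y}\subseteq\mathcal{X}$ for which $P:=\bigcap\mathcal{Y}$ is a prime ideal of $kG$; then $P\supseteq\bigcap\mathcal{X}\neq 0$, and since by (\ref{L=l(Gamma)}) every member of $\mathcal{L}$ is fixed by $\Gamma_{1}$, the ideal $P$ is a \emph{nonzero} $\Gamma_{1}$-invariant prime of $kG$.

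The next, and first substantial, point is that $P$ is \emph{faithful}, i.e. $P^{\dagger}=1$. Suppose instead $K:=P^{\dagger}>1$. Then $\mathfrak{k}\subseteq P$; moreover $K$ is $\Gamma_{1}$-invariant, so $\mathrm{N}_{\Gamma}(K)$ has finite index in $\Gamma$, and Lemma \ref{K} shows that $A/A\mathfrak{k}$ has finite upper rank as a module for $\mathrm{N}_{\Gamma}(K)$, hence also --- finite upper rank being inherited by finite-index subgroups --- as a module for $\Gamma_{1}$, say with bound $r$. Now take any $n>r$ distinct ideals $L_{1},\dots,L_{n}\in\mathcal{Y}$. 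Each $L_{i}$ is $\Gamma_{1}$-invariant and contains $\mathfrak{k}$, so $A/(AL_{1}\cap\cdots\cap AL_{n})$ is a finite $\Gamma_{1}$-module quotient of $A/A\mathfrak{k}$; by the Chinese Remainder Theorem it equals $\bigoplus_{i}A/AL_{i}$, and by Proposition \ref{prime-redn}(ii) each $A/AL_{i}$ maps onto $(kG/L_{i})^{\tau}\neq 0$, so this quotient has rank at least $n>r$, a contradiction. Hence $P$ is faithful.

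Now $P$ is a nonzero faithful $\Gamma_{1}$-invariant prime ideal of the group ring of the torsion-free abelian minimax group $G$, so Proposition \ref{control} furnishes a $\Gamma_{1}$-invariant subgroup $H\leq G$ with $P=(P\cap kH)kG$ and $(H,\Gamma_{0})$ a strict Brookes pair for some $\Gamma_{0}\leq_{f}\Gamma_{1}$. The crux is the claim that $H=G$. Suppose not. Then $\overline{P}:=P\cap kH$ is a nonzero prime of $kH$ (nonzero since $P=\overline{P}\,kG\neq 0$), and a nonzero prime of the group ring of a torsion-free abelian group of finite rank strictly lowers the transcendence degree over $k$ (reduce to the affine case by passing to a finitely generated subgroup $H_{1}\leq H$ of full rank with $\overline{P}\cap kH_{1}\neq 0$), so $\mathrm{trdeg}_{k}\mathrm{Frac}(kH/\overline{P})<\mathrm{rk}(H)$. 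On the other hand $kG/P\cong(kH/\overline{P})\otimes_{kH}kG$ is the (commutative) twisted group ring of $G/H$ over $kH/\overline{P}$ and is integral over a Laurent polynomial ring in $\mathrm{rk}(G/H)$ variables over $kH/\overline{P}$, so
\[
\mathrm{trdeg}_{k}\mathrm{Frac}(kG/P)=\mathrm{trdeg}_{k}\mathrm{Frac}(kH/\overline{P})+\mathrm{rk}(G/H)<\mathrm{rk}(H)+\mathrm{rk}(G/H)=\mathrm{rk}(G).
\]
But $kG/P$ is generated over $k$ by the image of $G$, which is isomorphic to $G$ because $P$ is faithful; a torsion-free abelian group of rank $\mathrm{rk}(G)$ has a free abelian subgroup of the same rank, supplying $\mathrm{rk}(G)$ algebraically independent elements, so $\mathrm{trdeg}_{k}\mathrm{Frac}(kG/P)\geq\mathrm{rk}(G)$ --- contradiction. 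Hence $H=G$, so $(G,\Gamma_{0})$ is a strict Brookes pair with $\Gamma_{0}\leq_{f}\Gamma_{1}\leq_{f}\Delta$, and therefore $(G,\Delta)$ is a virtually strict Brookes pair, which is (b).

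I expect the step $H=G$ to be the main obstacle. The rest is a fairly mechanical assembly of Lemma \ref{primes}, Lemma \ref{K}, Proposition \ref{prime-redn} and Proposition \ref{control}, whereas excluding a proper controlling subgroup for a faithful prime rests on the transcendence-degree facts about (twisted) group rings of abelian groups of finite rank --- presumably the point at which one leans on the structure theory of [21] and [24]. A minor technical wrinkle is the passage, in the faithfulness step, from finite upper rank over $\mathrm{N}_{\Gamma}(K)$ to finite upper rank over $\Gamma_{1}$, which is what makes the $\Gamma_{1}$-invariant modules $AL_{1}\cap\cdots\cap AL_{n}$ usable.
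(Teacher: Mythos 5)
The proposal follows the paper's line up to the application of Proposition \ref{control}: assume (a) fails, use Lemma \ref{primes} to produce a nonzero prime $P=\bigcap\mathcal{Y}$ invariant under $\Gamma_{1}$, show $P$ is faithful via Lemma \ref{K} and the Chinese Remainder argument borrowed from Proposition \ref{prime-redn}, and then invoke Proposition \ref{control} to get a controlling subgroup $H\leq G$. But your treatment of the critical step, showing that the controller $H$ may be taken to be all of $G$, is wrong. The argument rests on the assertion that, since $P$ is faithful, the image of $G$ in $kG/P$ supplies $\mathrm{rk}(G)$ algebraically independent elements, so that $\mathrm{trdeg}_{k}\mathrm{Frac}(kG/P)\geq\mathrm{rk}(G)$. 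That inference is false: faithfulness means the image of $G$ is multiplicatively free of the right rank, and multiplicative independence does not imply algebraic independence. A concrete counterexample already occurs for $G=\mathbb{Z}^{2}$: with $kG=k[x^{\pm1},y^{\pm1}]$ the prime $P=(y-x-1)$ is nonzero and faithful (one checks directly that no nontrivial $x^{a}y^{b}$ lies in $1+P$), yet $kG/P$ has transcendence degree $1$. So a faithful nonzero prime of $kG$ need not force $H=G$ in Proposition \ref{control}, and the dichotomy cannot be extracted this way.

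What you are missing is the extra ingredient the paper uses precisely to cover this possibility: $\mathcal{Y}$ is not taken arbitrarily, but is chosen so that the prime $P=\bigcap\mathcal{Y}$ is \emph{maximal} among primes obtainable as intersections of infinite subsets of $\mathcal{L}$, which is legitimate because $kG$ has finite Krull dimension. The paper then does not try to prove $H=G$; it proves $H\neq G$ leads to a contradiction. Assuming (b) is also false, the isolator $H$ of the Brookes controller is a proper, $\Gamma_{1}$-invariant subgroup with $G/H$ torsion-free. Fixing any $M\in\mathcal{Y}$ and setting $T=(M\cap kH)kG$, one checks $T$ is a prime with $T\geq P$, $T^{\dagger}=H\cap M^{\dagger}$ infinite (so $T>P$ by faithfulness of $P$), $G/T^{\dagger}$ reduced and $\mu\notin T=T^{\Gamma_{1}}$; Proposition \ref{ThmE} then expresses $T$ as an intersection of a subset $\mathcal{Z}\subseteq\mathcal{L}$. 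Maximality of $P$ forces $\mathcal{Z}$ finite, so $kG/T$ is finite, contradicting $G/H$ infinite. You should redo the $H=G$ step along these lines; the transcendence-degree route does not close the gap.
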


\begin{proof}
Suppose that (a) and (b) are both false. Then by Lemma \ref{primes}
$\mathcal{L}$ contains an infinite subset $\mathcal{Y}$ such that
$P=\bigcap\mathcal{Y}$ is a non-zero prime ideal, and we choose $\mathcal{Y}$
so that $P$ is as large as possible (as we may because $kG$ has finite Krull
dimension). Note that $P$ is $\Gamma_{1}$-invariant.

\medskip

\emph{Claim 1. }$P$ is faithful.

The proof is the same as the proof that $Q$ is faithful in the penultimate
paragraph of the proof of Proposition \ref{prime-redn}.

\medskip

\emph{Claim 2.} $P=(P\cap kH)kG$ where $H=H^{\Gamma_{1}}<G$ and $G/H$ is torsion-free.

According to Proposition \ref{control}, there exist $H_{0}=H_{0}^{\Gamma_{1}%
}\leq G$ and $\Gamma_{0}\leq_{f}\Gamma_{1}$ such that $P=(P\cap kH_{0})kG$ and
$(H_{0},\Gamma_{0})$ is a strict Brookes pair. Let $H$ be the isolator of
$H_{0}$ in $G$. Since $G$ is Noetherian as a $\Gamma$-module, $H/H_{0}$ is a
Noetherian $\Gamma_{0}$-module; as it is also a periodic abelian minimax
group, $H/H_{0}$ is finite. Since we are supposing (b) false, it follows that
$H\neq G$.

\medskip

\emph{Claim 3.} Fix $M\in\mathcal{Y}$ and put $T=(M\cap kH)kG$. Then
$T=\bigcap\mathcal{Z}$ for some subset $\mathcal{Z}$ of $\mathcal{L}$.

To see this, note that $T$ is a prime ideal of $kG$, because $G/H$ is
torsion-free; also $G/T^{\dagger}=G/(H\cap M^{\dagger})$ is reduced and
$\mu\notin T=T^{\Gamma_{1}}$, so by Proposition \ref{ThmE} we have%
\[
T=\bigcap\mathcal{L}(\Gamma_{1},T,\mu).
\]
Since $M\cap kH\geq P\cap kH$ we also have $T\geq P\geq Q$, so $\mathcal{L}%
(\Gamma_{1},T,\mu)\subseteq\mathcal{L}(\Gamma_{1},Q,\mu)=\mathcal{L}$.

\medskip

\emph{Conclusion.} Since $P\neq0$ the group $H$ is infinite, so $T^{\dagger
}=H\cap M^{\dagger}$ is infinite. As $P\leq T$ and $P$ is faithful it follows
that $P<T$; by the maximal choice of $P$ this forces $\mathcal{Z}$ to be a
finite set. Hence $kG/T$ is finite, a contradiction since $G/H$ is infinite.
\end{proof}

\bigskip

We examine the two cases in turn.

\medskip

\textbf{Case (a) }Suppose that $\bigcap\mathcal{X}=0$ for every infinite
subset $\mathcal{X}$ of $\mathcal{L}$. Then $Q=0$ since $Q=\bigcap\mathcal{L}$
and $\mathcal{L}$ is infinite.

Let $\mathcal{L}_{1}$ denote the set of $L\in\mathcal{L}$ such that%
\[
M\underset{\max}{\vartriangleleft}_{f}kG,\,\,M^{\dagger}=L^{\dagger
}\Longrightarrow M\in\mathcal{L}.
\]
Proposition \ref{Land M}, with (\ref{L=l(Gamma)}), shows that $\mathcal{L}%
\smallsetminus\mathcal{L}_{1}$ is finite.

Suppose $L\in\mathcal{L}_{1}$ with $m(L)=\left\vert G/L^{\dagger}\right\vert
=m$ and $f(L)=\log_{p}\left\vert kG/L\right\vert =f$. Then according to
Corollary \ref{manysisters} we have%
\[
\left\vert \mathcal{L}(f)\right\vert \geq\varphi(m)/f;
\]
with Proposition \ref{L(f)} this gives
\begin{equation}
\varphi(m)\leq n_{0}f. \label{ineq}%
\end{equation}

Let $\mathcal{L}_{2}$ denote the set of $L\in\mathcal{L}_{1}$ such that%
\[
\left\vert \Gamma_{1}:\mathrm{C}_{\Gamma_{1}}(G/L^{\dagger})\right\vert
>4d\alpha.
\]
We claim that $\mathcal{L}_{1}\smallsetminus\mathcal{L}_{2}$ is finite. To see
this, let $\Gamma_{2}$ be the intersection of all subgroups of index at most
$4d\alpha$ in $\Gamma_{1}$. Then $\Gamma_{2}$ has finite index in $\Gamma$
while%
\[
\lbrack G,\Gamma_{2}]\leq\bigcap_{L\in\mathcal{L}_{1}\smallsetminus
\mathcal{L}_{2}}L^{\dagger}.
\]
Lemma \ref{non-trivial} shows that $[G,\Gamma_{2}]\neq1$; but if
$\mathcal{L}_{1}\smallsetminus\mathcal{L}_{2}$ is infinite then the
intersection on the right is equal to $1$. The claim follows.

Now let $L\in\mathcal{L}_{2}$. Then from Proposition \ref{m vs f} we have%
\[
\log_{p}m(L)>\frac{f(L)}{2(d+1)\alpha}-\frac{\log_{p}(psf(L))}{d+1}%
\]
where $s=\left|  \Gamma:\Gamma_{1}\right|  $. With (\ref{ineq}) this implies
that $m(L)$ is bounded above by a constant (depending on $p,\,n_{0},\,s,\,d$
and $\alpha$) -- note that $\varphi(m)>\sqrt{m}$ for large $m$. Since
$G/G^{m}$ is finite for each $m$ it follows that $\mathcal{L}_{2}$ is finite.

Hence $\mathcal{L}$ is finite, a contradiction.

\medskip

\textbf{Case (b) \ }We quote

\begin{proposition}
\emph{([24], Proposition 6) }Let $(G,\Delta)$ be a virtually strict Brookes
pair and let $Q=Q^{\Delta}$ be a faithful prime ideal of infinite index in
$kG$. Let $\lambda\in kG\smallsetminus Q$. Then the numbers $\left\vert
\mathcal{L}(\Delta,Q,\lambda)(f)\right\vert $ are unbounded as $f\rightarrow
\infty$.
\end{proposition}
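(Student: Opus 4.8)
The engine of the argument is the rigidity of $kG$ over $\mathbb{F}_{p}$ in a \emph{strict} Brookes pair: if $H=H_{\chi}$ is a character eigengroup then $\Delta$ acts on $kH$ by $h\mapsto h^{\chi(\gamma)}$ with $\chi(\gamma)$ an integral power of $p$, so this action extends to a power of the Frobenius endomorphism of $kH$ and hence acts as a \emph{ring automorphism} on every finite quotient ring of $kH$. First I would carry out the standard reductions. Passing to suitable $G_{0}\leq_{f}G$, $\Delta_{0}\leq_{f}\Delta$ we may assume $(G,\Delta)$ itself is a strict Brookes pair (the point needing care is transferring the count of maximal ideals across the finite ring extension $kG_{0}\subseteq kG$, and replacing $\lambda$ by a nonzero element of $kG_{0}\smallsetminus(Q\cap kG_{0})$ coming from an integral-dependence relation). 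Next, using Proposition \ref{control} one peels $Q$ off: $Q=(Q\cap kH)kG$ for a strict Brookes subgroup $H$, and since $kG=kH[G/H]$ is a free $kH$-module, a finite-index maximal ideal of $kH$ containing $Q\cap kH$ and avoiding the conjugates of $\lambda$ extends to (many) finite-index maximal ideals of $kG$ containing $Q$ and avoiding the conjugates of $\lambda$; iterating this, I reduce to the case $Q=0$, so that the containment $L\supseteq Q$ becomes vacuous and we are simply counting finite-index maximal ideals $L$ of $kG$ with $\lambda^{\delta}\notin L$ for all $\delta\in\Delta$, $|kG/L|$ running over an unbounded set of powers $p^{f}$.

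Now the explicit construction. Since $\Delta$ cannot centralise a finite-index subgroup of $G$ (that would contradict Lemma \ref{non-trivial}), some character $\chi_{i}$ occurring in the strict Brookes structure is non-trivial; then $G_{\chi_{i}}$ must be $p$-divisible, hence isomorphic to a free $\mathbb{Z}[1/p]$-module of positive rank, and I choose inside it a rank-one direct summand $P\cong\mathbb{Z}[1/p]$ (it is $\Delta$-invariant, since each eigengroup is). Writing $G=P\times R$ up to finite index, chosen so that the image of $\lambda$ in $k[G/R]\cong kP$ is non-zero, I note that for each $n\geq1$ the quotient $P/P^{p^{n}-1}\cong\mathbb{Z}/(p^{n}-1)$ lifts to a subgroup $K_{n}\leq_{f}G$ with $G/K_{n}$ cyclic of order $p^{n}-1$ (coprime to $p$) and residue degree exactly $n$. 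By Lemma \ref{cyclic} there are $\varphi(p^{n}-1)/n$ maximal ideals $L$ of $kG$ with $L^{\dagger}=K_{n}$, each of index $p^{n}$; and each of them is $\Delta$-invariant, because $\Delta$ acts on $k[G/K_{n}]$ through a group of field automorphisms of its residue fields --- this is precisely where ``strict Brookes'' is used, the action on $k[P/P^{p^{n}-1}]$ being by powers of the Frobenius. (This is the same phenomenon underlying Corollary \ref{manysisters}, but here it is built into the construction rather than imported.)

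It remains to discard those $L$ with $L^{\dagger}=K_{n}$ that fail $\lambda^{\delta}\notin L$ for some $\delta$. Putting $\mu=\prod_{z}\lambda^{z}$ over a transversal of a finite-index subgroup of $\Delta$ fixing each such $L$, the condition becomes $\mu\notin L$, and I claim the number of bad $L$ is $o(\varphi(p^{n}-1)/n)$ as $n\to\infty$. Indeed $\mu\notin(R-1)kG\subseteq\mathfrak{k}_{n}$ by the choice of $P$, so the image $\bar{\mu}_{n}$ of $\mu$ in $k[G/K_{n}]\cong k[\mathbb{Z}/(p^{n}-1)]$ is a fixed nonzero element coming from $kP$, and the bad $L$ are exactly the faithful maximal ideals of this group algebra on which $\bar{\mu}_{n}$ vanishes; counting primitive $(p^{n}-1)$-th roots of unity in $\overline{\mathbb{F}}_{p}$ annihilated by a fixed nonzero element of $kP$ shows their number grows much more slowly than $\varphi(p^{n}-1)/n$. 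Hence $|\mathcal{L}(\Delta,Q,\lambda)(n)|\geq\varphi(p^{n}-1)/n-o(\varphi(p^{n}-1)/n)$, and since $\varphi(p^{n}-1)/n\to\infty$ (standard lower bounds give $\varphi(p^{n}-1)\gg p^{n}/\log\log(p^{n})$), the numbers $|\mathcal{L}(\Delta,Q,\lambda)(f)|$ are unbounded.

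The main obstacle is the interplay in the two reduction steps of the first paragraph: keeping the count honest while (a) restricting to a strict Brookes pair and (b) eliminating $Q$, so that in the end we are counting \emph{all} maximal ideals with a prescribed $\dagger$-subgroup rather than a sparse subfamily of them; once one is in the clean situation $Q=0$ with the strict Brookes structure, the construction above is forced and the growth of Euler's function does the rest. A secondary technical point is the estimate $o(\varphi(p^{n}-1)/n)$ for the bad ideals, which uses only that a nonzero element of the group algebra of an infinite cyclic (or $\mathbb{Z}[1/p]$) group cannot vanish at ``most'' of its characters.
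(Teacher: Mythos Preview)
The paper does not prove this proposition at all: it is quoted verbatim from reference [24] (Proposition 6 there) and used as a black box to close Case (b) in Subsection \ref{subs8.4}. So there is no proof in the present paper to compare against; what follows is an assessment of your argument on its own.

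Your outline has the right flavour---exploit the eigengroup decomposition of a strict Brookes pair to manufacture cyclic quotients $G/K_n$ of order $p^n-1$, and let Euler's function do the counting---but the reduction to $Q=0$ is a genuine gap. Proposition \ref{control} is only an existence statement and may return $H=G$, in which case $Q=(Q\cap kG)kG$ is vacuous and your ``iterating this'' makes no progress. A concrete obstruction: take $G=\mathbb{Z}^{2}=\langle x,y\rangle$, $\Delta$ acting trivially (so $(G,\Delta)$ is a strict Brookes pair with only the trivial character), and $Q=(x+y-1)$, a faithful prime of infinite index. For every proper rank-one subgroup $H\leq G$ one has $Q\cap kH=0$ but $(Q\cap kH)kG=0\neq Q$, so no proper $H$ controls $Q$; your iteration stalls, and the explicit family you build (maximal ideals with $L^{\dagger}=K_n$) need not contain $Q$ at all. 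You try to exclude this scenario by invoking Lemma \ref{non-trivial}, but that lemma is proved in the specific PIG context of Section \ref{singsec} and is \emph{not} a hypothesis of the standalone proposition from [24]; and even when a nontrivial character is available, the constraint $L\supseteq Q$ remains unaddressed once the reduction fails. The proof in [24] must therefore work inside $kG/Q$ rather than reducing to $Q=0$: one has to control the finite-index maximal ideals of the quotient ring itself using the Brookes structure, and carry out the Euler-function and ``bad ideal'' estimates there. (Your Frobenius-power trick for bounding the bad ideals---replace $\mu$ by $\mu^{p^{M}}$ to land in an honest Laurent polynomial ring and bound roots by a fixed degree---is correct and is the right way to make the $o(\varphi(p^{n}-1)/n)$ claim precise; the difficulty is getting into a position where it applies.)
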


Thus if $(G,\Delta)$ is a virtually strict Brookes pair we again have a
contradiction to Proposition \ref{L(f)}.

As each case of Proposition \ref{(a)(b)} leads to a contradiction, the proof
is complete.

\section{Profinite Examples\label{exsec1}}

Let $f:\mathbb{N}\rightarrow\mathbb{R}_{>0}$ be a non-decreasing function with
$f(n)\rightarrow\infty$ as $n\rightarrow\infty$. We construct sequences
$(p_{i})$, $(m_{i})$ and $(q_{i})$ recursively as follows.

Let $p_{1}\equiv1\,(\operatorname{mod}4)$ be a prime such that $f(p_{1})>4$,
put $m_{1}=1$ and set $q_{1}=(p_{1}+1)/2$.

Given $p_{j}$, $m_{j}$ and $q_{j}$ for $1\leq j<i$, set $P_{i}=\prod
_{j<i}p_{j},\,Q_{i}=\prod_{j<i}q_{j}$, and let $p_{i}$ be a prime such that%
\begin{align*}
f(p_{i})  &  \geq2+2m_{j}\,\,\forall j<i\\
p_{i}  &  \equiv1\,(\operatorname{mod}4P_{i}Q_{i}).
\end{align*}
The existence of such a prime is guaranteed by Dirichlet's theorem on
arithmetic progressions. Now set%
\begin{align*}
m_{i}  &  =[\frac{f(p_{i})-2}{2}]\\
q_{i}  &  =\frac{p_{i}^{m_{i}}+1}{2}.
\end{align*}
Let $F_{i}$ be the field of size $p_{i}^{2m_{i}}$; the multiplicative group of
$F_{i}$ contains an element $x_{i}$ of order $q_{i}$. One checks easily that
$p_{i}$ and $q_{i}$ are relatively prime to each other and to $P_{i}Q_{i}$,
and that $x_{i}$ is a primitive element for $F_{i}$. It follows that the
semi-direct product%
\[
B_{i}=F_{i}\rtimes\left\langle x_{i}\right\rangle
\]
(where $F_{i}$ is taken as an $\mathbb{F}_{p}\left\langle x_{i}\right\rangle
$-module) is generated by the two elements $\mathbf{e}_{i}=(1_{F_{i}},1)$ and
$\mathbf{x}_{i}=(0,x_{i})$.

Now consider the profinite group%
\[
B=\prod_{i=1}^{\infty}B_{i}.
\]
Noting that the $q_{i}$ and the $p_{i}$ all all relatively prime, it is easy
to see that $B$ is generated topologically by the two elements $\mathbf{e}%
=(\mathbf{e}_{i})$ and $\mathbf{x}=(\mathbf{x}_{i})$.

We claim that $B$ has PIG$(3)$. Since $F_{i}$ is the unique minimal normal
subgroup of $B_{i}$ and the groups $B_{i}$ have pairwise coprime orders, any
finite quotient of $B$ takes the form%
\begin{equation}
Q=\prod_{j\in X}B_{j}\times\prod_{j\in Y}C_{j} \label{prod}%
\end{equation}
where $X$ and $Y$ are disjoint finite sets of indices and $C_{j}$ is some
quotient of $\left\langle x_{j}\right\rangle $. As the factors in (\ref{prod})
have pairwise coprime orders, it will suffice to show that each factor has
PIG$(3)$. This is obvious for the $C_{j}$; as for $B_{j}$ we have (writing
$p=p_{j},\,q=q_{j},\,m=m_{j}$)%
\begin{align*}
\left|  B_{j}\right|   &  =\frac{1}{2}p^{2m}(p^{m}+1)\\
\exp(B_{j})  &  =\frac{1}{2}p(p^{m}+1)
\end{align*}
so $\left|  B_{j}\right|  <\exp(B_{j})^{3}$.

As $m_{i}\rightarrow\infty$ with $f(p_{i})$ the rank of $B$ is infinite. On
the other hand, the slow growth of the $m_{i}$ limits the subgroup growth of
$B$. For each prime $p$ let $r_{p}$ denote the upper $p$-rank of $B$, and
denote by $s_{n}(B)$ the number of open subgroups of idex at most $n$ in $B$.
The proof of Lemma 1.7.1 of [12] shows that for every $n,$%
\[
s_{n}(B)\leq n^{2+h(n)}%
\]
where%
\[
h(n)=\max\left\{  r_{p}\mid p\leq n\right\}  .
\]
Now if $p=p_{i}$ for some $i$ then $r_{p}=2m_{i}$; if $p$ is any other prime
then $r_{p}\leq1.$ Therefore $h(n)$ is bounded above by%
\[
\max\left\{  2m_{i}\mid p_{i}\leq n\right\}  \leq\max\left\{  f(p_{i})-2\mid
p_{i}\leq n\right\}  \leq f(n)-2.
\]
Hence $s_{n}(B)\leq n^{f(n)}$ for all $n$.

\section{ A finitely generated example\label{exsec2}}

There are a number of theorems that characterize the fg residually finite
groups that satisfy some upper finiteness condition as being those that are
virtually soluble minimax groups. Many of them were first proved, or have only
been proved, under the additional assumption that the groups are virtually
residually nilpotent. Some examples were mentioned in Section \ref{Hsection},
and several more appear in [12]. These motivated the question:

\begin{itemize}
\item \emph{If a fg} \emph{abelian-by-minimax group is residually finite, is
it necessarily} ({\small R}$\mathfrak{N}$)$\mathfrak{F}$?
\end{itemize}

Here we present an easy counterexample, which also shows that Theorem \ref{H}
fails if the hypothesis on torsion is removed.

Fix a prime $p$ and let $G$ be the additive group of $\mathbb{Z}[\frac{1}{p}%
]$, written multiplicatively:%
\[
G=\left\langle x_{n}\,(n\in\mathbb{Z})\,;\,x_{n}^{p}=x_{n-1}\,\forall
n\right\rangle .
\]
Let $\tau$ be the automorphism of $G$ sending $x_{n}$ to $x_{n+1}$ for each
$n$. Then $\tau$ extends to an automorphism of the group ring $A=\mathbb{F}%
_{p}G,$ and hence to an automorphism of the wreath product%
\[
W=C_{p}\wr G=A\rtimes G.
\]
Now let%
\[
E=W\rtimes\left\langle \tau\right\rangle .
\]

\begin{proposition}
\emph{(i) }$E$ is a $3$-generator residually finite abelian-by-minimax group
of infinite upper rank.\newline\emph{(ii)} Every ({\small R}$\mathfrak{N}%
$)$\mathfrak{F}$ quotient of $G$ is a minimax group.
\end{proposition}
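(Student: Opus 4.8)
Part (i). The plan is to exhibit a $3$-element generating set and then produce enough finite quotients. $E$ is generated by $x_{0}$, $\tau$, and the canonical generator $\mathbf e$ of the base $C_{p}$ at $1_{G}$ (that is, $\mathbf e = 1\in A=\mathbb F_{p}G$): since $\tau^{n}x_{0}\tau^{-n}=x_{n}$ for all $n\in\mathbb Z$, the subgroup $\langle x_{0},\tau\rangle$ contains $G=\langle x_{n}\mid n\in\mathbb Z\rangle$ and hence equals $\Gamma:=G\rtimes\langle\tau\rangle$, while the $G$-orbit of $\mathbf e$ is an $\mathbb F_{p}$-basis of $A$, so $\langle\mathbf e,\Gamma\rangle=A\rtimes\Gamma=E$. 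Also $A$ is abelian and normal with $E/A\cong\Gamma=\mathbb Z[\tfrac1p]\rtimes\mathbb Z$ soluble minimax, so $E$ is abelian-by-minimax. For residual finiteness the key point is that \emph{every ideal of finite index in $R:=\mathbb F_{p}G$ is $\tau$-invariant}: such an ideal contains $J_{m}:=\ker\bigl(R\to\mathbb F_{p}[G/mG]\bigr)$ for some $m$ coprime to $p$ (the image of the $p$-divisible group $G$ in any finite quotient ring is a finite $p'$-group), and on the semisimple ring $R/J_{m}\cong\mathbb F_{p}[\mathbb Z/m]$ the automorphism $\tau$ acts as multiplication by $p^{-1}$ on $\mathbb Z/m$; the $\tau$-image of a maximal ideal $\ker\chi$ is $\ker(\mathrm{Frob}\circ\chi)=\ker\chi$, so $\tau$ fixes every maximal ideal and hence every ideal. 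Now given $0\ne a\in A$, pick $m$ with $a\notin J_{m}$ and a maximal ideal $P\supseteq J_{m}$ with $a\notin P$; then $AP\vartriangleleft E$, and $E/AP$ is the split extension $(R/P)\rtimes\Gamma$ in which $\Gamma_{1}:=\mathrm C_{\Gamma}(R/P)$ has finite index, so $(R/P)\rtimes\Gamma_{1}=(R/P)\times\Gamma_{1}$ is a direct product of a finite group and a residually finite group of finite index in the finitely generated group $E/AP$; hence $E/AP$ is residually finite and $a$ survives in some finite quotient of $E$. Since $E/A\cong\Gamma$ separates the remaining elements of $E$, $E$ is residually finite. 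For the infinite upper rank: for each $m$ coprime to $p$ choose $P\supseteq J_{m}$ maximal with $R/P\cong\mathbb F_{p^{f}}$, $f=\mathrm{ord}_{m}(p)$; then $A/AP\cong\mathbb F_{p^{f}}$ has $\mathbb Z$-rank $f$, and since $\Gamma_{1}=\mathrm C_{\Gamma}(A/AP)$ is normal of finite index and meets $A/AP$ trivially in the split extension $E/AP$, its preimage $M\vartriangleleft_{f}E$ satisfies $M\cap A=AP$, so the finite quotient $E/M$ has a section of rank $f$; as $\mathrm{ord}_{p^{k}-1}(p)=k$ is unbounded, $E$ has infinite upper rank.

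Part (ii). Let $\bar E=E/M$ be $(\mathrm R\mathfrak N)\mathfrak F$; I must show $\bar E$ is minimax. Let $\bar A$ be the image of $A$: it is elementary abelian of exponent $p$, and $\bar E/\bar A$, a quotient of $\Gamma$, is minimax; as minimax is extension-closed it suffices to prove $\bar A$ finite. The image $\bar G$ of $G$ is a quotient of $\mathbb Z[\tfrac1p]$, hence either a finite $p'$-group — and then $\bar A$, on which $G$ acts through the finite ring $\mathbb F_{p}\bar G$, is finite — or $\bar G\cong\mathbb Z[\tfrac1p]$, in which case $\bar A\cong R/I$ is a cyclic module for $R:=\mathbb F_{p}\bar G$ with $\bar G$ acting by multiplication. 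In the second case pass to a normal, finitely generated, residually nilpotent subgroup $\bar E_{0}\vartriangleleft_{f}\bar E$ and set $\bar A_{0}=\bar A\cap\bar E_{0}$ (an $R$-submodule, with $\bar A/\bar A_{0}$ finite) and $\bar G_{0}=\bar G\cap\bar E_{0}=m\bar G$ with $m$ coprime to $p$. The essential observation is that the augmentation ideal $\mathfrak g_{0}$ of $\mathbb F_{p}\bar G_{0}$ is \emph{idempotent}: $\bar G_{0}$ is $p$-divisible, and in characteristic $p$ one has $g-1=(h-1)^{p}$ whenever $g=h^{p}$. Hence $[\bar A_{0},{}_{n}\bar G_{0}]=\mathfrak g_{0}^{\,n}\bar A_{0}=\mathfrak g_{0}\bar A_{0}$ for every $n\ge1$; but $[\bar A_{0},{}_{n}\bar G_{0}]\le\gamma_{n+1}(\bar E_{0})$, so $\mathfrak g_{0}\bar A_{0}\le\bigcap_{n}\gamma_{n}(\bar E_{0})=1$. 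Thus $\bar G_{0}$ acts trivially on $\bar A_{0}$; writing $\bar A_{0}=I'/I$, so that $R/I'\cong\bar A/\bar A_{0}$ is finite, this reads $\mathfrak jI'\subseteq I$ with $\mathfrak j=(m\bar G-1)R$.

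To finish the second case, let $\mathfrak h=(m'\bar G-1)R$, where $m'$ is a common multiple of $m$ and of the exponent of the finite $p'$-group that is the image of $\bar G$ in $R/I'$. Then $\mathfrak h\subseteq I'$; $\mathfrak h\subseteq\mathfrak j$ because $m\mid m'$; and $\mathfrak h$ is idempotent because $m'\bar G$ is $p$-divisible. Therefore $\mathfrak h=\mathfrak h\cdot\mathfrak h\subseteq\mathfrak jI'\subseteq I$, so $\bar A=R/I$ is a homomorphic image of the finite ring $R/\mathfrak h\cong\mathbb F_{p}[\mathbb Z/m']$ and is finite. (Incidentally this is exactly why $C_{p}\wr\mathbb Z$, although residually nilpotent of infinite rank, is not a quotient of $E$: there the group acting by multiplication on the base is $\mathbb Z$, whose augmentation ideal $(t-1)$ is not idempotent.) In all cases $\bar A$ is finite, so $\bar E$ is finite-by-minimax, hence minimax, which proves (ii).

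I expect the main obstacle to be, first, getting the residual-finiteness step of (i) off the ground — without the $\tau$-invariance of finite-index ideals of $\mathbb F_{p}G$ one has no normal subgroups of $E$ realizing the desired finite sections — and, second, in (ii), identifying the correct structural mechanism: that the $p$-divisibility of $G$ forces the relevant augmentation ideals to be idempotent. This idempotence is what is used twice, via the lower central series to trivialize the $\bar G_{0}$-action on $\bar A_{0}$ and via $\mathfrak h=\mathfrak h^{2}$ to collapse $\bar A$. The remaining bookkeeping — the split structure of $E/AP$, the passage to $\bar E_{0}$, and the manipulation of finite-index subgroups — is routine.
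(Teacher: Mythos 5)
Your proof is correct in substance and uses the same essential idea as the paper for part (ii) — namely that the augmentation ideal of a $p$-divisible group in characteristic $p$ is idempotent, so that the lower central series of any subgroup containing $A$ and a $p$-divisible piece of $G$ stalls — but both halves take a noticeably different route. For residual finiteness in (i), the paper observes that $A_m:=(G^m-1)\mathbb{F}_pG$ (which equals your $J_m$) is normal in $E$, that $AG^m/A_m$ is abelian, and hence that $E/A_m$ is virtually metabelian and residually finite by P.~Hall's theorem; the $A_m$ intersect in $1$. You instead prove $\tau$-invariance of finite-index ideals of $\mathbb{F}_pG$ via the Frobenius observation and reduce residual finiteness of $E/P$ to that of the Baumslag--Solitar group $\Gamma\cong BS(1,p)$. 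Both are fine; the paper's route also gives a sharper lower bound for upper rank ($m$ rather than $\mathrm{ord}_m(p)$), though unboundedness is all that is needed. For (ii), the paper works ``upstairs'': it shows directly that $A_m\le N$ for suitable $m$, so $E/N$ is a quotient of the explicit minimax group $E/A_m$. You work ``downstairs'' in $\overline{E}$ with a case analysis on $\overline{G}$ and a module-theoretic argument ($\mathfrak{h}\subseteq I$), which is correct but longer. One genuine (if small) gap: your assertion that a quotient of $\mathbb{Z}[1/p]$ is either a finite $p'$-group or $\cong\mathbb{Z}[1/p]$ is false as stated — $\mathbb{Z}[1/p]/p^{-k}\mathbb{Z}\cong C_{p^\infty}$ and more generally $C_{p^\infty}\times C_n$ occur. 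The dichotomy does hold for \emph{residually finite} quotients, and $\overline{G}$ is residually finite because $\overline{E}$ is finitely generated and $(\mathrm{R}\mathfrak{N})\mathfrak{F}$, hence residually finite, and subgroups inherit this; you should say so explicitly. (Alternatively, your case-2 argument goes through verbatim if $\overline{G}$ has a $C_{p^\infty}$ part, since $C_{p^\infty}$ is $p$-divisible and has no proper finite-index subgroups, so the case split could be avoided entirely.)
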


\begin{proof}
The group $E$ is clearly generated by $e,\,x_{0}$ and $\tau$ where $e$
generates the `bottom group' $C_{p}$ of $W$. Also $E/A\cong G\rtimes
\left\langle \tau\right\rangle $ is minimax.

For $m\in\mathbb{N}$ put
\begin{align*}
A_{m}  &  =[A,G^{m}]\\
&  =(G^{m}-1)\mathbb{F}_{p}G.
\end{align*}
Then $A/A_{m}\cong\mathbb{F}_{p}(G/G^{m})$, and if $p\nmid m$ then $G/G^{m}$
is cyclic of order $m$, in which case $A/A_{m}$ has rank $m$.

Now $AG^{m}/A_{m}$ is locally cyclic modulo its centre, so it is abelian. As
$E/AG^{m}$ is virtually cyclic it follows that $E/A_{m}$ is virtually
metabelian, hence residually finite ([9], \S 4.3). In particular, $E$ has a
normal subgroup $N$ of finite index with $N\cap A=A_{m}$. Hence%
\[
\mathrm{ur}(E)\geq\mathrm{rk}(E/N)\geq\mathrm{rk}(A/A_{m}).
\]
Letting $m$ range over all numbers prime to $p$ we see that $E$ has infinite
upper rank.

Since the subgroups $G^{m}$ intersect in $\{1\}$, the ideals $(G^{m}%
-1)\mathbb{F}_{p}G$ intersect in zero in the ring $\mathbb{F}_{p}G$, whence
$\bigcap_{m}A_{m}=1$. As each of the quotients $E/A_{m}$ is residually finite
it follows that $E$ is residually finite.

(ii) Suppose that $N\leq K$ are normal subgroups of $E$ such that $E/K$ is
finite and $K/N$ is residually nilpotent. There exists $m$ such that
$G^{m}\leq K$, and then $A_{m}=[A,G^{m}]\leq K$. Now the ideal $(G^{m}%
-1)\mathbb{F}_{p}G$ is idempotent in the ring $\mathbb{F}_{p}G$ since $G^{m}$
is $p$-divisible; this means that $A_{m}=[A_{m},G^{m}]$. Hence $A_{m}$ is
contained in every term of the lower central series of $K$, and so $A_{m}\leq
N$. Thus $E/N$ is a quotient of the minimax group $E/A_{m}.$
\end{proof}

\bigskip

\begin{center}
{\Large References}
\end{center}

[1] M. Ab\'{e}rt, A. Lubotzky and L. Pyber, Bounded generation and linear
groups, \emph{Int. J. Algebra and Computation} \textbf{13} (2003), 401-413.

\bigskip

[2] A. Balog, A. Mann and L. Pyber, Polynomial index growth groups, \emph{Int.
J. Algebra and Computation} \textbf{10} (2000), 773-782.

\bigskip

[3] J. D. Dixon, M. P. F. du Sautoy, A. Mann and D. Segal, \emph{Analytic
pro-}$p$\emph{ groups, 2nd edn.}, Cambridge Studies Adv. Math. \textbf{61},
CUP, Cambridge, 1999.

\bigskip

[4] R. M. Guralnick, On the number of generators of a finite group,
\emph{Archiv der Math.} \textbf{53} (1989), 521-523.

\bigskip

[5] M. V. Horoshevskii, On automorphisms of finite groups (Russian),
\emph{Mat. Sbornik} \textbf{93} (1974), 576-587.

\bigskip

[6] L. Kov\'{a}cs, On finite soluble groups, \emph{Math. Zeit.} \textbf{103}
(1968), 37-39.

\bigskip

[7] P. H. Kropholler, On finitely generated soluble groups with no large
wreath product sections, \emph{Proc. London Math. Soc.}(3) \textbf{49} (1984), 155-169.

\bigskip

[8] A. Lucchini, A bound on the number of generators of a finite group,
\emph{Archiv der Math.} \textbf{53} (1989), 313-317.

\bigskip

[9] J. C. Lennox and D. J. S. Robinson, \emph{The theory of infinite soluble
groups}, Clarendon Press, Oxford, 2004.

\bigskip

[10] A. Lubotzky, Subgroup growth and congruence subgroups, \emph{Invent.
Math. }\textbf{119} (1995), 267-295.

\bigskip

[11] A Lubotzky, A. Mann and D. Segal, Finitely generated groups of polynomial
subgroup growth, \emph{Israel J. Math.} \textbf{82} (1993), 363-371.

\bigskip

[12] A Lubotzky and D. Segal, \emph{Subgroup Growth,} Progress in Math.
\textbf{212}, Birkh\"{a}user, Basel, 2003.

\bigskip

[13] A. Muranov, Diagrams with selection and method for constructing
bounded-generated and bounded-simple groups, \emph{preprint: }www.arxiv

\bigskip

[14] A. Mann and D. Segal, Uniform finiteness conditions in residually finite
groups, \emph{Proc. London Math. Soc. }(3) \textbf{61 }(1990), 529-545.

\bigskip

[15] O. Manz and T. R. Wolf, \emph{Representations of solvable groups,} LMS
Lect. Notes \textbf{185}, CUP, Cambridge, 1993.

\bigskip

[16] P. P. P\'{a}lfy, A polynomial bound for the orders of primitive solvable
groups, \emph{J. Algebra} \textbf{77} (1982), 127-137.

\bigskip

[17] V. P. Platonov and A. S. Rapinchuk, Abstract properties of $S$-arithmetic
groups and the congruence subgroup problem, \emph{Izv. Ross.} \emph{Akad. Nauk
Ser. Mat.}\textbf{56} (1992), 483-508 (Russian); \emph{Russian Acad. Sci. Izv.
Math.} \textbf{40} (1993), 455-476 (English).

\bigskip

[18] V. P. Platonov and A. S. Rapinchuk, \emph{Algebraic groups and number
theory}, Academic Press, San Diego, 1994.

\bigskip

[19] D. Segal, On abelian-by-polycyclic groups, \emph{J. London Math. Soc.}
(2)\textbf{ 11 }(1975), 445-452.

\bigskip

[20] D. Segal, Subgroups of finite index in soluble groups, I and II, pp.
307-319 in \emph{Groups St Andrews 1985}, LMS Lect. Notes \textbf{121}, CUP,
Cambridge, 1986.

\bigskip

[21] D. Segal, On the group rings of abelian minimax groups, \emph{J. Algebra}
\textbf{237} (2001), 64-94.

\bigskip

[22] D. Segal, On modules of finite upper rank, \emph{Trans. AMS} \textbf{353}
(2000), 391-410.

\bigskip

[23] D. Segal, On the finite images of infinite groups, pp. 542-563 in
\emph{Groups: topological, combinatorial and arithmetic aspects}, LMS Lect.
Notes \textbf{121}, CUP, Cambridge, 2004.

\bigskip

[24] D. Segal, On the group rings of abelian minimax groups, II: the singular
case,  \emph{J. Algebra} \textbf{306} (2006), 378-396.

\bigskip

[25] A. Seress, The minimal base size of primitive solvable permutation
groups, \emph{J. London Math. Soc. }\textbf{53} (1996), 243-255.

\bigskip

[26] B.Sury, Bounded generation does not imply finite presentation,

\emph{Comm. Alg}. \textbf{25 }(1997), 1673-1683

\bigskip

[27] O. I. Tavgen, Bounded generation of Chevalley groups over rings of
algebraic $S$-integers, \emph{Math. USSR Izvestia }\textbf{36 }(1991), 101-128.

\bigskip

[28] B. A. F. Wehrfritz, \emph{Infinite linear groups, }Ergebnisse der Math.
\textbf{76}, Springer-Verlag, Berlin, 1973.

\bigskip

[29] T. R. Wolf, Solvable and nilpotent subgroups of $GL(n,q^{m})$, \emph{Can.
J. Math.} \textbf{34} (1982), 1097-1111.

\bigskip
\begin{verbatim}
L. Pyber
Alfred Renyi Math Institute, Hungarian Academy of Sciences
P. O. Box 127
H-1364 Hungary

pyber@renyi.hu


D. Segal
All Souls College
Oxford OX1 4AL
UK

dan.segal@all-souls.ox.ac.uk
\end{verbatim}

\end{document}